\title{Strong ill-posedness and non-existence in Sobolev spaces for generalized-SQG}
\author{Diego Córdoba\footnote{Corresponding author. Instituto de Ciencias Matem\'aticas CSIC-UAM-UCM-UC3M, Spain. e-mail: dcg@icmat.es} , \hspace{7mm} José Lucas-Manchón\footnote{Instituto de Ciencias Matem\'aticas CSIC-UAM-UCM-UC3M, Spain. e-mail: jose.lucas@icmat.es} , \hspace{7mm} Luis Martínez-Zoroa\footnote{University of Basel, Switzerland. e-mail: luis.martinezzoroa@unibas.ch}.}
\begin{document}
\newtheorem{thm}{Theorem}
\newtheorem*{thm*}{Theorem}
\newtheorem{lma}{Lemma}
\newtheorem{coro}{Corollary}
\newtheorem{prp}{Proposition}
\newtheorem{dfn}{Definition}
\newtheorem{rmk}{Remark}
\maketitle

\begin{abstract}
The general surface quasi-geostrophic equation is the  scalar transport equation defined by
 \begin{equation*}
 \left\{\begin{array}{l}
 \frac{\partial \theta}{\partial t}+v^\gamma_1 \frac{\partial \theta}{\partial x_1}+v^\gamma_2 \frac{\partial \theta}{\partial x_2} =0 ,\\ \\
 v^\gamma=\nabla^{\perp} \psi_\gamma=\left(\partial_{2} \psi_\gamma,-\partial_{1} \psi_\gamma \right), \quad \psi_\gamma=-\Lambda^{-1+\gamma} \theta , \\ \\
 \theta(\cdot,0)=\theta_0(\cdot),
 \end{array}\right.
 \end{equation*}
 for $\gamma \in (-1,1)$, where the non-local operator $\Lambda^{\alpha}=(-\Delta)^{\frac{\alpha}{2}}$ is defined on the Fourier side by $\widehat{\Lambda^{\alpha} f}(\xi)=|\xi|^{\alpha} \widehat{f}(\xi)$. 
 The PDE is well-posed in the Sobolev spaces $H^s$ with $s>2+\gamma$.\\ \\
 In this paper we prove strong ill-posedness in the super-critical regime $H^\beta$ with $\beta\in [1,2+\gamma)\cap(\frac{3}{2}+\gamma,2+\gamma)$. To do this, we will derive an approximated PDE solvable by some family of functions that we will call pseudosolutions and that will allow us to control the norms of the real solutions.
 \\ \\
 Using this result and a gluing argument we also prove non-existence of solutions in the same Sobolev spaces. Since the pseudosolution will control the real one, we can build a solution that will be initially in $H^{\beta}$ and will leave it instantaneously. Nevertheless, this solution exists for a long time and remains the only classical solution in a high regularity class.
\end{abstract}

\newpage
\tableofcontents
\newpage
 \section{Introduction}
In the study of fluid dynamics, the Euler equations play a fundamental role in describing the behavior of ideal, incompressible fluids. These equations capture the conservation of mass and momentum, providing information about the dynamics of fluid flow. However, the Euler equations are highly complex and challenging to solve analytically or numerically in many cases. As a result, it is interesting to think about simplified models that capture essential features of fluid motion while being more simple.
\\ \\
One such two-dimensional model that has attracted significant attention in recent years is the surface quasi-geostrophic (SQG) equation. The SQG equation provides a simplified representation of the dynamics of large-scale geophysical flows, such as oceanic and atmospheric currents. While they are derived as a simplified version of the Euler equations (see \cite{sqgEulerRelacion} and \cite{majda}), they retain essential characteristics of the flow, making them a valuable tool for studying some geophysical phenomena.\\ \\
SQG involves a velocity described by a singular integral operator, namely the Riesz transform, which is a Calderón-Zygmund type operator. For the generalized SQG (gSQG) a more general velocity operator is considered, with regularity parameterized by $\gamma$.
\\ \\
Let $\gamma \in [-1,1]$. We define the $\gamma$-gSQG equations as
  \begin{equation}
  \label{gSQG}
 \left\{\begin{array}{l}
 \frac{\partial \theta}{\partial t}+v^\gamma_1 \frac{\partial \theta}{\partial x_1}+v^\gamma_2 \frac{\partial \theta}{\partial x_2} =0 ,\\ \\
 v^\gamma=\nabla^{\perp} \psi_\gamma=\left(\partial_{2} \psi_\gamma,-\partial_{1} \psi_\gamma \right), \quad \psi_\gamma=-\Lambda^{-1+\gamma} \theta , \\ \\
 \theta(\cdot,0)=\theta_0(\cdot),
 \end{array}\right.
 \end{equation}
 where $\theta=\theta(x,t)$ is the scalar unknown, $v^\gamma=v^\gamma(x,t)$ is the velocity of propagation of the magnitude $\theta$ and $\psi_\gamma=\psi_\gamma(x,t)$ is the stream function. Notice that $v^\gamma$ is divergence free, since it is the orthogonal gradient of some function.
 \\ \\
The non-local operator $\Lambda^{\alpha}=(-\Delta)^{\frac{\alpha}{2}}$ is defined on the Fourier side by $\widehat{\Lambda^{\alpha} f}(\xi)=|\xi|^{\alpha} \widehat{f}(\xi)$.
\\ \\
More precisely, for $\gamma \in [-1,1)$, the velocity $v^\gamma$ can be also expressed as
\begin{equation}
    \label{velocidad}
    v^\gamma(x,t):= C(\gamma) P.V.\int_{\mathbb{R}^2} \frac{(x-y)^\perp}{|x-y|^{3+\gamma}} \theta (y,t) dy,
\end{equation}
where $(a,b)^\perp:=(-b,a)$ and 
\begin{align*}
    C(\gamma)=\frac{\Gamma \left(\frac{1+\gamma}{2}\right)}{2^{1-\gamma}\pi\Gamma \left(\frac{1-\gamma}{2}\right)}.
\end{align*}
We define
\begin{align}
\label{kernel}
    K^\gamma(x)=\frac{x^{\perp}}{|x|^{3+\gamma}},
\end{align}
which is the kernel used in (\ref{velocidad}).\\\\
As we may see, in terms of Sobolev norms the velocity will be more singular than the scalar itself for positive gammas ($\gamma>0$) and more regular than the scalar quantity for negative ones ($\gamma<0$). This can be seen on the Fourier side. \\ \\
Some particular values of $\gamma$ are specially interesting, namely:
\begin{itemize}
    \item For $\gamma=-1$, we have 2D Euler equation in the vorticity-stream formulation, where $\theta$ plays the role of the scalar vorticity (see \cite{majda}, Section 2.1).
    \item For $\gamma = 0$, we recover SQG.
    \item For $\gamma=1$, gSQG is the steady-state trivial equation.
\end{itemize}
\vspace{2mm}
Let us summarize the state of the art related to the well/ill-posedness and the local or global existence of generalized SQG and some related family of equations in Sobolev and Hölder spaces.
\begin{itemize}
    \item \textbf{Local well-posedness in Sobolev spaces:} local existence of solutions for $\gamma$-gSQG with $\gamma\in [-1,1)$ holds in $H^s$ for $s>2+\gamma$ (see \cite{existence}). Formation of singularities at positive finite times for initial data in $H^s$ with $s>2+\gamma$ remains an open problem for the range $\gamma \in (-1,1)$. Nevertheless, there are a few rigorous constructions of non-trivial global solutions in $H^s$ (for some $s$ satisfying $s > 2 + \gamma$) in \cite{global1}, \cite{global2}, \cite{global3}, \cite{global4}. For a logarithmic regularization of this equation, local existence holds in borderline spaces $H^{2+\gamma}$ (see \cite{Logarithmic} and \cite{Logarithmic2}). Moreover, regularizing logarithmically up to a high enough power, global existence of modified 2D Euler holds in the critical Sobolev space $H^1$ (see \cite{Logarithmic3}). 
\item \textbf{In terms of $H^s$-norm growth}, in \cite{crecimientoHs} the authors prove that there exist solutions for SQG with initial conditions arbitrarily small that become large after a long period of time. 
\item Attending now to bad behaviours \textbf{in Sobolev spaces, strong ill-posedness} for 2D Euler ($\gamma=-1$) in the critical space $H^1$ is proved in \cite{illposednessEuler}. Moreover, for some low logarithmic regularizations of 2D Euler, strong ill-posedness in $H^1$ is proved in \cite{Logarithmic4}. In \cite{gap} the authors prove an instantaneous gap loss of Sobolev regularity of the  velocity for the 2D incompressible Euler equations in the super-critical regime. For SQG ($\gamma=0$) we can see in \cite{paper} that the equation is strongly ill-posed in the super-critical and critical Sobolev spaces $H^s$ for $s\in (\frac{3}{2},2]$ and the authors prove also non-existence of solutions for $s\in (\frac{3}{2},2)$. The technique used in \cite{paper} is similar to the one we use here and proves the existence of a solution that will leave immediately the space $H^s$ for positive times but will still be the unique solution in $H^{\frac{3}{2}+\gamma}$ for times as long as we want. In \cite{illposednessContinuo} the authors improve the non-existence result constructing it in a compact support set and proving a continuous loss of regularity. Strong ill-posedness of SQG was also proved in the torus in \cite{sobolevcriticosqg}. Finally in \cite{tabla} some strong ill-posedness results are proved for several transport equations. In particular, strong ill-posedness is proved for gSQG with $\gamma>1$.
\item \textbf{Global existence of weak solutions in $L^2$} for SQG has been obtained in \cite{soldebiles} and extended to $\gamma\in(0,1)$ in \cite{existence}. The uniqueness of weak solutions is not known and, in fact, in \cite{nounicidad} a nonuniquness result was proved for solutions of SQG with regularity $\Lambda^{-1} \theta \in C^\sigma_t C^\beta_x$ (where $\frac{1}{2}<\beta <\frac{4}{5}$ and $\sigma <\frac{\beta }{2-\beta }$). Finally, in \cite{nounicidadL2}, \cite{nounicidad1}, \cite{nounicidad2}, \cite{nounicidad3}, \cite{nounicidad4}, \cite{Onsager}, \cite{nounicidad5}, \cite{nounicidad6}, \cite{nounicidad7} the reader can check numerous results concerning the non-uniqueness of weak solutions and the conservation of Hamiltonian systems for weak solutions for the SQG equation.
\item \textbf{Local well-posedness in Hölder spaces:} local existence of solutions in $C^{k,\beta}\cap L^q$ ($k\geq 1$, $\beta\in(0,1)$, $q>1$) was established for SQG in \cite{existencialocalholder} and improved in \cite{existencialocalholder2} dropping the hypothesis $\theta\in L^q$. For $\gamma\in [-1,0)$ the result does hold for $\beta \in [0,1]$ (see the estimates in \cite{noexistenciaholdergsqg}) and it has been improved in \cite{gsqgwellillholder}, establishing well-posedness in $C^{0,\beta}$ for $\beta>1+\gamma$. 
\item \textbf{Strong ill-posedness in Hölder spaces:} despite of the local existence result in $C^{k,\beta}$ ($k\geq 1$, $\beta\in(0,1)$, $q>1$) for SQG  (\cite{existencialocalholder} and \cite{existencialocalholder2}), it is shown in \cite{paper} that the result is not valid for $\beta=0$ since the authors prove strong ill-posedness and non-existence of solutions. In \cite{gsqgwellillholder} the authors prove strong ill-posedness for gSQG with $\gamma\in [-1,0)$ in the critical Hölder space $C^{0,1+\gamma}$. When the velocity is more singular than the active scalar function (i.e. $\gamma \in (0,1)$), two of the authors of this paper establish strong ill-posedness in $C^{k,\beta}$ ($k\geq 1, \beta\in(0,1]$ and $k+\beta>1+\gamma$) in \cite{diegoluis}. For the particular case of 2D Euler ($\gamma=-1$) see \cite{noexistenciaholdereuler} and \cite{bourgainli} where it is proved an ill-posedness and nonexistence result for velocities $v\in C^k$. Finally, in \cite{crecimientoSQG} it is proved an exponential lower bound on time of the $C^2$ norm of the solution, which proves that either one of the second derivatives of the solution grows infinite in time or that blows up in finite time.
\end{itemize}
Finally we prove strong ill-posedness and non-existence of solutions for gSQG when $\gamma \in (-1,1)$ in the Sobolev spaces $H^s$ with $s \in [1,2+\gamma)\cap (\frac{3}{2}+\gamma,2+\gamma)$. In fact the result for $\gamma=0$ is already done in \cite{paper} and we extend it to the rest of the  interval.\\\\
Notice that ill-posedness can be presented in several forms: non uniqueness of solutions in the  considered space, lack of continuity with respect of the initial conditions or norm inflation (strong ill-posedness). In the spaces that we consider in this paper the solutions are unique. We prove strong ill-posedness and, using this, we also demonstrate non-existence of solutions. Furthermore, although this is not the focus of the paper, one can find solutions that are initially arbitrarily close in $H^{\beta}$ to our solution (that leaves $H^{\beta}$ instantly) that remain bounded in $H^{\beta}$ for some long time, obtaining, in particular, lack of continuity with respect to the initial conditions around our solutions.

 \subsection{Notation}
 \begin{itemize}
    \item We denote the ball of radio $r>0$ and center $(x_0,y_0)\in \mathbb{R}^2$ as $$B_r(x_0,y_0):=\{(x,y)\in\mathbb{R}^2: (x-x_0)^2+(y-y_0)^2<r^2\}.$$  
    \item Whenever we do not specify the space where a function is defined, we mean $\mathbb{R}^2$. For example, $H^s:=H^s(\mathbb{R}^2)$
    \item If $s\in \mathbb{N}$ and $\Omega \subset \mathbb{R}^2$, we say that $f\in H^s(\Omega)$ with norm $ \|  f  \| _{H^s(\Omega)}$ if
    \[
     \|  f  \| _{H^s(\Omega)}:=\sum_{i=0}^s\sum_{j=0}^i  \|  \frac{\partial^i f}{\partial^{j}x_1 \partial^{i-j}x_2}  \| _{L^2(\Omega)} <\infty.
    \]
    The homogeneous norm comes defined by 
    \begin{align*}
        \|  f  \| _{\Dot{H}^s(\Omega)}:=\sum_{j=0}^s  \|  \frac{\partial^s f}{\partial^{j}x_1 \partial^{s-j}x_2}  \| _{L^2(\Omega)}.
    \end{align*}
     \item For $s>0$ a non-integer, we do the analogous definition with the norm
    \[
     \|  f  \| _{H^s(\Omega)}:= \|f\|_{L^2}+\|f\|_{\Dot{H}^s(\Omega)}, \hspace{4mm} \|f\|_{\Dot{H}^s(\Omega)}:= \|\mathcal{F}^{-1}(|\xi|^s\mathcal{F}f)\|_{L^2(\Omega)},
    \]
    where $\mathcal{F}$ is the Fourier transform. Notice that this is not the standard definition of the fractional Sobolev norm in domains. Nevertheless, we will use this one for technical reasons. In the case $\Omega=\mathbb{R}^2$, we recover a norm equivalent to the standard one.
        \item For $\bold{j}=(j_1,j_2) \in \mathbb{N}_0^2$, let us denote $\partial^\bold{j} f=\frac{\partial^{|\bold{j}|}f}{\partial^{j_1}x_1\partial^{j_2}x_2}$ with $|\bold{j}|=j_1+j_2$.
      \item We will use the symbol $\Lambda^\alpha$ to denote the fractional laplacian, more specifically $\Lambda^\alpha=(-\Delta)^\frac{\alpha}{2}$. Moreover, we will also use the symbols $D^{s,\bold{j}}$ with $s\geq0$, $\bold{j}\in \mathbb{N}_0^2$ to denote differential operators defined on the Fourier side as
    \begin{equation*}
                \widehat{D^{s,\bold{j}}f} ( \xi )= i^{-|\bold{j}|} \partial_{\xi}^{\bold{j}} (|\xi|^s)\hat{f}(\xi).
    \end{equation*}
    Notice that $D^s:=D^{s,0}$ coincides with $\Lambda^s$.
    \item Eventually, we will use polar coordinates, denoting with $\alpha$ the angular coordinate and with $r$ the radial one. Sometimes we will abuse of the notation writing $f(r,\alpha)$ for $f$ a function defined in cartesian coordinates. What we really mean by $f(r,\alpha)$ is $f(x_1(r,\alpha),x_2(r,\alpha))$.
    \item We will denote the characteristic function in $A\subset\mathbb{R}^2$ with the symbol 
    \begin{align*}
        \mathbbm{1}_A(x)=
         \left\{ \begin{array}{lcc} 
    1 & if & x\in A ,
    \\ \\ 0 & if & x\notin A .
    \end{array} \right.
    \end{align*}
    \item When we use the notation $P.V.$ we mean that we take limits when $\varepsilon \rightarrow 0^+$ of an integral over the hole space without the ball of radio $\varepsilon>0$ centered on a singularity, independently of whether we are in cartesian or polar coordinates. This is, if $K$ is a kernel with a singularity at the origin,
    \begin{align*}
        P.V. \int_{\mathbb{R}^2}K(x-y)f(y)dy:=\lim_{\varepsilon\rightarrow 0^+} \int_{|x-y|>\varepsilon}K(x-y)f(y)dy,
    \end{align*}
    and hence in polar coordinates, 
    \begin{align*}
        &P.V. \int_{0}^\infty\int_{-\pi}^\pi K(r\cos(\alpha)-r'\cos(\alpha'),r\sin(\alpha)-r'\sin(\alpha'))f(r'\cos(\alpha'),r'\sin(\alpha'))d\alpha' r' dr'
        \\
        &:=\lim_{\varepsilon\rightarrow 0^+}\int \int_{A_{\varepsilon,r,\alpha}} 
        K(r\cos(\alpha)-r'\cos(\alpha'),r\sin(\alpha)-r'\sin(\alpha'))f(r'\cos(\alpha'),r'\sin(\alpha'))d\alpha' r'dr',
    \end{align*}
    where $x=(r\cos(\alpha),r\sin(\alpha))$, $y=(r'\cos(\alpha'),r'\sin(\alpha'))$ and 
    \begin{align*}
        A_{\varepsilon,r,\alpha}=\{(r',\alpha')\in \mathbb{R}_+\times[-\pi,\pi]:|(r-r')^2+2rr'(1-\cos(\alpha-\alpha'))|>\varepsilon^2\}.
    \end{align*}
    \item We will use $\lfloor \cdot  \rfloor$ to denote the floor function. 
    \item For $A,B\subset \mathbb{R}^2$, we will define $A+B:=\{x+y\in \mathbb{R}^2:x\in A, y \in B\}$.
\end{itemize}
\subsection{Main results}
\begin{thm}[Strong ill-posedness of gSQG in supercritical $H^\beta$ spaces]
\label{teorema}
For any $T>0$, $0<c_0<1$, $M>1$, $\gamma \in (-1,1)$, $\beta \in [1,2+\gamma)\cap (\frac{3}{2}+\gamma,2+\gamma)$ and $t_\ast>0$ as small as we want, we can find a $H^{\beta+\frac{1}{2}}$ function $\theta_0$ with $  \|  \theta_0  \|  _{H^{\beta}}\leq c_0$ such that the unique solution $\theta (x,t)$ in $H^{\beta+\frac{1}{2}}$ for $t\in[0,T]$ to the $\gamma$-gSQG equation (\ref{gSQG}) with initial conditions $\theta_0$ is such that $  \|  \theta(x,t_\ast)  \|  _{H^\beta}\geq Mc_0$. Moreover, along its time of existence the solution is $C^\infty_c$ and $supp(\theta)\subset supp(\theta_0)+B_{2c_0T}(0)$.
\end{thm}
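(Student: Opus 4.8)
The plan is to follow the pseudosolution scheme of \cite{paper} (which treats $\gamma=0$) and push it through for every $\gamma\in(-1,1)$; the two inequalities $\beta<2+\gamma$ and $\beta>\tfrac32+\gamma$ will enter in two different, essential ways. A radial profile $\bar\theta$ produces through $v^\gamma=\nabla^\perp\Lambda^{-1+\gamma}\bar\theta$ a purely tangential velocity, i.e.\ a pure differential rotation $\partial_t f+\Omega(r)\partial_\alpha f=0$ with $\Omega(r)=\tfrac1r\partial_r\Lambda^{-1+\gamma}\bar\theta$; this linear equation is solved explicitly by $f(r,\alpha,t)=f(r,\alpha-\Omega(r)t,0)$, and these are the pseudosolutions. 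I would take $\bar\theta$ to be a positive radial bump of width $\mu$ near a radius $r_0\sim1$, normalised to $\|\bar\theta\|_{H^\beta}\le c_0/10$; the key point is that because $\beta<2+\gamma$, the quantity $\|\partial_r^2\Lambda^{-1+\gamma}\bar\theta\|_{L^\infty}$ is \emph{not} controlled by $\|\bar\theta\|_{H^\beta}$, so one may force $|\Omega'(r)|\sim S:=c_0\mu^{\beta-2-\gamma}$ on an annulus inside the support of $\bar\theta$, with $S\to\infty$ as $\mu\to0$. As datum I would use $\theta_0=\bar\theta+\phi_0$, with $\phi_0$ a ``bubble'' of amplitude $b\sim c_0N^{-\beta}\mu^{-1/2}$, radially localised at scale $\mu$ near $r_0$ and of angular frequency $N\sim1/\mu$, so that $\|\theta_0\|_{H^\beta}\le c_0$, $\theta_0\in C_c^\infty\cap H^{\beta+1/2}$ and $\|v^\gamma[\theta_0]\|_{L^\infty}\le2c_0$ (this last using $\beta-\gamma>\tfrac32$); and I would fix $\mu$ by $S\,t_\ast\sim M^{1/\beta}$, i.e.\ $\mu\sim(c_0t_\ast M^{-1/\beta})^{1/(2+\gamma-\beta)}$, which is small exactly when $t_\ast$ is small or $M$ is large.

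Next I would read off the norm inflation of the pseudosolution $\theta_{ps}(r,\alpha,t)=\theta_0(r,\alpha-\Omega(r)t)$: the radial part $\bar\theta$ is unchanged, while the bubble becomes $\phi_0(r,\alpha-\Omega(r)t)$, which now oscillates in $r$ at frequency $\Lambda_t:=NSt$, so a direct computation in polar coordinates (handling the non-integer $\beta$ by Littlewood--Paley) gives $\|\theta_{ps}(t)\|_{\dot H^\beta}\gtrsim b\sqrt\mu\,\Lambda_t^{\beta}\sim c_0(St)^\beta$, with this $\dot H^\beta$-mass concentrated in the shell $|\xi|\sim\Lambda_t$; at $t=t_\ast$ this is $\gtrsim Mc_0$, and $\|\theta_{ps}(t_\ast)\|_{H^{\beta+1/2}}\lesssim\Lambda_{t_\ast}^{1/2}\|\theta_{ps}(t_\ast)\|_{H^\beta}$.

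Then I would compare the true solution with $\theta_{ps}$. Since $\beta+\tfrac12>2+\gamma$, by \cite{existence} there is a unique local solution $\theta\in C([0,\tau);H^{\beta+1/2})$, which by propagation of regularity is $C_c^\infty$ on its lifespan with $\mathrm{supp}\,\theta(t)\subset\mathrm{supp}\,\theta_0+B_{\|v^\gamma[\theta(t)]\|_{L^\infty}t}(0)$. Writing $w=\theta-\theta_{ps}$ and $\phi_{ps}=\theta_{ps}-\bar\theta$,
\begin{equation*}
\partial_t w+v^\gamma[\theta]\cdot\nabla w=-\big(v^\gamma[w]+v^\gamma[\phi_{ps}]\big)\cdot\nabla\theta_{ps}.
\end{equation*}
A commutator energy estimate in $H^{\beta+1/2}$ --- of the type underlying the well-posedness theory, valid because $\beta+\tfrac12>2+\gamma$ --- gives $\frac{d}{dt}\|w\|_{H^{\beta+1/2}}\lesssim (S+1)\|w\|_{H^{\beta+1/2}}+\varepsilon(\mu)\|\theta_{ps}\|_{H^{\beta+1/2}}$ with $\varepsilon(\mu)\to0$ (here using $\tfrac32+\gamma-\beta<0$ to make the forcing subdominant). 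Since $\int_0^T(S+1)\,dt\sim ST=M^{1/\beta}T/t_\ast$ is \emph{independent of $\mu$}, Gr\"onwall on $[0,T]$ gives $\|w(t)\|_{H^{\beta+1/2}}\le\tfrac1{100}\|\theta_{ps}(t)\|_{H^{\beta+1/2}}$ for all $t\in[0,T]$ (and, via the $L^2$-energy balance, $\|w(t)\|_{L^2}$ small, whence $\|v^\gamma[\theta(t)]\|_{L^\infty}\le2c_0$ and $\mathrm{supp}\,\theta\subset\mathrm{supp}\,\theta_0+B_{2c_0T}(0)$). In particular $\|\theta\|_{H^{\beta+1/2}}$ stays finite on $[0,T]$, so the solution exists and is unique there. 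Finally, with $P_{\sim\Lambda}$ the frequency projection onto $|\xi|\sim\Lambda_{t_\ast}$,
\begin{equation*}
\|\theta(t_\ast)\|_{\dot H^\beta}\ge\|P_{\sim\Lambda}\theta_{ps}(t_\ast)\|_{\dot H^\beta}-\Lambda_{t_\ast}^{-1/2}\|w(t_\ast)\|_{H^{\beta+1/2}}\gtrsim Mc_0-\tfrac1{100}Mc_0\gtrsim Mc_0,
\end{equation*}
and renormalising the constants in $\bar\theta,\phi_0$ turns this into $\|\theta(t_\ast)\|_{H^\beta}\ge Mc_0$.

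The hard part is the energy estimate for $w$ in the norm $H^{\beta+1/2}$ (supercritical for $\theta_{ps}$ but subcritical for well-posedness), over the whole interval $[0,T]$: one must (i) prove a Kato--Ponce/commutator bound for $[\Lambda^{\beta+1/2},v^\gamma[\theta]\cdot\nabla]$ with constants that do not degenerate as $\mu\to0$ --- which is why the working regularity must be strictly above the well-posedness threshold, $\beta+\tfrac12>2+\gamma$ --- and, when $\gamma>0$, absorb the derivative loss of $v^\gamma=\nabla^\perp\Lambda^{-1+\gamma}$ using the divergence-free/null structure of the transport nonlinearity; and (ii) show that the self-interaction $v^\gamma[\phi_{ps}]\cdot\nabla\phi_{ps}$ is subdominant to $\|\theta_{ps}\|_{H^{\beta+1/2}}$, which uses both the oscillatory (non-resonant) structure of that product and the smallness of $b$, and is quantitatively exactly where $\beta>\tfrac32+\gamma$ is needed. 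The remaining ingredients --- the explicit shear computation of the second step, the Calder\'on--Zygmund bounds for the kernel $K^\gamma$ of \eqref{kernel}, and the propagation of $C_c^\infty$ regularity together with the finite-speed-of-propagation estimate --- are technical but routine.
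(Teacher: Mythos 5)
Your proposal follows the same high-level strategy as the paper — a pseudosolution built from a shear-producing radial profile and an oscillating perturbation, with the error compared to the true solution in $H^{\beta+\frac12}$ and the growth read off by interpolation — but it departs from the paper's construction in two ways that open genuine gaps.

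\textbf{Overlapping supports.} You place the bubble $\phi_0$ inside the support of the shear-producing profile $\bar\theta$, localising both at radial scale $\mu$ near the same $r_0$, whereas the paper places $f_{1,c,K}$ and $f_{2,c,K}$ on \emph{disjoint} annuli, with the shear generated by $f_1$ felt on the separate annulus where $f_2$ lives. This disjointness is load-bearing: the most dangerous piece of the source term is $v_r^\gamma(\phi_{ps})\,\partial_r f_1$, and in Lemma~\ref{FnormL2} it is controlled precisely because $v_r^\gamma(\phi_{ps})$ is evaluated \emph{away} from $\mathrm{supp}(\phi_{ps})$, where Lemma~\ref{vradial} provides an extra factor $N^{-1}$ (inside the annulus) or $N^{-2}$ (outside) of decay on top of the integration-by-parts gain. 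With overlapping supports one only has the interior bound (\ref{vradialmedio}), which gives $|v_r^\gamma(\phi_{ps})|\lesssim N^{\gamma}\|\phi_{ps}\|_{L^\infty}\ln N$ with no extra decay; multiplied by $\partial_r\bar\theta$ (whose size grows as $\mu\to0$ in your construction), the corresponding contribution to $\|F\|_{H^{\beta+\frac12}}$ is no longer subdominant. You flag this as ``where $\beta>\tfrac32+\gamma$ is needed,'' but that inequality is what makes the error \emph{after} the gain decay, not a replacement for the gain itself; with overlapping supports the exponent counting does not close.

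\textbf{Tied scales and $N$-dependent constants.} The paper fixes $c,K$ (hence $f_1,f_2$ and all their $C^k$ norms, hence the shear magnitude) first, and only then sends $N\to\infty$; the constants $C_{c,K}$ in Corollary~\ref{FnormH} and Lemma~\ref{cotaTheta} are allowed to be large and are killed by the remaining negative power of $N$, and the bootstrap in Lemma~\ref{cotaTheta} is carried out with an $N$-independent Gr\"onwall constant. Your scheme instead sets $N\sim1/\mu$ and $S\sim c_0\mu^{\beta-2-\gamma}$, so $\bar\theta$ itself depends on the frequency: $\|\bar\theta\|_{C^k}\sim c_0\mu^{\beta-\frac12-k}$ blows up, the annulus $(R_0,R_1)$ in Lemmas~\ref{vradial}--\ref{normackvelocidad} shrinks with $\mu$ (and those constants depend on $R_0,R_1$), and the constant in the $H^{\beta+\frac12}$ commutator estimate is no longer just $S$ — it involves quantities like $\|D^1 v_i^\gamma(\theta_{ps})\|_{L^\infty}$, $\|D^{s,\mathbf j}v_i^\gamma(\theta_{ps})\|_{L^\infty}$ which the paper controls uniformly in $N$ only because $f_1$ is fixed (see the step using (\ref{normackvelocidad}) and the interpolation bounds below (\ref{aux})). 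Your claim that the Gr\"onwall factor is $e^{CST}$ with $ST$ independent of $\mu$ is not justified without redoing Lemmas~\ref{vradial}--\ref{cotaTheta} with all constants tracked in $\mu$, and for $\beta$ near $1$ (e.g.\ $\gamma$ near $-\tfrac12$) several of these quantities actually diverge.

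\textbf{Two further points to check.} First, you take $|\Omega'(r)|\sim S$ of a definite size \emph{inside} $\mathrm{supp}(\bar\theta)$; the paper's Lemma~\ref{lemag}/\ref{lemaf1} arranges a definite-sign shear at a point \emph{outside} $\mathrm{supp}(f_1)$ via a scaling limit, and the sign/magnitude of $\partial_r(v_\alpha^\gamma(\bar\theta)/r)$ inside a single concentrated bump does not come for free from $\beta<2+\gamma$ — it can change sign there. Second, on the uniqueness/regularity side, the fractional Kato--Ponce machinery (Lemmas~\ref{leibniz}, \ref{leibnizv}) only applies for $\gamma>0$; for $\gamma<0$ the paper needs the dedicated Lemma~\ref{gammanegativa} and the H\"older/Sobolev embedding arguments in (\ref{gammanegativo1})--(\ref{gammanegativo3}), which your outline does not account for. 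The architecture of your argument is right, but closing it as written requires precisely the disjoint-supports trick and the $c,K$-before-$N$ ordering that the paper uses.
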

\vspace{3mm}
\begin{thm}[Non-existence of gSQG in supercritical $H^\beta$ spaces.]
 \label{nonexistence}
 For any $t_0>0$, $0<c_0<1$, $\gamma \in (-1,1)$ and $\beta \in [1,2+\gamma)\cap (\frac{3}{2}+\gamma,2+\gamma)$, we can find initial conditions $\theta_0$ with $\|\theta_{0}\|_{H^\beta}\leq c_0$ such that there exist a solution $\theta$ to $\gamma$-gSQG (\ref{gSQG}) with $\theta(x,0)=\theta_0(x)$ satisfying $\|\theta(x,t)\|_{H^\beta}=\infty$ for all $t\in (0,t_0]$. Furthermore, it is the only solution with initial conditions $\theta_0$ that satisfies $\theta \in L^\infty([0,t_0],H^{\frac{3}{2}+\gamma})\cap C([0,t_0],C^2_x(K))$ for any compact set $K\subset\mathbb{R}^2$.
 \end{thm}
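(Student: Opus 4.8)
The plan is to deduce Theorem~\ref{nonexistence} from Theorem~\ref{teorema} by gluing together infinitely many well–separated ``building blocks'', following the scheme used for SQG in \cite{paper}.

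\emph{Step 1 (building blocks).} Fix a monotone sequence $t_k\downarrow 0$ in $(0,t_0]$ and positive numbers $\epsilon_k$ with $\sum_k\epsilon_k\le c_0$. For each $k$ apply Theorem~\ref{teorema} with $T=t_0$, with $\epsilon_k$ in the role of $c_0$, with $t_\ast=t_k$, and with amplification factor $M=M_k$ chosen so that $M_k\epsilon_k\ge 2^k$. This yields a $C^\infty_c$ datum $\theta_0^{(k)}$ with $\|\theta_0^{(k)}\|_{H^\beta}\le\epsilon_k$ whose unique $H^{\beta+\frac12}$ evolution $\theta^{(k)}$ satisfies $\|\theta^{(k)}(t_k)\|_{H^\beta}\ge 2^k$ and $\operatorname{supp}\theta^{(k)}(t)\subset\operatorname{supp}\theta_0^{(k)}+B_{2\epsilon_k t_0}(0)$ for $t\in[0,t_0]$. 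Inspecting the proof of Theorem~\ref{teorema} (and, if needed, enlarging the internal frequency $N_k$ of the block, which is free) we also record: (i) the growth of the $\dot H^\beta$ norm is produced by small–scale oscillations localized near the support and, once produced, only gets transported, so $\|\Lambda^\beta\theta^{(k)}(t)\|_{L^2(Q_k)}\ge 2^{k-1}$ for all $t\in[t_k,t_0]$, where $Q_k$ is a fixed small ball containing $\operatorname{supp}\theta^{(k)}(t)$ for every such $t$; and (ii) since $\beta>\tfrac32+\gamma$, the ratio between the $\dot H^\beta$ and $H^{3/2+\gamma}$ sizes of the block behaves like $N_k^{\beta-3/2-\gamma}\to\infty$, so we may in addition impose $\sup_{t\in[0,t_0]}\big(\|\theta^{(k)}(t)\|_{H^{3/2+\gamma}}+\|\theta^{(k)}(t)\|_{L^2}\big)\le 2^{-k}$.

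\emph{Step 2 (gluing).} Choose translations $\tau_k$ so that the sets $Q_k+\tau_k$ are pairwise disjoint and mutually so far apart that, using the decay of the kernel $K^\gamma$ in \eqref{kernel}, the velocity induced by block $j$ on $Q_k+\tau_k$ is a smooth field of norm $\lesssim 2^{-(j+k)}$ in every $C^m$; thus the total ``external'' velocity felt on each $Q_k+\tau_k$ is smooth and arbitrarily small. Define $\theta$ as the limit of the unique $H^{\beta+\frac12}$ solutions of \eqref{gSQG} with the truncated data $\sum_{k\le n}\theta_0^{(k)}(\cdot-\tau_k)$ (these data lie in $H^{\beta+\frac12}$, which is in the well–posedness range since $\beta+\tfrac12>2+\gamma$): by finite speed of propagation the supports stay inside the disjoint sets $Q_k+\tau_k$, on each of which the solution solves \eqref{gSQG} plus a small smooth drift; the mechanism of Theorem~\ref{teorema} is stable under such a perturbation, so it keeps giving, blockwise, the estimates of Step~1. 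These a priori bounds provide existence up to $t_0$ uniformly in $n$, and the solutions converge to a solution $\theta$ with $\theta(\cdot,0)=\theta_0:=\sum_k\theta_0^{(k)}(\cdot-\tau_k)$, $\|\theta_0\|_{H^\beta}\le\sum_k\epsilon_k\le c_0$, and $\theta\in L^\infty([0,t_0];H^{3/2+\gamma})\cap C([0,t_0];C^2_x(K))$ for every compact $K$ (only finitely many blocks ever meet $K$).

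\emph{Step 3 (instantaneous loss of $H^\beta$ and uniqueness).} Because $\|f\|_{\dot H^s(\Omega)}=\|\Lambda^s f\|_{L^2(\Omega)}$ is additive over disjoint domains, for every $t\in(0,t_0]$
\[
\|\theta(t)\|_{\dot H^\beta(\mathbb R^2)}^2\ \ge\ \sum_{k\,:\,t_k\le t}\|\Lambda^\beta\theta(t)\|_{L^2(Q_k+\tau_k)}^2 .
\]
Writing $\theta=\chi_k\theta+(1-\chi_k)\theta$ with $\chi_k\equiv1$ near the support of the $k$-th part, by Step~1(i) one has $\|\Lambda^\beta(\chi_k\theta)(t)\|_{L^2(Q_k+\tau_k)}\ge 2^{k-1}$, while $\|\Lambda^\beta((1-\chi_k)\theta)(t)\|_{L^\infty(Q_k+\tau_k)}\lesssim\|\theta(t)\|_{L^2}\le c_0$ with $|Q_k+\tau_k|$ small, so each summand is $\ge 2^{2k-4}$ for $k$ large; since $\{k:t_k\le t\}$ is cofinite the series diverges, i.e. $\|\theta(t)\|_{H^\beta}=\infty$ for all $t\in(0,t_0]$. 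For uniqueness, let $\tilde\theta$ be any solution with datum $\theta_0$ lying in $L^\infty_tH^{3/2+\gamma}\cap C_tC^2_{x,\mathrm{loc}}$: since $\gamma<1$, $\Lambda^{-1+\gamma}$ is smoothing of positive order $1-\gamma$, hence $v^\gamma=\nabla^\perp\Lambda^{-1+\gamma}\tilde\theta$ is locally $C^1$ (in particular locally Lipschitz), the $H^{3/2+\gamma}$ decay making \eqref{velocidad} and the energy identities rigorous. Comparing $\tilde\theta$ with our $\theta$ (which is in the same class by Steps~1--2) through the standard weak--strong estimate for transport by a Lipschitz velocity — bounding $\tfrac{d}{dt}\|\theta-\tilde\theta\|$ in a suitable negative–order Sobolev norm, the transport term handled by the $C^1$ bound and the difference $v^\gamma[\theta]-v^\gamma[\tilde\theta]$ costing only $\gamma$ derivatives — one concludes $\theta\equiv\tilde\theta$ on $[0,t_0]$ by Gronwall.

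\emph{Main obstacle.} The delicate point is Step~2: one must check that the superposition is compatible with the nonlinearity and non‑locality of \eqref{gSQG}, i.e. that the velocities the blocks induce on one another really are negligible smooth perturbations, and that the ill‑posedness mechanism of Theorem~\ref{teorema} survives the addition of such a perturbation while still producing the localized, persistent $\dot H^\beta$ lower bound of Step~1(i). The rest is either soft (the superposition itself and the divergence of the series in Step~3) or a standard Lipschitz‑velocity uniqueness argument.
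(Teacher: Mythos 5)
Your overall strategy — applying Theorem~\ref{teorema} to construct a sequence of well-separated building blocks, summing translated copies, passing to the limit, and showing the limit instantly leaves $H^\beta$ — is exactly the approach of the paper. However, the proposal has several gaps, one of which is the central technical difficulty and which you yourself flag as ``the main obstacle'' without resolving.

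The real content of Step~2 is the assertion that ``the mechanism of Theorem~\ref{teorema} is stable under such a perturbation, so it keeps giving, blockwise, the estimates of Step~1.'' That is precisely the hard part, and the paper does not simply assert it: the key tool is Lemma~\ref{lemanonexistence}, which shows that if $\theta_1^\gamma$ and $\theta_2^\gamma$ are two compactly supported smooth solutions with separation $R$, then the true solution with datum $\theta_1^\gamma(\cdot,0)+T_R\theta_2^\gamma(\cdot,0)$ stays within $\varepsilon$ of the naive sum $\theta_1^\gamma+T_R\theta_2^\gamma$ in $C([0,T];H^4)$, with quantitative dependence on $R$ through the decay $R^{-2-\gamma}$ of the kernel. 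That lemma is then iterated to build the Cauchy sequence $\theta_{tr,J,\gamma}$ in $C([0,t_0];H^4(K))$ and to transfer the blockwise lower bounds from $\overline\theta_{tr,J,\gamma}$ to $\theta_{tr,J,\gamma}$. Without a version of this lemma, the bootstrap in Step~2 is unjustified: the blocks do interact nonlocally, and ``finite speed of propagation'' alone does not control the nonlocal velocity feedback.

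Two further points are softer but still need care. First, the inequality $\|\Lambda^\beta((1-\chi_k)\theta)\|_{L^\infty(Q_k+\tau_k)}\lesssim\|\theta\|_{L^2}$ is not a one-liner: $\Lambda^\beta$ acting on the far field requires both an $L^1$-type bound on the tail and a quantified separation, which is exactly what the paper's Lemma~\ref{fractionalsobolev} and Remark~\ref{outsupport} encode. These are genuine fractional-Sobolev interaction estimates, not bookkeeping. Second, the uniqueness argument in Step~3 is under-specified: $v^\gamma=\nabla^\perp\Lambda^{-1+\gamma}\theta$ is solution-dependent, and for $\gamma>0$ the operator $\nabla v^\gamma$ costs $1+\gamma$ derivatives on $\theta$, so $\theta\in H^{3/2+\gamma}$ does not by itself give a locally Lipschitz velocity in a way that closes a Gronwall estimate for the difference; the paper instead localizes with $\mathbbm{1}_{B_{t_0v_{\max}+2^{-j}}(-R_j,0)}$, uses the oddness of $v^\gamma$ together with Lemma~\ref{leibnizv} (for $\gamma>0$) on the commutator term, and controls the cross-block contribution via the kernel decay $(D_j-C)^{-1-\gamma}$. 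The ``standard weak--strong estimate for transport by a Lipschitz velocity'' is not quite what is happening.

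In short: the architecture matches the paper, but Steps~2 and~3 rely on unproven stability and superadditivity claims that are precisely the hard lemmas of the actual proof (Lemma~\ref{lemanonexistence}, Lemma~\ref{fractionalsobolev}, Remark~\ref{outsupport}), and the uniqueness argument needs to be replaced by the paper's localized energy argument adapted to the nonlocal velocity.
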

\vspace{3mm}
\begin{rmk}
    In our construction, $\theta\in L^\infty([0,T],H^{\frac{3}{2}+\gamma})$. This is, we prove that $\theta$ will instantaneously leave $H^\beta$ and also assure that the solution will still be in $H^{\frac{3}{2}+\gamma}$ for every time $[0,T]$. The fact that the solution is constructed ``by hand'' is the key to allow us to control the $H^s$ norm of it for $0\leq s\leq \beta+\frac{1}{2}$.
\end{rmk}
\subsection{Sketch of the proof}
The main idea of the proof is to construct good approximations of a certain family of solutions that are perturbations of a radial one. Let us consider a radial function $f_1(r)$. Notice that it will be a solution of the equation (\ref{gSQG}) for all $\gamma\in [-1,1]$ (every radial function is a solution). Let us perturb it, this is, let us consider $\theta(r,\alpha,t)=f_1(r)+\theta_p(r,\alpha,t)$. Then the perturbation $\theta_p$ will satisfy the PDE
\begin{align*}
    \partial_t \theta_p +\frac{v_\alpha^\gamma(f_1)(r)}{r}\partial_\alpha\theta_p+v_r^\gamma(\theta_p)\partial_rf_1+v^\gamma(\theta_p)\cdot\nabla\theta_p=0.
\end{align*}
If the quadratic term in $\theta_p$ and the term with $v_r^\gamma(\theta_p)$ were small in comparison to the term including the angular derivative (this is better explained in the Remark \ref{remark2}, once we present the pseudosolutions), it would make sense to consider the approximated PDE
\begin{align*}
     \partial_t \overline{\theta}_p +\frac{v_\alpha^\gamma(f_1)(r)}{r}\partial_\alpha\overline{\theta}_p = 0,
\end{align*}
and we expect the pseudosolution $\overline{\theta}_p$ to be similar to the real solution $\theta_p$ for a long time. This PDE can be solved explicitly with the solutions fulfilling
\begin{align*}
    \overline{\theta}_p(r, \alpha, t)= \overline{\theta}_p \left(r,  \alpha-\frac{v_\alpha^\gamma(f_1)(r)}{r}t,0\right),
\end{align*}
with the initial condition $\overline{\theta}_p(r,\alpha,0)$ being a $2\pi$-periodic function in the angular coordinate.\\ \\
We want the perturbation $\overline{\theta}_p(r, \alpha, t)$ to be small in $C^{1+\gamma}$ (so that the approximation we are making is correct) but we want it to start small in $H^{\beta}$ and then become very big (to achieve ill-posedness). A way to get this is considering a perturbation of the form
\begin{align*}
    \overline{\theta}_{p,N}(r, \alpha, t)= N^{-\beta} g \left(r,  N\alpha-N\frac{v_\alpha^\gamma(f_1)(r)}{r}t\right),
\end{align*}
with $N>0$ big. Then, notice that the leading term (in terms of $N$) when we compute the $k$-th derivative with respect to the radial coordinate is something of the form 
\begin{align*}
    N^{k-\beta}(-1)^k\left(\frac{\partial}{\partial r}\frac{v^\gamma_\alpha(f_1)(r)}{r} \right)^k \partial_2^k g\left(r,  N\alpha-N\frac{v_\alpha^\gamma(f_1)(r)}{r}t\right),
\end{align*}
so by interpolation we may see that the term that will make the  $H^\beta$ norm grow is 
\begin{align*}
    \left|\frac{\partial}{\partial r}\frac{v^\gamma_\alpha(f_1)(r)}{r}\right|^\beta.
\end{align*}
To obtain this growth we just have to find a compact supported $f_1$ such that this quantity is big somewhere in the plane far from the support of $f_1$. Then, we place there a compact supported $f_2$ multiplied by $N^{-\beta}$ and a $2\pi$ periodic function depending on the variable $N \left(\alpha-\frac{v_\alpha^\gamma(f_1)(r)}{r}t \right)$ and this is how we can obtain the inflation of the $H^\beta$ norm.\\\\
This motivates us to study the family of pseudosolutions 
\begin{align*}
    \overline{\theta}_N(r,\alpha,t)=f_1(r)+\overline{\theta}_{p,N}(r,\alpha,t)=f_1(r)+f_2(r)N^{-\beta}\sin
    \left(N\alpha-N\frac{v_\alpha^\gamma(f_1)(r)}{r}t
    \right),
\end{align*}
for $N\in \mathbb{N}$. Given $\beta \in [1,2+\gamma)\cap (\frac{3}{2}+\gamma,2+\gamma)$ and $c,\varepsilon>0$ as small as we want, we prove that there exist $f_1$, $f_2$ compactly supported and some annulus $\mathcal{C}\subset \mathbb{R}^2$ such that,
\begin{itemize} 
    \item $\|f_1\|_{H^\beta}\leq c$, $\|f_2\|_{L^2}= c$, 
    \item $\left|\frac{\partial}{\partial r}\left(\frac{v_\alpha^\gamma(f_1)(r)}{r}\right)\right|\sim \frac{K}{r}$ as big as we want in $\mathcal{C}$,
    \item $ supp(f_2)\subset \mathcal{C}$, $supp(f_1)\cap supp(f_2)=\emptyset$ and $supp(f_1)\cup supp(f_2)\subset B_\varepsilon (0)$.
\end{itemize}
Notice that as a consequence of the two last points, the derivative of the velocity generated by the radial function $f_1$ will act on $\overline{\theta}_{p,N}$, making its $H^\beta$ norm grow. Summarizing, by the previous conditions,
\begin{itemize}
    \item $\|\overline{\theta}_N(\cdot,0)\|_{H^\beta}\lesssim c$ uniformly on $N$ big enough,
    \item $\|\overline{\theta}_N(\cdot,t)\|_{H^\beta}\gtrsim c K^\beta t^\beta$ uniformly on $N$ big enough.
\end{itemize} 
Furthermore, if we are able to prove
\begin{align*}
    \|\theta_N-\overline{\theta}_N\|_{H^\beta}\rightarrow 0,
\end{align*}
then the real solution $\theta_N$ will be very similar to the pseudosolution for big $N$ and we will obtain 
\begin{itemize}
    \item $\|\theta_N(\cdot,0)\|_{H^\beta}\lesssim c$ for $N$ big,
    \item $\|\theta_N(\cdot,t)\|_{H^\beta}\gtrsim c K^\beta t^\beta$ for $N$ big,
\end{itemize}  
so given $c,t_\ast>0$, we can choose $K>0$ big enough to obtain the strong ill-posedness result.\\ \\
We prove the non-existence result via a gluing argument. To do this we consider initial conditions of the form 
\begin{align*}
        \theta(x,0)=\sum_{j=1}^{\infty} \overline{\theta}_{N_j, c_j,K_j} (x-R_j,0),
\end{align*}
where $R_j\rightarrow \infty$ grows fast to minimize the interaction between the different parts of the solution. Using the strong ill-posedness result,
\begin{itemize}
 \item $c_j>0$ small enough such that
    \begin{align*}
        \| \overline{\theta}_{N_j,c_j,K_j} (x,0)\|_{H^\beta} \leq c_0 2^{-j},
    \end{align*}
    \item and $N_j,K_j$ large enough such that
    \begin{align*}
        \|\overline{\theta}_{N_j,c_j,K_j} (x,t)\|_{H^\beta} \geq t c_0 2^{j}, \hspace{3mm} \forall t \in [0,T],
    \end{align*}
    with $T>0$ independent of $j\in \mathbb{N}$.
\end{itemize}
We can then prove that $\theta$ will exist via a limit argument and that it will be similar to $\sum_{j}\overline{\theta}_{N_j,c_j,K_j} $ in the Sobolev norm $H^\beta$. Thus, we have the desired instantaneous blow-up $\|\theta(\cdot,t)\|_{H^\beta}=\infty$ for $t>0$.
\begin{rmk}
    Although we only need $\|\theta_N-\overline{\theta}_N\|_{H^\beta}\rightarrow 0$ for proving norm inflation, we are just able to propagate the error in a space where we have local existence, so all the bounds of the error $\theta_N-\overline{\theta}_N$ will be done in $H^{\beta+\frac{1}{2}}$. This is due to the fact that a bootstrap argument will be used in the Lemma \ref{cotaTheta} and this only works in a space where local existence holds.
\end{rmk}
\begin{rmk}
\label{remark2}
    Notice that, although we are assuming in the approximation of the PDE that the quadratic terms in $\theta_{p,N}$ are small, the growth in the Sobolev norm is given precisely by the perturbation. Nevertheless, it heuristically makes sense since the behaviour of the $H^\beta$ norm of the different parts of the PDE in terms of $N$ comes given by
    \begin{itemize}
        \item $\|\partial_t \theta_{p,N}\|_{H^\beta}\sim N$,
        \item $\|\frac{v_\alpha^\gamma(f_1)(r)}{r}\partial_\alpha\theta_{p,N}\|_{H^\beta}\sim N$,
        \item $\|v_r^\gamma(\theta_{p,N})\partial_rf_1\|_{H^\beta}\sim N^\gamma$ with $\gamma<1$,
        \item $\|v^\gamma(\theta_{p,N})\cdot\nabla\theta_{p,N}\|_{H^\beta}\sim N^{\gamma+1+\beta-2\beta}=N^{\gamma+1-\beta}$ with $\gamma+1-\beta<0$,
    \end{itemize}
    so we are including in our approximated evolution the biggest terms and neglecting the lower order ones.
    \end{rmk}
    \subsubsection{Differences between the proofs of non-existence of SGQ and generalized SQG.}
    Notice that, although the strategy of the proof for SQG and its generalization are similar, a lot of technical issues come up when the operator velocity is not a Riesz transform.\\ \\ We find the main difficulty when one tries to bound the Sobolev norms of the difference between the pseudosolution and the exact solution. More specifically, in Lemma \ref{FnormL2}, Corollary \ref{FnormH} and Lemmas \ref{cotadiferencia} and \ref{cotaTheta}, the reader may check that bounds of the Hölder and Sobolev norms of $v^\gamma(\overline{\theta}_{N})$ are necessary to complete the results. In order to achieve this, it has been necessary to prove Lemma \ref{vradial} and the bounds (\ref{vangular}), (\ref{vradialmedio}) and (\ref{vangularmedio}), that imply the bounds (\ref{normahsoperados}) and (\ref{normackvelocidad}) of the operator velocity applied to the pseudosolution. \\ \\
    Another aspect that needs a treatment apart is the fact that we cannot use  anymore the Leibniz type bound of Lemmas \ref{leibniz} and \ref{leibnizv} when $\gamma<0$. Then, in several bounds of Lemma \ref{cotaTheta} (where we bound the $H^{\beta+\frac{1}{2}}$ norm of the difference between the pseudosolution and the solution), as in (\ref{gammanegativo1}), (\ref{gammanegativo2}) or (\ref{gammanegativo3}), we must overcome this using different tools.
 \subsection{Outline of the paper}
 In subsection \ref{lemastecnicos}, we first prove some technical lemmas in which we show that it is possible to find the radial functions $f_1,f_2$ with the desired conditions, as well as some bounds for the velocity. In subsection \ref{seccionpseudosol} we prove that the pseudo-solutions will be similar in $H^s$ to the real solutions. Using all of the above, in subsection \ref{prueba} we prove strong ill-posedness. Finally in Section \ref{seccion3} we use the strong ill-posedness result to prove non-existence of solutions via a gluing argument. 
 \section{Strong ill-posedness}
 \label{seccion2}
During this whole chapter we will work to prove strong ill-posedness of the generalized SQG (gSQG) in $H^\beta$ for $\beta \in [1,2+\gamma)\cap (\frac{3}{2}+\gamma,2+\gamma)$.
\subsection{Technical lemmas}
\label{lemastecnicos}
 We remember the expression of the velocity (already defined in (\ref{velocidad}))
\begin{align*}
     v^\gamma(\theta)(x,t):= C(\gamma) P.V.\int_{\mathbb{R}^2} \frac{(x-y)^\perp}{|x-y|^{3+\gamma}} \theta (y,t) dy,
\end{align*}
where $\gamma\in [-1,1)$. We consider the angular component of it, 
\begin{equation*}
v_\alpha^\gamma (\theta)(x,t)=C(\gamma) P.V.\int_{\mathbb{R}^2} \hat{x}^\perp \cdot\frac{(x-y)^\perp}{|x-y|^{3+\gamma}} \theta (y,t) dy,
\end{equation*}
 that, in the case of a function of the form $g_N(r,\alpha)=\tilde{g}_N(r)\sin{(N\alpha-N\alpha_0(r))}$ with $\tilde{g}_N$ and $\alpha_0$ regular enough, can be expressed in polar coordinates as follows
 \begin{align*}
&v_\alpha^\gamma (g_N)(r,\alpha)=\\
&= C(\gamma) P.V. \int_{\mathbb{R}_+}\int_{-\pi}^\pi r' \frac{\sin(\alpha)(r\sin(\alpha)-r'\sin(\alpha'))+\cos(\alpha)(r\cos(\alpha)-r'\cos(\alpha'))}{|(r\cos(\alpha)-r'\cos(\alpha'))^2+(r\sin(\alpha)-r'\sin(\alpha'))^2|^{\frac{3+\gamma}{2}}}g_N(r',\alpha')dr'd\alpha'\\
&= C(\gamma) P.V. \int_{\mathbb{R}_+ \times [-\pi,\pi]} r' \frac{r-r'(\sin(\alpha)\sin(\alpha')+\cos(\alpha)\cos(\alpha'))}{|(r-r')^2+2rr'(1-\cos(\alpha-\alpha'))|^{\frac{3+\gamma}{2}}}g_N(r',\alpha')dr'd\alpha'\\
&= C(\gamma) P.V. \int_{-r}^\infty \int_{-\pi}^\pi (r+h) \frac{r-(r+h)\cos(\alpha-\alpha')}{|h^2+2r(r+h)(1-\cos(\alpha-\alpha'))|^{\frac{3+\gamma}{2}}}g_N(r+h,\alpha')dh d\alpha'\\
&= C(\gamma) P.V. \int_{-r}^\infty \int_{-\pi}^\pi  
\left(
\frac{[(r+h)(1-\cos(\alpha-\alpha'))-h]}{|h^2+2r(r+h)(1-\cos(\alpha-\alpha'))|^{\frac{3+\gamma}{2}}}
\tilde{g}_N(r+h)\mathcal{S}_N(r+h,\alpha')
\right) dh d\alpha'
\\
&= C(\gamma) P.V. \int_{-r}^\infty \tilde{g}_N(r+h) \mathcal{S}_N(r+h,\alpha)
\left(
\int_{-\pi}^\pi \frac{[(r+h)(1-\cos(\alpha'))-h]\cos(N\alpha') }{|h^2+2r(r+h)(1-\cos(\alpha'))|^{\frac{3+\gamma}{2}}}d\alpha'
\right)dh, 
\end{align*}
where $\mathcal{S}_N(\rho,\phi)=\rho\sin(N\phi-N\alpha_0(\rho))$.
\\ \\In the last equality we summed and subtracted $N\alpha$ in the angular component of $\mathcal{S}_N$, we used the trigonometric formula for the sinus of the sum and finally we did the change of variables $\alpha'-\alpha\rightarrow \alpha'$, noticing that the term involving $\sin (N\alpha')$ vanishes when we integrate due to imparity. \\\\
Notice that, if we had a radial function $\theta=\tilde{g}_N(r)$ it would be even simpler. Indeed,
\begin{align*}
    v_\alpha^\gamma (\theta)(x,t)=\hat{x}^\perp \cdot v^\gamma(\theta)(x,t)=C(\gamma) P.V.\int_{\mathbb{R}^2} \hat{x}^\perp\cdot \frac{(x-y)^\perp}{|x-y|^{3+\gamma}} (\theta (y,t)-\theta(x,t)) dy\\
    =C(\gamma) P.V. \int_{0}^\infty \int_{-\pi}^\pi r' \frac{r-r'\cos(\alpha')}{|(r-r')^2+2rr'(1-\cos(\alpha'))|^{\frac{3+\gamma}{2}}}(\tilde{g}_N(r')-\tilde{g}_N(r))dr' d\alpha'.
\end{align*}
On the other hand, the radial component of the velocity is
\begin{equation*}
    v_r^\gamma (\theta)(x,t)=C(\gamma)  P.V.\int_{\mathbb{R}^2} \hat{x} \cdot\frac{(x-y)^\perp}{|x-y|^{3+\gamma}} \theta (y,t) dy,
\end{equation*}
that, in the case of a function of the form $g_N(r,\alpha)=\tilde{g}_N(r)\sin{(N\alpha-N\alpha_0(r))}$, can be expressed in polar coordinates as follows:
\begin{align*}
         &v_r^\gamma (g_N)(r,\alpha)=\\&
         = C(\gamma)  P.V.\int_{\mathbb{R}^+\times [-\pi,\pi]}(r')^2\frac{(\cos(\alpha)\sin(\alpha')-\sin(\alpha)\cos(\alpha'))\sin(N\alpha'-N\alpha_0(r'))\tilde{g}_N(r')}{|(r\cos(\alpha)-r'\cos(\alpha'))^2+(r\sin(\alpha)-r'\sin(\alpha'))^2|^{\frac{3+\gamma}{2}}}dr'd\alpha'\\&
         =C(\gamma)  P.V.\int_{\mathbb{R}^+\times [-\pi,\pi]}(r')^2\frac{\sin(\alpha'-\alpha)\sin(N\alpha'-N\alpha_0(r'))\tilde{g}_N(r')}{|(r-r')^2+2rr'(1-\cos(\alpha-\alpha'))|^{\frac{3+\gamma}{2}}}dr'd\alpha'\\&
         =C(\gamma)  P.V.\int_{\mathbb{R}^+\times [-\pi,\pi]}(r')^2\frac{\sin(\alpha'-\alpha)\sin(N\alpha'-N\alpha)\cos(N\alpha-N\alpha_0(r'))\tilde{g}_N(r')}{|(r-r')^2+2rr'(1-\cos(\alpha-\alpha'))|^{\frac{3+\gamma}{2}}}dr'd\alpha'\\&
        =C(\gamma) P.V. \int_{-r}^\infty\tilde{g}_N(r+h)\mathcal{C}_N(r+h,\alpha)\int_{ -\pi}^\pi\frac{\sin(\alpha')\sin(N\alpha')}{|h^2+2r(r+h)(1-\cos(\alpha'))|^{\frac{3+\gamma}{2}}}d\alpha'dh.
\end{align*}
where $\mathcal{C}_N(\rho,\phi)=\rho^2\cos(N\phi-N\alpha_0(\rho))$.\\ \\
Hence, summarizing, for $\gamma\in [-1,1)$ and $g_N(r,\alpha)=\tilde{g}_N(r)\sin{(N\alpha-N\alpha_0(r))}$ with $\tilde{g}_N$ regular enough,
\begin{align}
 \label{velocidadangular2}
    &v_\alpha^\gamma (g_N)(r,\alpha)=C(\gamma)P.V. \int_{-r}^\infty \tilde{g}_N(r+h) \mathcal{S}_N(r+h,\alpha)
\int_{-\pi}^\pi \frac{[(r+h)(1-\cos(\alpha'))-h]\cos(N\alpha') }{|h^2+2r(r+h)(1-\cos(\alpha'))|^{\frac{3+\gamma}{2}}}d\alpha'dh, \\
 \label{velocidadangular}
    &v_\alpha^\gamma (\tilde{g}_N)(r)=C(\gamma)P.V. \int_{0}^\infty \int_{-\pi}^\pi r' \frac{r-r'\cos(\alpha')}{|(r-r')^2+2rr'(1-\cos(\alpha'))|^{\frac{3+\gamma}{2}}}(\tilde{g}_N(r')-\tilde{g}_N(r))d\alpha' dr',\\
    \label{velocidadradial}
   & v_r^\gamma (g_N)(r,\alpha)
=C(\gamma)P.V.  \int_{-r}^\infty\tilde{g}_N(r+h)\mathcal{C}_N(r+h,\alpha)\int_{-\pi}^\pi\frac{\sin(\alpha')\sin(N\alpha')}{|h^2+2r(r+h)(1-\cos(\alpha'))|^{\frac{3+\gamma}{2}}}d\alpha'dh,
\end{align}
where $\mathcal{S}_N(\rho,\phi)=\rho\sin(N\phi-N\alpha_0(\rho))$ and  $\mathcal{C}_N(\rho,\phi)=\rho^2\cos(N\phi-N\alpha_0(\rho))$.\\\\
Once stated those important expressions, let us start with the following technical lemmas.
\vspace{3mm}
\begin{lma}
\label{dilations}
Let $\gamma \in [-1,1)$, let $f:\mathbb{R}^2 \rightarrow \mathbb{R}$ be a radial $C^\infty$ function (without relabelling its radial expression $f(r)=f(|x|)=f(x_1,x_2)$), and let $\lambda>0$. Then
\[
v^\gamma_\alpha (f(\lambda \cdot))\left(\frac{r}{\lambda}\right)=\lambda^{\gamma} 
 v^\gamma_\alpha(f(\cdot))(r),
\]
and
\[
\frac{\partial v^\gamma_\alpha (f(\lambda \cdot)) }{\partial r} \left(\frac{r}{\lambda}\right)= \lambda^{1+\gamma} \frac{\partial v_\alpha^\gamma (f(\cdot))}{\partial r}(r),
\]
hold for every $r \in \mathbb{R}_+$.
\begin{proof}
Firstly, the expression of $v_\alpha^\gamma$ in polar coordinates can be written using (\ref{velocidadangular}) as 
\[
v_\alpha^\gamma (f)(r)=C(\gamma)P.V. \int_{\mathbb{R}_+ \times [-\pi,\pi]} r'\frac{r-r' \cos(\alpha')}{|r^2+r'^2-2rr' \cos(\alpha')|^\frac{3+\gamma}{2}}(f(r')-f(r)) d\alpha' d r'.
\]
So  
\[
v_\alpha^\gamma (f(\lambda \cdot))\left(\frac{r}{\lambda}\right)=C(\gamma)P.V. \int_{\mathbb{R}_+ \times [-\pi,\pi]} r'\frac{r/\lambda-r' \cos(\alpha')}{|(r/\lambda)^2+r'^2-2r'r/\lambda \cos(\alpha')|^\frac{3+\gamma}{2}}(f( \lambda r')-f(r)) d\alpha' d r',
\]
and we consider the change of variables $s=\lambda r'$, obtaining
\[
=  C(\gamma) P.V. \int_{\mathbb{R}_+ \times [-\pi,\pi]} \frac{s}{\lambda^2}\frac{r/\lambda-s/\lambda \cos(\alpha')}{|(r/\lambda)^2+(s/\lambda)^2-2(s/\lambda)(r/\lambda) \cos(\alpha')|^\frac{3+\gamma}{2}}(f(s)-f(r)) d\alpha' d s=
\]
\[
=  C(\gamma) P.V. \int_{\mathbb{R}_+ \times [-\pi,\pi]}\lambda^{\gamma} s \frac{r-s \cos(\alpha')}{|r^2+s^2-2sr \cos(\alpha')|^\frac{3+\gamma}{2}}(f(s)-f(r)) d\alpha' d s= \lambda^{\gamma}  v^{\gamma}_\alpha(f(\cdot))(r).
\]
Now, the second result follows from the first as
\[
\frac{\partial v_\alpha^\gamma(f(\lambda \cdot))}{\partial r} \left(\frac{r}{\lambda} \right)=
\left. \frac{\partial v_\alpha^\gamma(f(\lambda \cdot))(s)}{\partial s} \right\vert_{s=\frac{r}{\lambda}}=
\left. \frac{\partial \lambda^{\gamma} v_\alpha^\gamma(f(\cdot))( \lambda s)}{\partial s} \right\vert_{s=\frac{r}{\lambda}}= 
\lambda^{1+\gamma} \frac{\partial v_\alpha^\gamma (f(\cdot))}{\partial r}(r).
\]
\end{proof}
\end{lma}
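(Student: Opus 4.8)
The plan is to reduce both identities to the explicit polar representation of $v_\alpha^\gamma$ acting on a radial profile, i.e.\ formula (\ref{velocidadangular}), and then to exploit the homogeneity of the kernel under a rescaling of the radial integration variable. First I would write, using (\ref{velocidadangular}) with radial profile $f(\lambda\,\cdot)$ evaluated at radius $r/\lambda$,
\[
v_\alpha^\gamma (f(\lambda \cdot))\!\left(\tfrac{r}{\lambda}\right)=C(\gamma)\,P.V.\!\int_{\mathbb{R}_+\times[-\pi,\pi]} r'\,\frac{r/\lambda-r'\cos(\alpha')}{\big|(r/\lambda)^2+r'^2-2(r/\lambda)r'\cos(\alpha')\big|^{\frac{3+\gamma}{2}}}\big(f(\lambda r')-f(r)\big)\,d\alpha'\,dr',
\]
and then perform the substitution $s=\lambda r'$. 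Three scalings occur simultaneously: the Jacobian factor $r'\,dr'$ becomes $\lambda^{-2}s\,ds$; the kernel $\dfrac{\rho-\rho'\cos(\alpha')}{|\rho^2+\rho'^2-2\rho\rho'\cos(\alpha')|^{(3+\gamma)/2}}$, being homogeneous of degree $-2-\gamma$ in $(\rho,\rho')$, picks up a factor $\lambda^{2+\gamma}$ when its arguments are both divided by $\lambda$; and the difference $f(\lambda r')-f(r)$ becomes $f(s)-f(r)$. Collecting powers of $\lambda$ gives $\lambda^{-2}\cdot\lambda^{2+\gamma}=\lambda^{\gamma}$, which is precisely the first claimed identity.

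I would then remark that this change of variables is compatible with the principal value: the excised set defining the $P.V.$ in $v_\alpha^\gamma(f(\lambda\cdot))(r/\lambda)$ is $\{\,|(r/\lambda-r')^2+2(r/\lambda)r'(1-\cos\alpha')|\le \varepsilon^2\,\}$, and under $s=\lambda r'$ this becomes exactly $\{\,|(r-s)^2+2rs(1-\cos\alpha')|\le(\lambda\varepsilon)^2\,\}$, so the cutoff merely rescales and the limit $\varepsilon\to0^+$ is unaffected.

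For the derivative identity I would not differentiate under the integral but instead bootstrap from the first identity. Replacing $r$ by $\lambda s$ in the formula just proved gives $v_\alpha^\gamma(f(\lambda\cdot))(s)=\lambda^{\gamma}\,v_\alpha^\gamma(f(\cdot))(\lambda s)$ for every $s>0$; differentiating in $s$ by the chain rule yields $\partial_s\big(v_\alpha^\gamma(f(\lambda\cdot))(s)\big)=\lambda^{1+\gamma}\,(\partial_r v_\alpha^\gamma(f(\cdot)))(\lambda s)$, and evaluating at $s=r/\lambda$ produces the stated formula.

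The only genuinely delicate point, which I would isolate as a short preliminary remark rather than grind through, is that (\ref{velocidadangular}) really does represent $v_\alpha^\gamma(f)$ for a smooth radial $f$ and that the substitution is legitimate: both rest on the cancellation $f(r')-f(r)=O(|r'-r|)$ near the diagonal, which tames the $|x-y|^{-2-\gamma}$-type singularity enough for the principal value to converge and for the change of variables to commute with the defining limit. Everything beyond this is bookkeeping of homogeneity, so I would not belabour it.
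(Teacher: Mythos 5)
Your proposal is correct and follows essentially the same route as the paper: write $v_\alpha^\gamma$ in polar coordinates via (\ref{velocidadangular}), perform the substitution $s=\lambda r'$, collect the powers of $\lambda$ (Jacobian $\lambda^{-2}$ against kernel homogeneity $\lambda^{2+\gamma}$), and obtain the derivative identity from the first by the chain rule. The extra remarks you add on the rescaling of the principal-value cutoff and on the near-diagonal cancellation $f(r')-f(r)=O(|r'-r|)$ are sound but supplementary; the paper omits them as routine.
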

\begin{lma}
\label{lemag}
    Let $\gamma \in [-1,1)$ and $\varepsilon>0$. There exists a radial $C^\infty$ function $g:\mathbb{R}^2 \rightarrow \mathbb{R}$ compactly supported in $r\in[R_0,R_1]$ for $0<R_0<R_1<\varepsilon$ and  such that 
    \[
    \left|
    \frac{\partial \frac{v_\alpha^\gamma(g)(r)}{r}}{\partial r}(r=1)
    \right|
    >0.
    \]
    \begin{proof}
    First, let us consider a function $h(r)$ smooth and with compact support in some $[a_0,a_1]$, $0<a_0<a_1$ and such that 
    \[
    \int_{0}^\infty sh(s)ds=1.
    \]
    Let $g_\lambda(r)=\lambda^2h(\lambda r)$ and let us compute the following derivative
    \[
    \frac{\partial \frac{v_\alpha^\gamma(g_\lambda)(r)}{r}}{\partial r}(r)= 
  \frac{1}{r} \frac{\partial v_\alpha^\gamma(g_\lambda)(r)}{\partial r}-\frac{1}{r^2} v_\alpha^\gamma(g_\lambda)(r).
    \]
   Using differentiation under the integral sign, we obtain 
    \[
     \frac{1}{r}\frac{\partial v_\alpha^\gamma(g_\lambda)(r)}{\partial r}(r)=C(\gamma)
    \int_{\mathbb{R}_+ \times [-\pi,\pi]} \frac{r'}{r}\partial_r \left( \frac{r-r' \cos(\alpha')}{|r^2+r'^2-2rr' \cos(\alpha')|^\frac{3+\gamma}{2}}\right) g_\lambda(r') d\alpha' d r'
    \]
    \[
    =C(\gamma) \int_{\mathbb{R}_+ \times [-\pi,\pi]} \frac{r'}{r} \left( 
    \frac{1}{|r^2+r'^2-2rr' \cos(\alpha')|^\frac{3+\gamma}{2}}-(3+\gamma)\frac{(r-r' \cos(\alpha'))^2}{|r^2+r'^2-2rr' \cos(\alpha')|^\frac{5+\gamma}{2}} 
    \right) g_\lambda(r') d\alpha' d r'.
    \]
\\ \\
    We have to  subtract $\frac{1}{r^2}v_\alpha^\gamma(g_\lambda)(r)$ to this and check whether we can find $g_\lambda$ in the conditions such that we get a non-zero real number. 
    \begin{align*}
\frac{\partial \frac{v_\alpha^\gamma(g_\lambda)(r)}{r}}{\partial r}(r) 
   = C(\gamma)\int_{\mathbb{R}_+ \times [-\pi,\pi]} \frac{r'}{r} \left( 
    \frac{1}{|r^2+r'^2-2rr' \cos(\alpha')|^\frac{3+\gamma}{2}}-(3+\gamma)\frac{(r-r' \cos(\alpha'))^2}{|r^2+r'^2-2rr' \cos(\alpha')|^\frac{5+\gamma}{2}} 
    \right.\\
    \left.
    -\frac{1}{r}\frac{r-r' \cos(\alpha')}{|r^2+r'^2-2rr' \cos(\alpha')|^\frac{3+\gamma}{2}}
    \right)
    \lambda^2 h(\lambda r') d\alpha' d r',
    \end{align*}
    and now let us compute the following limit:
    \begin{align*}
    \lim_{\lambda \rightarrow \infty} \frac{\partial \frac{v_\alpha^\gamma(g_\lambda)(r)}{r}}{\partial r}(r) 
    =2 \pi C(\gamma)\int_0^{\infty} \frac{r'}{r} 
        \left(
\frac{1}{r^{3+\gamma}}-(3+\gamma)\frac{r^2}{r^{5+\gamma}}-\frac{1}{r^{3+\gamma}}
    \right) h( r')dr'   
    =- \frac{2 \pi (3+\gamma)}{r^{4+\gamma}} .
    \end{align*}  
    So for every compact $K\subset\mathbb{R}_+$ and positive number $\delta>0$, there is a $\lambda > 0$ big enough such that 
    \[
    -\frac{2 \pi (3+\gamma)C(\gamma)}{r^{4+\gamma}}-\delta\leq
    \frac{\partial \frac{v_\alpha^\gamma(g_\lambda)(r)}{r}}{\partial r}(r)\leq-\frac{2 \pi (3+\gamma)C(\gamma)}{r^{4+\gamma}}+\delta, \hspace{3mm} \forall r\in K,
    \]
    so taking $K=[1/2,3/2]$,  $\delta<\pi (3+\gamma)$, and $g=g_\lambda$ with the $\lambda$ associated to $\delta$, then
    \[
     \left|
     \frac{\partial \frac{v_\alpha^\gamma(g_\lambda)(r)}{r}}{\partial r}(1)
     \right| >\pi (3+\gamma)C(\gamma)>0.
    \]
    Finally notice that $g=g_\lambda$ is compactly supported in $[R_0,R_1]$ with $R_i=\frac{a_i}{\lambda}$ for $i=0,1$ and notice also that, since we can do $\lambda$ as big as we choose, the support is as concentrated at the origin as we want.
   \end{proof}
\end{lma}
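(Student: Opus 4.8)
The plan is to take $g$ to be a rescaling of a fixed radial bump, concentrated near the origin, and to evaluate the quantity in the statement in the limit of infinite concentration. Fix a radial $C^\infty$ function $h$ supported in some annulus $\{a_0\le r\le a_1\}$ with $0<a_0<a_1$, normalized by $\int_0^\infty s\,h(s)\,ds=1$ (the natural normalization, since the planar radial density is $r\,dr$), and for $\lambda>0$ set $g_\lambda(r)=\lambda^2 h(\lambda r)$, which is supported in $[a_0/\lambda,a_1/\lambda]$ and satisfies $\int_0^\infty s\,g_\lambda(s)\,ds=1$ for every $\lambda$. As $\lambda\to\infty$ the probability measures $s\,g_\lambda(s)\,ds$ on $(0,\infty)$ converge weakly-$*$ to the Dirac mass at $0$; the whole proof rests on using this to compute $\displaystyle\lim_{\lambda\to\infty}\frac{\partial}{\partial r}\!\left(\frac{v_\alpha^\gamma(g_\lambda)(r)}{r}\right)$ at $r=1$.

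For the computation I would start from the polar expression \eqref{velocidadangular} for $v_\alpha^\gamma$ on radial data. Since $g_\lambda$ lives near the origin while I only evaluate at $r=1$, the kernel singularity (located at $r=r'$, $\alpha=\alpha'$) is never reached, so the principal value is vacuous and differentiation under the integral sign is immediate. Writing
\[
\frac{\partial}{\partial r}\!\left(\frac{v_\alpha^\gamma(g_\lambda)(r)}{r}\right)=\frac{1}{r}\,\partial_r v_\alpha^\gamma(g_\lambda)(r)-\frac{1}{r^2}\,v_\alpha^\gamma(g_\lambda)(r),
\]
and differentiating the kernel $\dfrac{r-r'\cos\alpha'}{|r^2+r'^2-2rr'\cos\alpha'|^{(3+\gamma)/2}}$ in $r$ (which gives $\dfrac{1}{|r^2+r'^2-2rr'\cos\alpha'|^{(3+\gamma)/2}}-(3+\gamma)\dfrac{(r-r'\cos\alpha')^2}{|r^2+r'^2-2rr'\cos\alpha'|^{(5+\gamma)/2}}$), all the integrands that appear are continuous in $(r',\alpha')$ up to $r'=0$, uniformly in $\alpha'\in[-\pi,\pi]$, for $r$ in a compact neighbourhood of $1$. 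Testing the measures $s\,g_\lambda(s)\,ds$ against them, and noting that at $r'=0$ every kernel loses its $\alpha'$-dependence so the $d\alpha'$-integral contributes a factor $2\pi$, I obtain
\[
\lim_{\lambda\to\infty}\frac{\partial}{\partial r}\!\left(\frac{v_\alpha^\gamma(g_\lambda)(r)}{r}\right)=2\pi C(\gamma)\!\left(\frac{1-(3+\gamma)}{r^{4+\gamma}}-\frac{1}{r^{4+\gamma}}\right)=-\frac{2\pi(3+\gamma)C(\gamma)}{r^{4+\gamma}},
\]
locally uniformly in $r$; at $r=1$ this equals $-2\pi(3+\gamma)C(\gamma)\ne0$, since $3+\gamma>0$ and $C(\gamma)>0$ for $\gamma\in(-1,1)$ (and for $\gamma=-1$ one uses the Biot–Savart constant $1/2\pi>0$ in place of $C(\gamma)$, with the same conclusion).

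To finish, I would fix $\delta=\pi(3+\gamma)C(\gamma)$ and choose $\lambda$ large enough that $\bigl|\frac{\partial}{\partial r}(v_\alpha^\gamma(g_\lambda)/r)(1)\bigr|>2\pi(3+\gamma)C(\gamma)-\delta=\pi(3+\gamma)C(\gamma)>0$ and, enlarging $\lambda$ further if necessary, also $a_1/\lambda<\varepsilon$; then $g:=g_\lambda$, $R_0:=a_0/\lambda$, $R_1:=a_1/\lambda$ satisfies all requirements. The only genuinely non-formal point is the passage to the limit $\lambda\to\infty$: because $\|g_\lambda\|_{L^\infty}\sim\lambda^2$ blows up, one cannot dominate crudely and must really use the weak-$*$ convergence of $s\,g_\lambda(s)\,ds$ to $\delta_0$, tested against the kernel and its $r$-derivative, with the uniform-in-$\alpha'$ continuity at $r'=0$ making the $\alpha'$-integral behave. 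An essentially equivalent route is via Lemma \ref{dilations}, which gives $\frac{\partial}{\partial r}(v_\alpha^\gamma(g_\lambda)/r)(1)=\lambda^{4+\gamma}F'(\lambda)$ with $F(x)=v_\alpha^\gamma(h)(x)/x$, reducing everything to the far-field expansion $F(x)=2\pi C(\gamma)x^{-(3+\gamma)}(1+O(1/x))$ as $x\to\infty$ — which is the same concentration computation in disguise.
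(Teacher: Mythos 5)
Your proposal is essentially the paper's argument: the same rescaled bump $g_\lambda(r)=\lambda^2 h(\lambda r)$ with the normalization $\int_0^\infty s\,h(s)\,ds=1$, the same differentiation under the integral sign, and the same $\lambda\to\infty$ limit yielding $-2\pi(3+\gamma)C(\gamma)\,r^{-(4+\gamma)}$, then a large-$\lambda$ choice to close. Your framing of the limit as weak-$*$ convergence of $s\,g_\lambda(s)\,ds$ to $\delta_0$ and the closing remark about Lemma \ref{dilations} are helpful reformulations of the same concentration computation rather than a different route, and your note on the normalization constant at $\gamma=-1$ is a fair observation but does not change the argument.
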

\begin{rmk}
    In the proof of the following lemma we find the restriction $\beta<2+\gamma$.
\end{rmk}
\begin{lma}
\label{lemaf1}
    For any $\beta \in [1,2+\gamma)$, $\gamma \in [-1,1)$ and $c,K,\varepsilon>0$, there exist a $C^\infty$ radial function $f_1: \mathbb{R}^2 \rightarrow \mathbb{R}$ with support in some small crown around the origin (this is $supp(f_1(r)) \subset (a_0,a_1)$ with $0<a_0<a_1<\varepsilon$ depending on $K,c, \beta$) such that 
    \[
       \|  f_1   \|  _{H^\beta}\leq c,
    \]
    and 
    \[
 \left| \frac{\partial \left( \frac{v^\gamma_\alpha (f_1( \cdot))}{r} \right)}{\partial r} \left(r=2a_1\right) \right| \geq \frac{ K}{2a_1}.
    \]
    \begin{proof}
        Let $g\in C^\infty_c$ be given by Lemma \ref{lemag} supported in $r\in[R_0,R_1]\subset (0,\frac{1}{2})$, with Sobolev norm $\|g\|_{H^\beta}<C$ and such that $|\frac{\partial \frac{v_\alpha^\gamma(g)(r)}{r}}{\partial r}(r=1)|=\delta>0$. We define 
        \[
        g_{\lambda_1,\lambda_2}(r)= \frac{g(\lambda_1 r)}{\lambda_2 \lambda_1^{\beta-1}},
        \]
        with $\lambda_1,\lambda_2>1$. Notice that, since $g$ is supported in $[R_0,R_1]$, then $g_{\lambda_1,\lambda_2}$ is supported in $[\frac{R_0}{\lambda_1}, \frac{R_1}{\lambda_1}]$. \\ \\
        The Sobolev norm $H^\beta$ of $g_{\lambda_1,\lambda_2}$ is bounded by
        \[
        \|   g_{\lambda_1,\lambda_2}    \|  _{H^\beta} \leq \frac{   \|  g   \|  _{H^\beta}}{\lambda_2}\leq \frac{C}{\lambda_2}.
        \]
        Using now Lemma \ref{dilations}, 
        \[
        \frac{\partial \frac{v_\alpha^\gamma(g_{\lambda_1,\lambda_2})(r)}{r}}{\partial r}\left(r=\frac{1}{\lambda_1}\right)=
        \lambda_1 \frac{\partial v_\alpha^\gamma(g_{\lambda_1,\lambda_2})(r)}{\partial r}\left(r=\frac{1}{\lambda_1}\right)- \lambda_1^2 v_\alpha^\gamma(g_{\lambda_1,\lambda_2})\left(r=\frac{1}{\lambda_1}\right)
         \]
         \[
         =\frac{\lambda_1 \lambda_1^{1+\gamma}}{\lambda_2 \lambda_1^{\beta-1}}\frac{\partial v_\alpha^\gamma(g)(r)}{\partial r}(r=1)-\frac{\lambda_1^2 \lambda_1^{\gamma}}{\lambda_2\lambda_1^{\beta-1}} v_\alpha^\gamma (g)(r=1)=
         \frac{\lambda_1^{2+\gamma}}{\lambda_2 \lambda_1^{\beta-1}}\frac{\partial \frac{v_\alpha^\gamma(g)(r)}{r}}{\partial r}(r=1)
         \]
         \[
         = \frac{\lambda_1^{2+\gamma}}{\lambda_2 \lambda_1^{\beta-1}} \delta =\frac{\lambda_1^{3-\beta+\gamma}}{\lambda_2}\delta=\frac{\lambda_1^{2-\beta+\gamma}}{\frac{1}{\lambda_1}\lambda_2}\delta.
         \]
         Notice that $1/\lambda_1$ is now the point where we are evaluating the radial derivative of $v_\alpha(g_{\lambda_1,\lambda_2})/r$ and where we want it to be big. It is still far from the support of $g_{\lambda_1,\lambda_2}$ (indeed, $0<\frac{R_0}{\lambda_1}<\frac{R_1}{\lambda_1}<\frac{1}{2\lambda_1}<\frac{1}{\lambda_1}$).
    So now let us make the right choice of the parameters $\lambda_1,\lambda_2>1$. \\ \\
    First of all, we choose $\lambda_2$ big enough such that $C/\lambda_2<c$. Now, we choose $\lambda_1$ big enough such that
    \[
    \frac{\lambda_1^{2-\beta+\gamma}}{\lambda_2}\delta>K,
    \]
    holds. Then defining $a_0=\frac{R_0}{\lambda_1}$ and $a_1=\frac{1}{2\lambda_1}$ we conclude
     \[
 \left| \frac{\partial \left( \frac{v^\gamma_\alpha (g_{\lambda_1,\lambda_2}( \cdot))}{r} \right)}{\partial r} \left(r=2a_1\right) \right| \geq \frac{K}{2a_1}. 
    \]
     \end{proof}
\end{lma}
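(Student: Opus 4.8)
The strategy is a two-parameter rescaling of the "seed" function $g$ produced by Lemma \ref{lemag}. Recall that Lemma \ref{lemag} gives a radial $C^\infty_c$ function $g$, supported in an annulus $[R_0,R_1]\subset(0,1/2)$, with $\|g\|_{H^\beta}<C$ for some fixed constant and with $\bigl|\partial_r\bigl(\tfrac{v^\gamma_\alpha(g)(r)}{r}\bigr)(r=1)\bigr|=\delta>0$. The point is that the two requirements we must meet — smallness of $\|f_1\|_{H^\beta}$ and largeness of the radial derivative of $v^\gamma_\alpha(f_1)/r$ at the reference point — scale differently under a dilation, so with enough room in the parameters we can satisfy both at once. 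Concretely I would set
\[
f_1(r)=g_{\lambda_1,\lambda_2}(r):=\frac{g(\lambda_1 r)}{\lambda_2\,\lambda_1^{\beta-1}},\qquad \lambda_1,\lambda_2>1,
\]
which is supported in $[R_0/\lambda_1,R_1/\lambda_1]$, hence in an arbitrarily small annulus around the origin once $\lambda_1$ is large.

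First I would estimate the Sobolev norm. Since $\dot H^\beta$ scales as $\|g(\lambda_1\cdot)\|_{\dot H^\beta}=\lambda_1^{\beta-1}\|g\|_{\dot H^\beta}$ in two dimensions, and the $L^2$ part scales by a smaller power of $\lambda_1$ (using $\lambda_1>1$), the normalization $\lambda_1^{\beta-1}$ in the denominator exactly absorbs the homogeneous part and makes the lower-order part only smaller, so $\|g_{\lambda_1,\lambda_2}\|_{H^\beta}\le \|g\|_{H^\beta}/\lambda_2\le C/\lambda_2$. (This is where $\beta<2+\gamma$ is not yet needed, but $\beta\ge1$ keeps the power $\lambda_1^{\beta-1}\ge1$ so the rescaling is a genuine shrinking.) Then I would compute the derivative of $v^\gamma_\alpha(f_1)/r$ at the point $r=1/\lambda_1$, using the product-rule expansion
\[
\frac{\partial}{\partial r}\Bigl(\frac{v^\gamma_\alpha(g_{\lambda_1,\lambda_2})(r)}{r}\Bigr)
=\frac{1}{r}\,\partial_r v^\gamma_\alpha(g_{\lambda_1,\lambda_2})(r)-\frac{1}{r^2}\,v^\gamma_\alpha(g_{\lambda_1,\lambda_2})(r),
\]
and the two scaling identities from Lemma \ref{dilations}: $v^\gamma_\alpha(f(\lambda\cdot))(r/\lambda)=\lambda^\gamma v^\gamma_\alpha(f)(r)$ and $\partial_r v^\gamma_\alpha(f(\lambda\cdot))(r/\lambda)=\lambda^{1+\gamma}\partial_r v^\gamma_\alpha(f)(r)$. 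Tracking the constant prefactor $1/(\lambda_2\lambda_1^{\beta-1})$ through both terms, everything collapses to
\[
\frac{\partial}{\partial r}\Bigl(\frac{v^\gamma_\alpha(g_{\lambda_1,\lambda_2})(r)}{r}\Bigr)\Bigl(r=\tfrac1{\lambda_1}\Bigr)
=\frac{\lambda_1^{2+\gamma}}{\lambda_2\lambda_1^{\beta-1}}\,\frac{\partial}{\partial r}\Bigl(\frac{v^\gamma_\alpha(g)(r)}{r}\Bigr)(r=1)
=\frac{\lambda_1^{3-\beta+\gamma}}{\lambda_2}\,\delta .
\]

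Finally I would make the parameter choice in the right order: pick $\lambda_2$ large enough that $C/\lambda_2<c$ (securing the norm bound), and then — crucially, since $3-\beta+\gamma>1>0$ because $\beta<2+\gamma$ — pick $\lambda_1$ large enough that $\lambda_1^{2-\beta+\gamma}\delta/\lambda_2>K$; this is exactly the point at which the hypothesis $\beta<2+\gamma$ is used, guaranteeing the exponent $2-\beta+\gamma$ is positive so that increasing $\lambda_1$ makes the quantity blow up while the support shrinks. Setting $a_0=R_0/\lambda_1$, $a_1=1/(2\lambda_1)$ one has $2a_1=1/\lambda_1$, so the displayed inequality reads $\bigl|\partial_r(v^\gamma_\alpha(f_1)/r)(2a_1)\bigr|=\tfrac{1}{2a_1}\cdot\lambda_1^{2-\beta+\gamma}\delta/\lambda_2\ge K/(2a_1)$, as required, and $a_1=1/(2\lambda_1)<\varepsilon$ for $\lambda_1$ large. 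The only mild subtlety — the "main obstacle" such as it is — is making sure the $L^2$ piece of the $H^\beta$ norm really is dominated after rescaling (i.e. that the $\lambda_1^{\beta-1}$ normalization is not too weak for the lower-order terms); this is handled by $\beta\ge1$, which forces the $L^2$ rescaling factor $\lambda_1^{-1}<\lambda_1^{\beta-1-\beta}$... more simply, $\|g(\lambda_1\cdot)\|_{L^2}=\lambda_1^{-1}\|g\|_{L^2}\le\lambda_1^{\beta-1}\|g\|_{L^2}$ for $\lambda_1\ge1$, $\beta\ge1$, so dividing by $\lambda_2\lambda_1^{\beta-1}$ leaves it bounded by $\|g\|_{L^2}/\lambda_2$. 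Everything else is bookkeeping of scaling exponents.
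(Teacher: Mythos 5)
Your proof is correct and follows essentially the same route as the paper: the same two-parameter rescaling $g_{\lambda_1,\lambda_2}=g(\lambda_1\cdot)/(\lambda_2\lambda_1^{\beta-1})$, the same use of Lemma \ref{dilations} to collapse the derivative at $r=1/\lambda_1$ to $\lambda_1^{3-\beta+\gamma}\delta/\lambda_2$, and the same order of parameter choice ($\lambda_2$ first for the norm, then $\lambda_1$ for the derivative). The only addition is your explicit remark that $\beta\ge 1$ controls the $L^2$ part of the Sobolev norm after rescaling, which the paper leaves implicit; that is a correct and harmless clarification.
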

\begin{lma}
\label{vradial}
Let $\tilde{g}_N:\mathbb{R}_+\rightarrow \mathbb{R}$ be a $C^\infty(\mathbb{R}_+)$ function with support in the interval $(R_0,R_1)$ and let $\alpha_0(r,t) \in C^\infty(\mathbb{R}_+\times[0,\infty))$ be a radial function. Let us define $g_N:\mathbb{R}^2\rightarrow \mathbb{R}$ in polar coordinates as 
\[
g_N(r,\alpha):= \tilde{g}_N(r)\sin{(N\alpha+N \alpha_0(r,t))},
\]
with $N$ a natural number and consider $\gamma\in [-1,1)$. Then there exist a constant $C>0$ depending on $R_0,R_1$ and $\gamma$ such that
\[
|v^{\gamma}_r(g_N)(r,\alpha)|\leq \frac{C  \|   \tilde{g}_N  \|   _\infty}{N^2|r-R_1|^{2+\gamma}} \ln
\left(
e+\frac{r}{(r-R_1)^2}
\right), \hspace{5mm} \forall r>R_1,
\]
and 
\[
|v^{\gamma}_r(g_N)(r,\alpha)|\leq \frac{C  \|   \tilde{g}_N  \|   _\infty}{N(R_0-r)^{2+\gamma}}, \hspace{5mm} \forall 0\leq r<R_0.
\]
Furthermore, we have that if $  \|   \tilde{g}_N  \|   _{C^k}\leq MN^k$, for $k=0,1,...,m$, then 
\[
\left|\frac{\partial^m v^\gamma_{r} (g_N)}{\partial x_1^{m-k} \partial x_2^{k}}(r,\alpha)\right| \leq \frac{CMN^{m-1}}{(R_0-r)^{2+\gamma}}, \hspace{3mm} \forall 0\leq r<R_0,
\]
with $C$ depending on $R_0,R_1,\gamma$ and $m$.
\begin{proof}
For the first inequality, according to the expression (\ref{velocidadradial}) and noticing that principal values are not necessary anymore because we are far from the support, it is enough to find a bound for \[
 \left|
 \int_{-r}^\infty\tilde{g}_N(r+h)\cos(N\alpha-N\alpha_0(r+h))(r+h)^2\int_{ [-\pi,\pi]}\frac{\sin(\alpha')\sin(N\alpha')}{|h^2+2r(r+h)(1-\cos(\alpha'))|^{\frac{3+\gamma}{2}}}d\alpha'dh
 \right|.
\]
for $r\notin [R_0,R_1]$.\\\\
We first integrate in the angular coordinate, so let us fix $h\in \mathbb{R_+}$ such that $|g_N(r+h)|>0$ and bound
\[
 \left| \int_{ [-\pi,\pi]}\frac{\sin(\alpha') \sin(N\alpha')}{|h^2+2r(r+h)(1-\cos(\alpha'))|^{\frac{3+\gamma}{2}}} \, d\alpha' \right|.
\]
We define temporarily
\[
w(\alpha')= \frac{\sin(\alpha') }{|h^2+2r(r+h)(1-\cos(\alpha'))|^{\frac{3+\gamma}{2}}},
\]
and we start the bounding integrating by parts twice, taking into account that all the involved $\alpha'$-dependent function are $2\pi$ periodic and the fact that the denominator is never zero. Hence,
\[
 \left| \int_{ [-\pi,\pi]}\frac{\sin(\alpha') \sin(N\alpha')}{|h^2+2r(r+h)(1-\cos(\alpha'))|^{\frac{3+\gamma}{2}}} \, d\alpha' \right|
 \]
 \begin{align*}
 &= \left| \left[ -\frac{\cos(N \alpha')}{N} w(\alpha') \right]_{-\pi}^{\pi}+ \int_{ [-\pi,\pi]}\frac{\cos(N\alpha')}{N}w'(\alpha') \, d\alpha' \right|
 =\left|\int_{ [-\pi,\pi]}\frac{\cos(N\alpha')}{N}w'(\alpha') \, d\alpha' \right|
 \end{align*}
 \begin{equation}
 \label{partes}
 =
 \left|\int_{-\pi}^{\pi}\frac{\cos(N\alpha')}{N}
\left(
\frac{\cos(\alpha')}{|h^2+2r(r+h)(1-\cos(\alpha'))|^{\frac{3+\gamma}{2}}} -\frac{(3+\gamma)\sin^2(\alpha')r(r+h)}{|h^2+2r(r+h)(1-\cos(\alpha'))|^{\frac{5+\gamma}{2}}}
\right)
d \alpha'\right|
\end{equation}
\begin{align*}
 &=\left|\int_{ [-\pi,\pi]}\frac{\sin(N\alpha')}{N^2}w''(\alpha') \, d\alpha' \right|
 \leq \frac{1}{N^2}\int_{ [-\pi,\pi]}\left|w''(\alpha')\right| \, d\alpha' 
\\
&\leq \frac{1}{N^2} \int_{-\pi}^{\pi}\left|\frac{\partial}{\partial \alpha'}
\left(
\frac{\cos(\alpha')}{|h^2+2r(r+h)(1-\cos(\alpha'))|^{\frac{3+\gamma}{2}}} -\frac{(3+\gamma)\sin^2(\alpha')r(r+h)}{|h^2+2r(r+h)(1-\cos(\alpha'))|^{\frac{5+\gamma}{2}}}
\right)
\right| d \alpha'
\end{align*}
\[
\leq \frac{1}{N^2} \int_{-\pi}^\pi \left| 
-\frac{\sin(\alpha')}{|h^2+2r(r+h)(1-\cos(\alpha'))|^{\frac{3+\gamma}{2}}} -(3+\gamma)\frac{\cos(\alpha')\sin(\alpha')r(r+h)}{|h^2+2r(r+h)(1-\cos(\alpha'))|^{\frac{5+\gamma}{2}}}
\right.
\]
\[
\left.
-\frac{2(3+\gamma)\sin(\alpha')\cos(\alpha')r(r+h)}{|h^2+2r(r+h)(1-\cos(\alpha'))|^{\frac{5+\gamma}{2}}}+\frac{(3+\gamma)(5+\gamma)\sin^3(\alpha')r^2(r+h)^2}{|h^2+2r(r+h)(1-\cos(\alpha'))|^{\frac{7+\gamma}{2}}}
\right| d\alpha'
\]
\[
\leq \frac{1}{N^2} \int_{-\pi}^\pi \left| 
-\frac{\sin(\alpha')}{|h^2+2r(r+h)(1-\cos(\alpha'))|^{\frac{3+\gamma}{2}}} - \frac{3(3+\gamma)\sin(\alpha')\cos(\alpha') r(r+h)}{|h^2+2r(r+h)(1-\cos(\alpha'))|^{\frac{5+\gamma}{2}}}
\right.
\]
\[
\left.
+\frac{(3+\gamma)(5+\gamma)\sin^3(\alpha')r^2(r+h)^2}{|h^2+2r(r+h)(1-\cos(\alpha'))|^{\frac{7+\gamma}{2}}}
\right| d\alpha'.
\]
Now, we realize that there exist $c_1,c_2>0$ such that $|\sin(\alpha')| \leq c_1 |\alpha'|$ and $1-\cos(\alpha')\geq c_2 \alpha'^2 $ for all $\alpha' \in [-\pi,\pi]$, hence
\begin{equation}
\label{cociente}
\frac{|\sqrt{r(r+h)}\sin(\alpha')|}{|h^2+2r(r+h)(1-\cos(\alpha'))|^{\frac{1}{2}}} \leq \frac{|\sqrt{r(r+h)}c_1\alpha'|}{|2r(r+h)c_2\alpha'^2|^{\frac{1}{2}}} \leq C,
\end{equation}
and also
\begin{align*}
    \frac{(3+\gamma)(5+\gamma)\sin^3(\alpha')r^2(r+h)^2}{|h^2+2r(r+h)(1-\cos(\alpha'))|^{\frac{7+\gamma}{2}}}\leq C\frac{r(r+h)|\sin(\alpha')|}{|h^2+2r(r+h)(1-\cos(\alpha'))|^{\frac{5+\gamma}{2}}}.
\end{align*}
Now we continue with the bounding of the desired quantity as
\begin{align*}
 \left| \int_{ [-\pi,\pi]}\frac{\sin(\alpha') \sin(N\alpha')}{|h^2+2r(r+h)(1-\cos(\alpha'))|^{\frac{3+\gamma}{2}}} \, d\alpha' \right|\\
  \leq \frac{C}{N^2} \int_{-\pi}^{\pi} \frac{1}{|h^2+2r(r+h)(1-\cos(\alpha'))|^{\frac{3+\gamma}{2}}}
\left(
1+
\frac{|r(r+h)\sin(\alpha')|}{|h^2+2r(r+h)(1-\cos(\alpha'))|}
\right)
  d \alpha' 
\\
    \leq \frac{2C}{N^2|h|^{3+\gamma}} \int_{0}^{\pi} 
\left(
1+
\frac{c_1r(r+h)\alpha'}{h^2+2r(r+h)c_2\alpha'^2}
\right)
  d \alpha' 
  \leq 
\frac{2C}{N^2|h|^{3+\gamma}} 
\left(
\pi+
\ln
\left(1+\frac{2r(r+h)c_2\pi^2}{h^2}
\right)
\right).
\end{align*}
 Finally, integrating in $h$ and taking into account that the support of $g_N$ is in $(R_0,R_1)$ and as a consequence $r+h\in(R_0,R_1)$ and $ h\in (R_0-r,R_1-r)$, we obtain
\begin{align*}
 &|v_r^\gamma (g_N)(r,\alpha)|
 \\&\leq \int_{-r}^\infty  \frac{C}{N^2} |\mathcal{C}_N(r+h,\alpha)||\tilde{g}_N(r+h)| \frac{1}{|h|^{3+\gamma}}
\left|
\pi+
\ln
\left(1+\frac{2r(r+h)c_2\pi^2}{h^2}
\right)
\right|dh
\\& \leq \frac{C  \|   \tilde{g}_N  \|   _\infty}{N^2} \int_{(R_0-r,R_1-r)} \frac{1}{|h|^{3+\gamma}}
\left(
\pi+
\ln(1+\frac{2c_2 R_1r\pi^2}{h^2})
\right) dh\\&
\leq \frac{C  \|   \tilde{g}_N  \|   _\infty}{N^2|r-R_1|^{2+\gamma}} \ln
\left(
e+\frac{r}{(r-R_1)^2}
\right), \hspace{3mm} \forall r>R_1 .
\end{align*}
The bound for small radii $0<r<R_0$ is done in a simpler way. Indeed repeating the previous process until (\ref{partes}) and using (\ref{cociente}) we have 
\begin{align*}
    \left|
 \int_{-r}^\infty\tilde{g}_N(r+h)\cos(N\alpha-N\alpha_0(r+h))(r+h)^2\int_{ [-\pi,\pi]}\frac{\sin(\alpha')\sin(N\alpha')}{|h^2+2r(r+h)(1-\cos(\alpha'))|^{\frac{3+\gamma}{2}}}d\alpha'dh
 \right|
    \\ \leq \frac{C}{N} 
    \int_{-r}^\infty |\Tilde{g}_N(r+h)|
    \int_{-\pi}^\pi 
    \left|
    \frac{1}{|h^2+2r(r+h)(1-\cos(\alpha'))|^{\frac{3+\gamma}{2}}}
    \right|d\alpha'dh\\
    \leq \frac{C\|\Tilde{g}_N\|_{L^\infty}}{N} \int_{R_0-r}^{R_1-r}\frac{1}{|h|^{3+\gamma}}dh\leq 
    \frac{C\|\Tilde{g}_N\|_{L^\infty}}{N(R_0-r)^{2+\gamma}} .
\end{align*}
For higher derivatives we use the property
\[
v_r^\gamma(f)=\cos(\alpha(x))v_1^\gamma(f)+\sin(\alpha(x))v_2^\gamma(f).
\]
and the previous bound 
\begin{align*}
    |v_r^\gamma(g_N)(r,\alpha)|
\leq \frac{C  \|   \tilde{g}_N  \|   _\infty}{N|R_0-r|^{2+\gamma}}, \hspace{5mm} \forall 0<r<R_0.
\end{align*}
Differentiating $m$ times, applying the chain rule as many times as necessary and taking into account that, although $r^{-k}$ for $k\in \{0,...,n-1\}$ may appear when differentiating in polar coordinates, it will not be a problem because we consider our solution in a fixed annulus, then
\begin{align*}
&\left|\frac{\partial^m v^\gamma_{r} (g_N)}{\partial x_1^{m-k} \partial x_2^{k}}(r,\alpha)\right| \\&
\leq
\left|v^\gamma_{r} \left(\frac{\partial^m g_N}{\partial x_1^{m-k} \partial x_2^{k}}\right)(r,\alpha)\right|
+C\sum_{i=0}^{m-1}\sum_{j=0}^i \left|\frac{\partial^i v^\gamma_{1} (g_N)}{\partial x_1^{i-j} \partial x_2^{j}}(r,\alpha)\right|
+C\sum_{i=0}^{m-1}\sum_{j=0}^i 
\left|\frac{\partial^i v^\gamma_{2} (g_N)}{\partial x_1^{i-j} \partial x_2^{j}}(r,\alpha)\right|
\\&
=
\left|v^\gamma_{r} \left(\frac{\partial^m g_N}{\partial x_1^{m-k} \partial x_2^{k}}\right)(r,\alpha)\right|
+C\sum_{i=0}^{m-1}\sum_{j=0}^i 
\left|v^\gamma_{1}\left(\frac{\partial^i  g_N}{\partial x_1^{i-j} \partial x_2^{j}}\right)(r,\alpha)\right|
+C\sum_{i=0}^{m-1}\sum_{j=0}^i 
\left|v^\gamma_{2}\left(\frac{\partial^i  g_N}{\partial x_1^{i-j} \partial x_2^{j}}\right)(r,\alpha)\right|,
\end{align*}
with $C$ depending on $R_0,R_1$ and $m$.\\ \\
Now, using the previous bound we get 
\[
\left|v^\gamma_{r} \left(\frac{\partial^m g_N}{\partial x_1^{m-k} \partial x_2^{k}}\right)(r,\alpha)\right|\leq
\frac{C  \|   \tilde{g}_N  \|   _{C^m}}{N|r-R_1|^{2+\gamma}}\leq \frac{CMN^{m-1}}{|R_0-r|^{2+\gamma}}.
\]
Also, using
\[
\left|v^\gamma_{1}(f)(x)\right| \leq C\frac{  \|   f  \|   _{L^1}}{d(x,supp(f))^{2+\gamma}},
\]
and $  \|   g_N  \|   _{C^k}\leq MN^k$ for $k=0,1,...,m-1$, then
\[
\left|v^\gamma_{1}\left(\frac{\partial^i  g_N}{\partial x_1^{i-j} \partial x_2^{j}}\right)(r,\alpha)\right| \leq C\frac{ \|   \frac{\partial^i  g_N}{\partial x_1^{i-j} \partial x_2^{j}}  \|   _{L^1}}{|R_0-r|^{2+\gamma}} \leq C\frac{  \|   g_N  \|   _{C^i}}{|R_0-r|^{2+\gamma}}\leq \frac{CMN^i}{|R_0-r|^{2+\gamma}}, \hspace{3mm} \forall 0<r<R_0,
\] 
for all $i=0,1,...,m-1$. 
Doing the same reasoning for the velocity in the second component and putting it all together, we obtain the result.
\end{proof}
\end{lma}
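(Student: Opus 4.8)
The plan is to work directly from the explicit polar representation (\ref{velocidadradial}) of $v^\gamma_r(g_N)$ (with $\alpha_0$ replaced by $-\alpha_0$, which is immaterial for the size estimates), using two structural facts. First, when $r\notin[R_0,R_1]$ the kernel has no singularity on $\mathrm{supp}(g_N)$, so the principal value disappears and we are integrating a smooth function against a compactly supported weight. Second, the inner angular integral carries the oscillatory factor $\sin(N\alpha')$, so each integration by parts in $\alpha'$ gains a power of $1/N$ while the boundary terms cancel by $2\pi$-periodicity. I would therefore reduce everything to estimating, for each admissible $h$ (those with $r+h\in(R_0,R_1)$), the quantity
\[
I(h)=\int_{-\pi}^{\pi}\frac{\sin(\alpha')\sin(N\alpha')}{\bigl|h^2+2r(r+h)(1-\cos\alpha')\bigr|^{\frac{3+\gamma}{2}}}\,d\alpha',
\]
and then integrating $\tilde g_N(r+h)\,\mathcal C_N(r+h,\alpha)\,I(h)$ in $h$ over $(R_0-r,R_1-r)$.

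For the regime $r>R_1$ I would integrate by parts twice, replacing $\sin(N\alpha')$ by $-\tfrac1N\partial_{\alpha'}\cos(N\alpha')$ and then $\cos(N\alpha')$ by $\tfrac1N\partial_{\alpha'}\sin(N\alpha')$; this extracts the factor $N^{-2}$ and leaves $N^{-2}\int_{-\pi}^{\pi}|w''(\alpha')|\,d\alpha'$ with $w(\alpha')=\sin(\alpha')\,\bigl|h^2+2r(r+h)(1-\cos\alpha')\bigr|^{-(3+\gamma)/2}$. Expanding $w''$, each term has a numerator of the shape $r^{a}(r+h)^{a}\sin^{b}(\alpha')$ over a matching power of the denominator, and the elementary inequalities $|\sin\alpha'|\le c_1|\alpha'|$, $1-\cos\alpha'\ge c_2\alpha'^2$ on $[-\pi,\pi]$ give $\sqrt{r(r+h)}\,|\sin\alpha'|\,\bigl|h^2+2r(r+h)(1-\cos\alpha')\bigr|^{-1/2}\le C$; applying this as often as needed collapses $|w''|$ to a pointwise bound by $C\,\bigl|h^2+2r(r+h)(1-\cos\alpha')\bigr|^{-(3+\gamma)/2}\bigl(1+r(r+h)|\sin\alpha'|\,\bigl|h^2+2r(r+h)(1-\cos\alpha')\bigr|^{-1}\bigr)$. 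Bounding the first factor by $|h|^{-(3+\gamma)}$ and using $1-\cos\alpha'\ge c_2\alpha'^2$ in the logarithmic second factor, the $\alpha'$-integral yields $|I(h)|\le C N^{-2}|h|^{-(3+\gamma)}\bigl(\pi+\ln(1+2c_2\pi^2 r(r+h)/h^2)\bigr)$. Integrating in $h\in(R_0-r,R_1-r)$ — where $|h|\ge r-R_1$, $r+h\le R_1$ and $\int_{r-R_1}^{r-R_0}h^{-(3+\gamma)}\,dh\le C|r-R_1|^{-(2+\gamma)}$ — and pulling the logarithm out of the integral with $h\ge r-R_1$, one gets the claimed bound with $\ln\!\bigl(e+r/(r-R_1)^2\bigr)$.

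For the regime $0\le r<R_0$ a single integration by parts already gives the stated $1/N$ bound. Writing $D=h^2+2r(r+h)(1-\cos\alpha')$, after one step one is left with $\tfrac1N\int_{-\pi}^{\pi}\cos(N\alpha')\bigl(\cos\alpha'\,|D|^{-(3+\gamma)/2}-(3+\gamma)\sin^2\alpha'\,r(r+h)\,|D|^{-(5+\gamma)/2}\bigr)\,d\alpha'$; the estimate $\sqrt{r(r+h)}|\sin\alpha'|\,|D|^{-1/2}\le C$ turns the second term into $C|D|^{-(3+\gamma)/2}$, and $\int_{-\pi}^{\pi}|D|^{-(3+\gamma)/2}\,d\alpha'\le 2\pi|h|^{-(3+\gamma)}$; integrating over $h\in(R_0-r,R_1-r)$, now with $h\ge R_0-r$, gives $C\|\tilde g_N\|_\infty N^{-1}(R_0-r)^{-(2+\gamma)}$. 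For the derivative bounds I would use $v^\gamma_r(f)=\cos(\alpha(x))\,v^\gamma_1(f)+\sin(\alpha(x))\,v^\gamma_2(f)$, where $v^\gamma_1,v^\gamma_2$ are convolutions with kernels of size $\lesssim|x|^{-(2+\gamma)}$, hence commute with Cartesian derivatives and satisfy $|v^\gamma_i(f)(x)|\le C\|f\|_{L^1}\,d(x,\mathrm{supp}\,f)^{-(2+\gamma)}$ off the support. Differentiating $m$ times via the Leibniz rule, the factors $\cos\alpha,\sin\alpha$ and their derivatives (which involve $r^{-k}$, bounded on the fixed annulus $0\le r<R_0$) are harmless; the top-order term $\cos\alpha\,v^\gamma_1(\partial^m g_N)+\sin\alpha\,v^\gamma_2(\partial^m g_N)$ I would estimate by decomposing $\partial^m g_N$ into finitely many pieces of the form (radial coefficient)$\times\sin(N\alpha+N\alpha_0+\text{phase})$, the coefficients having sup-norm $\lesssim MN^m$ by the hypothesis $\|\tilde g_N\|_{C^k}\le MN^k$, and applying the $1/N$ bound just proved, which produces $MN^{m-1}\,d^{-(2+\gamma)}$; the remaining lower-order Leibniz terms are $v^\gamma_i(\partial^j g_N)$ with $j<m$, bounded crudely by $\|\partial^j g_N\|_{L^1}\,d^{-(2+\gamma)}\lesssim MN^{j}\,d^{-(2+\gamma)}\le MN^{m-1}\,d^{-(2+\gamma)}$.

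The only genuinely delicate step is the second paragraph: computing $w''(\alpha')$ term by term and verifying that \emph{every} contribution is absorbed by the $|\sin\alpha'|/\sqrt{1-\cos\alpha'}$ estimate down to at most $|h|^{-(3+\gamma)}$ times a single logarithm, and then squeezing the sharp decay $|r-R_1|^{-(2+\gamma)}$ — rather than $|r-R_1|^{-(3+\gamma)}$ — out of the $h$-integral; the range $\gamma\in[-1,1)$ is precisely what keeps all the $\alpha'$- and $h$-integrals finite with the exponents stated. Everything else is routine bookkeeping with the Leibniz rule and the off-support kernel bounds.
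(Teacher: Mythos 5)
Your proposal is correct and follows essentially the same route as the paper's proof: two integrations by parts in $\alpha'$ for $r>R_1$ with the $|w''|$ estimate and the $\sqrt{r(r+h)}|\sin\alpha'|/\sqrt{D}\le C$ absorption, a single integration by parts for $r<R_0$, and for the derivative bounds the decomposition $v_r^\gamma=\cos\alpha\,v_1^\gamma+\sin\alpha\,v_2^\gamma$ combined with the Leibniz rule and the off-support $L^1$ kernel estimate. The one place you are slightly more explicit than the paper is in the top-order term of the $m$-th derivative, where you spell out the rewriting of $\partial^m g_N$ as a finite sum of pieces of the form (radial coefficient)$\times$oscillation with coefficients of size $O(MN^m)$ before invoking the $1/N$ bound — a useful clarification of a step the paper compresses into the shorthand $\|\tilde g_N\|_{C^m}/N$.
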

\begin{lma}
Let $\tilde{g}_N:[0,\infty)\rightarrow \mathbb{R}$ be a $C^\infty$ function with support in the interval $(R_0,R_1)$ and let $\alpha_0(r,t) \in C^\infty(\mathbb{R}_+\times[0,\infty))$ be a radial function. Let us define $g_N:\mathbb{R}^2\rightarrow \mathbb{R}$ in polar coordinates as 
\begin{align*}
g_N(r,\alpha):= \tilde{g}_N(r)\sin{(N\alpha+N \alpha_0(r,t))},
\end{align*}
with $N$ a natural number and consider $\gamma\in [-1,1)$. Then there exist a constant $C>0$ depending on $R_0,R_1$ and $\gamma$ such that if $r>R_1$, then
\begin{align}
    \label{vangular}
|v^{\gamma}_\alpha(g_N)(r,\alpha)|\leq \frac{C  \|   \tilde{g}_N  \|   _{L^\infty}}{N^2(r-R_1)^{2+\gamma}}.
\end{align}
\begin{proof}
    According to \ref{velocidadangular2}, it is enough to find a bound for 
    \begin{align*}
    \left|
        \int_{-r}^\infty \tilde{g}_N(r+h) \mathcal{S}_N(r+h,\alpha)
\int_{-\pi}^\pi \frac{[(r+h)(1-\cos(\alpha'))-h]\cos(N\alpha') }{|h^2+2r(r+h)(1-\cos(\alpha'))|^{\frac{3+\gamma}{2}}}d\alpha'dh 
\right|,
    \end{align*}
    where we removed the principal values due to the fact that we are doing the bound far from the support.\\\\
We pay attention first to the integral in $\alpha'$,
\begin{align*}
    \left|
\int_{-\pi}^\pi w(\alpha')\cos(N\alpha') d\alpha'
\right|,
\end{align*}
where 
\begin{align*}
    w(\alpha')=\frac{[(r+h)(1-\cos(\alpha'))-h]}{|h^2+2r(r+h)(1-\cos(\alpha'))|^{\frac{3+\gamma}{2}}},
\end{align*}
ignoring in the notation the dependence on the radial coordinate, that will be recovered later. Then, choosing $c_1,c_2>0$ such that $|\sin(\alpha')| \leq c_1 |\alpha'|$ and $c_2 \alpha'^2\leq 1-\cos(\alpha')\leq c_1 \alpha'^2 $ for all $\alpha' \in [-\pi,\pi]$ such that we can use the bound (\ref{cociente}), 
\begin{align}
    \label{cotaw''}
    \begin{split}
    |w''(\alpha')|= \left|
\frac{(r+h)\cos(\alpha')}{|h^2+2r(r+h)(1-\cos(\alpha'))|^{\frac{3+\gamma}{2}}}-\frac{3+\gamma}{2}
\frac{4r(r+h)^2(\sin(\alpha'))^2}{|h^2+2r(r+h)(1-\cos(\alpha'))|^{\frac{5+\gamma}{2}}}
\right.\\
\left.
-\frac{3+\gamma}{2}
\frac{2r(r+h)\cos(\alpha')[(r+h)(1-\cos(\alpha'))-h]}{|h^2+2r(r+h)(1-\cos(\alpha'))|^{\frac{5+\gamma}{2}}}
\right.\\
\left.
-\frac{3+\gamma}{2}\frac{5+\gamma}{2}
\frac{4r^2(r+h)^2(\sin(\alpha'))^2[(r+h)(1-\cos(\alpha'))-h]}{|h^2+2r(r+h)(1-\cos(\alpha'))|^{\frac{7+\gamma}{2}}}
\right|
\\ \leq
\frac{C(r+h)}{|h^2+2r(r+h)(1-\cos(\alpha'))|^{\frac{3+\gamma}{2}}}
\left(
1+r\frac{|(r+h)c_1\alpha'^2|+|h|}{|h^2+2r(r+h)c_2\alpha'^2)|}
\right).
\end{split}
\end{align}
Thus, taking into account that $R_0<r+h<R_1$, integrating by parts twice and using (\ref{cotaw''}),
\begin{align*}
     & \left|
\int_{-\pi}^\pi w(\alpha')\cos(N\alpha') d\alpha'
\right|=  
\left|
\left[
w(\alpha')\frac{\sin(N\alpha')}{N}
\right]_{-\pi}^\pi-
\int_{-\pi}^\pi w'(\alpha')\frac{\sin(N\alpha')}{N} d\alpha'
\right|\\&
=\left|\int_{-\pi}^\pi w'(\alpha')\frac{\sin(N\alpha')}{N} d\alpha' 
\right|=
\left|\int_{-\pi}^\pi w''(\alpha')\frac{\cos(N\alpha')}{N^2} d\alpha'
\right|\\
& \leq
C\int_{-\pi}^\pi
\frac{1}{N^2|h|^{3+\gamma}}
\left(
1+r\frac{|(r+h)c_1\alpha'^2|+|h|}{|h^2+2r(r+h)c_2\alpha'^2)|}
\right)
d\alpha'
\\& \leq
C\int_{-\pi}^\pi
\frac{1}{N^2|h|^{3+\gamma}}
\left(
1+\frac{|r(r+h)c_1\alpha'^2|}{|h^2+2r(r+h)c_2\alpha'^2)|}
+\frac{r}{|h|}\frac{1}{(1+\frac{2r(r+h)c_2}{h^2}\alpha'^2)}
\right)
d\alpha'
\\& \leq
C\int_{-\pi}^\pi
\frac{1}{N^2|h|^{3+\gamma}}
\left(
2+\frac{r}{|h|}\frac{1}{(1+\frac{2r(r+h)c_2}{h^2}\alpha'^2)}
\right)d\alpha'
\\&\leq
2C
\frac{1}{N^2|h|^{3+\gamma}}
\left(
\pi+\frac{\sqrt{r}\arctan(\frac{\sqrt{2r(r+h)c_2}}{|h|}\pi)}{\sqrt{2(r+h)c_2}}
\right)
\leq
 \frac{C}{N^2|h|^{3+\gamma}},
\end{align*}
 using in the last bound that $r\sim |h|$ when $r\mapsto \infty$ and that $\frac{\arctan(cy)}{y}\sim c$ when $y\mapsto 0$. Finally, integrating in $h\in (R_0-r,R_1-r)$ we obtain the desired result.
\end{proof}
\end{lma}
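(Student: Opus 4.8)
The plan is to start from the polar representation (\ref{velocidadangular2}) of $v_\alpha^\gamma(g_N)$, which after the substitution built into that formula expresses the quantity as $C(\gamma)\int_{-r}^\infty \tilde g_N(r+h)\,\mathcal{S}_N(r+h,\alpha)\,I(h)\,dh$ with $I(h)=\int_{-\pi}^\pi w(\alpha')\cos(N\alpha')\,d\alpha'$ and $w(\alpha')=\frac{(r+h)(1-\cos\alpha')-h}{|h^2+2r(r+h)(1-\cos\alpha')|^{(3+\gamma)/2}}$ (suppressing the $r,h$ dependence in $w$). Since $r>R_1$ lies strictly outside the support annulus $(R_0,R_1)$, the principal value is unnecessary, the effective range of integration is $h\in(R_0-r,R_1-r)$, so $r+h\in(R_0,R_1)$ is bounded while $|h|\ge r-R_1>0$ and, as $r\to\infty$, $|h|\sim r$. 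In particular the denominator of $w$ never vanishes, so $w$ is smooth and $2\pi$-periodic together with all its derivatives.

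First I would extract the oscillation in $\alpha'$: integrating by parts twice in the $\alpha'$-integral kills the boundary terms by periodicity and yields $I(h)=N^{-2}\int_{-\pi}^\pi w''(\alpha')\cos(N\alpha')\,d\alpha'$, hence $|I(h)|\le N^{-2}\int_{-\pi}^\pi|w''(\alpha')|\,d\alpha'$. The core of the argument is then a pointwise bound on $w''$: differentiating twice produces terms with denominator exponents up to $(7+\gamma)/2$ and numerators carrying powers of $\sin\alpha'$ and factors $r(r+h)$, and using that $|\sin\alpha'|\le c_1|\alpha'|$ and $c_2\alpha'^2\le 1-\cos\alpha'\le c_1\alpha'^2$ on $[-\pi,\pi]$ — so that each factor $\frac{\sqrt{r(r+h)}\,|\sin\alpha'|}{|h^2+2r(r+h)(1-\cos\alpha')|^{1/2}}$ is bounded by a constant, exactly as in (\ref{cociente}) — all such terms collapse to an expression controlled by $\frac{C(r+h)}{|h^2+2r(r+h)(1-\cos\alpha')|^{(3+\gamma)/2}}\bigl(1+r\frac{(r+h)c_1\alpha'^2+|h|}{h^2+2r(r+h)c_2\alpha'^2}\bigr)$. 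Integrating this over $\alpha'\in[-\pi,\pi]$ is elementary: the $1$-term gives $\lesssim|h|^{-(3+\gamma)}$, the $\frac{r(r+h)c_1\alpha'^2}{h^2+2r(r+h)c_2\alpha'^2}$ piece is bounded by a constant, and the $\frac{r|h|}{h^2+2r(r+h)c_2\alpha'^2}$ piece integrates via $\arctan$ to something $\lesssim\sqrt{r/(r+h)}$, which stays bounded since $r\sim|h|$ and $r+h\ge R_0$ as $r\to\infty$ (this is where $\frac{\arctan(cy)}{y}\sim c$ as $y\to0$ is used). Altogether $|I(h)|\le C N^{-2}|h|^{-(3+\gamma)}$ with $C=C(R_0,R_1,\gamma)$.

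Finally I would plug this back, bounding $|\mathcal{S}_N(r+h,\alpha)|\le r+h\le R_1$ and $|\tilde g_N(r+h)|\le\|\tilde g_N\|_{L^\infty}$, to get $|v_\alpha^\gamma(g_N)(r,\alpha)|\le \frac{C\|\tilde g_N\|_{L^\infty}}{N^2}\int_{R_0-r}^{R_1-r}\frac{(r+h)\,dh}{|h|^{3+\gamma}}\le \frac{CR_1\|\tilde g_N\|_{L^\infty}}{N^2}\int_{r-R_1}^{r-R_0}\frac{ds}{s^{3+\gamma}}\le \frac{C\|\tilde g_N\|_{L^\infty}}{N^2(r-R_1)^{2+\gamma}}$, which is the claimed estimate (\ref{vangular}). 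I expect the main obstacle to be the bookkeeping in the $w''$ bound: one must check that after two $\alpha'$-derivatives the worst term — denominator exponent $(7+\gamma)/2$, numerator $\sin^3\alpha'\,r^2(r+h)^2$ — genuinely reduces, via two applications of the $\frac{|\sin\alpha'|}{|h^2+\cdots|^{1/2}}$ estimate, to the same $\frac{r(r+h)|\sin\alpha'|}{|h^2+\cdots|^{(5+\gamma)/2}}$ shape as the lower-order terms, so that nothing with a net power of $|h|$ worse than $|h|^{-(3+\gamma)}$ survives; once that reduction is in place the remaining $\alpha'$- and $h$-integrations are routine.
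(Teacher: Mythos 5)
Your proposal follows the paper's proof essentially verbatim: the same reduction via (\ref{velocidadangular2}) to an oscillatory $\alpha'$-integral, the same double integration by parts, the same bound on $w''$ using $|\sin\alpha'|\le c_1|\alpha'|$ and $c_2\alpha'^2\le 1-\cos\alpha'\le c_1\alpha'^2$, and the same final $h$-integration over $(R_0-r,R_1-r)$. One small imprecision in the write-up, not a gap: the crude bound $\lesssim\sqrt{r/(r+h)}$ (from $\arctan\le\pi/2$) is itself \emph{not} uniformly bounded in $r>R_1$ since $r+h$ stays in $(R_0,R_1)$; what makes the arctan piece uniformly bounded is that for large $r$ the argument $\sqrt{2r(r+h)c_2}\,\pi/|h|\to 0$ (because $|h|\sim r$) so $\arctan(cy)/y\sim c$ compensates the $\sqrt{r}$, while for $r$ near $R_1$ the trivial bound suffices — you invoke the right asymptotic but present $\sqrt{r/(r+h)}$ as if it were already the final uniform bound.
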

\begin{lma}
Let $\gamma\in [-1,1)$, $K_{\gamma}(y)$ be defined by (\ref{kernel}), $\tilde{g}_N:[0,\infty)\rightarrow \mathbb{R}$ be a $C^\infty$ function  with support in the interval $(R_0,R_1)$ and $\alpha_0(r,t) \in C^\infty(\mathbb{R}_+\times[0,\infty))$ be a radial function. If we define $g_N:\mathbb{R}^2\rightarrow \mathbb{R}$ in polar coordinates as 
\[
g_N(r,\alpha):= \tilde{g}_N(r)\sin{(N\alpha+N \alpha_0(r,t))},
\]
for $N$ the natural numbers, then there exist a $C>0$ depending on $R_0,R_1$ and $\gamma$ such that
\begin{align}
\label{vradialmedio}
     \left|
     C(\gamma) \int_{\{1/N<|x-y|<3R_1\}} \hat{x}\cdot K_{\gamma}(x-y) g_N (y,t) dy
    \right|\leq C\|\tilde{g}_N\|_{L^\infty}N^{\gamma}\ln(e+N),\hspace{4mm} \forall x\in \mathbb{R}^2.
\end{align}
\begin{proof}
It is enough to find a bound for 
\begin{align*}
 \left|
\int_{(-1/N,1/N)}\tilde{g}_N(r+h)\mathcal{C}_N(\alpha,r+h)\int_{ [-\pi,\pi]\setminus (-\varepsilon_N,\varepsilon_N)}\frac{\sin(\alpha')\sin(N\alpha')}{|h^2+2r(r+h)(1-\cos(\alpha'))|^{\frac{3+\gamma}{2}}}d\alpha'dh
 \right.
 \\
 \left.
 +
 \int_{(-r,3R_1)\setminus (-1/N,1/N)}\tilde{g}_N(r+h)\mathcal{C}_N(\alpha,r+h)\int_{ [-\pi,\pi]}\frac{\sin(\alpha')\sin(N\alpha')}{|h^2+2r(r+h)(1-\cos(\alpha'))|^{\frac{3+\gamma}{2}}}d\alpha'dh
 \right|,
\end{align*}
where we define $\varepsilon_N$ as
 \begin{align}
 \label{epsilon}
        h^2+2r(r+h)(1-\cos(\varepsilon_N(h))=\frac{1}{N^2}.
    \end{align}
Let us compute the first part of the sum. We are first integrating in the angular coordinate, so let us fix $h\in (-\frac{1}{N},\frac{1}{N})$ such that $|g_N(r+h)|>0$ and bound
\[
 \left| \int_{ [-\pi,\pi]\setminus (-\varepsilon_N,\varepsilon_N)}\frac{\sin(\alpha') \sin(N\alpha')}{|h^2+2r(r+h)(1-\cos(\alpha'))|^{\frac{3+\gamma}{2}}} \, d\alpha' \right|.
\]
Proceeding as in the Lemma \ref{vradial} we define 
\begin{align*}  w(\alpha')=\frac{\sin(\alpha') }{|h^2+2r(r+h)(1-\cos(\alpha'))|^{\frac{3+\gamma}{2}}}.
\end{align*}
 Let $c_1,c_2>0$ be such that $|\sin(\alpha')| \leq c_1 |\alpha'|$, $c_2 \alpha'^2\leq 1-\cos(\alpha')\leq c_1 \alpha'^2 $. Then $w'$ is bounded as follows
\begin{align}
\label{primeraderivada}
\begin{split}
    |w'(\alpha')|=
    \left|
\frac{\cos(\alpha') }{|h^2+2r(r+h)(1-\cos(\alpha'))|^{\frac{3+\gamma}{2}}}
-
\frac{(3+\gamma)r(r+h)\sin(\alpha')^2 }{|h^2+2r(r+h)(1-\cos(\alpha'))|^{\frac{5+\gamma}{2}}}
    \right|
    \\
     \leq 
 \frac{C }{|h^2+2r(r+h)(1-\cos(\alpha'))|^{\frac{3+\gamma}{2}}},
 \end{split}
\end{align}
and for $w''$ it is proved in the Lemma \ref{vradial} that
\begin{align}
\label{segundaderivada}
    |w''(\alpha')|\leq
    \frac{1}{|h^2+2r(r+h)(1-\cos(\alpha'))|^{\frac{3+\gamma}{2}}}
\left(
1+
\frac{|r(r+h)\sin(\alpha')|}{|h^2+2r(r+h)(1-\cos(\alpha'))|}
\right).
\end{align}
In the following calculations we integrate by parts twice. We must take into account that, although the outer parts of the boundary terms vanish because all the functions involved are $2\pi$-periodic, the inner parts do not. To bound the border terms we will use (\ref{primeraderivada}) and (\ref{segundaderivada}) evaluated in $\varepsilon_N$. Using also the expression (\ref{epsilon}) we may say $\varepsilon_N\leq CN^{-1}$. Hence,
\begin{align*}
\left|
 \int_{ [-\pi,\pi]\setminus (-\varepsilon_N,\varepsilon_N)}w(\alpha')\sin(N\alpha') d\alpha' 
 \right|
 \\\leq
  \left|
  2w(\varepsilon_N) \frac{\cos(N\varepsilon_N)}{N}
  \right|+
  \left|\int_{ [-\pi,\pi]\setminus (-\varepsilon_N,\varepsilon_N)}w'(\alpha')\frac{\cos(N\alpha')}{N} d\alpha' 
  \right|
 \\\leq
  C\varepsilon_N(h)N^{2+\gamma}+
  \left|\int_{ [-\pi,\pi]\setminus (-\varepsilon_N,\varepsilon_N)}w'(\alpha')\frac{\cos(N\alpha')}{N} d\alpha' 
  \right|
  \\\leq
  C\varepsilon_N(h)N^{2+\gamma}+   \left|2w'(\varepsilon_N)\frac{\sin(N\varepsilon_N)}{N^2}
   \right|
   +
   \left|
   \int_{ [-\pi,\pi]\setminus (-\varepsilon_N,\varepsilon_N)}w''(\alpha')\frac{\sin(N\alpha')}{N^2} d\alpha' 
   \right|
  \\\leq
  C\varepsilon_N(h)N^{2+\gamma}+
   CN^{1+\gamma}
   +
   \left|
   \int_{ [-\pi,\pi]\setminus (-\varepsilon_N,\varepsilon_N)}w''(\alpha')\frac{\sin(N\alpha')}{N^2} d\alpha' 
   \right|\\
    \leq
   CN^{1+\gamma}
   +
   \frac{1}{N^2}
   \int_{ [-\pi,\pi]\setminus (-\varepsilon_N,\varepsilon_N)}
    \left|w''(\alpha') 
   \right|
   d\alpha' 
   \\
    \leq CN^{1+\gamma}
   +\frac{C}{N^2} \int_{(\varepsilon_N,\pi)} \frac{1}{|h^2+2r(r+h)(1-\cos(\alpha'))|^{\frac{3+\gamma}{2}}}
\left(
1+
\frac{|r(r+h)\sin(\alpha')|}{|h^2+2r(r+h)(1-\cos(\alpha'))|}
\right)
  d \alpha' 
  \\ \leq 
   CN^{1+\gamma}
   +CN^{1+\gamma} \int_{(\varepsilon_N,\pi)}
\frac{r(r+h)\sin(\alpha')}{h^2+2r(r+h)(1-\cos(\alpha'))}
  d \alpha'
  \\
  \leq 
  CN^{1+\gamma}+CN^{1+\gamma}\ln
  \left(
  \frac{h^2+4r(r+h)}{h^2+2r(r+h)(1-\cos(\varepsilon_N))}
  \right)
  \\
  \leq 
  CN^{1+\gamma}+CN^{1+\gamma}\ln
  \left(
  (h^2+4r(r+h))N^2
  \right)
  \\
  \leq
  CN^{1+\gamma} 
  \ln(e+N),
\end{align*}
using (\ref{epsilon}), $R_0<r+h<R_1$ and $|h|\leq 1/N$ in the final inequalities. Now, integrating in $h\in (-1/N,1/N)$ we obtain the bound 
\begin{align*}
   \int_{ (-1/N,1/N)}|\tilde{g}_N(r+h)\cos(N\alpha+N\alpha_0(r+h))(r+h)^2 CN^{1+\gamma} \ln(e+N)|dh\\
   \leq C\|\tilde{g}_N\|_{L^\infty}N^\gamma \ln(e+N).
\end{align*}
We do a similar argument for the second part of the sum. Again, we integrate by parts twice (now all the boundary terms vanish) and we bound $|w''|$. Indeed, 
\begin{align*}
 &\left| \int_{ [-\pi,\pi]}\frac{\sin(\alpha') \sin(N\alpha')}{|h^2+2r(r+h)(1-\cos(\alpha'))|^{\frac{3+\gamma}{2}}} \, d\alpha' \right|\\&
 \leq 
\left|
\int_{-\pi}^\pi 
w''(\alpha') \frac{\sin (N\alpha')}{N^2}
d\alpha'
\right|
 \\& \leq
 \frac{C}{N^2|h|^{3+\gamma}}
 \left(1+
 \int_{ [-\pi,\pi]}\frac{|r(r+h)\sin(\alpha') |}{|h^2+2r(r+h)(1-\cos(\alpha'))|}
  \, d\alpha' \right)
  \\& \leq
 \frac{C}{N^2|h|^{3+\gamma}}
 \left(1+
 \int_{ [0,\pi]}\frac{r(r+h)c_1\alpha' }{h^2+2r(r+h)c_1\alpha'^2}
  \, d\alpha' \right)
  \\&
  \leq 
  \frac{C}{N^2|h|^{3+\gamma}} 
  \left(
  1+\ln
  \left(
  \frac{h^2+2r(r+h)c_1\pi^2}{h^2}
  \right)
  \right)
    \\&
  \leq 
  \frac{C}{N^2|h|^{3+\gamma}} 
    \ln
  \left(e+
  \frac{1}{|h|}
  \right),
\end{align*}
and now integrating in $h\in (-r,3R_1)\setminus (-1/N,1/N)$ we have the bound
\begin{align*}
  & \int_{(-r,3R_1)\setminus (-1/N,1/N)}|\tilde{g}_N(r+h)\cos(N\alpha+N\alpha_0(r+h))(r+h)^2  \frac{C}{N^2|h|^{3+\gamma}} 
    \ln
  \left(e+
  \frac{1}{|h|}
  \right)dh
  \\& \leq 
  C\|\Tilde{g_N}\|_{L^\infty}\int _{1/N}^\infty
  \frac{1}{N^2h^{3+\gamma}}\ln(e+\frac{1}{h})dh
  \leq 
  C\|\Tilde{g_N}\|_{L^\infty}N^\gamma \ln (e+N).
\end{align*}
\end{proof}
\end{lma}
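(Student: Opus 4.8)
The plan is to recognise that $\hat{x}\cdot K^\gamma(x-y)=\hat{x}\cdot\frac{(x-y)^\perp}{|x-y|^{3+\gamma}}$ is, up to the constant $C(\gamma)$, precisely the kernel appearing in the polar expression (\ref{velocidadradial}) for the radial velocity, so the quantity in (\ref{vradialmedio}) is the ``intermediate'' part of that principal value integral; on the annular region $\{1/N<|x-y|<3R_1\}$ no principal value is needed. Writing $x=r(\cos\alpha,\sin\alpha)$, $y=(r+h)(\cos(\alpha+\alpha'),\sin(\alpha+\alpha'))$, so that $|x-y|^2=h^2+2r(r+h)(1-\cos\alpha')$, and noting that $r+h$ must lie in $(R_0,R_1)$ while $|x-y|<3R_1$, we may assume $r\lesssim R_1$ and $r+h\in(R_0,R_1)$ (otherwise the integral vanishes). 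The integral then becomes $\int \tilde g_N(r+h)(r+h)^2\cos(N\alpha-N\alpha_0(r+h))\,W_N(h)\,dh$, with $W_N(h)=\int \frac{\sin\alpha'\sin(N\alpha')}{|h^2+2r(r+h)(1-\cos\alpha')|^{(3+\gamma)/2}}\,d\alpha'$, the inner integration running over the set of $\alpha'$ with $|x-y|>1/N$.

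I would split the $h$-integral at $|h|=1/N$. On $1/N<|h|<3R_1$ the constraint $|x-y|>1/N$ is automatic, so the inner integral is over (a subset of) all of $[-\pi,\pi]$; integrating $W_N(h)$ by parts twice against $\sin(N\alpha')$ — all boundary terms cancelling by $2\pi$-periodicity — and using the bounds for $w(\alpha')=\sin\alpha'/|h^2+2r(r+h)(1-\cos\alpha')|^{(3+\gamma)/2}$ and its first two derivatives established in the proof of Lemma \ref{vradial} (which rest on $|\sin\alpha'|\le c_1|\alpha'|$, $c_2\alpha'^2\le 1-\cos\alpha'\le c_1\alpha'^2$ and the scale-invariant estimate (\ref{cociente})), one obtains $|W_N(h)|\lesssim N^{-2}|h|^{-3-\gamma}\ln(e+1/|h|)$. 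Since $(r+h)^2\lesssim R_1^2$ and $3+\gamma\ge 2>1$, $\int_{1/N}^{3R_1}N^{-2}|h|^{-3-\gamma}\ln(e+1/|h|)\,dh\lesssim N^{\gamma}\ln(e+N)$, so after the factor $\|\tilde g_N\|_{L^\infty}$ this piece is within the claimed bound.

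For $|h|\le 1/N$ the inner integral must avoid $\alpha'=0$: the constraint $|x-y|>1/N$ forces $|\alpha'|>\varepsilon_N(h)$, where $h^2+2r(r+h)(1-\cos\varepsilon_N(h))=1/N^{2}$, and $r+h\in(R_0,R_1)$ gives $\varepsilon_N(h)\lesssim 1/N$. Integrating $W_N(h)$ by parts twice now leaves boundary terms at $\alpha'=\pm\varepsilon_N$, where $|w(\varepsilon_N)|\le\varepsilon_N N^{3+\gamma}\lesssim N^{2+\gamma}$ and $|w'(\varepsilon_N)|\lesssim N^{3+\gamma}$; these contribute at order $\frac{|w(\varepsilon_N)|}{N}\lesssim N^{1+\gamma}$ and $\frac{|w'(\varepsilon_N)|}{N^{2}}\lesssim N^{1+\gamma}$, while the leftover term $\frac{1}{N^{2}}\int_{\varepsilon_N<|\alpha'|\le\pi}|w''|\,d\alpha'$ is $\lesssim N^{1+\gamma}\ln(e+N)$, the logarithm arising from $\int_{\varepsilon_N}^{\pi}\frac{r(r+h)\sin\alpha'}{h^2+2r(r+h)(1-\cos\alpha')}\,d\alpha'\lesssim\ln((h^2+4r(r+h))N^{2})\lesssim\ln(e+N)$ via the defining relation for $\varepsilon_N$. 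Thus $|W_N(h)|\lesssim N^{1+\gamma}\ln(e+N)$ uniformly for $|h|\le 1/N$; multiplying by $|\tilde g_N(r+h)|(r+h)^2\lesssim\|\tilde g_N\|_{L^\infty}R_1^{2}$ and integrating over a set of measure $2/N$ yields $\lesssim\|\tilde g_N\|_{L^\infty}N^{\gamma}\ln(e+N)$. Adding the two pieces finishes the proof.

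The main obstacle is this near-diagonal piece. In contrast to Lemma \ref{vradial}, where $x$ lies away from the support of $g_N$ and one can use full periodicity, here the $\alpha'$-integral is truncated exactly along the arc $|x-y|=1/N$, where $w$ and $w'$ are as large as $N^{2+\gamma}$ and $N^{3+\gamma}$. The point is that the truncation scale $\varepsilon_N\sim 1/N$ cancels precisely the $N^{-1}$ (resp.\ $N^{-2}$) produced by the two integrations by parts, so the surviving boundary terms are of borderline size $N^{1+\gamma}$, and it is only the final $h$-integration, over a set of measure $2/N$, that brings the estimate down to $N^{\gamma}$ (up to the logarithm). Extracting the sharp bound $\varepsilon_N\lesssim 1/N$ from its implicit definition, and tracking which boundary terms actually survive the two integrations by parts, is the delicate bookkeeping; everything else repeats the kernel computations already carried out in Lemma \ref{vradial}.
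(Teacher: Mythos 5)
Your proposal follows the same route as the paper: the same split of the $h$-integral at $|h|=1/N$, the same two integrations by parts in $\alpha'$ (with full periodicity killing boundary terms on the outer piece, and the inner boundary terms at $\alpha'=\pm\varepsilon_N$ surviving on the near-diagonal piece), the same bounds $|w(\varepsilon_N)|\lesssim N^{2+\gamma}$, $|w'(\varepsilon_N)|\lesssim N^{3+\gamma}$, and the same logarithmic gain from the remaining $w''$-integral. The argument is correct and matches the paper's proof in all essentials.
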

\begin{lma}
    Let $\gamma\in [-1,1)$, $K_{\gamma}(y)$ be defined by (\ref{kernel}) and let $\tilde{g}_N:[0,\infty)\rightarrow \mathbb{R}$ be a $C^\infty$ function  with support in the interval $(R_0,R_1)$ and $\alpha_0(r,t) \in C^\infty(\mathbb{R}_+\times[0,\infty))$. If we define $g_N:\mathbb{R}^2\rightarrow \mathbb{R}$ in polar coordinates as 
\[
g_N(r,\alpha):= \tilde{g}_N(r)\sin{(N\alpha+N \alpha_0(r,t))},
\]
with $N$ a natural number, then
\begin{align}
\label{vangularmedio}
     \left|
     C(\gamma) \int_{\{1/N<|x-y|<3R_1\}} \hat{x}^\perp\cdot K_{\gamma}(x-y) g_N (y,t) dy
    \right|\leq C\|\tilde{g}_N\|_{L^\infty}N^{\gamma}, \hspace{4mm} \forall x\in \mathbb{R}^2.
\end{align}
\begin{proof}
It is enough to find a bound for 
    \begin{align*}
    \left|
        \int_{-1/N}^{1/N} \tilde{g}_N(r+h) \mathcal{S}_N(r+h,\alpha)
\int_{[-\pi,\pi]\setminus (-\varepsilon_N,\varepsilon_N)} \frac{[(r+h)(1-\cos(\alpha'))-h]\cos(N\alpha') }{|h^2+2r(r+h)(1-\cos(\alpha'))|^{\frac{3+\gamma}{2}}}d\alpha'dh 
\right.\\
+
\left.
   \int_{(-r,3R_1)\setminus (-1/N,1/N)}\tilde{g}_N(r+h) \mathcal{S}_N(r+h,\alpha)
\int_{-\pi}^\pi \frac{[(r+h)(1-\cos(\alpha'))-h]\cos(N\alpha') }{|h^2+2r(r+h)(1-\cos(\alpha'))|^{\frac{3+\gamma}{2}}}d\alpha'dh 
\right|,
    \end{align*}
    where 
    \begin{align*}
        h^2+2r(r+h)(1-\cos(\varepsilon_N(h)))=\frac{1}{N^2}.
    \end{align*}
    Again we integrate in the angular coordinate, so we fix an $h\in \mathbb{R}$ such that $|\tilde{g}_N(r+h)|>0$ and we define 
    \begin{align*}
        w(\alpha')=\frac{[(r+h)(1-\cos(\alpha'))-h] }{|h^2+2r(r+h)(1-\cos(\alpha'))|^{\frac{3+\gamma}{2}}}.
    \end{align*}
    For the first integral we integrate by parts twice taking into account that the inner parts of the boundary terms do not vanish. The reader may check that $|w'(\varepsilon_N)|\leq CN^{3+\gamma}$. We use the bound for $|w''|$ done in (\ref{cotaw''}) with $c_1,c_2>0$ such that $|\sin(\alpha')| \leq c_1 |\alpha'|$, $c_2 \alpha'^2\leq 1-\cos(\alpha')\leq c_1 \alpha'^2 $, $R_0-\frac{1}{N}\leq r\leq R_1+\frac{1}{N}$, $R_0<r+h<R_1$. Hence,
\begin{align*}
&\left|
 \int_{ [-\pi,\pi]\setminus (-\varepsilon_N,\varepsilon_N)}w(\alpha')\cos(N\alpha') d\alpha' 
 \right|\\& \leq
 \left|
 \left[
 w(\alpha')\frac{\sin(N\alpha')}{N}
 \right]_{-\varepsilon_N}^{\varepsilon_N}
 -\int_{ [-\pi,\pi]\setminus (-\varepsilon_N,\varepsilon_N)}w'(\alpha')\frac{\sin(N\alpha')}{N} d\alpha' 
 \right|
 \\&\leq
  CN^{1+\gamma}+
  \left|\int_{ [-\pi,\pi]\setminus (-\varepsilon_N,\varepsilon_N)}w'(\alpha')\frac{\sin(N\alpha')}{N} d\alpha' 
  \right|
 \\&\leq
  CN^{1+\gamma}+
   \left|
   \int_{ [-\pi,\pi]\setminus (-\varepsilon_N,\varepsilon_N)}w''(\alpha')\frac{\cos(N\alpha')}{N^2} d\alpha' 
   \right|
    \\&\leq
  CN^{1+\gamma}+\frac{1}{N^2}
   \int_{ [-\pi,\pi]\setminus (-\varepsilon_N,\varepsilon_N)}|w''(\alpha')| d\alpha' 
   \\& \leq 
   CN^{1+\gamma}+
   CN^{1+\gamma}
    \int_{ [-\pi,\pi]\setminus (-\varepsilon_N,\varepsilon_N)}
   \left(
1+r\frac{|(r+h)c_1\alpha'^2|+|h|}{|h^2+2r(r+h)c_2\alpha'^2)|}
\right)
d\alpha'
\\& \leq 
   CN^{1+\gamma}+
   CN^{1+\gamma}
    \int_{ [-\pi,\pi]\setminus (-\varepsilon_N,\varepsilon_N)}
   \left(
1+\frac{|r(r+h)c_1\alpha'^2|}{|h^2+2r(r+h)c_2\alpha'^2)|}
+\frac{|rh|}{|h^2+2r(r+h)c_2\alpha'^2)|}
\right)
d\alpha'
\\&  \leq 
CN^{1+\gamma}+CN^{1+\gamma}\int_{[-\pi,\pi]\setminus (-\varepsilon_N,\varepsilon_N)}\frac{r}{|h|}\frac{1}{(1+\frac{2r(r+h)c_2}{h^2}\alpha'^2)}
 d\alpha'\\& \leq
 CN^{1+\gamma}+\frac{C\sqrt{r}N^{1+\gamma}}{\sqrt{2(r+h)c_2}} 
 \left[\arctan (\frac{\sqrt{2r(r+h)c_2}}{|h|}\alpha')
 \right]_{\varepsilon_N}^\pi\\&
 \leq 
 CN^{1+\gamma},
\end{align*}
using in the last bound that the arc tangent and $r$ are bounded and $r+h$ is far from zero. Integrating in $h\in (-1/N,1/N)$ we obtain the bound 
\begin{align*}
   C \int_{-1/N}^{1/N}
    |
\tilde{g}_N (r+h)\mathcal{S}(r+h,\alpha)N^{1+\gamma}dh
    |
    \leq C\|\tilde{g}_N\|_{L^\infty}N^{\gamma}.
\end{align*}
For the bound of the second part of the sum we integrate by parts twice and we recall the bound for $|w''|$ done in (\ref{cotaw''}), obtaining 
\begin{align*}
    &\left|
 \int_{ [-\pi,\pi]}w(\alpha')\cos(N\alpha') d\alpha' 
 \right|
 \leq
  \left|\int_{ [-\pi,\pi]}w'(\alpha')\frac{\sin(N\alpha')}{N} d\alpha' 
  \right|
 \leq
   \left|
   \int_{ [-\pi,\pi]}w''(\alpha')\frac{\cos(N\alpha')}{N^2} d\alpha' 
   \right|
    \\&\leq
\int_{-\pi}^\pi
\frac{C}{N^2|h|^{3+\gamma}}
\left(
1+r\frac{|(r+h)c_1\alpha'^2|+|h|}{|h^2+2r(r+h)c_2\alpha'^2)|}
\right)
d\alpha'
 \leq
\int_{-\pi}^\pi
\frac{C}{N^2|h|^{3+\gamma}}
\left(
1+\frac{r}{|h|}\frac{1}{(1+\frac{2r(r+h)c_2}{h^2}\alpha'^2)}
\right)d\alpha'
\\&\leq
2C
\frac{1}{N^2|h|^{3+\gamma}}
\left(
\pi+\frac{\sqrt{r}\arctan(\frac{\sqrt{2r(r+h)c_2}}{|h|}\pi)}{\sqrt{2(r+h)c_2}}
\right)
\leq
 \frac{C}{N^2|h|^{3+\gamma}},
\end{align*}
using in the last bound that $r\sim |h|$ when $r\mapsto \infty$ and that $\frac{\arctan(cy)}{y}\sim c$ when $y\mapsto 0$. Now integrating in $h\in (-r,3R_1)\setminus(-1/N,1/N)$ we obtain the desired bound.
\end{proof}
\end{lma}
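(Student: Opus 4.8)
The plan is to first perform the $\alpha'$-integration so that only a one-dimensional integral in the radial shift $h$ remains, and then to split that integral at the scale $|h|\sim 1/N$. Writing $x=(r\cos\alpha,r\sin\alpha)$, $y=((r+h)\cos\alpha',(r+h)\sin\alpha')$ and using $\hat x^{\perp}\cdot(x-y)^{\perp}=\hat x\cdot(x-y)=r-(r+h)\cos(\alpha-\alpha')=(r+h)(1-\cos(\alpha-\alpha'))-h$, together with the expansion of $\sin(N\alpha'+N\alpha_0(r+h,t))$ after the shift $\alpha'\mapsto\alpha'+\alpha$ — exactly as in the derivation of \eqref{velocidadangular2} — the $\sin(N\alpha')$ contribution vanishes by oddness in $\alpha'$ and the quantity to bound becomes
\[
\Big|\,C(\gamma)\int \tilde g_N(r+h)\,\mathcal S_N(r+h,\alpha)\int w(\alpha')\cos(N\alpha')\,d\alpha'\,dh\,\Big|,\qquad \mathcal S_N(\rho,\phi)=\rho\sin(N\phi+N\alpha_0(\rho,t)),
\]
where $w(\alpha')=\dfrac{(r+h)(1-\cos\alpha')-h}{\bigl|h^{2}+2r(r+h)(1-\cos\alpha')\bigr|^{\frac{3+\gamma}{2}}}$ and the integration region is the image of $\{1/N<|x-y|<3R_1\}\cap\operatorname{supp}(g_N)$: since $|x-y|^{2}=h^{2}+2r(r+h)(1-\cos\alpha')\ge h^{2}$, it is contained in $\{|h|<3R_1\}$, and on $\operatorname{supp}(g_N)$ one has $r+h\in(R_0,R_1)$, so $r$ and $r+h$ stay bounded above and bounded away from $0$; no principal value is needed since the ball $|x-y|<1/N$ is excised. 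Fix $c_1,c_2>0$ with $|\sin\alpha'|\le c_1|\alpha'|$ and $c_2\alpha'^{2}\le 1-\cos\alpha'\le c_1\alpha'^{2}$ on $[-\pi,\pi]$, so that \eqref{cociente} and the bound \eqref{cotaw''} for $|w''|$ are at hand.

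On the far piece $1/N<|h|<3R_1$ the constraint $|x-y|>1/N$ is automatic, so for the absolute-value bound the angular integral may be taken over all of $[-\pi,\pi]$; integrating by parts twice in $\alpha'$, all boundary terms cancel by $2\pi$-periodicity and we gain a factor $N^{-2}$. I then estimate $\int_{-\pi}^{\pi}|w''|$ by \eqref{cotaw''}: the prefactor $|h^{2}+2r(r+h)(1-\cos\alpha')|^{-\frac{3+\gamma}{2}}\le|h|^{-(3+\gamma)}$, the term with numerator $(r+h)\alpha'^{2}$ is bounded by a constant, and the term with numerator $|h|$ equals $\tfrac{r}{|h|}\bigl(1+\tfrac{2r(r+h)c_2}{h^{2}}\alpha'^{2}\bigr)^{-1}$, whose $\alpha'$-integral is $\tfrac{r}{|h|}\cdot\tfrac{|h|}{\sqrt{2r(r+h)c_2}}\arctan(\cdots)$, bounded — this is exactly the point where, in contrast with \eqref{vradialmedio}, no logarithm appears, the radial kernel having instead a numerator linear in $\sin\alpha'$ whose $\alpha'$-integral is a logarithm. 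Hence $\int_{-\pi}^{\pi}|w''|\lesssim|h|^{-(3+\gamma)}$, and the far piece is bounded by $\tfrac{1}{N^{2}}\int_{1/N}^{3R_1}\tfrac{C\|\tilde g_N\|_{L^\infty}}{|h|^{3+\gamma}}\,dh\lesssim \|\tilde g_N\|_{L^\infty}N^{\gamma}$, using $2+\gamma>0$.

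On the near piece $|h|<1/N$ the constraint $|x-y|>1/N$ is exactly $|\alpha'|>\varepsilon_N(h)$, with $h^{2}+2r(r+h)(1-\cos\varepsilon_N(h))=N^{-2}$ and hence $\varepsilon_N(h)\le C/N$; integrating by parts twice on $[-\pi,\pi]\setminus(-\varepsilon_N,\varepsilon_N)$ now leaves genuine endpoint contributions at $\pm\varepsilon_N$ (those at $\pm\pi$ still cancel by periodicity). Using $|\sin\alpha'|\le c_1|\alpha'|$, the defining relation for $\varepsilon_N$ and $|h|\le 1/N$, one checks $|w(\varepsilon_N)|\lesssim N^{2+\gamma}$ and $|w'(\varepsilon_N)|\lesssim N^{3+\gamma}$, so these endpoint terms are $\lesssim N^{1+\gamma}$ after dividing by $N$ and by $N^{2}$; for the remaining $N^{-2}\int_{|\alpha'|>\varepsilon_N}|w''|$ I use the crude bound $|h^{2}+2r(r+h)(1-\cos\alpha')|^{-\frac{3+\gamma}{2}}\le N^{3+\gamma}$ valid on that range, which by \eqref{cotaw''} and the same arctangent computation reduces it to $\lesssim N^{1+\gamma}$. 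Thus the inner $\alpha'$-integral is $\lesssim N^{1+\gamma}$, and since $|\tilde g_N(r+h)\,\mathcal S_N(r+h,\alpha)|\le R_1\|\tilde g_N\|_{L^\infty}$, integrating over $|h|<1/N$ (an interval of length $\sim N^{-1}$) gives $\lesssim \|\tilde g_N\|_{L^\infty}N^{\gamma}$. Adding the two contributions yields \eqref{vangularmedio}.

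The main obstacle is the near-origin regime $|h|<1/N$: one has to track the powers of $N$ in the endpoint values $w(\varepsilon_N)$, $w'(\varepsilon_N)$ and in the crude bound for $\int_{|\alpha'|>\varepsilon_N}|w''|$ precisely enough that, after multiplying a factor $N^{1+\gamma}$ by an $h$-interval of length $\sim N^{-1}$, one lands exactly on $N^{\gamma}$ and not on a worse power. The accompanying subtlety is recognising that in the angular kernel the awkward part of $w''$ has numerator $(r+h)\alpha'^{2}+|h|$ rather than the linear-in-$\sin\alpha'$ numerator of the radial kernel, so its $\alpha'$-integral is a bounded arctangent — this is what removes the $\ln(e+N)$ present in \eqref{vradialmedio}.
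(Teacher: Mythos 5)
Your proposal is correct and follows essentially the same route as the paper: the same split at $|h|\sim 1/N$, the same double integration by parts in $\alpha'$ (with endpoint contributions at $\pm\varepsilon_N$ only in the near piece), the same use of the bound for $|w''|$ from \eqref{cotaw''}, and the same observation that the awkward summand in $w''$ integrates to a bounded arctangent rather than a logarithm, which is exactly why \eqref{vangularmedio} has no $\ln(e+N)$ factor.
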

\begin{lma}
 Given a $C^\infty$ function $\tilde{g}_N:[0,\infty)\rightarrow \mathbb{R}$ with support in the interval $(R_0,R_1)$, $\|\Tilde{g}_N\|_{C^{s_0+3}}\leq \overline{C} N^{-\beta}$ for $s_0\in \mathbb{N}$ and $\alpha_0(r,t) \in C^\infty(\mathbb{R}_+\times[0,\infty))$, let us define $g_N:\mathbb{R}^2\rightarrow \mathbb{R}$ in polar coordinates as 
\[
g_N(r,\alpha, t):= \tilde{g}_N(r)\sin{(N\alpha+N \alpha_0(r,t))},
\]
with $N$ a natural number. Let $\gamma\in (-1,1)$, then 
\begin{align}
    \label{normal2velocidad}
    \|v^\gamma(g_N)\|_{L^2}\leq CN^{-\beta+\gamma}\mathcal{X}(\gamma),
\end{align}
where 
\begin{align*}
    \mathcal{X}(\gamma)=
    \left\{ \begin{array}{lcc} 
    \ln(e+N) & if & \gamma <0 ,
    \\ \\ 1 & if & \gamma \geq0.
    \end{array} \right.
\end{align*}
Furthermore, this can be generalized to the $H^s$ norms with $0\leq s\leq s_0$ as
\begin{align*}
    \|v^\gamma(g_N)\|_{H^s}\leq C N^{-\beta+\gamma+s}\mathcal{X}(\gamma+s).
\end{align*}
Moreover, the result is also true for the homogeneous differential operator $\Lambda^{\alpha }v^\gamma$ with $\alpha\in [0,2]$, $\gamma\in (-1,1)$, this is
\begin{align}
    \label{normahsoperados}
   \|\Lambda^{\alpha }v^\gamma(g_N)\|_{H^s}\leq C N^{-\beta+\alpha+\gamma+s}\mathcal{X}(\gamma+s),
\end{align}
for $0\leq s\leq s_0$.\\\\
The constant $C>0$ just depends on $R_0,R_1,s_0>0$ and $\overline{C}>0$.
\begin{proof}
The Sobolev norms of $g_N$ can be computed as follows. Let $k\in (\mathbb{N}\cup \{0\})\cap [0,s_0+1]$. Then, since our function has compact support,
\begin{align}
\label{auxsobolev}
    \|g_N\|_{H^k}\leq C \|g_N\|_{C^k}\leq CN^{-\beta+k},
\end{align}
with $C$ depending on $R_0,R_1,k>0$ and $\overline{C}>0$. For $k\in [0,s_0+1]$, the same result follows by interpolation.\\\\
The $L^2$ norm of a function of the form $v^\gamma (g_N)$ is easy to calculate when $\gamma \in [0,1)$, since
\begin{align*}
    \|v^\gamma (g_N)\|_{L^2}=\|\mathcal{F}(v^\gamma (g_N))\|_{L^2}=\left\|\frac{|(-\xi_2,\xi_1)|}{|\xi|^{1-\gamma}}\mathcal{F}(g_N)\right\|_{L^2}= \| |\xi|^{\gamma}\mathcal{F}(g_N)\|_{L^2}=\|g_N\|_{\Dot{H}^\gamma}\leq CN^{-\beta+\gamma},
\end{align*}
where the last inequality follows from (\ref{auxsobolev}).\\\\
Analogously, for $0<s\leq s_0$,
\begin{align*}
    \|v^\gamma (g_N)\|_{H^{s}}\leq C(\|v^\gamma (g_N)\|_{L^2}+\|v^\gamma (g_N)\|_{\Dot{H}^{s}})\leq C(\|g_N\|_{\Dot{H}^\gamma}+\|g_N\|_{\Dot{H}^{s+\gamma}})\leq CN^{-\beta+s+\gamma},
\end{align*}
where the last inequality follows from (\ref{auxsobolev}).\\\\
Now, let us compute the $L^2$-norm of a function of the form $v^\gamma (g_N)$ when $\gamma=-\eta<0$. 
\begin{align*}
    \|v^{-\eta}(g_N)\|^2_{L^2(\mathbb{R}^2)}=\|v^{-\eta}(g_N)\|^2_{L^2(B_{2R_1})}+\|v^{-\eta}(g_N)\|^2_{L^2(B_{2R_1}^c)},
\end{align*}
and since $\tilde{g}_N$ has compact support in $B_{R_1}=B_{R_1}(0)\subset \mathbb{R}^2$, then
\begin{align*}
    \|v^{-\eta}(g_N)\|_{L^2(B_{2R_1})}\leq C \|v^{-\eta}(g_N)\|_{L^\infty(B_{2R_1})} ,
\end{align*}
so 
\begin{align}
      \|v^{-\eta}(g_N)\|_{L^2(\mathbb{R}^2)}\leq C\left(
      \|v^{-\eta}(g_N)\|_{L^\infty(B_{2R_1})}+\|v^{-\eta}(g_N)\|_{L^2(B_{2R_1}^c)}
      \right).
\end{align}
\begin{enumerate}
    \item Let $x\in B_{2R_1}$ and let $g_N$ be also the previous function even when expressed in Cartesian coordinates. We also remove the principal value because $K^{-\eta}$ given by (\ref{kernel}) is now integrable at zero. Then using (\ref{vradialmedio}) and (\ref{vangularmedio}), $\forall x\in B_{2R},$
\begin{align*}
    &|v^{-\eta}(g_N)(x, t)|=
    \left|
     C(\gamma) \int_{\mathbb{R}^2} K_{-\eta}(x-y) g_N (y,t) dy
    \right|\\&
    \leq \left|
     C(\gamma) \int_{\{|x-y|<1/N\}} K_{-\eta}(x-y) g_N (y,t) dy\right|  
     + \left|
     C(\gamma) \int_{\{1/N<|x-y|<3R_1\}} K_{-\eta}(x-y) g_N (y,t) dy
    \right| 
    \\&
    \leq 
     C(\gamma) \int_{\{|x-y|<1/N\}} |x-y|^{-2+\eta}|g_N (y,t)| dy  
     + \left|
     C(\gamma) \int_{\{1/N<|x-y|<3R_1\}} K_{-\eta}(x-y) g_N (y,t) dy
    \right|
    \\&
    \leq 
     C(\gamma)\|g_N\|_{L^\infty} N^{-\eta}   
     + \left|
     C(\gamma) \int_{\{1/N<|x-y|<3R_1\}} K_{-\eta}(x-y) g_N (y,t) dy
    \right|
    \\& \leq 
     C N^{-\beta-\eta}+ \left|
     C(\gamma) \int_{\{1/N<|x-y|<3R_1\}} \hat{x}\cdot K_{-\eta}(x-y) g_N (y,t) dy
     \right|\\&\hspace{18mm}+ \left|
     C(\gamma) \int_{\{1/N<|x-y|<3R_1\}} \hat{x}^\perp \cdot K_{-\eta}(x-y) g_N (y,t) dy
     \right|\\&
     \leq CN^{-\beta-\eta}\ln(e+N).
\end{align*}
\item Now the bound for the $L^2$ norm with $x\in B_{2R_1}^c$ is done using directly the Lemma \ref{vradial} and (\ref{vangular}), this is, for $r>2R_1$,
\begin{align*}
    |v^{-\eta}_r(g_N)(r,\alpha, t)|\leq \frac{C  \|   \tilde{g}_N  \|   _{L^\infty}}{N^2|r-R_1|^{2-\eta}}\ln\left(
    e+\frac{r}{(r-R_1)^2}\right), \hspace{6mm}
    |v^{-\eta}_\alpha(g_N)(r,\alpha, t)|\leq \frac{C  \|   \tilde{g}_N\|   _{L^\infty}}{N^2|r-R_1|^{2-\eta}}. 
\end{align*}
Then, as $\|   \tilde{g}_N  \|   _{L^\infty}\leq CN^{-\beta}$,
\begin{align*}
    \int_{2R_1}^\infty\int_{-\pi}^\pi |v^{-\eta}_r(g_N)(r,\alpha)|^2 rd\alpha dr \leq C N^{-2\beta-4}\leq CN^{-2\beta-2\eta},
\end{align*}
and proceeding analogously with $v_\alpha$ we obtain  $\|v^{-\eta}(g_N)\|_{L^2(B_{2R_1}^c)}\leq CN^{-\beta-\eta}$.
\end{enumerate}
Then, putting all this together we conclude $\|v^{\gamma}(g_N)\|_{L^2(\mathbb{R}^2)}=\|v^{-\eta}(g_N)\|_{L^2(\mathbb{R}^2)}\leq CN^{-\beta-\eta}\ln(e+N)=CN^{-\beta+\gamma}\ln(e+N)$ for $\gamma \in (-1,0)$.\\\\
It is easier to calculate the $H^s$-Sobolev norm of $v^{-\eta}(g_N)$ when $s\geq\eta$. Indeed,
\begin{align*}
    \|v^{-\eta}(g_N)\|_{\Dot{H}^{s}}\leq 
    \||\xi|^{s}\mathcal{F}(v^{-\eta} (g_N))\|_{L^2}=\left\||\xi|^{s}\frac{|(-\xi_2,\xi_1)|}{|\xi|^{1+\eta}}\mathcal{F}(g_N)\right\|_{L^2}
    = \|g_N\|_{\Dot{H}^{s-\eta}}\leq CN^{-\beta+s-\eta},
\end{align*}
where the last inequality can be done interpolating between the $L^2$ and the $H^1$ norms of $g_N$, that are easy to estimate. Summing this to the $L^2$ norm and taking into account that any power of $N$ grows faster than $\ln(e+N)$ we are finished. The case $s\in(0,\eta)$ follows by interpolation between the $L^2$ and the $H^\eta$ norms.\\\\
Finally, $\|\Lambda^\alpha v^\gamma(f)\|_{H^s}\leq C\|v^\gamma(f)\|_{H^{s+\alpha}}$ and (\ref{normahsoperados}) follows by applying the result of the velocity in $H^{s+\alpha}$. 
\end{proof}
\end{lma}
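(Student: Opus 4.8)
The plan is to reduce the velocity estimates to elementary Sobolev bounds for $g_N$ itself, treating the regimes $\gamma\ge 0$ and $\gamma<0$ separately. First I would record that since $g_N$ is supported in the fixed annulus $\{R_0<r<R_1\}$ and each Cartesian derivative falling on the oscillatory factor $\sin(N\alpha+N\alpha_0(r,t))$ costs at most a power of $N$ (the derivatives of $\tilde g_N$, of $\alpha_0$ and of the angular variable being bounded there, and the polar-to-Cartesian chain rule being harmless away from the origin), one gets $\|g_N\|_{H^k}\le C\|g_N\|_{C^k}\le CN^{-\beta+k}$ for integers $0\le k\le s_0+3$, hence $\|g_N\|_{\dot H^\sigma}\le CN^{-\beta+\sigma}$ for every real $0\le\sigma\le s_0+3$ by interpolation. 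When $\gamma\ge 0$ the map $g_N\mapsto v^\gamma(g_N)$ is the Fourier multiplier $\frac{(-i\xi_2,i\xi_1)}{|\xi|^{1-\gamma}}$, so $|\widehat{v^\gamma(g_N)}(\xi)|=|\xi|^{\gamma}|\widehat{g_N}(\xi)|$ and therefore $\|v^\gamma(g_N)\|_{\dot H^\sigma}=\|g_N\|_{\dot H^{\sigma+\gamma}}\le CN^{-\beta+\sigma+\gamma}$; adding this for $\sigma=0$ and $\sigma=s$ yields (\ref{normal2velocidad}) and its $H^s$ version, with $\mathcal X(\gamma)=1$.

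The substantive case is $\gamma=-\eta$, $\eta\in(0,1)$, where the symbol $|\xi|^{-\eta}$ is singular at the origin and the Fourier argument fails for the $L^2$ (low-frequency) part. I would split $\mathbb R^2=B_{2R_1}\cup B_{2R_1}^c$. On $B_{2R_1}$, bounding $\|v^{-\eta}(g_N)\|_{L^2(B_{2R_1})}\le C\|v^{-\eta}(g_N)\|_{L^\infty(B_{2R_1})}$, the $L^\infty$ bound comes from cutting the kernel integral at $|x-y|=1/N$: on $\{|x-y|<1/N\}$ the kernel (\ref{kernel}) with $\gamma=-\eta$ is locally integrable and contributes $\lesssim\|g_N\|_{L^\infty}N^{-\eta}\le CN^{-\beta-\eta}$, while on $\{1/N<|x-y|<3R_1\}$ one writes $v^{-\eta}=v_r^{-\eta}\hat x+v_\alpha^{-\eta}\hat x^{\perp}$ and invokes the cancellation bounds (\ref{vradialmedio}) and (\ref{vangularmedio}), giving $\lesssim\|\tilde g_N\|_{L^\infty}N^{-\eta}\ln(e+N)\le CN^{-\beta-\eta}\ln(e+N)$. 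On $B_{2R_1}^c$, i.e.\ $r>2R_1$, Lemma \ref{vradial} and (\ref{vangular}) give pointwise decay with an $N^{-2}$ prefactor and denominators $|r-R_1|^{2-\eta}$ bounded away from $0$; squaring and integrating in $(r,\alpha)$ yields $\|v^{-\eta}(g_N)\|_{L^2(B_{2R_1}^c)}^2\le CN^{-2\beta-4}\le CN^{-2\beta-2\eta}$. Combining the two pieces gives $\|v^{-\eta}(g_N)\|_{L^2}\le CN^{-\beta-\eta}\ln(e+N)$, which is (\ref{normal2velocidad}) with $\mathcal X(\gamma)=\ln(e+N)$.

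For the higher Sobolev norms with $\gamma<0$ I would return to the multiplier: when $\sigma\ge\eta$ the shifted index is nonnegative, so $\|v^{-\eta}(g_N)\|_{\dot H^\sigma}=\|g_N\|_{\dot H^{\sigma-\eta}}\le CN^{-\beta+\sigma-\eta}$ and adding the $L^2$ bound finishes it (the logarithm being absorbed by the positive power), while the range $0<\sigma<\eta$ follows by interpolating the $L^2$ bound against the $H^\eta$ bound, which reproduces exactly the factor $\ln(e+N)=\mathcal X(\gamma+\sigma)$. Finally, since $\Lambda^{\alpha}$ is the multiplier $|\xi|^{\alpha}$ one has $\|\Lambda^{\alpha}v^\gamma(g_N)\|_{H^s}\le C\|v^\gamma(g_N)\|_{H^{s+\alpha}}$; running the preceding velocity estimates with the exponent $s+\alpha$ in place of $s$ --- legitimate since $s+\alpha+\gamma<s_0+3$, which is precisely why the hypothesis is phrased in $C^{s_0+3}$ --- and using that $\mathcal X$ is non-increasing, so $\mathcal X(\gamma+s+\alpha)\le\mathcal X(\gamma+s)$, gives (\ref{normahsoperados}).

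The main obstacle is exactly the low-frequency behaviour of $v^\gamma$ for $\gamma<0$: the symbol $|\xi|^{-\eta}$ need not be square integrable against a generic $\widehat{g_N}$, so differentiating on the Fourier side is unavailable for the $L^2$ endpoint, and one is forced into the physical-space decomposition above. The mild logarithmic loss inherited from the cancellation estimates (\ref{vradialmedio})--(\ref{vangularmedio}) is what makes the estimate go through and is the source of the factor $\mathcal X(\gamma)$; everything else is bookkeeping of Sobolev indices against the $C^{s_0+3}$ hypothesis.
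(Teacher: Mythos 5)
Your proposal is correct and follows essentially the same path as the paper's proof: the integer-$k$ Sobolev bounds for $g_N$ plus interpolation, the Fourier-multiplier argument when $\gamma\ge 0$ (and for $\dot H^\sigma$ with $\sigma+\gamma\ge 0$), and for $\gamma=-\eta<0$ the same decomposition of $\mathbb{R}^2$ into $B_{2R_1}$ and its complement, with the kernel split at scale $1/N$ inside $B_{2R_1}$ and the cancellation estimates (\ref{vradialmedio}), (\ref{vangularmedio}) together with the far-field bounds from Lemma \ref{vradial} and (\ref{vangular}). Your explicit remark that $\mathcal{X}$ is non-increasing is a small tidying of a step the paper leaves implicit, but the argument is the same.
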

\begin{lma}
Given a $C^\infty$ function $\tilde{g}_N:[0,\infty)\rightarrow \mathbb{R}$ with support in the interval $(R_0,R_1)$, $\|\Tilde{g}_N\|_{C^{k_0+2}}\leq \overline{C}N^{-\beta}$ ($ k_0\in \mathbb{N}$) and $\alpha_0(r,t) \in C^\infty(\mathbb{R}_+\times[0,\infty))$ such that $\|\alpha_0\|_{C^{k_0}}\leq \overline{C}$. Let us define $g_N:\mathbb{R}^2\rightarrow \mathbb{R}$ in polar coordinates as 
\[
g_N(r,\alpha, t):= \tilde{g}_N(r)\sin{(N\alpha+N \alpha_0(r,t))},
\]
with $N$ a natural number. \\ \\
Let $0\leq k\leq k_0$, $\alpha \in [0,2]$. Then, for $N$ big enough,
\begin{align}
\label{normafraccionario}
    \|\Lambda^\alpha g_N\|_{C^k}\leq CN^{-\beta+\alpha+k}.
\end{align}
\\
Let $\gamma\in (-1,1)$, $0\leq k\leq k_0$, $\alpha\in[0,2]$. Then, for $N$ big enough, 
\begin{align}
    \label{normackvelocidad}
    \|\Lambda^\alpha v^\gamma(g_N)\|_{C^k}\leq C N^{-\beta+\gamma+\alpha+ k}\ln(e+N).
\end{align}
Here, $C>0$ depends just on $R_0,R_1,k_0$ and $\overline{C}$.
\begin{proof}
Let us proof (\ref{normafraccionario}) for $\alpha \in (0,2)$. Since $g_N\in C^\infty_c$, for $\bold{k}\in \mathbb{N}_ 0^2$,
\begin{align*}
    |\partial^\bold{k} \Lambda^\alpha g_N (x,t)| = |\Lambda^\alpha \partial^\bold{k}  g_N (x,t)|
    =\left|
    \mathcal{K}_\beta P.V. \int_{\mathbb{R}^2}\frac{\partial^\bold{k}g_N(y,t)-\partial^\bold{k}g_N(x,t)}{|y-x|^{2+\alpha}}dy
    \right|\\
    \leq \left|\mathcal{K}_\beta P.V.\int_{B_{1/N}(x)}\frac{\partial^\bold{k}g_N(y,t)-\partial^\bold{k}g_N(x,t)-\partial_{1}\partial^\bold{k}g_N(x,t)(y_1-x_1)-\partial_{2}\partial^\bold{k}g_N(x,t)(y_2-x_2)}{|y-x|^{2+\alpha}}dy \right|\\+
    \mathcal{K}_\beta \int_{\mathbb{R}^2\setminus B_{1/N}(x)}\frac{|\partial^\bold{k}g_N(y,t)-\partial^\bold{k}g_N(x,t)|}{|y-x|^{2+\alpha}}dy\\
    \leq C \|\partial^\bold{k} g_N\|_{C^2} \int_{B_{1/N}(x)}\frac{1}{|x-y|^{\alpha}}dy+ C\|\partial^{\bold{k}}g_N\|_{L^\infty}\int_{\mathbb{R}^2\setminus B_{1/N}(x)} \frac{1}{|x-y|^{2+\alpha}}dy\leq C N^{-\beta+k+\alpha},
\end{align*}
where $\mathcal{K}_\beta=\frac{4^{\frac{\beta}{2}}\Gamma(\frac{2+\beta}{2})}{\pi|\Gamma(-\frac{\beta}{2})|}$ and using that the linear term of the Taylor expansion is odd in $y_i$ and hence the integral vanishes. For $\alpha=0,2$ the proof of (\ref{normafraccionario}) is trivial.\\\\
Let us proof (\ref{normackvelocidad}) with $\gamma\in (-1,1), \alpha=0, k=0$. Let $K_\gamma$ be given by (\ref{kernel}). Let $x\in B_{2R_1}$, using in the second inequality the mean value theorem and in the last one the bounds (\ref{vradialmedio}) and (\ref{vangularmedio}),
\begin{align*}
    &|v^{\gamma}(g_N)(x, t)|
    \leq \left|
     C(\gamma) \int_{\{|x-y|<1/N\}} K_{\gamma}(x-y) (g_N (x,t)-g_N (y,t)) dy\right|  
      \\&
     + \left|
     C(\gamma) \int_{\{1/N<|x-y|<3R_1\}} K_{\gamma}(x-y) g_N (y,t) dy
    \right| 
    \\&
    \leq 
     C(\gamma)\|g_N\|_{C^1} N^{\gamma-1}   
     + \left|
     C(\gamma) \int_{\{1/N<|x-y|<3R_1\}} K_{\gamma}(x-y) g_N (y,t) dy
    \right|
    \\& \leq 
     C N^{-\beta+\gamma}+ \left|
     C(\gamma) \int_{\{1/N<|x-y|<3R_1\}} \hat{x}\cdot K_{\gamma}(x-y) g_N (y,t) dy
     \right|\\&\hspace{18mm}+ \left|
     C(\gamma) \int_{\{1/N<|x-y|<3R_1\}} \hat{x}^\perp \cdot K_{\gamma}(x-y) g_N (y,t) dy
     \right|\\&
     \leq CN^{-\beta+\gamma}\ln(e+N).
\end{align*}
For $x\in B_{2R_1}^c$ we use directly Lemma \ref{vradial} and (\ref{vangular}), this is
\begin{align*}
    |v^{\gamma}_r(g)(r,\alpha)|\leq \frac{C  \|   \tilde{g}_N  \|   _{L^\infty}}{N^2|r-R_1|^{2+\gamma}}\ln\left(
    e+\frac{r}{(r-R_1)^2}\right), \hspace{6mm}
    |v^{\gamma}_\alpha(g)(r,\alpha)|\leq \frac{C  \|   \tilde{g}_N\|   _{L^\infty}}{N^2|r-R_1|^{2+\gamma}}, \hspace{6mm} \forall r>2R_1,
\end{align*}
and taking supremum in $r>2R_1$ we obtain the result.\\ \\
For the case $k\geq 1$ we just have to take into account that, for functions regular enough, $\partial^{\bold{j}}v_i(f)=v_i(\partial^{\bold{j}}f)$ and repeat the reasoning. 
\begin{align*}
      \partial_{x_1}v_j^\gamma (g_N)= v_j^\gamma(\partial_{x_1}\tilde{g}_N(r) \sin (N\alpha+N\alpha_0(r,t)))+v_j^\gamma(\tilde{g}_N \partial_{x_1}\sin (N\alpha+N\alpha_0(r,t)))\\
      =v_j^\gamma(\partial_{r}\tilde{g}_N(r) \cos(\alpha) \sin (N\alpha+N\alpha_0(r,t)))
      +v_j^\gamma(\tilde{g}_N(r) \cos (\alpha) \partial_r \alpha_0(r,t) \cos (N\alpha+N\alpha_0(r,t)))\\
      +v_j^\gamma
      \left(\tilde{g}_N(r) \frac{\sin (\alpha)}{r}N\sin (N\alpha +N\alpha_0(r,t))
      \right)\\
      =\frac{1}{2}v_j^\gamma(\partial_{r}\tilde{g}_N(r)  \sin ((N-1)\alpha+N\alpha_0(r,t)))
      +\frac{1}{2}v_j^\gamma(\partial_{r}\tilde{g}_N(r)  \sin ((N+1)\alpha+N\alpha_0(r,t)))
      \\ +\frac{1}{2}v_j^\gamma(\tilde{g}_N(r)  \partial_r \alpha_0(r,t) \cos ((N-1)\alpha+N\alpha_0(r,t)))
      +\frac{1}{2}v_j^\gamma(\tilde{g}_N(r)  \partial_r \alpha_0(r,t) \cos ((N+1)\alpha+N\alpha_0(r,t)))
      \\ +\frac{1}{2}v_j^\gamma
      \left( \frac{\tilde{g}_N(r)}{r}N\sin ((N-1)\alpha +N\alpha_0(r,t))
      \right)
      -\frac{1}{2}v_j^\gamma
      \left( \frac{\tilde{g}_N(r)}{r}N\sin ((N+1)\alpha +N\alpha_0(r,t))
      \right).
\end{align*} 
Notice that we recover the type of functions that we had before. If we want to have the same factor $N\pm1$ also multiplying the phase $\alpha_0$ inside the sines or cosines, we just have to apply the sum and difference rules, for example, 
\begin{align*}
    \sin{((N+1)\alpha+N\alpha_0(r,t))}=\cos{(\alpha_0(r,t))}
    \sin{((N+1)\alpha+(N+1)\alpha_0(r,t))} \\
    -\sin{(\alpha_0(r,t))}\cos{((N+1)\alpha+(N+1)\alpha_0(r,t))}.
\end{align*}
Summarizing, we have a sum of functions of the form: radial function (which $C^{k_0+1}$ norm is bounded by $CN^{-\beta}$ with $C>0$ not depending on $N$) times a sine or cosine (which is a sine except for a phase that we can add to $\alpha_0$) with argument $(N\pm1)(\alpha+\text{phase}(r,t))$. Although we made this for the partial derivative with respect to the first coordinate, the same result holds for the second. Also, although we made just an example of one derivative, $k$ derivatives can be made iterating the process. Since for $N$ big enough, $(N\pm k_0)\sim N$, then
\begin{align*}
    \|v^\gamma (g_N)\|_{C^k}\leq CN^{-\beta +k},
\end{align*}
for $1\leq k\leq k_0$. \\\\
The proof can be generalized to the operator $\Lambda^\alpha v^{\gamma} (g_N)$ with $\alpha\in (0,2]$, since $g_N$ is $C^\infty_c$ and hence $\Lambda^\alpha v^{\gamma} (g_N)=\Lambda^{\alpha+\gamma-1}\nabla^\perp g_N$. \\\\
We first bound the case $\alpha+\gamma>1$. Let $\varphi=\partial_{x_i}g_N$. Then, the components of $\Lambda^{\alpha+\gamma-1}\nabla^\perp g_N$ are given by 
\begin{align*}
   |\Lambda^{\alpha+\gamma-1} \varphi (x,t)|\leq 
   C\left|
     P.V.\int_{\{|x-y|<1/N\}}\frac{\varphi (y,t)-\varphi (x,t)}{|x-y|^{1+\alpha+\gamma} } dy\right| +
      C\left|
     \int_{\{|x-y|>1/N\}}\frac{\varphi (y,t)-\varphi (x,t)}{|x-y|^{1+\alpha+\gamma} } dy\right|
     \\ 
     = C\left|
      P.V.\int_{\{|x-y|<1/N\}} \frac{\varphi (y,t)-\varphi (x,t)-\partial_{x_1}\varphi (x,t)(y_1-x_1)-\partial_{x_2}\varphi (x,t)(y_2-x_2)}{|x-y|^{1+\alpha+\gamma}} dy\right| \\
      + C\left|
     \int_{\{|x-y|>1/N\}}\frac{\varphi (y,t)-\varphi (x,t)}{|x-y|^{1+\alpha+\gamma} } dy\right|
     \\  
    \leq C\|\varphi\|_{C^2} \int_{\{|x-y|\leq 1/N\}}|x-y|^{1-\gamma-\alpha}dy
    +C\|\varphi\|_{L^\infty} \int_{\{|x-y|>1/N\}}|x-y|^{-1-\alpha-\gamma}dy
    \\
    \leq C\|\varphi\|_{C^2}N^{\alpha+\gamma-3}
    +C \|\varphi\|_{L^\infty}N^{\alpha+\gamma-1}
    \leq CN^{\alpha+\gamma}.
\end{align*}
We used that the linear term of the Taylor expansion is odd in $y_i$ and hence the integral vanishes. \\\\
The case $\alpha+\gamma=1$ is trivial since $\Lambda^\alpha v^\gamma (g_N)=\nabla^\perp g_N$. For the case $-1<\alpha+\gamma<1$, we must use the fundamental solution of the inverse fractional laplacian,
\begin{align*}
    |\Lambda^{\alpha+\gamma-1}\varphi (x,t)| \leq C\left|
     \int_{\{|x-y|<1/N\}}\frac{\varphi (y,t)}{|x-y|^{1+\alpha+\gamma} } dy\right| +
     C\left|
     \int_{\{|x-y|>1/N\}}\frac{\varphi (y,t)}{|x-y|^{1+\alpha+\gamma} } dy\right|
     \\ \leq  C\|\varphi\|_{L^\infty} \int_{\{|x-y|<1/N\}} |x-y|^{-1-\alpha-\gamma} dy
     +
     C\left|
     \int_{\{|x-y|>1/N\}}\frac{\varphi (y,t)}{|x-y|^{1+\alpha+\gamma} } dy\right|
      \\ \leq  CN^{\alpha+\gamma}
     +C\left|\int_{\partial B_{1/N}(x)} N^{1+\alpha+\gamma} g_N(y)dy_j
     -
     \int_{\{|x-y|>1/N\}}\frac{x_i-y_i}{|x-y|^{3+\alpha+\gamma}}g_N(y)dy
     \right|\\ 
     \leq  CN^{\alpha+\gamma}
     +C\left|
     \int_{\{|x-y|>1/N\}}\frac{x_i-y_i}{|x-y|^{3+\alpha+\gamma}}g_N(y)dy
     \right|,
\end{align*}
where we integrated by parts in the $i$-th coordinate. \\\\
Now we just have to bound this last integral. Notice that the proof is completely analogous to the one of the bound of the velocity (\ref{normal2velocidad}). This is, when $x\in B_{2R_1}(x)$,
\begin{align*}
    \left|
     \int_{\{|x-y|>1/N\}}\frac{(x-y)^\perp}{|x-y|^{3+\alpha+\gamma}}g_N(y)dy
     \right|\leq \left|
     \int_{\{1/N<|x-y|<3R_1\}}\hat{x} \cdot\frac{(x-y)^\perp}{|x-y|^{3+\alpha+\gamma}}g_N(y)dy
     \right|\\
     +\left|
     \int_{\{1/N<|x-y|<3R_1\}}\hat{x}^\perp \cdot \frac{(x-y)^\perp}{|x-y|^{3+\alpha+\gamma}}g_N(y)dy
     \right|\leq  CN^{\alpha+\gamma} \ln {N},
\end{align*}
using that $\alpha+\gamma \in (-1,1)$ and (\ref{vradialmedio}), (\ref{vangularmedio}). For $x\in B_{2R_1}(x)^c$, we use Lemma \ref{vradial} and (\ref{vangular}) to conclude 
\begin{align*}
    \left|
     \int_{\{|x-y|>1/N\}}\frac{(x-y)^\perp}{|x-y|^{3+\alpha+\gamma}}g_N(y)dy
     \right|\leq |v^{\alpha+\gamma}g_N(x)|\leq CN^{-2} \leq  CN^{\alpha+\gamma}.
\end{align*}
\end{proof}
\end{lma}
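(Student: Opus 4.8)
The plan is to reduce both estimates to the same two‑region estimate: represent the non‑local operator by its kernel, split $\mathbb{R}^2$ into the short‑range ball $\{|x-y|<1/N\}$ and its complement, and play the compact support of $g_N$ against its frequency‑$N$ oscillation. Throughout I use the elementary bound $\|g_N\|_{C^m}\lesssim N^{-\beta+m}$ in the admissible range of $m$: each Cartesian derivative of $g_N=\tilde g_N(r)\sin(N\alpha+N\alpha_0(r,t))$ either lands on $\tilde g_N$, only consuming one more derivative of the profile (still $\lesssim \mathcal C N^{-\beta}$), or lands on the phase, producing a factor $N$ times the bounded quantities $\partial_r\alpha_0$ or $1/r$ (harmless on the fixed annulus).

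For (\ref{normafraccionario}): since $g_N\in C_c^\infty$, $\partial^{\mathbf{j}}\Lambda^\alpha g_N=\Lambda^\alpha\partial^{\mathbf{j}}g_N$, so for $\alpha\in(0,2)$ and $\varphi=\partial^{\mathbf{j}}g_N$ with $|\mathbf{j}|=k$ I use
\[
\Lambda^\alpha\varphi(x)=\mathcal{K}_\alpha\,\mathrm{P.V.}\int_{\mathbb{R}^2}\frac{\varphi(x)-\varphi(y)}{|x-y|^{2+\alpha}}\,dy.
\]
On $\{|x-y|<1/N\}$ I subtract the first‑order Taylor polynomial of $\varphi$ at $x$ (the linear part integrates to zero by oddness) and bound by $\|\varphi\|_{C^2}\int_{|z|<1/N}|z|^{-\alpha}\,dz\lesssim\|g_N\|_{C^{k+2}}N^{\alpha-2}$; on $\{|x-y|>1/N\}$ I bound by $\|\varphi\|_{L^\infty}\int_{|z|>1/N}|z|^{-2-\alpha}\,dz\lesssim\|g_N\|_{C^k}N^{\alpha}$. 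Both are $\lesssim N^{-\beta+k+\alpha}$, and $\alpha=0,2$ are immediate ($\Lambda^2=-\Delta$).

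For (\ref{normackvelocidad}): write $\Lambda^\alpha v^\gamma(g_N)=\Lambda^{\alpha+\gamma-1}\nabla^\perp g_N$ and set $\varphi=\partial_{x_i}g_N$, reducing to $\|\Lambda^{\alpha+\gamma-1}\varphi\|_{C^k}$. If $\alpha+\gamma>1$, $\Lambda^{\alpha+\gamma-1}$ is a fractional Laplacian of order in $(0,2)$ and the previous splitting gives $\|\varphi\|_{C^2}N^{\alpha+\gamma-3}+\|\varphi\|_{L^\infty}N^{\alpha+\gamma-1}\lesssim N^{-\beta+\alpha+\gamma}$. If $\alpha+\gamma=1$ the operator is the identity, so $\Lambda^\alpha v^\gamma(g_N)=\nabla^\perp g_N$ and the bound is trivial. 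If $-1<\alpha+\gamma<1$, $\Lambda^{\alpha+\gamma-1}$ has Riesz‑potential kernel $\sim|z|^{-1-\alpha-\gamma}$; over $\{|x-y|<1/N\}$ I bound by $\|\varphi\|_{L^\infty}\int_{|z|<1/N}|z|^{-1-\alpha-\gamma}\,dz\lesssim N^{-\beta+\alpha+\gamma}$, and over $\{|x-y|>1/N\}$ I integrate by parts in $y_i$, shifting the derivative from $\varphi=\partial_{y_i}g_N$ to the kernel; this leaves a boundary term on $\partial B_{1/N}(x)$ of size $\lesssim N^{1+\alpha+\gamma}\|g_N\|_{L^\infty}\lesssim N^{-\beta+\alpha+\gamma}$ plus a Calderón-Zygmund–type integral $\int\frac{(x-y)^\perp}{|x-y|^{3+\alpha+\gamma}}g_N(y)\,dy$ of $g_N$ itself. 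On $\{1/N<|x-y|<3R_1\}$ I decompose into the radial and angular components $\hat x\cdot$ and $\hat x^\perp\cdot$ and invoke (\ref{vradialmedio}) and (\ref{vangularmedio}) with exponent $\alpha+\gamma\in(-1,1)$, getting $\lesssim N^{-\beta+\alpha+\gamma}\ln(e+N)$; on $\{|x-y|>3R_1\}$, where $x$ is away from $\mathrm{supp}(g_N)$, Lemma \ref{vradial} and (\ref{vangular}) make this $O(N^{-\beta-2})$, negligible. For $k\ge1$ I use $\partial^{\mathbf{j}}v_i^\gamma(f)=v_i^\gamma(\partial^{\mathbf{j}}f)$ and the fact (obtained by the product‑to‑sum trigonometric identities, absorbing phases into $\alpha_0$) that Cartesian derivatives of $g_N$ are again finite sums of functions of the same structural form, with radial profiles of size $\lesssim N^{-\beta}$ and angular frequency $N\pm j$; since $N\pm k_0\sim N$ for $N$ large, iterating the above closes the bound with the stated power.

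The main obstacle is the regime $-1<\alpha+\gamma<1$. There the Riesz kernel $|z|^{-1-\alpha-\gamma}$ fails to be integrable at infinity and, after integration by parts, the Calderón-Zygmund kernel $|z|^{-2-\alpha-\gamma}$ is only borderline, so the estimate cannot be closed by an $L^1_y$ kernel times $\|g_N\|_{L^\infty}$; the frequency‑$N$ cancellation of $g_N$ must be exploited, which is exactly what the angular‑integration‑by‑parts bounds (\ref{vradialmedio}), (\ref{vangularmedio}) and the exterior decay of Lemma \ref{vradial} and (\ref{vangular}) provide. The only logarithmic loss $\ln(e+N)$ enters through these bounds and is harmless for the later applications.
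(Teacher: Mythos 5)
Your proposal is correct and follows essentially the same route as the paper: Taylor expansion plus a near/far split on scale $1/N$ for the fractional Laplacian, the identity $\Lambda^\alpha v^\gamma(g_N)=\Lambda^{\alpha+\gamma-1}\nabla^\perp g_N$ to reduce the velocity estimate, the three-way case split on the sign of $\alpha+\gamma-1$, integration by parts to convert the Riesz potential to a Calder\'on--Zygmund integral and then the oscillatory bounds (\ref{vradialmedio}), (\ref{vangularmedio}), Lemma \ref{vradial} and (\ref{vangular}), and the product-to-sum trigonometric identities to iterate for $k\ge 1$. The only cosmetic difference is that the paper first treats $\alpha=0$, $k=0$ directly on the kernel $K_\gamma$ with a mean-value argument and then generalizes to $\alpha\in(0,2]$, whereas you subsume $\alpha=0$ into the unified $\Lambda^{\alpha+\gamma-1}\nabla^\perp g_N$ framework from the start; since $\alpha+\gamma=\gamma\in(-1,1)$ always falls in the Riesz-potential regime, this is logically equivalent and slightly streamlines the exposition. (Incidentally, you correctly track the $N^{-\beta}$ factor from $\|\varphi\|_{C^2}$ in the $\alpha+\gamma>1$ case, where the paper's displayed final line reads $CN^{\alpha+\gamma}$ and appears to have dropped it in passing.)
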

\subsection{Pseudo-solutions for gSQG}
\label{seccionpseudosol}
We say that a function $\overline{\theta}$ is a pseudo-solution of the $\gamma$-gSQG equation if it fulfils
  \begin{equation*}
\left\{\begin{array}{l}
\frac{\partial \overline{\theta}}{\partial t}+v^\gamma\cdot \nabla \overline{\theta}+F(x,t)=0 ,\\ \\
v^\gamma=\nabla^{\perp} \psi_\gamma=\left(\partial_{2} \psi_\gamma,-\partial_{1} \psi_\gamma\right), \quad \psi_\gamma=-\Lambda^{-1+\gamma} \overline{\theta} , \\ \\
\overline{\theta}(\cdot,0)=\theta_0(\cdot).
\end{array}\right.
\end{equation*}
Notice that, choosing the appropriate source term $F$, every regular enough function is a pseudo-solution. The key point is to try to make $F$ small in the needed norms.  We will consider pseudo-solutions of the form
\[
\overline{\theta}_{N,c,K}(r,\alpha,t)=f_{1,c,K}(r)+f_{2,c,K}(r)r_{c,K}^\beta \frac{\sin\left(N\alpha-Nt\frac{v^\gamma_\alpha(f_{1,c,K}(r))}{r}\right)}{N^\beta},
\]
with $f_{1,c,K}$ given by the Lemma \ref{lemaf1} for some values $c,K>0$, where $K>1$ will be big and $0<c<1$ small. The value of $r_{c,K}$ is given in the Lemma \ref{lemaf1} by the relation $r_{c,K}=2a_1$. Notice that, by continuity and Lemma \ref{lemaf1}, there exist an interval $[r_{c,K}-\epsilon_{c,K},r_{c,K}+\epsilon_{c,K}]$ such that, if $r\in [r_{c,K}-\epsilon_{c,K},r_{c,K}+\epsilon_{c,K}]$, then 
\begin{equation}
\label{crecimiento}
\left|
\frac{\partial\frac{v^\gamma_\alpha(f_{1,c,K})(r)}{r}}{\partial r}(r)
\right|
\geq \frac{K}{2r}.
\end{equation}
Regarding $f_{2,c,K}$, we choose a $C^\infty$ function with support in $[r_{c,K}-\epsilon_{c,K},r_{c,K}+\epsilon_{c,K}]\cap[\frac{2r_{c,K}}{3},\frac{3r_{c,K}}{2}]$, such that $ \|  f_{2,c,K}  \| _{L^2}=c$.\\\\
Notice how the coronas that contain the supports of $f_{1,c,K}$ and $f_{2,c,K}$ are disposed: they are disjoint, the support of $f_{1,c,K}$ is the inner one and the absolute value of the radial derivative of $v^\gamma_\alpha(f_{1,c,K})/r$ is big where $f_{2,c,K}$ is non-zero.  More specifically $supp(f_{1,c,K})(r)\subset [a_0,\frac{r_{c,K}}{2}]$ and $supp(f_{2,c,K})(r)\subset [\frac{2r_{c,K}}{3}, \frac{3r_{c,K}}{2}]$. Also notice that the supports are bounded uniformly on $K>1$, $0<c<1$ by construction (check Lemma \ref{lemaf1}).\\ \\
As a last remark, notice that for some high $k\in\mathbb{N}$, the $C^k$ norms of $f_{1,c,K}$ and $f_{2,c,K}$ can grow when $K>1$ grows or when $0<c<1$ goes to zero. Those norms will appear in the following lemmas, in which we try to bound $F_{N,c,k,\gamma}$ and the difference between the solution and the pseudosolution in terms of $N>0$. Hence, the bounds of the Lemmas \ref{FnormL2}, \ref{lemanormafuerzahk}, \ref{cotadiferencia} and \ref{cotaTheta} will not be uniform on $K>1$ and $0<c<1$. Nevertheless, that will not be a problem to prove the ill-posedness result (Theorem \ref{teorema}) as long as we choose the parameters in the proper order.
\\
\begin{rmk}
    Although we do not indicate it, the pseudosolutions depend also on $\gamma\in(-1,1)$. The dependence is hidden, fist of all, in the angular velocity of the radial function inside the sine and secondly in the $\beta$ playing the  role of the exponent of the $N$ in the  denominator, since $\beta$ is confined in an interval dependent on $\gamma$.
\end{rmk}
We define $F_{N,c,K,\gamma}$ as the source term associated to the pseudo-solution $\overline{\theta}_{N,c,K}(r,\alpha,t)$ with respect to the $\gamma$-gSQG. Notice that 
\begin{align*}
    F_{N,c,K,\gamma}= -\frac{v_{\alpha}^{\gamma}(\overline{\theta}_{N,c,K}-f_{1,c,K})}{r} \frac{\partial \overline{\theta}_{N,c,K}}{\partial \alpha}-v_r^\gamma(\overline{\theta}_{N,c,K})\frac{\partial \overline{\theta}_{N,c,K}}{\partial r}\\ 
    = -\frac{v_{\alpha}^{\gamma}(\overline{\theta}_{N,c,K}-f_{1,c,K})}{r} \frac{\partial \overline{\theta}_{N,c,K}}{\partial \alpha}-v_r^\gamma(\overline{\theta}_{N,c,K}-f_{1,c,K})\frac{\partial \overline{\theta}_{N,c,K}}{\partial r}.
\end{align*}
\\ 
\begin{lma}
\label{FnormL2}
For $\gamma \in [-1,1)$, $\beta \in [1,2+\gamma)\cap(\frac{3}{2}+\gamma, 2+\gamma)$, $t\in [0,T]$ and the pseudo-solution $\overline{\theta}_{N,c,K}$, the source term $F_{N,c,K,\gamma}$ satisfies
\[
  \|  F_{N,c,K,\gamma}  \|  _{L^2} \leq \frac{C\mathcal{X}(\gamma)}{N^{2\beta-1-\gamma}},
\]
with $C$ depending on $c,K,T, \gamma$ and $\mathcal{X}$ being 
\begin{align}
\label{funcionsalto}
    \mathcal{X}(\gamma)=
    \left\{ \begin{array}{lcc} 
    \ln(e+N) & if & \gamma <0 ,
    \\ \\ 1 & if & \gamma \geq0.
    \end{array} \right.
\end{align}
\begin{proof}
We divide the source term in different parts.\\ \\
For the first inequality, using that the pseudo-solution has compact support in a corona independent on N, we can bound the $1/r$ factor and by Hölder's inequality
\[
  \|  
\frac{v_{\alpha}^{\gamma}(\overline{\theta}_{N,c,K}-f_{1,c,K})}{r} \frac{\partial \overline{\theta}_{N,c,K}}{\partial \alpha}
 \|  _{L^2}\leq
C  \|  v_{\alpha}^{\gamma}(\overline{\theta}_{N,c,K}-f_{1,c,K})  \|  _{L^2}
  \|  \frac{\partial \overline{\theta}_{N,c,K}}{\partial \alpha}
  \|  _{L^\infty}
\leq CN^{-(2\beta-1-\gamma)}\mathcal{X}(\gamma),
\]
where we also used the bound (\ref{normal2velocidad}).
\\ \\
Analogously, using again the bound (\ref{normal2velocidad}),
\begin{align*}
  \|  
v_{r}^{\gamma}(\overline{\theta}_{N,c,K}-f_{1,c,K})\frac{\partial (\overline{\theta}_{N,c,K}-f_{1,c,K})}{\partial r}
 \|  _{L^2} \\
\leq
  \|  
v_{r}^{\gamma}(\overline{\theta}_{N,c,K}-f_{1,c,K})  \|  _{L^2}
 \|  
\frac{\partial (\overline{\theta}_{N,c,K}-f_{1,c,K})}{\partial r} \|  _{L^\infty} \leq CN^{-(2\beta-1-\gamma)}  \mathcal{X}(\gamma).
\end{align*}
Recall that $supp(f_{1,c,K})\subset [a_0,\frac{r_{c,K}}{2})$ and that $\partial_r f_{1,c,K}$ is bounded uniformly on $N>1$. Also $supp(\overline{\theta}_{N,c,K}-f_{1,c,K})\subset supp(f_{2,c,K})\subset[\frac{2r_{c,K}}{3},\frac{3r_{c,K}}{2}]$ and $  \|  \overline{\theta}_{N,c,K}-f_{1,c,K}  \|  _{L^\infty} \leq \frac{C}{N^\beta}$. Then, using the Lemma \ref{vradial} we have
\begin{align*}
 \|  
v_{r}^{\gamma}(\overline{\theta}_{N,c,K}-f_{1,c,K})\frac{\partial f_{1,c,K}}{\partial r} 
 \|  _{L^2}
\leq 
\left(
\int^{\frac{r_{c,K}}{2}}_{a_0} \frac{C}{N^{2+2\beta}(\frac{2r_{c,K}}{3}-r)^{4+2\gamma}}rdr 
\right)^\frac{1}{2}
\leq CN^{-(1+\beta)} \leq CN^{-(2\beta-1-\gamma)},
\end{align*}
using in this last inequality that $\beta<2+\gamma$.\\ \\
Combining the bounds we get the desired result. 
\end{proof}
\end{lma}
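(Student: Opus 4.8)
The plan is to write $F_{N,c,K,\gamma}$ out explicitly, split it into three pieces, and bound each one by Hölder's inequality together with the velocity estimates already established, namely \eqref{normal2velocidad} and Lemma \ref{vradial}.

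First I would record the structural cancellation. Since $f_{1,c,K}$ is radial it is a stationary solution, $v^\gamma(f_{1,c,K})$ is purely azimuthal, and in particular $v_r^\gamma(f_{1,c,K})=0$; also, by construction, the perturbation $\overline{\theta}_p:=\overline{\theta}_{N,c,K}-f_{1,c,K}$ solves exactly the linear transport equation $\partial_t\overline{\theta}_p+\frac{v_\alpha^\gamma(f_{1,c,K})}{r}\partial_\alpha\overline{\theta}_p=0$. Expanding the nonlinearity in polar coordinates, $v^\gamma(\overline{\theta}_{N,c,K})\cdot\nabla\overline{\theta}_{N,c,K}=v_r^\gamma(\overline{\theta}_p)\partial_r\overline{\theta}_{N,c,K}+\frac{v_\alpha^\gamma(f_{1,c,K})+v_\alpha^\gamma(\overline{\theta}_p)}{r}\partial_\alpha\overline{\theta}_{N,c,K}$, and subtracting the part that the pseudosolution already solves leaves the two displayed terms, which I would further split into the three pieces $-\frac{v_\alpha^\gamma(\overline{\theta}_p)}{r}\partial_\alpha\overline{\theta}_p$, $\;-v_r^\gamma(\overline{\theta}_p)\partial_r\overline{\theta}_p$ and $-v_r^\gamma(\overline{\theta}_p)\partial_r f_{1,c,K}$, using $\partial_\alpha\overline{\theta}_{N,c,K}=\partial_\alpha\overline{\theta}_p$.

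Next I would dispose of the first two pieces. Observe that $\overline{\theta}_p$ has the form $g_N$ of the velocity lemmas, with radial profile $f_{2,c,K}r_{c,K}^\beta N^{-\beta}$ and phase $\alpha_0(r,t)=-t\,v_\alpha^\gamma(f_{1,c,K})(r)/r$; hence $\|\overline{\theta}_p\|_{L^\infty}\le CN^{-\beta}$, and since an $\alpha$-derivative only produces a factor $N$ while $\partial_r\alpha_0$ is bounded by a constant depending on $c,K,T$ (not on $N$, as $f_{1,c,K}$ is a fixed smooth function once $c,K$ are chosen), both $\|\partial_\alpha\overline{\theta}_p\|_{L^\infty}$ and $\|\partial_r\overline{\theta}_p\|_{L^\infty}$ are $\le CN^{1-\beta}$. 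Because all supports lie in a fixed annulus away from the origin, the factors $1/r$ and $\partial_r f_{1,c,K}$ are bounded, so Hölder's inequality plus the estimate $\|v^\gamma(\overline{\theta}_p)\|_{L^2}\le CN^{-\beta+\gamma}\mathcal{X}(\gamma)$ from \eqref{normal2velocidad} gives, for each of the first two pieces, a bound $C N^{-\beta+\gamma}\mathcal{X}(\gamma)\cdot N^{1-\beta}=CN^{-(2\beta-1-\gamma)}\mathcal{X}(\gamma)$.

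The remaining piece $v_r^\gamma(\overline{\theta}_p)\partial_r f_{1,c,K}$ is the delicate one, and it is where the off-support decay of the radial velocity — the generalized-SQG substitute for the Calder\'on--Zygmund bounds available only when $\gamma=0$ — is needed. The relevant geometry is that $supp(f_{1,c,K})\subset[a_0,r_{c,K}/2]$ lies strictly inside $supp(\overline{\theta}_p)\subset[2r_{c,K}/3,3r_{c,K}/2]$, so on $\{0\le r<R_0\}$ with $R_0=2r_{c,K}/3$ the ``small radii'' estimate of Lemma \ref{vradial} gives $|v_r^\gamma(\overline{\theta}_p)(r,\alpha)|\le \frac{C\|f_{2,c,K}r_{c,K}^\beta N^{-\beta}\|_{L^\infty}}{N(R_0-r)^{2+\gamma}}\le \frac{CN^{-\beta-1}}{(R_0-r)^{2+\gamma}}$. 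Since $r$ stays bounded away from $R_0$ on $supp(f_{1,c,K})$ the weight is integrable there, and $\partial_r f_{1,c,K}$ is bounded uniformly in $N$, so $\|v_r^\gamma(\overline{\theta}_p)\partial_r f_{1,c,K}\|_{L^2}\le CN^{-\beta-1}$. Finally $N^{-\beta-1}\le N^{-(2\beta-1-\gamma)}$ holds precisely because $\beta<2+\gamma$ — this is exactly where the upper endpoint restriction on $\beta$ is used — and summing the three contributions yields $\|F_{N,c,K,\gamma}\|_{L^2}\le \frac{C\mathcal{X}(\gamma)}{N^{2\beta-1-\gamma}}$ with $C$ depending on $c,K,T,\gamma$. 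I expect the only genuine obstacle to be the control of $v_r^\gamma(\overline{\theta}_p)$ off the support of $\overline{\theta}_p$ through Lemma \ref{vradial}; the rest is bookkeeping with Hölder's inequality and the already-established $L^2$ velocity bound.
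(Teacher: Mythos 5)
Your proposal is correct and follows essentially the same route as the paper: same three-term decomposition of $F_{N,c,K,\gamma}$ using the cancellations $v_r^\gamma(f_{1,c,K})=0$ and $\partial_\alpha f_{1,c,K}=0$, Hölder combined with the $L^2$ velocity bound \eqref{normal2velocidad} and $\|\nabla\overline{\theta}_p\|_{L^\infty}\le CN^{1-\beta}$ for the first two pieces, and the small-radius pointwise decay of $v_r^\gamma$ from Lemma \ref{vradial} for the disjoint-support piece, closed with $\beta<2+\gamma$. The only cosmetic difference is that you make the cancellations and the role of the disjoint-support geometry explicit up front, which the paper uses tacitly; the estimates themselves coincide line for line.
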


\begin{lma}
\label{lemanormafuerzahk}
For $\gamma \in [-1,1)$, $\beta \in [1,2+\gamma)\cap(\frac{3}{2}+\gamma, 2+\gamma)$, $t\in [0,T]$ and the pseudo-solution $\overline{\theta}_{N,c,K}$, the source term $F_{N,c,K,\gamma}$ satisfies, for $k\in \mathbb{N}$, $k\geq 2$,
\begin{align}
\label{normafuerzahk}
  \|  F_{N,c,K,\gamma}  \|  _{H^k} \leq \frac{C\mathcal{X}(\gamma)}{N^{2\beta-1-k-\gamma}},
\end{align}
with $C$ depending on $k,c,K,T$ and $\gamma$ and with $\mathcal{X}(\gamma)$ given by (\ref{funcionsalto}).
\begin{proof}
Just as we did in the last lemma, we split the computation in three parts.\\ \\
We use the derivative of the product and the velocity bound given by (\ref{normahsoperados}),
\[
  \|  
\frac{v_{\alpha}^{\gamma}(\overline{\theta}_{N,c,K}-f_{1,c,K})}{r} \frac{\partial \overline{\theta}_{N,c,K}}{\partial \alpha}
  \|  _{\dot{H}^k}\leq 
C\sum_{i=0}^{k} \|  \frac{\partial \overline{\theta}_{N,c,K}}{\partial \alpha}  \|  _{C^i}
  \|  
v_{\alpha}^{\gamma}(\overline{\theta}_{N,c,K}-f_{1,c,K}) 
 \|  _{\dot{H}^{k-i}} 
\]
\[
\leq
C\sum_{i=0}^{k}N^{-(\beta-1-i)}N^{-(\beta-k+i-\gamma)}\mathcal{X}(\gamma+k) \leq CN^{-(2\beta -1-k-\gamma)} \mathcal{X} (\gamma).
\]
The second bound is done analogously to the first using the bound (\ref{normahsoperados}),
\begin{align*}
  \|  \frac{\partial (\overline{\theta}_{N,c,K}-f_{1,c,K})}{\partial r} v_r^{\gamma}(\overline{\theta}_{N,c,K}-f_{1,c,K}) \|  _{\dot{H}^k}
\\
\leq
C\sum_{i=0}^{k} 
 \|  \frac{\partial (\overline{\theta}_{N,c,K}-f_{1,c,K})}{\partial r} \|  _{C^i}
  \|  v_r^{\gamma}(\overline{\theta}_{N,c,K}-f_{1,c,K})  \|  _{\dot{H}^{k-i}} \leq C N^{-(2\beta -1-k-\gamma)}\mathcal{X} (\gamma).
\end{align*}
Finally, we have to bound $  \|  \frac{\partial f_{1,c,K}}{\partial r} v_r^{\gamma}(\overline{\theta}_{N,c,K}-f_{1,c,K})  \|  _{\Dot{H}^k}$. Let $\bold{i},\bold{j}$ denote vectors of the form $\bold{j}=(j_1,j_2)\in \mathbb{N}_0^2$. Hence,
\[
  \|  \frac{\partial f_{1,c,K}}{\partial r} v_r^{\gamma}(\overline{\theta}_{N,c,K}-f_{1,c,K})  \|  _{\Dot{H}^k}\leq  
 \sum_{|\bold{j}|=k}   \|  \partial^{\bold{j}}(\frac{\partial f_{1,c,K}}{\partial r} v_r^{\gamma}(\overline{\theta}_{N,c,K}-f_{1,c,K})) \|  _{L^2}
\]
\[
\leq \sum_{|\bold{i}+\bold{j}|=k}
\|  \partial^\bold{j}(\frac{\partial f_{1,c,K}}{\partial r})
\partial^\bold{i} v_r^{\gamma}(\overline{\theta}_{N,c,K}-f_{1,c,K})\|  _{L^2}
\leq C \sum_{|\bold{i}|=k}
\| 
\partial^\bold{i} v_r^{\gamma}(\overline{\theta}_{N,c,K}-f_{1,c,K})\|  _{L^2(supp(f_{1,c,K}))}
\]
\[
\leq \left(
\int_{a_0}^{\frac{r_{c,K}}{2}}
\frac{CN^{2k}}{N^{2+2\beta}(\frac{2r_{c,K}}{3}-r)^{4+2\gamma}}rdr 
\right)^{\frac{1}{2}}\leq CN^{k-1-\beta}\leq  CN^{-(2\beta-1-k-\gamma)},
\]
using the Lemma \ref{vradial}, $\beta<2+\gamma$ and the disposal of the supports of the radial functions, more precisely $supp(f_{1,c,K})\subset [a_0,\frac{r_{c,K}}{2})$ and $supp(\overline{\theta}_{N,c,K}-f_{1,c,K})\subset supp(f_{2,c,K})\subset[\frac{2r_{c,K}}{3},\frac{3r_{c,K}}{2}]$. \\\\
To obtain the final non-homogeneous bound we just have to sum the $L^2$ bound obtained in the Lemma \ref{FnormL2} and the homogeneous one, noticing that the homogeneous is dominating.
\end{proof}
\end{lma}
Now, applying interpolation for Sobolev spaces with, for example, $L^2$ and $H^4$, we obtain the following corollary.
\vspace{3mm}
\begin{coro}
\label{FnormH}
For $\gamma \in [-1,1)$, $\beta \in [1,2+\gamma)\cap(\frac{3}{2}+\gamma, 2+\gamma)$, $t\in [0,T]$ and a pseudo-solution $\overline{\theta}_{N,c,K}$, the source term $F_{N,c,K,\gamma}$ satisfies
\[
  \|  F_{N,c,K,\gamma}  \|  _{H^{\beta+\frac{1}{2}}} \leq \frac{C\mathcal{X}(\gamma)}{N^{\beta-\frac{3}{2}-\gamma}},
\]
with $C$ depending on $c,K,T, \gamma$ and with $\mathcal{X}(\gamma)$ given by (\ref{funcionsalto}).
\end{coro}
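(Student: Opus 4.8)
The plan is to obtain the claimed bound purely by Sobolev interpolation between the two endpoint estimates already at hand: the $L^2$ bound of Lemma~\ref{FnormL2} and the integer-order bound (\ref{normafuerzahk}). First I would fix an integer $k$ with $k>\beta+\tfrac12$. Since $\beta<2+\gamma<3$ we have $\beta+\tfrac12<\tfrac72$, so $k=4$ works for every admissible pair $(\gamma,\beta)$, and moreover $k=4\geq 2$, which is the hypothesis under which (\ref{normafuerzahk}) is available. This gives the two endpoint estimates
\[
\|F_{N,c,K,\gamma}\|_{L^2}\leq \frac{C\mathcal{X}(\gamma)}{N^{2\beta-1-\gamma}},\qquad
\|F_{N,c,K,\gamma}\|_{H^4}\leq \frac{C\mathcal{X}(\gamma)}{N^{2\beta-5-\gamma}}.
\]

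Next I would apply the standard interpolation inequality $\|f\|_{H^s}\leq C\|f\|_{L^2}^{1-\theta}\|f\|_{H^4}^{\theta}$ with $s=(1-\theta)\cdot 0+\theta\cdot 4$, that is $\theta=\tfrac{s}{4}$, to the exponent $s=\beta+\tfrac12\in[0,4]$, so that $\theta=\tfrac{\beta+1/2}{4}$. Inserting the two endpoint bounds, the prefactor $\mathcal{X}(\gamma)^{1-\theta}\mathcal{X}(\gamma)^{\theta}=\mathcal{X}(\gamma)$ is unchanged, and the power of $N$ in the denominator becomes the convex combination
\[
(1-\theta)(2\beta-1-\gamma)+\theta(2\beta-5-\gamma)=2\beta-1-\gamma-4\theta=2\beta-1-\gamma-\left(\beta+\tfrac12\right)=\beta-\tfrac32-\gamma,
\]
which is exactly the claimed exponent; the constant $C$ inherits its dependence on $c,K,T,\gamma$ (and on the fixed choice $k=4$) from the two endpoint lemmas.

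There is essentially no genuine obstacle here beyond bookkeeping. The only point that must be verified is that $\beta+\tfrac12$ indeed lies in the interval $[0,k]$ spanned by the two endpoint estimates, which is precisely why one takes $k=4$ (any integer $k\geq 2$ with $k>\beta+\tfrac12$ would serve equally well, but $4$ is comfortably above $\tfrac72$ for all $\gamma\in(-1,1)$), and that Lemma~\ref{FnormL2} and (\ref{normafuerzahk}) carry the \emph{same} factor $\mathcal{X}(\gamma)$ so that it passes through the interpolation unaffected rather than being raised to a fractional power. I would note in passing that the resulting exponent $\beta-\tfrac32-\gamma$ is strictly positive exactly under the hypothesis $\beta\in(\tfrac32+\gamma,2+\gamma)$, which is why this range appears in the main theorems, although the corollary itself only asserts the bound and does not use positivity.
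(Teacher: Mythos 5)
Your proposal is correct and matches the paper's approach exactly: the paper states the corollary follows ``applying interpolation for Sobolev spaces with, for example, $L^2$ and $H^4$,'' which is precisely the interpolation you carry out, with the same exponent bookkeeping.
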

To assure local existence of a solution to the $\gamma$-gSQG equation (\ref{gSQG}), we must consider the problem in $H^{2+\gamma+\epsilon}$, with $\epsilon>0$. Notice that whenever we consider a solution in the following steps we will do it in $H^{\beta+\frac{1}{2}}$, so we will be in the well-posedness regime. \\ \\
The condition $\beta<2+\gamma$ appears analytically in the proof of the Lemma \ref{lemaf1}, but it can be also justified by the fact that the strong ill-posedness we are proving would contradict local existence of solutions, result that holds when $\beta>2+\gamma$. We will not deal with the critical case $\beta=2+\gamma$.\\ \\
On the other hand, in the proof we will see that, to be able to close the estimates, they must be done in a Sobolev space $H^s$ where local existence holds. This is, $s> 2+\gamma$. On the other hand, the source term must converge to zero in that space, so (check (\ref{normafuerzahk}) and interpolate) $ 2\beta-1-s-\gamma>0$. Combining those inequalities we obtain that $\beta > \frac{3}{2}+\gamma$. This leads us to consider $\beta$ in the interval $(\frac{3}{2} +\gamma,2+\gamma)$.\\ \\
 We will find also the restriction $\beta\geq 1$ (check Lemma \ref{cotadiferencia}) and as a consequence the Euler case $\gamma=-1$ will not be included anymore. \\ \\
So, given $\gamma \in (-1,1)$, $\beta \in [1,2+\gamma)\cap (\frac{3}{2}+\gamma,2+\gamma)$, let us consider $\theta_{N,c,K,\gamma}$ the unique classical solution in $H^{\beta+\frac{1}{2}}$ to the $\gamma$-gSQG equation (\ref{gSQG}) with initial conditions $\theta_{N,c,K,\gamma}(x,0)=\overline{\theta}_{N,c,K}(x,0)\in H^{\beta+\frac{1}{2}}$.\\ \\
We define the difference between the solution and the  pseudosolution as 
\begin{equation}
\label{defdiferencia}
\Theta_{N,c,K,\gamma}:=\theta_{N,c,K,\gamma}-\overline{\theta}_{N,c,K}.
\end{equation}
In order to conclude that the growth over time of $\overline{\theta}_{N,c,K}$ and $\theta_{N,c,K,\gamma}$ in $H^{\beta}$ are similar, it is necessary to find bounds for the $H^\beta$ norm of $\Theta_{N,c,K,\gamma}$ in some period of time.\\ \\
Firstly, notice that $\Theta_{N,c,K,\gamma}$ solves the equation 
\begin{align*}
\frac{\partial \Theta_{N,c,K,\gamma}}{\partial t} + v_1^\gamma(\Theta_{N,c,K,\gamma}) \frac{\partial \Theta_{N,c,K,\gamma}}{\partial x_1}+ v_2^\gamma(\Theta_{N,c,K,\gamma}) \frac{\partial \Theta_{N,c,K,\gamma}}{\partial x_2}
\\
+v_1^\gamma(\Theta_{N,c,K,\gamma}) \frac{\partial \overline{\theta}_{N,c,K}}{\partial x_1}+ v_2^\gamma(\Theta_{N,c,K,\gamma}) \frac{\partial \overline{\theta}_{N,c,K}}{\partial x_2}
\\
+ v_1^\gamma(\overline{\theta}_{N,c,K}) \frac{\partial \Theta_{N,c,K,\gamma}}{\partial x_1}+ v_2^\gamma(\overline{\theta}_{N,c,K}) \frac{\partial \Theta_{N,c,K,\gamma}}{\partial x_2}- F_{N,c,K,\gamma}=0  .
\end{align*}
Now, before proving the Sobolev bounds for $\Theta_{N,c,K,\gamma}$, we state some technical lemmas. These first two can be found in \cite{kato}.
\vspace{3mm}
\begin{lma}
\label{leibniz}
    Let $s>0$. Then for any $s_1,s_2\geq 0$ such that $s=s_1+s_2$ and any $f,g \in \mathcal{S}(\mathbb{R}^2)$, the following holds
    \[
     \|  D^s(fg)-
    \sum_{|\bold{k}|\leq s_1}\frac{1}{\bold{k}!}\partial^{\bold{k}}fD^{s,\bold{k}}g-
    \sum_{|\bold{j}|\leq s_2}\frac{1}{\bold{j}!}\partial^{\bold{j}}gD^{s,\bold{j}}f
    \|  _{L^2}
    \leq C  \|  D^{s_1}f  \|  _{L^2}  \|  D^{s_2}g  \|  _{BMO},
    \]
    where $\bold{j}\in \mathbb{N}_0^2$ and $\bold{k}\in \mathbb{N}_0^2$ are multi-indexes, $\partial^{\bold{j}}=\frac{\partial}{\partial x_1^{j_1}\partial x_2^{j_2}}$, $\partial^{\bold{j}}_\xi = \frac{\partial}{\partial \xi_1^{j_1} \partial \xi_2^{j_2}}$ and $D^{s,\bold{j}}$ is defined on the Fourier side as 
    \begin{equation*}
                \widehat{D^{s,\bold{j}}f} ( \xi )= \widehat{D^{s,\bold{j}}}(\xi)\hat{f}(\xi),
    \end{equation*}
    where 
    \begin{equation*}
        \widehat{D^{s,\bold{j}}}(\xi)= i^{-|\bold{j}|} \partial_{\xi}^{\bold{j}} (|\xi|^s).
    \end{equation*}
\end{lma}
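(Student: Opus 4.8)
The plan is to treat Lemma~\ref{leibniz} as a refined fractional Leibniz (Kato--Ponce type) estimate; its complete proof can be found in \cite{kato}, so here I only outline the strategy. First I would pass to the Fourier side and record a bilinear-multiplier description of every term appearing in the statement. Using $\widehat{fg}=\hat f*\hat g$ one checks that for a multi-index $\bold{k}\in\mathbb{N}_0^2$,
\[
\widehat{\partial^{\bold{k}}f\,D^{s,\bold{k}}g}(\xi)=\int_{\mathbb{R}^2}\zeta^{\bold{k}}\,\partial_\xi^{\bold{k}}(|\cdot|^s)(\xi-\zeta)\,\hat f(\zeta)\,\hat g(\xi-\zeta)\,d\zeta ,
\]
so that $\sum_{|\bold{k}|\le s_1}\frac1{\bold{k}!}\partial^{\bold{k}}f\,D^{s,\bold{k}}g$ is precisely the operator obtained from $D^s(fg)$ by replacing the symbol $|\xi|^s=|(\xi-\zeta)+\zeta|^s$ (with $\zeta$ the frequency carried by $f$) by the Taylor polynomial of $|\cdot|^s$ about $\xi-\zeta$ in the variable $\zeta$, truncated at order $\lfloor s_1\rfloor$; symmetrically, the sum in $\bold{j}$ is the Taylor polynomial about the frequency carried by $g$, truncated at order $\lfloor s_2\rfloor$. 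Hence the quantity to be bounded is the $L^2$ norm of a single bilinear Fourier multiplier applied to $(f,g)$ whose symbol is $|\xi|^s$ minus these two truncated Taylor polynomials.

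Next I would decompose $fg$ into low--high, high--low and high--high pieces via Littlewood--Paley projections $\Delta_k$ (Bony's paraproduct decomposition). On the high--low piece (frequency of $g$ much smaller than that of $f$) the Taylor expansion of $|\xi|^s$ about the frequency of $f$ in the small variable is accurate, the subtracted sum in $\bold{j}$ cancels its polynomial part, and the leftover bilinear symbol is a Coifman--Meyer multiplier homogeneous of degree $s_2$ in the small frequency; the Coifman--Meyer theorem, together with the embedding $BMO\hookrightarrow\dot B^0_{\infty,\infty}$ applied to $D^{s_2}g$, then yields the bound $C\|D^{s_1}f\|_{L^2}\|D^{s_2}g\|_{BMO}$, the $L^2$ norm being placed on the $f$-factor. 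The low--high piece is handled symmetrically using the sum in $\bold{k}$. I would also need to note that the \emph{full} products $\partial^{\bold{k}}f\,D^{s,\bold{k}}g$ and $\partial^{\bold{j}}g\,D^{s,\bold{j}}f$ contain interactions outside their ``natural'' paraproduct region (e.g.\ high--high ones); these extra contributions are of resonant or lower-order type and are absorbed into the estimates below.

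The hard part will be the high--high (resonant) term $D^s\big(\sum_k\Delta_k f\,\widetilde\Delta_k g\big)$: here the output frequency may be far below the common input frequency $2^k$, so the crude estimate $2^{sk}\|\Delta_k f\|_{L^2}\|\widetilde\Delta_k g\|_{L^\infty}$ only produces a sequence in $\ell^2_k\cdot\ell^\infty_k$, which need not be summable. To repair this I would use the square-function (John--Nirenberg / Carleson-measure) characterization of $BMO$ rather than merely $BMO\hookrightarrow\dot B^0_{\infty,\infty}$: pairing against an $L^2$ test function and redistributing $D^s$, one exploits that $\big(\sum_k|2^{s_2 k}\widetilde\Delta_k g|^2\big)^{1/2}$ is controlled, in the relevant averaged sense, by $\|D^{s_2}g\|_{BMO}$, which restores summability against the $\ell^2_k$ sequence coming from $\|D^{s_1}f\|_{L^2}$. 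I expect this resonant term to be the only genuine difficulty; the rest is bookkeeping of Taylor remainders and routine Coifman--Meyer estimates, and all of it is carried out in detail in \cite{kato}.
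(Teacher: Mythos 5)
The paper does not prove this lemma at all; it simply cites \cite{kato} (Li's ``On Kato--Ponce and fractional Leibniz''), which is exactly what you do. Your sketch of the underlying argument---bilinear symbol as a Taylor remainder, Bony paraproduct split into low--high, high--low, high--high pieces, Coifman--Meyer for the off-diagonal pieces, and the Carleson/square-function characterization of $BMO$ to sum the resonant piece---is a fair account of the strategy in that reference, so this is essentially the same approach.
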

\vspace{3.5mm}
\begin{lma}
\label{leibnizv}
    Let $\gamma \in(0,1)$. Then for any two functions in the Schwartz class $f,g \in \mathcal{S}(\mathbb{R}^2)$ and $A^\gamma$ an homogeneous differential operator of order $\gamma>0$, the following holds
   \[ 
     \|  A^\gamma_i(fg)- A^\gamma_i(f)g-f A^\gamma_i(g)  \|  _{L^2} \leq C
     \|  f  \|  _{L^2}   \|  D^\gamma g  \|  _{BMO},
   \]
   where $D^\gamma$ is the operator defined on the Fourier domain as $\widehat{D^\gamma f}(\xi)=|\xi|^\gamma \Hat{f}(\xi)$.
\end{lma}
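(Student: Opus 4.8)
The plan is to treat this as a fractional-Leibniz commutator estimate of Kato--Ponce / Coifman--Meyer type (the statement is in fact recorded in \cite{kato}), so I will only sketch the argument. First I would dispose of the model case $A^{\gamma}=\Lambda^{\gamma}$: this is exactly Lemma~\ref{leibniz} with $s=\gamma$, $s_{1}=0$, $s_{2}=\gamma$, since for $\gamma\in(0,1)$ the only admissible multi-indices are $\mathbf{k}=\mathbf{j}=0$, so the two finite sums there degenerate to $f\Lambda^{\gamma}g$ and $g\Lambda^{\gamma}f$ and we get
\[
\bigl\|\Lambda^{\gamma}(fg)-f\,\Lambda^{\gamma}g-g\,\Lambda^{\gamma}f\bigr\|_{L^{2}}\le C\,\|f\|_{L^{2}}\,\|D^{\gamma}g\|_{BMO}.
\]
To pass to a general homogeneous operator of order $\gamma$, I would use that its symbol factors as $a(\xi)=|\xi|^{\gamma}\sigma(\xi/|\xi|)$ with $\sigma$ smooth on the unit circle, hence $A^{\gamma}_{i}=R_{i}\circ\Lambda^{\gamma}$ with $R_{i}$ a zeroth-order Calderón--Zygmund operator (bounded on $L^{2}$ and, modulo constants, on $BMO$), and the algebraic identity
\[
A^{\gamma}_{i}(fg)-A^{\gamma}_{i}(f)\,g-f\,A^{\gamma}_{i}(g)=R_{i}\bigl(\Lambda^{\gamma}(fg)-f\Lambda^{\gamma}g-g\Lambda^{\gamma}f\bigr)+[R_{i},M_{f}]\Lambda^{\gamma}g+[R_{i},M_{g}]\Lambda^{\gamma}f,
\]
where $M_{\phi}$ denotes multiplication by $\phi$.

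Then I would estimate the three terms in turn. The first is bounded in $L^{2}$ by $C\|f\|_{L^{2}}\|D^{\gamma}g\|_{BMO}$ from the $L^{2}$-boundedness of $R_{i}$ and the model estimate. The commutator $[R_{i},M_{g}]\Lambda^{\gamma}f$ is harmless, since the commutator of a Calderón--Zygmund operator with a smooth multiplier $g$ gains a derivative and thus maps $H^{-\gamma}$ into $H^{1-\gamma}\hookrightarrow L^{2}$, while $\|\Lambda^{\gamma}f\|_{H^{-\gamma}}\lesssim\|f\|_{L^{2}}$. The delicate term is $[R_{i},M_{f}]\Lambda^{\gamma}g$, where the multiplier $f$ is only in $L^{2}$ and no $BMO$-commutator bound is available; here I would pair with $\psi\in L^{2}$, move the multiplication across to write $\langle[R_{i},M_{f}]\Lambda^{\gamma}g,\psi\rangle=\langle\Lambda^{\gamma}g,\,[R_{i}^{*},M_{f}]\psi\rangle$, and invoke the Coifman--Rochberg--Weiss mechanism (the weak factorization of the Hardy space $H^{1}$) to see that $[R_{i}^{*},M_{f}]\psi\in H^{1}$ with norm $\lesssim\|f\|_{L^{2}}\|\psi\|_{L^{2}}$; $H^{1}$--$BMO$ duality then bounds the pairing by $\|D^{\gamma}g\|_{BMO}\|f\|_{L^{2}}\|\psi\|_{L^{2}}$, and taking the supremum over $\psi$ finishes the proof.

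An alternative that bypasses the reduction is a direct Bony paraproduct decomposition: apply $A^{\gamma}$ to $fg=T_{f}g+T_{g}f+R(f,g)$ and compare with the decompositions of $A^{\gamma}(f)g$ and $fA^{\gamma}(g)$. The subtraction of the two main terms is exactly what makes the low-frequency factor $f$, controlled only in $L^{2}$, never have to absorb a derivative: the off-diagonal paraproducts of $A^{\gamma}(fg)$ differ from $T_{f}(A^{\gamma}g)$ and $T_{g}(A^{\gamma}f)$ only by commutator remainders obtained by Taylor-expanding $a(\xi)$ about the high frequency, which gain a derivative and close by Bernstein's inequality, the bound $\|\Delta_{j}g\|_{L^{\infty}}\lesssim 2^{-j\gamma}\|D^{\gamma}g\|_{BMO}$, and almost-orthogonality; the resonant piece $A^{\gamma}(R(f,g))-R(A^{\gamma}f,g)-R(f,A^{\gamma}g)$ is handled the same way; and the leftover paraproducts with the $\gamma$-order factor in the low slot are bounded directly by $\|T_{a}b\|_{L^{2}}\lesssim\|a\|_{BMO}\|b\|_{L^{2}}$ together with $\|A^{\gamma}g\|_{BMO}\lesssim\|D^{\gamma}g\|_{BMO}$.

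The main obstacle in either formulation is precisely this derivative bookkeeping --- ensuring that in every surviving term the $\gamma$ orders of differentiation land on $g$ and are measured in $BMO$ (the sharp space for the low slot of a paraproduct) while $f$ stays in $L^{2}$. The concrete hard point is the term $[R_{i},M_{f}]\Lambda^{\gamma}g$, equivalently the low$\times$high interaction in which the symbol of $A^{\gamma}$ would prefer to act on $f$: since one cannot afford to lose even a fraction of a derivative on $f$, the full gain must be extracted from the cancellation, which is exactly what the Hardy-space duality provides.
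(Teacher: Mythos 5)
This lemma is not proved in the paper; it is quoted (together with Lemma~\ref{leibniz}) from \cite{kato}, so there is no in-paper argument to compare against. Evaluating your sketch on its own merits: the reduction of the model case $A^\gamma=\Lambda^\gamma$ to Lemma~\ref{leibniz} with $s_1=0$, $s_2=\gamma$ is correct, since for $\gamma\in(0,1)$ both sums there collapse to $f D^\gamma g$ and $g D^\gamma f$. The passage to a general $A^\gamma_i = R_i\circ\Lambda^\gamma$, however, has two gaps. For $[R_i,M_g]\Lambda^\gamma f$ you invoke the fact that a Calder\'on--Zygmund commutator with a smooth multiplier gains a whole derivative, i.e.\ maps $H^{-\gamma}\to H^{1-\gamma}$; but the operator norm of that map is controlled by $\|\nabla g\|_{L^\infty}$, and for $\gamma<1$ the Lipschitz norm of $g$ is \emph{not} controlled by $\|D^\gamma g\|_{BMO}$. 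The bound you actually need is the fractional commutator smoothing estimate $\|[R_i,M_g]\|_{\dot H^{-\gamma}\to L^2}\le C\|D^\gamma g\|_{BMO}$, which is of the same strength as the lemma and requires its own paraproduct proof.

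The second gap is more serious. For $[R_i,M_f]\Lambda^\gamma g$ you pair with $\psi$ to obtain $\pm\langle\Lambda^\gamma g,[R_i^\ast,M_f]\psi\rangle$ and assert $[R_i^\ast,M_f]\psi\in H^1$ with $\|[R_i^\ast,M_f]\psi\|_{H^1}\le C\|f\|_{L^2}\|\psi\|_{L^2}$. This is false: $\int[R_i^\ast,M_f]\psi\,dx=\int R_i^\ast(f\psi)\,dx-\int f\,R_i^\ast\psi\,dx=-\langle f,R_i^\ast\psi\rangle$, which does not vanish in general, so $[R_i^\ast,M_f]\psi$ fails the necessary mean-zero condition for $H^1$. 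The Coifman--Rochberg--Weiss weak factorization concerns the \emph{symmetrized} bilinear form $\phi\,T^\ast\psi-\psi\,T\phi$, which does have vanishing mean; the commutator applied to a single function is a different object. If you instead unfold the pairing in the CRW way, you get $\langle[R_i,M_f]\Lambda^\gamma g,\psi\rangle=\langle f,\;\Lambda^\gamma g\cdot R_i^\ast\psi-\psi\cdot R_i\Lambda^\gamma g\rangle$, but then it is $f$ (in $L^2$, not $BMO$) that sits against the would-be $H^1$ form, and moreover $\Lambda^\gamma g$ is only in $BMO$, not $L^2$, so the CRW $H^1$ bound does not apply to $\Lambda^\gamma g\cdot R_i^\ast\psi-\psi\cdot R_i\Lambda^\gamma g$ either. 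Your alternative Bony-paraproduct sketch is the right route (and is what \cite{kato} does), and the ingredient $\|\Delta_j g\|_{L^\infty}\le C2^{-j\gamma}\|D^\gamma g\|_{BMO}$ is correct, but as written it is an outline rather than a proof; the commutator-reduction approach, on the other hand, does not close.
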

\begin{lma}
\label{gammanegativa}
    Let $f\in L^2$ and supported in $B_R(0)\subset \mathbb{R}^2$, $s\in (0,1)$. Then $\|v^{-s}(f)\|_{L^2}\leq C\|f\|_{L^2}$ where $C=C(R,s)>0$.
    \begin{proof}
        \begin{align*}
            \|v^{-s}(f)\|_{L^2}^2=\int_{\mathbb{R}^2}|v^{-s}(f)(x)|^2 dx= \int_{B_{2R}(0)}|v^{-s}(f)(x)|^2 dx+ \int_{\mathbb{R}^2 \setminus B_{2R}(0)}|v^{-s}(f)(x)|^2 dx.
        \end{align*}
        Firstly,
        \begin{align*}
            \int_{B_{2R}(0)}|v^{-s}(f)(x)|^2 dx =\int_{B_{2R}(0)}
            \left|C
            \int_{\mathbb{R}^2}\frac{(x-y)^{\perp}}{|x-y|^{3-s}}f(y)dy
            \right|^2
            dx
            \leq 
            \int_{B_{2R}(0)}
            \left(
            \int_{B_{R}(0)}
            \frac{|f(y)|}{|x-y|^{2-s}}dy
            \right)^2
            dx\\
            \leq
            \int_{B_{2R}(0)} 
            \left(
                \int_{B_{R}(0)} 
                    \frac{|f(y)|^2}{|x-y|^{2-s}} dy
                    \right)
                     \left(
                \int_{B_{R}(0)} 
                    \frac{1}{|x-y|^{2-s}} dy
                    \right)dx\leq
                    C
             \int_{B_{2R}(0)} 
             |f(y)|^2
            \left(
                \int_{B_{R}(0)} 
                    \frac{1}{|x-y|^{2-s}} dx
                    \right)dy        
            \\\leq C \|f\|_{L^2}^2,
        \end{align*}
        where $C>0$ depends on the $L^1$ norm of $K^{-s}$ given by (\ref{kernel}) in the ball $B_{4R}(0)$ .\\
        Notice that, fixed $x\in \mathbb{R}^2\setminus B_{2R}(0)$, the $y\in B_{R}(0)$ minimizing the distance between $x$ and $y$ is $y=R\frac{x}{|x|}$, so
        \begin{align*}
            \int_{\mathbb{R}^2 \setminus B_{2R}(0)}|v^{-s}(f)(x)|^2 dx=\int_{\mathbb{R}^2\setminus B_{2R}(0)}
            \left|C
            \int_{B_{R}(0)}\frac{(x-y)^{\perp}}{|x-y|^{3-s}}f(y)dy
            \right|^2
            dx\\
            \leq \int_{\mathbb{R}^2\setminus B_{2R}(0)}
            \left(C
            \int_{B_{R}(0)}\frac{|f(y)|}{|x-y|^{2-s}}dy
            \right)^2
            dx
            \leq 
            \int_{\mathbb{R}^2\setminus B_{2R}(0)}
            \left(C
            \int_{B_{R}(0)}\frac{|f(y)|}{|x-R\frac{x}{|x|}|^{2-s}}dy
            \right)^2
            dx
            \\
            \leq 
            C
            \int_{\mathbb{R}^2\setminus B_{2R}(0)}
            \left(
            \int_{B_{R}(0)}\frac{|f(y)|}{|x|^{2-s}(1-\frac{R}{|x|})^{2-s}}dy
            \right)^2
            dx=C
            (\int_{\mathbb{R}^2\setminus B_{2R}(0)}
            \frac{1}{|x|^{4-2s}}dx)
            \left|
            \int_{B_{R}(0)}|f(y)|dy
            \right|^2\\
            \leq C\|f\|_{L^1(B_R(0))}^2 \leq C \|f\|_{L^2}^2,
        \end{align*}
        noticing that $1-\frac{R}{|x|}\geq \frac{1}{2}$ and that the last inequality follows by Hölder. 
    \end{proof}
\end{lma}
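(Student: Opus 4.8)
The plan is to split $\mathbb{R}^2$ into the region close to $\mathrm{supp}(f)$ and the region far from it, using only that $|K^{-s}(z)|=|z|^{-(2-s)}$ (with $K^{-s}$ the kernel in (\ref{kernel})): this is locally integrable because $s>0$ — so the principal value in the definition of $v^{-s}$ is in fact superfluous here, the integral being absolutely convergent — and it decays fast enough at infinity to be square integrable against an $L^1$ density precisely when $s<1$.

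First I would handle $x\in B_{2R}(0)$. For such an $x$ and for $y\in B_R(0)$ one has $|x-y|\le 3R$, so $|v^{-s}(f)(x)|\le C\int_{B_R(0)}|f(y)|\,|x-y|^{-(2-s)}\,dy\le C\,(G\ast|f|)(x)$ with $G(z)=|z|^{-(2-s)}\mathbbm{1}_{B_{4R}(0)}(z)$. Since $G\in L^1(\mathbb{R}^2)$ with $\|G\|_{L^1}=2\pi(4R)^s/s<\infty$, Young's convolution inequality gives $\|v^{-s}(f)\|_{L^2(B_{2R}(0))}\le C\|G\|_{L^1}\|f\|_{L^2}\le C(R,s)\|f\|_{L^2}$. (Equivalently one can apply Cauchy--Schwarz in the $y$-integral, bound the resulting inner integral $\int_{B_R(0)}|x-y|^{-(2-s)}dy$ uniformly in $x$, and then use Fubini.)

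Next I would handle $x\in\mathbb{R}^2\setminus B_{2R}(0)$. For $|x|>2R$ and $y\in B_R(0)$ we have $|x-y|\ge|x|-R\ge|x|/2$, hence $|v^{-s}(f)(x)|\le C|x|^{-(2-s)}\|f\|_{L^1(B_R(0))}\le C(R)|x|^{-(2-s)}\|f\|_{L^2}$ by Cauchy--Schwarz on the ball $B_R(0)$. Integrating the square in polar coordinates, $\int_{|x|>2R}|v^{-s}(f)(x)|^2\,dx\le C\|f\|_{L^2}^2\int_{2R}^\infty \rho^{2s-3}\,d\rho$, which is finite exactly because $2s-3<-1$, i.e. $s<1$; this produces the constant $C(R,s)$ for this second piece. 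Adding the two contributions yields $\|v^{-s}(f)\|_{L^2(\mathbb{R}^2)}\le C(R,s)\|f\|_{L^2}$.

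I do not expect a genuine obstacle here: the whole content is that the kernel is integrable at the origin (which is the hypothesis $s>0$, and is what makes the singular integral absolutely convergent) and that its tail $|x|^{-(2-s)}$ is square integrable at infinity (which is the hypothesis $s<1$). The only care needed is bookkeeping the dependence of the constant on $R$ and $s$, and noting that the compact support of $f$ is what lets us truncate the kernel near the diagonal in the first region and gain decay in the second.
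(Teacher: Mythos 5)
Your proof is correct and follows essentially the same route as the paper's: split at $B_{2R}(0)$, handle the near region by exploiting local integrability of $|z|^{-(2-s)}$, and handle the far region via the pointwise decay $|x-y|\ge|x|/2$ followed by a polar-coordinates integral that converges because $s<1$. The only cosmetic difference is that you invoke Young's convolution inequality for the near region where the paper applies Cauchy--Schwarz and then Fubini, but you yourself note these are equivalent, and indeed they produce the same constant up to bookkeeping.
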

\begin{lma}
\label{cotadiferencia}
If $\gamma \in (-1,1)$, $\beta \in [1,2+\gamma)\cap (\frac{3}{2}+\gamma,2+\gamma)$ and $\theta_{N,c,K,\gamma}\in H^{\beta+\frac{1}{2}}$ exist for $t \in [0,T]$, then 
\[
  \|  \Theta_{N,c,K,\gamma}  \|  _{L^2} \leq \frac{C\mathcal{X}(\gamma)t}{N^{2\beta-1-\gamma}}, \hspace{3mm} \forall t \in [0,T],
\]
where $C$ depends on $c$, $K$, $\gamma$ and $T$ and with $\mathcal{X}(\gamma)$ given by (\ref{funcionsalto}).
\begin{proof}
    The proof for $\gamma=0$ is done in \cite{paper}. Let $\gamma>0$. The evolution of $\Theta_{N,c,K,\gamma}$ is given by
    \begin{align*}
        \frac{\partial}{\partial t} \frac{  \|  \Theta_{N,c,K,\gamma}  \|  ^2_{L^2}}{2}\\
        = -\int_{\mathbb{R}^2} \Theta_{N,c,K,\gamma} 
        \left(
       (v_1^\gamma(\Theta_{N,c,K,\gamma})+
        v_1^\gamma(\overline{\theta}_{N,c,K})) \frac{\partial \Theta_{N,c,K,\gamma}}{\partial x_1} \right.
        \\
        \left.
         +(v_2^\gamma(\Theta_{N,c,K,\gamma})+
        v_2^\gamma(\overline{\theta}_{N,c,K})) \frac{\partial \Theta_{N,c,K,\gamma}}{\partial x_2}
        \right.
        \\
        \left.
        +
        v_1^\gamma(\Theta_{N,c,K,\gamma})\frac{\partial \overline{\theta}_{N,c,K}}{\partial x_1}+
        v_2^\gamma(\Theta_{N,c,K,\gamma})\frac{\partial \overline{\theta}_{N,c,K}}{\partial x_2}-F_{N,c,K,\gamma}
        \right)dx,
    \end{align*}
    but now we use the incompresibility property $\text{div}(v^\gamma)=0$ and obtain
    \begin{align*}
        \int_{\mathbb{R}^2} \Theta_{N,c,K,\gamma} 
        \left(
       (v_1^\gamma(\Theta_{N,c,K,\gamma})+
        v_1^\gamma(\overline{\theta}_{N,c,K})) \frac{\partial \Theta_{N,c,K,\gamma}}{\partial x_1} +
         (v_2^\gamma(\Theta_{N,c,K,\gamma})+
        v_2^\gamma(\overline{\theta}_{N,c,K})) \frac{\partial \Theta_{N,c,K,\gamma}}{\partial x_2}
        \right)dx\\
        = \int_{\mathbb{R}^2} 
        v^\gamma(\Theta_{N,c,K,\gamma}) \cdot \nabla( |\Theta_{N,c,K,\gamma}|^2) 
        dx+
        \int_{\mathbb{R}^2} 
        v^\gamma(\overline{\theta}_{N,c,K}) \cdot \nabla( |\Theta_{N,c,K,\gamma}|^2) 
        dx\\
        =-\int_{\mathbb{R}^2} \text{div}(v^\gamma(\Theta_{N,c,K,\gamma})) |\Theta_{N,c,K,\gamma}|^2 dx
        -\int_{\mathbb{R}^2} \text{div}(v^\gamma(\overline{\theta}_{N,c,K})) |\Theta_{N,c,K,\gamma}|^2 dx=0,
    \end{align*}
    Another important tool to bound $\frac{\partial}{\partial t}\frac{  \|  \Theta_{N,c,K,\gamma}  \|  ^2_{L^2}}{2}$ is that $v^\gamma$ is an odd operator \footnote{By an odd operator we mean that $(v^\gamma(f),g)_{L^2}=-(f,v^\gamma(g))_{L^2}$ where $(\cdot,\cdot)_{L^2}$ denotes the scalar product in $L^2$.}. This implies, for any $\varphi\in W^{1,\infty}$,
   \begin{align}
   \label{imparidad}
        \int_{\mathbb{R}^2} D^s \Theta_{N,c,K,\gamma} \frac{\partial \varphi}{\partial x_i} D^s(v_i^\gamma (\Theta_{N,c,K,\gamma}))dx
        =- \int_{\mathbb{R}^2} v_i^\gamma(D^s \Theta_{N,c,K,\gamma} \frac{\partial \varphi}{\partial x_i}) D^s \Theta_{N,c,K,\gamma}dx.
    \end{align}
    Hence, applying (\ref{imparidad}) with $\varphi=\overline{\theta}_{N,c,K}$, we can continue as follows:
    \begin{align*}
      \frac{\partial}{\partial t}\frac{  \|  \Theta_{N,c,K,\gamma}  \|  ^2_{L^2}}{2} \\
      = \left|\int_{\mathbb{R}^2} 
       \Theta_{N,c,K,\gamma} 
       \left(
          v_1^\gamma(\Theta_{N,c,K,\gamma})\frac{\partial \overline{\theta}_{N,c,K}}{\partial x_1}+
        v_2^\gamma(\Theta_{N,c,K,\gamma})\frac{\partial \overline{\theta}_{N,c,K}}{\partial x_2}-F_{N,c,K,\gamma}
       \right)
      dx \right| \\
      = \frac{1}{2}\left|\int_{\mathbb{R}^2} 
       \Theta_{N,c,K,\gamma} 
       \left(
          v_1^\gamma(\Theta_{N,c,K,\gamma})\frac{\partial \overline{\theta}_{N,c,K}}{\partial x_1}-
           v_1^\gamma(\Theta_{N,c,K,\gamma}\frac{\partial \overline{\theta}_{N,c,K}}{\partial x_1})+ \right. \right.\\
           \left. \left.
        v_2^\gamma(\Theta_{N,c,K,\gamma})\frac{\partial \overline{\theta}_{N,c,K}}{\partial x_2}-
        v_2^\gamma(\Theta_{N,c,K,\gamma}\frac{\partial \overline{\theta}_{N,c,K}}{\partial x_2})-2F_{N,c,K,\gamma}
       \right)
       dx \right|\\
      \leq  \|   \Theta_{N,c,K,\gamma}  \| _{L^2} 
      ( \|  v^\gamma ( \Theta_{N,c,K,\gamma} ) \cdot \nabla \overline{\theta}_{N,c,K}-v^\gamma ( \Theta_{N,c,K,\gamma} \cdot  \nabla \overline{\theta}_{N,c,K}) \| _{L^2}+ \|  F_{N,c,K,\gamma} \| _{L^2})\\
      \leq C  \|   \Theta_{N,c,K,\gamma}  \| _{L^2}( \|   \Theta_{N,c,K,\gamma}  \| _{L^2}  \|  D^\gamma \overline{\theta}_{N,c,K}  \| _{C^1}+ \|  F_{N,c,K,\gamma} \| _{L^2})\\
      \leq C  \|   \Theta_{N,c,K,\gamma}  \| _{L^2}( \|   \Theta_{N,c,K,\gamma}  \| _{L^2} + N^{-(2\beta -1-\gamma)}\mathcal{X}(\gamma)),
    \end{align*}
    where we used also the Lemma \ref{leibnizv}, the bound $ \|  D^\gamma \overline{\theta}_{N,c,K}  \| _{C^{1}} \leq \|  D^\gamma (\overline{\theta}_{N,c,K}-f_{1,c,K})  \| _{C^{1}}+\|  D^\gamma f_{1,c,K}\| _{C^{1}} \leq C$ proved in (\ref{normafraccionario}) and $  \|  F_{N,c,K,\gamma}  \|  _{L^2} \leq C N^{-(2\beta-1-\gamma)}\mathcal{X}(\gamma)$ (this last one proved in Lemma \ref{FnormL2}). 
    We have that $\eta(t)=  \|  \Theta_{N,c,K,\gamma}(\cdot,t)  \|  ^2_{L^2}$ fulfils
    \begin{align*}
        \frac{\partial \eta(t)^2}{\partial t} \leq C \eta(t)^2+ \frac{C\mathcal{X}(\gamma)}{N^{2\beta-1-\gamma}} \eta(t) \Longrightarrow \frac{\partial \eta(t)}{\partial t} \leq C \eta(t)+ \frac{C\mathcal{X}(\gamma)}{N^{2\beta-1-\gamma}}  \Rightarrow\\
          \frac{\partial(e^{-Ct}\eta(t))}{\partial t} \leq \frac{Ce^{-Ct}\mathcal{X}(\gamma)}{N^{2\beta-1-\gamma}} \Longrightarrow e^{-Ct}\eta(t) \leq \eta(0)+ \frac{C(1-e^{-Ct})\mathcal{X}(\gamma)}{N^{2\beta-1-\gamma}} =\frac{C(1-e^{-Ct})\mathcal{X}(\gamma)}{N^{2\beta-1-\gamma}},
    \end{align*}
    so finally for $t\in [0,T]$
    \[
        {  \|  \Theta_{N,c,K,\gamma}  \|  _{L^2}} \leq \frac{C\mathcal{X}(\gamma)(e^{Ct}-1)}{N^{2\beta-1-\gamma}} \leq  \frac{C\mathcal{X}(\gamma)t}{N^{2 \beta-1-\gamma}}.
    \]
    Now, let $\gamma \in (-1,0)$. Then using the Lemma \ref{gammanegativa} and $\beta\geq 1$,
    \begin{align*}
        \frac{\partial}{\partial t}\frac{  \|  \Theta_{N,c,K,\gamma}  \|  ^2_{L^2}}{2} \\
      = \left|\int_{\mathbb{R}^2} 
       \Theta_{N,c,K,\gamma} 
       \left(
          v_1^\gamma(\Theta_{N,c,K,\gamma})\frac{\partial \overline{\theta}_{N,c,K}}{\partial x_1}+
        v_2^\gamma(\Theta_{N,c,K,\gamma})\frac{\partial \overline{\theta}_{N,c,K}}{\partial x_2}-F_{N,c,K,\gamma}
       \right)
      dx \right|\\
      \leq 
      \| \Theta_{N,c,K,\gamma}\|_{L^2} 
      \left(
      \| \Theta_{N,c,K,\gamma}\|_{L^2}\|\frac{\partial \overline{\theta}_{N,c,K}}{\partial x_1}\|_{L^\infty} 
      +   \| \Theta_{N,c,K,\gamma}\|_{L^2}
      \|\frac{\partial \overline{\theta}_{N,c,K}}{\partial x_2}\|_{L^\infty}
      +\|F_{N,c,K,\gamma}\|_{L^2}
      \right)\\
      \leq C
      \| \Theta_{N,c,K,\gamma}\|_{L^2}(N^{-\beta +1} \| \Theta_{N,c,K,\gamma} \|_{L^2}+ N^{-2\beta+1+\gamma}\mathcal{X}(\gamma))
      \\
      \leq C
      \| \Theta_{N,c,K,\gamma}\|_{L^2}( \| \Theta_{N,c,K,\gamma} \|_{L^2}+ N^{-2\beta+1+\gamma}\mathcal{X}(\gamma)),
          \end{align*}
          so the same bound is valid.
\end{proof}
\end{lma}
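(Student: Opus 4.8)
\emph{Proof proposal.} The plan is a direct $L^2$ energy estimate for $\Theta:=\Theta_{N,c,K,\gamma}$, exploiting the skew-symmetry of $v^\gamma$; write also $\overline\theta:=\overline\theta_{N,c,K}$ and $F:=F_{N,c,K,\gamma}$. Since $\Theta$ is the difference of the well-posed $H^{\beta+\frac12}$ solution (which lies in $C([0,T];H^{\beta+\frac12})$) and the smooth pseudo-solution, it solves the equation displayed just before the statement, and the manipulations below are legitimate. Pairing that equation with $\Theta$ in $L^2$, the two transport-type terms $\int\Theta\,v^\gamma(\Theta)\cdot\nabla\Theta$ and $\int\Theta\,v^\gamma(\overline\theta)\cdot\nabla\Theta$ vanish: rewrite $\Theta\,\nabla\Theta=\tfrac12\nabla(\Theta^2)$, integrate by parts and use $\text{div}\,v^\gamma=0$. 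Hence
\[
\frac{d}{dt}\frac{\|\Theta\|_{L^2}^2}{2}=-\int_{\mathbb{R}^2}\Theta\,v^\gamma(\Theta)\cdot\nabla\overline\theta\,dx+\int_{\mathbb{R}^2}\Theta\,F\,dx,
\]
and the last term is controlled by $\|\Theta\|_{L^2}\|F\|_{L^2}\le C\mathcal{X}(\gamma)N^{-(2\beta-1-\gamma)}\|\Theta\|_{L^2}$ by Lemma \ref{FnormL2}. Everything thus reduces to showing $\big|\int\Theta\,v^\gamma(\Theta)\cdot\nabla\overline\theta\big|\le C\|\Theta\|_{L^2}^2$ with $C$ uniform in $N$ (it may depend on $c,K,T,\gamma$).

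For $\gamma\in[0,1)$ I would use the skew-symmetry of $v^\gamma$ on $L^2$ (its Fourier symbol $i\xi^\perp/|\xi|^{1-\gamma}$ is purely imaginary and odd): writing $\int\Theta\,v_i^\gamma(\Theta)\partial_i\overline\theta=-\int v_i^\gamma(\Theta\,\partial_i\overline\theta)\,\Theta$ and averaging,
\[
2\int\Theta\,v^\gamma(\Theta)\cdot\nabla\overline\theta=\sum_i\int\Theta\Big(v_i^\gamma(\Theta)\partial_i\overline\theta-v_i^\gamma(\Theta\,\partial_i\overline\theta)\Big)\,dx.
\]
For $\gamma=0$ (the SQG case, already in \cite{paper}) this is trivially $\le C\|\Theta\|_{L^2}^2$ since $v^0$ is $L^2$-bounded and $\|\nabla\overline\theta\|_{L^\infty}\le C$; for $\gamma\in(0,1)$, where $v^\gamma$ is of positive order and not $L^2$-bounded, I apply the Leibniz-type estimate Lemma \ref{leibnizv} with $f=\Theta$, $g=\partial_i\overline\theta$: the bracket equals $-\Theta\,v_i^\gamma(\partial_i\overline\theta)$ up to an $L^2$ error $\le C\|\Theta\|_{L^2}\|D^\gamma\partial_i\overline\theta\|_{BMO}$, so
\[
\Big|\int\Theta\,v^\gamma(\Theta)\cdot\nabla\overline\theta\Big|\le C\|\Theta\|_{L^2}^2\Big(\|D^\gamma\overline\theta\|_{C^1}+\sum_i\|v_i^\gamma(\partial_i\overline\theta)\|_{L^\infty}\Big).
\]
The crucial point is that both quantities in the parenthesis are bounded uniformly in $N$: for the smooth radial part $f_{1,c,K}$ they are fixed constants (depending on $c,K$), while for the perturbation $\overline\theta-f_{1,c,K}$ the bounds (\ref{normafraccionario}) and (\ref{normackvelocidad}) give contributions $\lesssim N^{-\beta+\gamma+1}\ln(e+N)\to0$, the decay coming precisely from $\beta>\tfrac32+\gamma$ (which forces $-\beta+\gamma+1<-\tfrac12$). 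This is where the hypothesis $\beta>\tfrac32+\gamma$ is used.

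For $\gamma=-s\in(-1,0)$ the operator $v^\gamma$ is of negative order but Lemma \ref{leibnizv} is no longer available, so I argue differently. Both $\theta$ and $\overline\theta$ stay supported in a fixed ball $B_R$ over $[0,T]$ with $R$ independent of $N$ ($\overline\theta$ has fixed support, and the support of the transported solution enlarges at a rate controlled by $\|v^\gamma(\theta)\|_{L^\infty}$, which is uniform in $N$ since the $L^p$ norms of $\theta$ are conserved and those of $\theta_0$ are uniform in $N$). Thus $\Theta$ is supported in $B_R$, Lemma \ref{gammanegativa} gives $\|v^{-s}(\Theta)\|_{L^2}\le C\|\Theta\|_{L^2}$, and hence $\big|\int\Theta\,v^{-s}(\Theta)\cdot\nabla\overline\theta\big|\le\|\Theta\|_{L^2}\|v^{-s}(\Theta)\|_{L^2}\|\nabla\overline\theta\|_{L^\infty}\le C\|\Theta\|_{L^2}^2$, using $\|\nabla\overline\theta\|_{L^\infty}\le C+CN^{1-\beta}\le C$ (the second place $\beta\ge1$ is needed).

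In all cases $\tfrac{d}{dt}\tfrac12\|\Theta\|_{L^2}^2\le C\|\Theta\|_{L^2}^2+C\mathcal{X}(\gamma)N^{-(2\beta-1-\gamma)}\|\Theta\|_{L^2}$. Putting $\eta(t)=\|\Theta(\cdot,t)\|_{L^2}$ and dividing by $\eta$ (legitimate via the usual $\sqrt{\eta^2+\varepsilon}$-regularization) gives $\eta'\le C\eta+C\mathcal{X}(\gamma)N^{-(2\beta-1-\gamma)}$ with $\eta(0)=0$ because $\theta(\cdot,0)=\overline\theta(\cdot,0)$; Grönwall's inequality then yields $\eta(t)\le C\mathcal{X}(\gamma)N^{-(2\beta-1-\gamma)}(e^{Ct}-1)\le C\mathcal{X}(\gamma)\,t\,N^{-(2\beta-1-\gamma)}$ for $t\in[0,T]$, which is the claim. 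The main obstacle is the $\gamma>0$ quadratic term: $v^\gamma$ loses derivatives and is not $L^2$-bounded, so one must first extract the cancellation from its skew-symmetry and then invoke $\beta>\tfrac32+\gamma$ to keep $\|D^\gamma\overline\theta\|_{C^1}$ and $\|v^\gamma(\nabla\overline\theta)\|_{L^\infty}$ bounded in $N$.
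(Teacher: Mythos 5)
Your proposal is correct and follows essentially the same route as the paper: an $L^2$ energy estimate using incompressibility, the skew-symmetry of $v^\gamma$ together with Lemma \ref{leibnizv} and the uniform bounds (\ref{normafraccionario}), (\ref{normackvelocidad}) for $\gamma>0$, Lemma \ref{gammanegativa} together with $\beta\geq 1$ for $\gamma<0$, and Grönwall with $\|F_{N,c,K,\gamma}\|_{L^2}$ controlled by Lemma \ref{FnormL2}. Your explicit check that $\Theta_{N,c,K,\gamma}$ remains supported in a fixed ball uniformly in $N$ (via conservation of the $L^p$ norms when $\gamma<0$), which is needed to invoke Lemma \ref{gammanegativa}, is a detail the paper leaves implicit.
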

\vspace{3.5mm}
Now, let us try to find the bounds for the Sobolev norms of $\Theta_{N,c,K,\gamma}$.
\vspace{3mm}
\begin{lma}
\label{cotaTheta}
    Let $T>0$, $\gamma \in (-1,1)$, $\beta \in [1,2+\gamma)\cap (\frac{3}{2}+\gamma,2+\gamma)$ and $\Theta_{N,c,K,\gamma}$ defined as in (\ref{defdiferencia}). Then we have that, for $N$ large, $\theta_{N,c,K,\gamma}$ exist for $t\in [0,T]$ and 
    \begin{equation}
    \label{cotaTheta2}
      \|  \Theta_{N,c,K,\gamma}(\cdot,t)  \|  _{H^{\beta+\frac{1}{2}}} \leq \frac{C\mathcal{X}(\gamma)t}{N^{\beta-\frac{3}{2}-\gamma}}, \hspace{3mm} \forall t \in [0,T]  ,  
    \end{equation}
    with $C$ depending on $c, K, \gamma$ and $T$ and with $\mathcal{X}(\gamma)$ given by (\ref{funcionsalto}).
    \begin{proof}
    We will separate cases depending on if $\gamma$ is positive or negative. The case $\gamma=0$ is already done in \cite{paper}.\\\\
    The proof of this lemma is highly technical and hard to follow. The main strategy is to do an energy estimate. The first problem we find is that when we apply the fractional Laplacian at both sides of the equality we find that, in the right hand side we have to deal with the fractional Laplacian applied to products. Since there is not a Leibniz rule for this operator, we have to use the Lemma \ref{leibniz}. Using the notation of the Lemma \ref{leibniz}, we will find products of the form $D^s(fg)$ so we have to sum and subtract 
    \begin{align*}
        \sum_{|\bold{k}|\leq s_1}\frac{1}{\bold{k}!}\partial^{\bold{k}}fD^{s,\bold{k}}g+
    \sum_{|\bold{j}|\leq s_2}\frac{1}{\bold{j}!}\partial^{\bold{j}}gD^{s,\bold{j}}f,
    \end{align*}
    and then the key point is to bound the quantities 
       \begin{align*}
        \left\|\sum_{|\bold{k}|\leq s_1}\frac{1}{\bold{k}!}\partial^{\bold{k}}fD^{s,\bold{k}}g\right\|_{L^2}, \hspace{3mm}
    \left\|\sum_{|\bold{j}|\leq s_2}\frac{1}{\bold{j}!}\partial^{\bold{j}}gD^{s,\bold{j}}f\right\|_{L^2}
    \hspace{3mm}
    \|D^{s_1}f\|_{L^2}, 
    \hspace{3mm}
    \|D^{s_2}g\|_{BMO},
    \end{align*}
    for the functions $f,g$ considered at each case.
    \\\\
    As $  \|  \Theta_{N,c,K,\gamma}(\cdot,0)  \|  _{H^{\beta+\frac{1}{2}}}=0$, and using the well-posedness of the equation in $H^{s}$, $s> 2+\gamma$, we know that 
    \[
    T^\ast_N:=sup\left\{T^\ast\in [0,T]:  \|  \Theta_{N,c,K,\gamma}(\cdot,t)  \|  _{H^{\beta+\frac{1}{2}}}\leq  \log(N) N^{-(\beta-\frac{3}{2}-\gamma)}\mathcal{X}(\gamma), \forall t<T^\ast \right\},
    \]
    will be greater than zero, this is, $T^\ast_N>0$. But if we prove the bound (\ref{cotaTheta2}) on the interval $[0,T^*_N]$, then for $N$ large enough, more precisely when $CT< \log(N)$, we will have $\|  \Theta_{N,c,K,\gamma}(\cdot,T^*)  \|  _{H^{\beta+\frac{1}{2}}}\leq CT^*N^{-\beta+\frac{3}{2}+\gamma}\mathcal{X}(\gamma)<  \log(N) N^{-(\beta-\frac{3}{2}-\gamma)}\mathcal{X}(\gamma)$ for all $T^\ast<T$ and hence by definition of supremum, $T^*_N=T$ for every $N$ big enough.  \\ \\
    Also, by Picard-Lindelöf's Theorem, as we already have local existence of the problem in the Sobolev spaces in $H^{s}$, $s> 2+\gamma$, this bound assures us existence on $[0,T]$.\\ \\
    To improve the notation, let us call $s=\beta+\frac{1}{2}$. Then, applying $D^s$ at both sides and multiplying at both sides by $D^s\Theta_{N,c,K,\gamma}  $ and integrating,
     \begin{align*}
        \frac{\partial}{\partial t} \frac{  \|  D^s\Theta_{N,c,K,\gamma}  \|  ^2_{L^2}}{2}
        = -\int_{\mathbb{R}^2} D^s\Theta_{N,c,K,\gamma} 
        D^s\left(
       (v_1^\gamma(\Theta_{N,c,K,\gamma})+
        v_1^\gamma(\overline{\theta}_{N,c,K})) \frac{\partial \Theta_{N,c,K,\gamma}}{\partial x_1} +\right.
        \\
        \left.
         (v_2^\gamma(\Theta_{N,c,K,\gamma})+
        v_2^\gamma(\overline{\theta}_{N,c,K})) \frac{\partial \Theta_{N,c,K,\gamma}}{\partial x_2} +
        v_1^\gamma(\Theta_{N,c,K,\gamma})\frac{\partial \overline{\theta}_{N,c,K}}{\partial x_1}+
        v_2^\gamma(\Theta_{N,c,K,\gamma})\frac{\partial \overline{\theta}_{N,c,K}}{\partial x_2}-F_{N,c,K,\gamma}
        \right)dx.
    \end{align*}
    \begin{enumerate}
        \item We start bounding, for $\gamma\in (-1,1)$, the term
    \[
    \int_{\mathbb{R}^2} D^s\Theta_{N,c,K,\gamma} 
        D^s\left(
        v_1^\gamma(\overline{\theta}_{N,c,K}) \frac{\partial \Theta_{N,c,K,\gamma}}{\partial x_1} +
        v_2^\gamma(\overline{\theta}_{N,c,K}) \frac{\partial \Theta_{N,c,K,\gamma}}{\partial x_2} 
        \right)dx,
    \]
    and we will do it using Lemma \ref{leibniz}. 
    \\ \\
    Let $s_1=1$ and $s_2=s-1$. Also, let $f=v^\gamma_i(\overline{\theta}_{N,c,K})$ and $g=\frac{\partial \Theta_{N,c,K,\gamma}}{\partial x_i}$ for $i=1,2$. Then 
    \begin{align*}
    \left(
    D^s \Theta_{N,c,K,\gamma}, D^s(fg)-
    \sum_{|\bold{k}|\leq s_1}\frac{1}{\bold{k}!}\partial^{\bold{k}}fD^{s,\bold{k}}g-
    \sum_{|\bold{j}|\leq s_2}\frac{1}{\bold{j}!}\partial^{\bold{j}}gD^{s,\bold{j}}f
    \right)_{L^2}\\
    \leq 
    C  \|  D^s \Theta_{N,c,K,\gamma}  \|  _{L^2}   \|  D^{s_1} f  \|  _{BMO}   \|  D^{s_2}g  \|  _{L^2} 
    \leq  C  \|  D^s \Theta_{N,c,K,\gamma}  \|  _{L^2}   \|  \Theta_{N,c,K,\gamma}  \|  _{H^s},
    \end{align*}
    where we used $   \|  D^{s_1} f  \|  _{BMO} \leq   \|  D^{s_1} f  \|  _{L^\infty} =  \|  D^1 v^\gamma_i(\overline{\theta}_{N,c,K})  \|  _{L^\infty}\leq  \| D^1 v^\gamma_i(\overline{\theta}_{N,c,K}-f_{1,c,K})  \|  _{L^\infty}+   \| D^1 v^\gamma_i(f_{1,c,K})  \|  _{L^\infty}$ $\leq C\frac{N^{1+\gamma}}{N^{\beta}}\ln(e+N)+C\leq C$ because of the bound \ref{normackvelocidad}.\\ \\
    Now, since we want to use the triangular inequality, we must bound 
    \begin{align*}
        \left(D^s \Theta_{N,c,K,\gamma},  \sum_{|\bold{k}|\leq s_1}\frac{1}{\bold{k}!}\partial^{\bold{k}}fD^{s,\bold{k}}g+
    \sum_{|\bold{j}|\leq s_2}\frac{1}{\bold{j}!}\partial^{\bold{j}}gD^{s,\bold{j}}f\right)_{L^2}.
    \end{align*}
    We first start with the term with $\bold{k}=0$.
    \begin{align*}
        \left(
    D^s \Theta_{N,c,K,\gamma}, D^s(\frac{\partial \Theta_{N,c,K,\gamma}}{\partial x_1})v^\gamma_1(\overline{\theta}_{N,c,K})+ D^s(\frac{\partial \Theta_{N,c,K,\gamma}}{\partial x_2})v^\gamma_2(\overline{\theta}_{N,c,K})
    \right)_{L^2} \\
    =\frac{1}{2} \int_{\mathbb{R}^2} (\frac{\partial}{\partial x_1} (D^s \Theta_{N,c,K,\gamma})^2 v^\gamma_1(\overline{\theta}_{N,c,K})+
    \frac{\partial}{\partial x_2} (D^s \Theta_{N,c,K,\gamma})^2 v^\gamma_2(\overline{\theta}_{N,c,K}))dx\\
    =\frac{1}{2} \int_{\mathbb{R}^2} \nabla (D^s \Theta_{N,c,K,\gamma})^2 \cdot v^\gamma (\overline{\theta}_{N,c,K}) dx =- \frac{1}{2} \int_{\mathbb{R}^2} (D^s \Theta_{N,c,K,\gamma})^2 div(v^\gamma(\overline{\theta}_{N,c,K}))dx=0,
    \end{align*}
    because of the incompresibility condition.\\ \\
    Now, let us bound the part of the sum in $\bold{k}$ with $|\bold{k}|=1$. Using that $D^{s,\bold{k}}$ is continuous from $H^a$ to $H^{a-s+|\bold{k}|}$ and the bound (\ref{normackvelocidad}),
    \begin{align*}
       |( D^s \Theta_{N,c,K,\gamma}, 
        \sum_{|\bold{k}|=1}\frac{1}{\bold{k}!}\partial^{\bold{k}}fD^{s,\bold{k}}g
       )_{L^2}|  \\
       \leq C   \|  D^s \Theta_{N,c,K,\gamma}   \|  _{L^2}   \|  v^\gamma_i(\overline{\theta}_{N,c,K})  \|  _{C^1}   \|  \Theta_{N,c,K,\gamma}  \|  _{H^s} \\
       \leq
       C   \|  D^s \Theta_{N,c,K,\gamma}   \|  _{L^2}   \|  \Theta_{N,c,K,\gamma}  \|  _{H^s}.
    \end{align*}
    Before continuing with the bounds for $j=0$, we need to estimate the $H^1$ norm of $\Theta_{N,c,K,\gamma}$ by interpolation (between the $L^2$ norm obtained in the Lemma \ref{cotadiferencia} and the bootstrap assumption), obtaining
    \begin{align*}
      \|   \Theta_{N,c,K,\gamma}  \|  _{H^1} \leq C   \|  \Theta_{N,c,K,\gamma}  \|  ^{1-\frac{1}{s}}_{L^2}   \|  \Theta_{N,c,K,\gamma}  \|  ^{\frac{1}{s}}_{H^s}\\
    \leq C N^{-(2 \beta -1-\gamma)(1-\frac{1}{s})} (\log(N))^{\frac{1}{s}} N^{-(\beta-\frac{3}{2}-\gamma)\frac{1}{s}}
    \leq C N^{-(2 \beta -1-\gamma)(1-\frac{1}{s})+\epsilon-(\beta-\frac{3}{2}-\gamma)\frac{1}{s}}  \\
    = C N^{-(2 \beta -1-\gamma)+\frac{1}{s}(\beta+\frac{1}{2})+\epsilon} = C N^{-(2 \beta -2-2\gamma)-\gamma+\varepsilon} = CN^{-\beta+\frac{1}{2}},
    \end{align*}
    choosing $\epsilon=\beta-\frac{3}{2}-\gamma>0$.\\ \\
    Let us obtain the bound for the sum in $\bold{j}=0$ for $\gamma\in (-1,1)$. Using the bound (\ref{normackvelocidad}) and proceeding in the same way as before,
    \begin{align*}
        |(
         D^s \Theta_{N,c,K,\gamma}, \frac{\partial \Theta_{N,c,K,\gamma}}{\partial x_i} D^s v^\gamma_i (\overline{\theta}_{N,c,K})
         )| \\
         \leq 
         C   \|   D^s \Theta_{N,c,K,\gamma}  \|  _{L^2}    \|   \Theta_{N,c,K,\gamma}  \|  _{H^1}   \|  D^{s}v^\gamma_i( \overline{\theta}_{N,c,K})  \|  _{L^\infty}\\
         \leq
         C   \|   D^s \Theta_{N,c,K,\gamma}  \|  _{L^2} 
        N^{-\beta+\frac{1}{2}}
         (\frac{N^{s+\gamma}}{N^\beta}\ln(e+N)+C)
         \leq 
          C   \|   D^s \Theta_{N,c,K,\gamma}  \|  _{L^2} N^{-\beta+\frac{3}{2}+\gamma}  .
    \end{align*}
    For the part of the sum in $\bold{j}$ with $|\bold{j}|=1$ and positive $\gamma>0$, we first need a $H^2$-bounding of $ \Theta_{N,c,K,\gamma}$. Taking into account the assumption of the bound of $  \|   \Theta_{N,c,K,\gamma}  \|  _{H^s}$ on $[0,T^\ast]$ and proceeding by interpolation, we have
    \begin{align*}
      \|   \Theta_{N,c,K,\gamma}  \|  _{H^2} \leq C   \|  \Theta_{N,c,K,\gamma}  \|  ^{1-\frac{2}{s}}_{L^2}   \|  \Theta_{N,c,K,\gamma}  \|  ^{\frac{2}{s}}_{H^s}\\
    \leq C N^{-(2 \beta -1-\gamma)(1-\frac{2}{s})} (\log(N))^{\frac{2}{s}} N^{-(\beta-\frac{3}{2}-\gamma)\frac{2}{s}}
    \leq C N^{-(2 \beta -1-\gamma)(1-\frac{2}{s})+\epsilon-(\beta-\frac{3}{2}-\gamma)\frac{2}{s}}  \\
    = C N^{-(2 \beta -1-\gamma)+\frac{2}{s}(\beta+\frac{1}{2})+\epsilon} = C N^{-(2 \beta -3-2\gamma)-\gamma+\varepsilon} = CN^{-\beta+\frac{3}{2}},
    \end{align*}
    choosing $\epsilon=\beta-\frac{3}{2}-\gamma>0$.\\ \\
    Then, if $\gamma>0$, using the bound (\ref{normackvelocidad}) 
    \begin{align*}
          |( 
          D^s \Theta_{N,c,K,\gamma}, \sum_{|\bold{j}|=1}\frac{1}{\bold{j}!}\partial^{\bold{j}}gD^{s,\bold{j}}f
          )_{L^2}| \\
          \leq 
          C   \|   D^s \Theta_{N,c,K,\gamma}  \|  _{L^2} 
            \|   \Theta_{N,c,K,\gamma}  \|  _{H^2}
           \sum_{|\bold{j}|=1}
            \|   D^{s,\bold{j}}  v^\gamma_i (\overline{\theta}_{N,c,K})  \|  _{L^\infty}\\
           \leq 
          C   \|   D^s \Theta_{N,c,K,\gamma}  \|  _{L^2} 
            \|   \Theta_{N,c,K,\gamma}  \|  _{H^2}
           \sum_{|\bold{j}|=1}
            \|   D^{s-2}  \partial^{\bold{j}}  v^\gamma_i (\overline{\theta}_{N,c,K})  \|  _{L^\infty}\\
           \leq C   \|   D^s \Theta_{N,c,K,\gamma}  \|  _{L^2} 
            \|   \Theta_{N,c,K,\gamma}  \|  _{H^2}
           \sum_{|\bold{j}|=1}
          (  \|   D^{s-2}  \partial^{\bold{j}}  v^\gamma_i (\overline{\theta}_{N,c,K}-f_{1,c,K})  \|  _{L^\infty}
          \\+  \|   D^{s-2}  \partial^{\bold{j}}  v^\gamma_i (f_{1,c,K})  \|  _{L^\infty})
          \end{align*}
          \begin{align}
          \label{aux}
          \leq 
          C   \|   D^s \Theta_{N,c,K,\gamma}  \|  _{L^2} 
            \|   \Theta_{N,c,K,\gamma}  \|  _{H^2} 
          (\frac{N^{s-1+\gamma}}{N^\beta}\ln(e+N)+C)             
          \end{align}
          \begin{align*}
          \leq 
           C   \|   D^s \Theta_{N,c,K,\gamma}  \|  _{L^2} N^{-\beta+\frac{3}{2}}(N^{\gamma-\frac{1}{2}}\ln(e+N)+C) \\ \leq 
           C   \|   D^s \Theta_{N,c,K,\gamma}  \|  _{L^2} N^{-\beta+\frac{3}{2}+\gamma}.
    \end{align*}
    If $\gamma <0$, then we must compute the sum in $|\bold{j}|=1$ whenever $s_2=s-1\geq 1$, that implies $s\geq 2$ and hence $\|\partial_{x_i} \partial_{x_j} \Theta_{N,c,K,\gamma}\|_{L^2}\leq \|\Theta_{N,c,K,\gamma}\|_{H^s}$. We repeat the previous computation until the bound (\ref{aux}) and complete it in a slightly different way. Indeed,
    \begin{align}
    \label{gammanegativo1}
    \begin{array}{l}
                 C   \|   D^s \Theta_{N,c,K,\gamma}  \|  _{L^2} 
            \|   \Theta_{N,c,K,\gamma}  \|  _{H^2} 
          (\frac{N^{s-1+\gamma}}{N^\beta}\ln(e+N)+C) \\ \\
          \leq  C   \|   D^s \Theta_{N,c,K,\gamma}  \|  _{L^2} 
            \|   \Theta_{N,c,K,\gamma}  \|  _{H^s} (N^{-\frac{1}{2}+\gamma}\ln(e+N)+C)\\ \\
            \leq
             C   \|   D^s \Theta_{N,c,K,\gamma}  \|  _{L^2} 
            \|   \Theta_{N,c,K,\gamma}  \|  _{H^s}  .
                \end{array}
    \end{align}    
   We consider $|\bold{j}|=2$ when $s_2=s-1\geq 2\Rightarrow s\geq 3$ so $  \|  \Theta_{N,c,K,\gamma}  \|  _{H^3} \leq   \|  \Theta_{N,c,K,\gamma}  \|  _{H^s}$. Using (\ref{normackvelocidad}) to bound the $C^k$ norms of the velocity, 
    \begin{align*}
         |( 
          D^s \Theta_{N,c,K,\gamma}, \sum_{|\bold{j}|=2}\frac{1}{\bold{j}!}\partial^{\bold{j}}gD^{s,\bold{j}}f
          )_{L^2}| \\
           \leq 
           C   \|   D^s \Theta_{N,c,K,\gamma}  \|  _{L^2} 
            \|   \Theta_{N,c,K,\gamma}  \|  _{H^3}
           \sum_{|\bold{j}|=2}
            \|   D^{s,\bold{j}}  v^\gamma_i (\overline{\theta}_{N,c,K})  \|  _{L^\infty}\\
          \leq 
           C   \|   D^s \Theta_{N,c,K,\gamma}  \|  _{L^2} 
            \|   \Theta_{N,c,K,\gamma}  \|  _{H^s}
           \sum_{|\bold{j}|=2}
            \|   D^{s,\bold{j}}  v^\gamma_i (\overline{\theta}_{N,c,K})  \|  _{L^\infty}\\
          \leq
           C   \|   D^s \Theta_{N,c,K,\gamma}  \|  _{L^2} 
            \|   \Theta_{N,c,K,\gamma}  \|  _{H^s} (\frac{N^{s-2+\gamma}}{N^\beta}\ln(e+N)+C)\\
           \leq
           C   \|   D^s \Theta_{N,c,K,\gamma}  \|  _{L^2} 
            \|   \Theta_{N,c,K,\gamma}  \|  _{H^s} (N^{-\frac{3}{2}+\gamma}\ln(e+N)+C)\\
          \leq  C   \|   D^s \Theta_{N,c,K,\gamma}  \|  _{L^2} 
            \|   \Theta_{N,c,K,\gamma}  \|  _{H^s}.
    \end{align*}
    \item \label{prueba.2} Now let us study the bound of the term 
       \[
    \int_{\mathbb{R}^2} D^s\Theta_{N,c,K,\gamma} 
        D^s\left(
        v_1^\gamma(\Theta_{N,c,K,\gamma} ) \frac{\partial \Theta_{N,c,K,\gamma}}{\partial x_1} +
        v_2^\gamma(\Theta_{N,c,K,\gamma} ) \frac{\partial \Theta_{N,c,K,\gamma}}{\partial x_2} 
        \right)dx.
    \]
    \vspace{3mm}
    First, let us study some useful bounds on $\Theta_{N,c,K,\gamma}$. By Sobolev embeddings, for $\gamma \in (0,1)$:
    \begin{itemize}
        \item As $\Theta_{N,c,K,\gamma} \in H^s$, then $D^1 v^\gamma(\Theta_{N,c,K,\gamma})\in  L^\infty$, as well as $\partial^{\bold{k}}v^\gamma(\Theta_{N,c,K,\gamma})\in L^\infty$ for $|\bold{k}|=1$. Indeed,
        \begin{align}
        \label{cota1}
        \begin{array}{l}
        \|D^1 v^\gamma(\Theta_{N,c,K,\gamma} )\|_{L^\infty}\leq \|D^1 v^\gamma(\Theta_{N,c,K,\gamma} )\|_{C^{0,\varepsilon}} 
        \leq C \|D^1 v^\gamma(\Theta_{N,c,K,\gamma} )\|_{W^{1+\varepsilon,2}}\\ \\
        \leq C \|\Theta_{N,c,K,\gamma} \|_{W^{2+\varepsilon+\gamma,2}}=C \|\Theta_{N,c,K,\gamma} \|_{H^s},
        \end{array}
        \end{align}
        \begin{align}
        \label{cota1.1}
        \begin{array}{l}
        \|\partial^{\bold{k}}
        v^\gamma(\Theta_{N,c,K,\gamma} )\|_{L^\infty}\leq \|v^\gamma(\Theta_{N,c,K,\gamma} )\|_{C^{1,\varepsilon}} 
        \leq C \| v^\gamma(\Theta_{N,c,K,\gamma} )\|_{W^{2+\varepsilon,2}}\\ \\
        \leq C \|\Theta_{N,c,K,\gamma} \|_{W^{2+\varepsilon+\gamma,2}}=C \|\Theta_{N,c,K,\gamma} \|_{H^s},
        \end{array}
        \end{align}
        choosing $\varepsilon=\beta-\frac{3}{2}-\gamma>0$.
        \item If $|\bold{j}|=1$, as $\Theta_{N,c,K,\gamma} \in H^s$ then $\partial^{\bold{j}}\Theta_{N,c,K,\gamma} \in L^{\frac{2}{1-\gamma}}$, since
        \begin{align}
        \label{cota2}
         \|  \partial^{\bold{j}}\Theta_{N,c,K,\gamma}  \|  _{L^{\frac{2}{1-\gamma}}} \leq C   \|  \partial^{\bold{j}}\Theta_{N,c,K,\gamma}  \|  _{ W^{\gamma,2}}\leq C   \|  \Theta_{N,c,K,\gamma}  \|  _{H^{\gamma+1}}\leq \|\Theta_{N,c,K,\gamma}\|_{H^{s}},
        \end{align}
        using $s=\beta+1/2>1+\gamma$.
        \item If $|\bold{j}|=1$, as $\Theta_{N,c,K,\gamma} \in H^s$, then $D^{s,\bold{j}}v^{\gamma}(\Theta_{N,c,K,\gamma}) \in L^{\frac{2}{\gamma}}$ and as before 
        \begin{align}
        \label{cota3}
        \|  D^{s,\bold{j}}v^{\gamma}(\Theta_{N,c,K,\gamma})  \|  _{ L^{\frac{2}{\gamma}}}\leq C \|  D^{s,\bold{j}}v^{\gamma}(\Theta_{N,c,K,\gamma})  \|  _{ W^{1-\gamma,2}}\leq C   \|  \Theta_{N,c,K,\gamma}  \|  _{H^s}.
        \end{align}
    \end{itemize}
    As before, $s_1=1$ and $s_2=s-1$. We define now $f$ and $g$ as $f=v^\gamma_i(\Theta_{N,c,K,\gamma})$ and $g=\frac{\partial \Theta_{N,c,K,\gamma}}{\partial x_i}$ for $i=1,2$. Then, applying Lemma \ref{leibniz},
    \begin{align*}
    \left(
    D^s \Theta_{N,c,K,\gamma}, D^s(fg)-
    \sum_{|\bold{k}|\leq s_1}\frac{1}{\bold{k}!}\partial^{\bold{k}}fD^{s,\bold{k}}g-
    \sum_{|\bold{j}|\leq s_2}\frac{1}{\bold{j}!}\partial^{\bold{j}}gD^{s,\bold{j}}f
    \right)_{L^2}\\
    \leq
    C  \|  D^s \Theta_{N,c,K,\gamma}  \|  _{L^2}   \|  D^{s_1} f  \|  _{BMO}   \|  D^{s_2}g  \|  _{L^2}
    \\
     \leq 
     C  \|  D^s \Theta_{N,c,K,\gamma}  \|  _{L^2}   \|D^1   v_i^\gamma(\Theta_{N,c,K,\gamma})  \|  _{L^\infty}   \|  D^{s-1}\partial_{i}\Theta_{N,c,K,\gamma}  \|  _{L^2}
     \\
     \leq
    C  \|  D^s \Theta_{N,c,K,\gamma}  \|  _{L^2}   \|  \Theta_{N,c,K,\gamma}  \|  ^2_{H^s},
    \end{align*}
    using the bound (\ref{cota1}) for the $L^\infty$ norm of the fractional laplacian of the velocity.\\ \\
   We can repeat the same process as before for $\bold{k}=0$.\\ \\
   For $|\bold{k}|=1$, using that $D^{s,\bold{k}}$ is continuous from $H^a$ to $H^{a-s+|\bold{k}|}$,
 \begin{align*}
       |( D^s \Theta_{N,c,K,\gamma}, 
        \sum_{|\bold{k}|=1}\frac{1}{\bold{k}!}\partial^{\bold{k}}fD^{s,\bold{k}}g
       )_{L^2}| \\
       \leq \sum_{|\bold{k}|=1} \|D^s \Theta_{N,c,K,\gamma}\|_{L^2} \|\partial^\bold{k} v_i^\gamma (\Theta_{N,c,K,\gamma})\|_{L^{\infty}} \|D^{s,\bold{k}}\partial_i \Theta_{N,c,K,\gamma}\|_{L^2
       } \\
       \leq C  \|  D^s \Theta_{N,c,K,\gamma}  \|  _{L^2}   \|  \Theta_{N,c,K,\gamma}  \|  ^2_{H^s}.
    \end{align*}
   Nevertheless, we must do some changes for $|\bold{j}|=0,1,2$.\\ \\
   To bound the term associated to $\bold{j}=0$, we need to use that $v^\gamma_i$ is an odd operator (property (\ref{imparidad}) with $\varphi=\Theta_{N,c,K,\gamma}$), so for $\gamma>0$, using the Lemma \ref{leibnizv},
     \begin{align*}
    |( 
          D^s \Theta_{N,c,K,\gamma}, gD^{s}f
          )_{L^2}|\\
          =\left| \int_{\mathbb{R}^2} D^s \Theta_{N,c,K,\gamma} \frac{\partial \Theta_{N,c,K,\gamma}}{\partial x_i} D^s(v_i^\gamma (\Theta_{N,c,K,\gamma}))dx \right| \\
          =\frac{1}{2}\left|\int_{\mathbb{R}^2} D^s \Theta_{N,c,K,\gamma} 
          \left(
          \frac{\partial \Theta_{N,c,K,\gamma}}{\partial x_i} D^s(v_i^\gamma (\Theta_{N,c,K,\gamma}))
          - v_i^\gamma(D^s \Theta_{N,c,K,\gamma} \frac{\partial \Theta_{N,c,K,\gamma}}{\partial x_i})
          \right)
          dx \right| \\
          \leq \frac{1}{2}
            \|  D^s\Theta_{N,c,K,\gamma}  \|  _{L^2} 
            \left \|   
          \frac{\partial \Theta_{N,c,K,\gamma}}{\partial x_i} D^s(v_i^\gamma (\Theta_{N,c,K,\gamma}))
          - v_i^\gamma(D^s \Theta_{N,c,K,\gamma} \frac{\partial \Theta_{N,c,K,\gamma}}{\partial x_i})
           \right\|  _{L^2}\\
                  \leq \frac{1}{2}
            \|  D^s\Theta_{N,c,K,\gamma}  \|  _{L^2} \\
            \left(
            \left \|   
          \frac{\partial \Theta_{N,c,K,\gamma}}{\partial x_i} D^s(v_i^\gamma (\Theta_{N,c,K,\gamma}))
          - v_i^\gamma(D^s \Theta_{N,c,K,\gamma} \frac{\partial \Theta_{N,c,K,\gamma}}{\partial x_i})+
          v_i^\gamma(\frac{\partial \Theta_{N,c,K,\gamma}}{\partial x_i})D^s \Theta_{N,c,K,\gamma}
           \right\|  _{L^2}
           \right.\\
           \left.
            +\|
             v_i^\gamma(\frac{\partial \Theta_{N,c,K,\gamma}}{\partial x_i})D^s \Theta_{N,c,K,\gamma}
            \|_{L^2}
            \right)
        \\
          \leq C
            \|  D^s\Theta_{N,c,K,\gamma}  \|  _{L^2}     \|  D^s\Theta_{N,c,K,\gamma}  \|  _{L^2}   \|  D^\gamma (\frac{\partial \Theta_{N,c,K,\gamma}}{\partial x_i})  \|  _{L^\infty}    \\
            +C
            \|  D^s\Theta_{N,c,K,\gamma}  \|  _{L^2}     \|  D^s\Theta_{N,c,K,\gamma}  \|  _{L^2}   \|  v_i^\gamma (\frac{\partial \Theta_{N,c,K,\gamma}}{\partial x_i})  \|  _{L^\infty}    \\
            \leq C
               \|  D^s\Theta_{N,c,K,\gamma}  \|  _{L^2}^2  \|  D^\gamma (\frac{\partial \Theta_{N,c,K,\gamma}}{\partial x_i})  \|  _{C^\varepsilon}    
            +C\|  D^s\Theta_{N,c,K,\gamma}  \|  _{L^2}^2   \|  v_i^\gamma (\frac{\partial \Theta_{N,c,K,\gamma}}{\partial x_i})  \|  _{C^\varepsilon} \\
            \leq C\|  D^s\Theta_{N,c,K,\gamma}  \|  _{L^2}^2
            \| \Theta_{N,c,K,\gamma}  \|  _{H^{2+\gamma+\varepsilon}}
          \leq 
          C   \|  D^s\Theta_{N,c,K,\gamma}  \|  _{L^2} 
   \|  \Theta_{N,c,K,\gamma}  \|  ^2_{H^s},
    \end{align*}
    choosing $\varepsilon=s-2-\gamma>0$.\\ \\
    For negative gammas we need the Hölder inequallity and the fractional Sobolev embeddings $H^{\varepsilon} \hookrightarrow L^q$. We obtain from the Theorem 6.7. of the article \cite{sobolevfraccional} that, in dimension two, the inequality $\|f\|_{L^q}\leq C\|f\|_{H^{\varepsilon}}$ holds for every $\varepsilon\in (0,1)$ and every $q\in [2,\frac{2}{1-\varepsilon}]$. In our case we apply this result once with $\varepsilon=1+\gamma, q=-2/\gamma$ and a second time with $\varepsilon=-\gamma, q=\frac{2}{1+\gamma}$, obtaining
    \begin{align}
    \label{gammanegativo2}
    \begin{array}{l}
        \left| \int_{\mathbb{R}^2} D^s \Theta_{N,c,K,\gamma} \frac{\partial \Theta_{N,c,K,\gamma}}{\partial x_i} D^s(v_i^\gamma (\Theta_{N,c,K,\gamma}))dx \right| \\ \\
        \leq 
        \| D^s \Theta_{N,c,K,\gamma}\|_{L^2} 
        \|\frac{\partial \Theta_{N,c,K,\gamma}}{\partial x_i} D^s(v_i^\gamma (\Theta_{N,c,K,\gamma}))\|_{L^2}\\ \\
        \leq
        C\| D^s \Theta_{N,c,K,\gamma}\|_{L^2} 
        \left\|\frac{\partial \Theta_{N,c,K,\gamma}}{\partial x_i} 
        \right\|_{L^{-\frac{2}{\gamma}}}
        \|D^s(v_i^\gamma (\Theta_{N,c,K,\gamma}))\|_{L^{\frac{2}{1+\gamma}}}
        \\ \\
        \leq
        C\| D^s \Theta_{N,c,K,\gamma}\|_{L^2} 
        \left\|\frac{\partial \Theta_{N,c,K,\gamma}}{\partial x_i} 
        \right\|_{H^{1+\gamma}}
        \|D^s(v_i^\gamma (\Theta_{N,c,K,\gamma}))\|_{H^{-\gamma}}
        \\ \\\leq
          C   \|  D^s\Theta_{N,c,K,\gamma}  \|  _{L^2} 
   \|  \Theta_{N,c,K,\gamma}  \|  ^2_{H^s}.
       \end{array}
    \end{align}
    In the case $|\bold{j}|=1$ and $\gamma>0$, using the bounds (\ref{cota2}) and (\ref{cota3}),
   \begin{align*}
        |( 
          D^s \Theta_{N,c,K,\gamma}, \sum_{|\bold{j}|=1}\frac{1}{\bold{j}!}\partial^{\bold{j}}gD^{s,\bold{j}}f
          )_{L^2}| \\
          \leq 
            \sum_{|\bold{j}|=1 }\parallel  D^s \Theta_{N,c,K,\gamma} \parallel _{L^2}  \parallel \partial^{\bold{j}} \partial_{x_i}\Theta_{N,c,K,\gamma}  D^{s, \bold{j}} v_i^\gamma(\Theta_{N,c,K,\gamma}) \parallel _{L^2}\\
          \leq  \sum_{|\bold{j}|=1 }
          C  \parallel  D^s \Theta_{N,c,K,\gamma} \parallel _{L^2}  \parallel \partial^{\bold{j}}\partial_{x_i}\Theta_{N,c,K,\gamma} \parallel _{L^\frac{2}{1-\gamma}}  \parallel D^{s, \bold{j}}v_i^\gamma(\Theta_{N,c,K,\gamma}) \parallel _{L^\frac{2}{\gamma}}
          \\
          \leq
          C  \parallel  D^s \Theta_{N,c,K,\gamma} \parallel _{L^2}  \parallel  \Theta_{N,c,K,\gamma} \parallel ^2_{H^s} .
    \end{align*}
    If $\gamma<0$, then the case $|\bold{j}|=1$ must be done just whenever $s_2=s-1\geq 1$, which implies $s\geq 2$ and hence $\|\partial_{x_i}\partial_{x_j} \Theta_{N,c,K,\gamma} \|_{L^2}\leq \|\Theta_{N,c,K,\gamma}\|_{H^s}$, so using that $D^{s,\bold{k}}$ is continuous from $H^a$ to $H^{a-s+|\bold{k}|}$, 
     \begin{align}
     \label{gammanegativo3}
     \begin{array}{l}
         \left|
            \int_{\mathbb{R}^2} D^s\Theta_{N,c,K,\gamma} \partial_{x_i}\partial_{x_j} \Theta_{N,c,K,\gamma} D^{s,\bold{j}} v^{\gamma} (\Theta_{N,c,K,\gamma})dx
         \right| \\\\
         \leq 
      C \| D^s\Theta_{N,c,K,\gamma}\|_{L^2} \|\partial_{x_i}\partial_{x_j} \Theta_{N,c,K,\gamma} D^{s,\bold{j}} v^{\gamma} (\Theta_{N,c,K,\gamma})\|_{L^2}\\ \\
      \leq C\| D^s\Theta_{N,c,K,\gamma}\|_{L^2} \|\partial_{x_i}\partial_{x_j} \Theta_{N,c,K,\gamma} \|_{L^2}
      \|D^{s,\bold{j}} v^{\gamma} (\Theta_{N,c,K,\gamma})\|_{L^\infty}\\ \\
         \leq C\| D^s\Theta_{N,c,K,\gamma}\|_{L^2} \|\partial_{x_i}\partial_{x_j} \Theta_{N,c,K,\gamma} \|_{L^2}
      \|D^{s,\bold{j}} v^{\gamma} (\Theta_{N,c,K,\gamma})\|_{C^\varepsilon}\\ \\
      \leq C\| D^s\Theta_{N,c,K,\gamma}\|_{L^2} \|\partial_{x_i}\partial_{x_j} \Theta_{N,c,K,\gamma} \|_{L^2}
      \|D^{s,\bold{j}} v^{\gamma} (\Theta_{N,c,K,\gamma})\|_{H^{1+\varepsilon}}
      \\ \\
      \leq C\| D^s\Theta_{N,c,K,\gamma}\|_{L^2} \|\partial_{x_i}\partial_{x_j} \Theta_{N,c,K,\gamma} \|_{L^2}
      \| \Theta_{N,c,K,\gamma}\|_{H^{s+\gamma+\varepsilon}} \\ \\\leq 
       C\| D^s\Theta_{N,c,K,\gamma}\|_{L^2}  \| \Theta_{N,c,K,\gamma}\|_{H^{s}}^2,
       \end{array}
     \end{align}   
     choosing $\varepsilon=-\gamma$.\\ \\ 
        We consider $|\bold{j}|=2$ (whenever $s_2=s-1\geq 2\Rightarrow s\geq 3$ so $  \|  \Theta_{N,c,K,\gamma}  \|  _{H^3} \leq   \|  \Theta_{N,c,K,\gamma}  \|  _{H^s}$). Using the embedding of $C^\varepsilon$ in Sobolev spaces and that $D^{s,\bold{k}}$ is continuous from $H^a$ to $H^{a-s+|\bold{k}|}$ we have
    \begin{align*}
         |( 
          D^s \Theta_{N,c,K,\gamma}, \sum_{|\bold{j}|=2}\frac{1}{\bold{j}!}\partial^{\bold{j}}gD^{s,\bold{j}}f
          )_{L^2}| \\
           \leq 
           C   \|   D^s \Theta_{N,c,K,\gamma}  \|  _{L^2} 
            \|   \Theta_{N,c,K,\gamma}  \|  _{H^3}
           \sum_{|\bold{j}|=2}
            \|   D^{s,\bold{j}}  v^\gamma_i (\Theta_{N,c,K,\gamma})  \|  _{L^\infty}\\
          \leq 
           C   \|   D^s \Theta_{N,c,K,\gamma}  \|  _{L^2} 
            \|   \Theta_{N,c,K,\gamma}  \|  _{H^s}
           \sum_{|\bold{j}|=2}
            \|   D^{s,\bold{j}}  v^\gamma_i (\Theta_{N,c,K,\gamma})  \|  _{L^\infty}\\
             \leq 
           C   \|   D^s \Theta_{N,c,K,\gamma}  \|  _{L^2} 
            \|   \Theta_{N,c,K,\gamma}  \|  _{H^s}
           \sum_{|\bold{j}|=2}
            \|   D^{s,\bold{j}}  v^\gamma_i (\Theta_{N,c,K,\gamma})  \|  _{C^\varepsilon}\\
            \leq C   \|   D^s \Theta_{N,c,K,\gamma}  \|  _{L^2} 
            \|   \Theta_{N,c,K,\gamma}  \|  _{H^s} \|   \Theta_{N,c,K,\gamma}  \|  _{H^{s-1+\varepsilon+\gamma}}
            \\
            =C   \|   D^s \Theta_{N,c,K,\gamma}  \|  _{L^2} 
            \|   \Theta_{N,c,K,\gamma}  \|  _{H^s}^2,
    \end{align*}
    choosing $\varepsilon=1-\gamma$.
    \item Let us estimate 
       \[
    \int_{\mathbb{R}^2} D^s\Theta_{N,c,K,\gamma} 
        D^s\left(
        v_1^\gamma(\Theta_{N,c,K,\gamma} ) \frac{\partial \overline{\theta}_{N,c,K}}{\partial x_1} +
        v_2^\gamma(\Theta_{N,c,K,\gamma} ) \frac{\partial \overline{\theta}_{N,c,K}}{\partial x_2} 
        \right)dx.
    \]
   For $\gamma>0$, we use the Lemma \ref{leibniz} with $f= v_i^\gamma (\Theta_{N,c,K,\gamma})$, $g=\partial_{x_i} \overline{\theta}_{N,c,K}$, $s_1=s-\gamma$ and $s_2=\gamma$. Hence,
\begin{align*}
     \left(
    D^s \Theta_{N,c,K,\gamma}, D^s(fg)-
    \sum_{|\bold{k}|\leq s-\gamma}\frac{1}{\bold{k}!}\partial^{\bold{k}}fD^{s,\bold{k}}g-
    \sum_{|\bold{j}|\leq \gamma}\frac{1}{\bold{j}!}\partial^{\bold{j}}gD^{s,\bold{j}}f
    \right)_{L^2}\\
     \leq C
     \| D^s \Theta_{N,c,K,\gamma}\|_{L^2} \| D^{s-\gamma} v_i^\gamma (\Theta_{N,c,K,\gamma})\|_{L^2} \|D^\gamma \partial_i \overline{\theta}_{N,c,K}\|_{BMO}\\
     \leq C \| D^s \Theta_{N,c,K,\gamma}\|_{L^2} \| \Theta_{N,c,K,\gamma}\|_{H^s},
\end{align*}
where we used $   \|  D^{s_2} g \|  _{BMO} \leq   \|  D^\gamma \partial_{x_i} \overline{\theta}_{N,c,K}  \|  _{L^\infty}\leq  \| D^\gamma \partial_{x_i} (\overline{\theta}_{N,c,K}-f_{1,c,K})  \|  _{L^\infty}+   \|D^\gamma \partial_{x_i}  f_{1,c,K} \|  _{L^\infty}\leq C\frac{N^{1+\gamma}}{N^{\beta}}+C\leq C$ because of the bound (\ref{normafraccionario}).\\\\
Now we have to bound all the crossed terms to be able to do triangular inequality.\\\\
The term associated to $\bold{j}=0$ is bounded as 
\begin{align*}
     |(
    D^s \Theta_{N,c,K,\gamma}, \partial_i \overline{\theta}_{N,c,K} D^s (v_i^\gamma (\Theta_{N,c,K,\gamma})) )_{L^2}|\\
    =\left| \int_{\mathbb{R}^2} D^s \Theta_{N,c,K,\gamma} \frac{\partial \overline{\theta}_{N,c,K}}{\partial x_i} D^s(v_i^\gamma (\Theta_{N,c,K,\gamma}))dx \right| \\
          =\frac{1}{2}\left|\int_{\mathbb{R}^2} D^s \Theta_{N,c,K,\gamma} 
          \left(
          \frac{\partial \overline{\theta}_{N,c,K}}{\partial x_i} D^s(v_i^\gamma (\Theta_{N,c,K,\gamma}))
          - v_i^\gamma(D^s \Theta_{N,c,K,\gamma} \frac{\partial\overline{\theta}_{N,c,K}}{\partial x_i})
          \right)
          dx \right| \\
          \leq \frac{1}{2}\left|\int_{\mathbb{R}^2} D^s \Theta_{N,c,K,\gamma} 
          \left(
          v_i^\gamma (\frac{\partial \overline{\theta}_{N,c,K}}{\partial x_i}) D^s(\Theta_{N,c,K,\gamma})
          \right. \right.\\
          +\left. \left.
          \frac{\partial \overline{\theta}_{N,c,K}}{\partial x_i} D^s(v_i^\gamma (\Theta_{N,c,K,\gamma}))
          - v_i^\gamma(D^s \Theta_{N,c,K,\gamma} \frac{\partial\overline{\theta}_{N,c,K}}{\partial x_i})
          \right)
          dx \right| \\
          +\frac{1}{2}\left|\int_{\mathbb{R}^2} D^s \Theta_{N,c,K,\gamma} 
          v_i^\gamma (\frac{\partial \overline{\theta}_{N,c,K}}{\partial x_i}) D^s(\Theta_{N,c,K,\gamma})
          dx \right| \\
          \leq C \| D^s \Theta_{N,c,K,\gamma}\|_{L^2}^2 \|D^\gamma(\partial_{x_i} \overline{\theta}_{N,c,K})\|_{L^\infty}+C
          \| D^s \Theta_{N,c,K,\gamma}\|_{L^2}^2 \|v_i^\gamma (\partial_{x_i} \overline{\theta}_{N,c,K})\|_{L^\infty}       \\
          \leq C \| D^s \Theta_{N,c,K,\gamma}\|_{L^2}^2,
\end{align*}
using the Lemma \ref{leibnizv} and again the bound (\ref{normackvelocidad}).\\\\
The terms associated to $2\leq |\bold{k}|\leq s-\gamma$ are bounded by 
\begin{align*}
    |( D^s \Theta_{N,c,K,\gamma}, 
        \sum_{2\leq |\bold{k}|\leq s-\gamma}\frac{1}{\bold{k}!}\partial^{\bold{k}}fD^{s,\bold{k}}g
       )_{L^2}|\\
       \leq C
       \sum_{2\leq |\bold{k}|\leq s-\gamma}
       \| D^s \Theta_{N,c,K,\gamma}\|_{L^2} 
       \| \partial^{\bold{k}} v_i^\gamma (\Theta_{N,c,K,\gamma})\|_{L^2} N^{s+1-|\bold{k}|-\beta}
       \leq C
        \| D^s \Theta_{N,c,K,\gamma}\|_{L^2}  \| \Theta_{N,c,K,\gamma}\|_{H^s}, 
\end{align*}
using (\ref{normafraccionario}) to bound the $C^1$ norm of $D^{s,\bold{k}}\overline{\theta}_{N,c,K}$.\\\\
Using the bootstrap assumption and the $L^2$ norm of $ \Theta_{N,c,K,\gamma}$ given by Lemma \ref{cotadiferencia} we obtain the interpolation bound $ \|  \Theta_{N,c,K,\gamma}  \|  _{H^{1+\gamma}}\leq C N^{-\beta+\frac{1}{2}+\gamma}$. The term with $|\bold{k}|=1$ is bounded as
\begin{equation}
\label{casok1}
    \begin{aligned}
     |( D^s \Theta_{N,c,K,\gamma}, 
        \sum_{|\bold{k}| =1}\frac{1}{\bold{k}!}\partial^{\bold{k}}fD^{s,\bold{k}}g
       )_{L^2}|\\
       \leq C\| D^s \Theta_{N,c,K,\gamma}\|_{L^2}N^{-\beta+\frac{1}{2}+\gamma+s-\beta}\leq C\| D^s \Theta_{N,c,K,\gamma}\|_{L^2}N^{-\beta+\frac{3}{2}+\gamma} ,       
    \end{aligned}      
\end{equation}
using (\ref{normafraccionario}) to bound the $C^1$ norm of $D^{s,\bold{k}}\overline{\theta}_{N,c,K}$.\\\\
Also by interpolation, $ \|  \Theta_{N,c,K,\gamma}  \|  _{H^{\gamma}}\leq C N^{-\beta-\frac{1}{2}+\gamma}$ and hence the case $\bold{k}=0$ can be done as follows
\begin{align*}
     |( D^s \Theta_{N,c,K,\gamma},  v_i^\gamma (\Theta_{N,c,K,\gamma}) D^s \partial_{x_i} \overline{\theta}_{N,c,K})|\leq C\| D^s \Theta_{N,c,K,\gamma}\|_{L^2}N^{-\beta-\frac{1}{2}+\gamma+s+1-\beta}\\ \leq C\| D^s \Theta_{N,c,K,\gamma}\|_{L^2}N^{-\beta+\frac{3}{2}+\gamma}, 
\end{align*}
using (\ref{normafraccionario}) to bound the $C^1$ norm of $D^{s}\overline{\theta}_{N,c,K}$.\\\\
This last bound finishes the case of singular velocities. For $\gamma\leq 0$ we also use the Lemma \ref{leibniz} with $f= v_i^\gamma (\Theta_{N,c,K,\gamma})$, $g=\partial_{x_i} \overline{\theta}_{N,c,K}$, $s_1=s$ and $s_2=0$
\begin{align*}
     \left(
    D^s \Theta_{N,c,K,\gamma}, D^s(fg)-
    \sum_{|\bold{k}|\leq s}\frac{1}{\bold{k}!}\partial^{\bold{k}}fD^{s,\bold{k}}g-
    \sum_{|\bold{j}|=0}\frac{1}{\bold{j}!}\partial^{\bold{j}}gD^{s,\bold{j}}f
    \right)_{L^2}\\
     \leq C
     \| D^s \Theta_{N,c,K,\gamma}\|_{L^2} \| D^{s} v_i^\gamma (\Theta_{N,c,K,\gamma})\|_{L^2} \| \partial_i \overline{\theta}_{N,c,K}\|_{BMO}\\
     \leq C \| D^s \Theta_{N,c,K,\gamma}\|_{L^2} \| \Theta_{N,c,K,\gamma}\|_{H^s}.
\end{align*}
The term associated to $\bold{j}=0$ is bounded as 
\begin{align*}
     |(
    D^s \Theta_{N,c,K,\gamma}, \partial_i \overline{\theta}_{N,c,K} D^s (v_i^\gamma (\Theta_{N,c,K,\gamma})) )_{L^2}|
    \leq C \| D^s \Theta_{N,c,K,\gamma}\|_{L^2} \| \Theta_{N,c,K,\gamma}\|_{H^s}.
\end{align*}
The cases $2\leq|\bold{k}|\leq s$ are solved like  
\begin{align*}
     |( D^s \Theta_{N,c,K,\gamma}, 
        \sum_{2\leq|\bold{k}|\leq s}\frac{1}{\bold{k}!}\partial^{\bold{k}}fD^{s,\bold{k}}g
       )_{L^2}|\\
       \leq 
        \sum_{2\leq|\bold{k}|\leq s} \|D^s \Theta_{N,c,K,\gamma}\|_{L^2} \| \partial^{\bold{k}} v_i^\gamma (\Theta_{N,c,K,\gamma})\|_{L^2} N^{s-|\bold{k}|+1-\beta}
        \\ \leq
         \sum_{2\leq|\bold{k}|\leq s}
        \| D^s \Theta_{N,c,K,\gamma}\|_{L^2} \| \Theta_{N,c,K,\gamma}\|_{H^s} N^{3/2-|\bold{k}|},
\end{align*}
using (\ref{normafraccionario}) to bound the $C^1$ norm of $D^{s,\bold{k}}\overline{\theta}_{N,c,K}$.\\\\
The case $|\bold{k}|=1$ is done just as in (\ref{casok1}) and the case $|\bold{k}|=0$ is done as 
\begin{align*}
     |( D^s \Theta_{N,c,K,\gamma},  v_i^\gamma (\Theta_{N,c,K,\gamma}) D^s \partial_{x_i} \overline{\theta}_{N,c,K})|\leq\| D^s \Theta_{N,c,K,\gamma}\|_{L^2}\|  \Theta_{N,c,K,\gamma}\|_{L^2} N^{s+1-\beta} \\
     \leq\| D^s \Theta_{N,c,K,\gamma}\|_{L^2} N^{-2\beta +1+\gamma+3/2}\leq \| D^s \Theta_{N,c,K,\gamma}\|_{L^2} N^{-\beta+3/2+\gamma},
\end{align*}
using (\ref{normafraccionario}) to bound the $C^1$ norm of $D^{s}\overline{\theta}_{N,c,K}$ and $\beta\geq 1$.
    \item Finally, for the term involving $F_{N,c,K,\gamma}$we use the Lemma \ref{FnormH} to obtain
    \begin{align*}
    \left| 
    \int_{\mathbb{R}^2} D^s\Theta_{N,c,K,\gamma} D^s F_{N,c,K,\gamma} dx
    \right|
    \leq
      \|  D^s\Theta_{N,c,K,\gamma}  \|  _{L^2}   \|  D^s F_{N,c,K,\gamma}  \|  _{L^2}
    \leq \frac{C\mathcal{X}(\gamma)}{N^{\beta-\frac{3}{2}-\gamma}}
       \|  D^s\Theta_{N,c,K,\gamma}  \|  _{L^2}.
    \end{align*}
    \end{enumerate}
    Putting all this together, we obtain 
    \begin{align*}
        \frac{\partial   \|  D^s\Theta_{N,c,K,\gamma}  \|  _{L^2}^2}{\partial t} \leq C   \|  D^s\Theta_{N,c,K,\gamma}  \|  _{L^2} (N^{-(\beta-\frac{3}{2}-\gamma)}\mathcal{X}(\gamma)+  \|  \Theta_{N,c,K,\gamma}  \|  _{H^s}+  \|  \Theta_{N,c,K,\gamma}  \|  _{H^s}^2),
    \end{align*}
    and using 
    \begin{align*}
       \|  \Theta_{N,c,K,\gamma}  \|  _{H^s}=  \|  \Theta_{N,c,K,\gamma}  \|  _{L^2}+   \|  D^s\Theta_{N,c,K,\gamma}  \|  _{L^2} \leq C(N^{-(2\beta-1-\gamma)}\mathcal{X}(\gamma)+  \|  D^s\Theta_{N,c,K,\gamma}  \|  _{L^2}),
    \end{align*}
    we have that 
    \begin{align*}
        \frac{\partial   \|  D^s\Theta_{N,c,K,\gamma}  \|  _{L^2}}{\partial t} \leq C (N^{-(\beta-\frac{3}{2}-\gamma)}\mathcal{X}(\gamma)+  \|  D^s\Theta_{N,c,K,\gamma}  \|  _{L^2}+  \|  D^s\Theta_{N,c,K,\gamma}  \|  _{L^2}^2),
    \end{align*}
    and recalling $  \|  D^s\Theta_{N,c,K,\gamma}  \|  _{L^2}\leq C \log(N) N^{-(\beta-\frac{3}{2}-\gamma)}\mathcal{X}(\gamma)\leq C$ in $[0,T^\ast]$,
    \begin{align*}
       \frac{\partial   \|  D^s\Theta_{N,c,K,\gamma}  \|  _{L^2}}{\partial t} \leq C (N^{-(\beta-\frac{3}{2}-\gamma)}\mathcal{X}(\gamma)+  \|  D^s\Theta_{N,c,K,\gamma}  \|  _{L^2}),
    \end{align*}
    so 
    \begin{align*}
          \|  D^s\Theta_{N,c,K,\gamma}  \|  _{L^2} \leq \frac{C(e^{Ct}-1)\mathcal{X}(\gamma)}{N^{\beta-\frac{3}{2}-\gamma}},
    \end{align*}
    as we wanted to prove.
    \end{proof}
\end{lma}
\begin{rmk}
    We may see in this last part of the proof that, to close the estimates, we need to make them in a space $H^s$ in which the force term vanishes when $N$ goes to infinity, this is, 
    \begin{align*}
        \|F_{N,c,K,\gamma}\|_{H^s}\rightarrow 0, \text{ when } N\rightarrow\infty.
    \end{align*}    
    Moreover, in the bound (\ref{normafuerzahk}) one can see (via interpolation) that the $H^s$ norm of $F_{N,c,K,\gamma}$ goes with a negative power of $N$ if and only if $2\beta-1-s-\gamma<0$. Combining this with the fact that we must be in the well posedness regime ($s>2+\gamma$) to apply the bootstrap argument and close the estimates, this lead us to $\beta>\frac{3}{2}+\gamma$. 
\end{rmk}
\subsection{Proof of the strong ill-posedness}
\label{prueba}
Finally we are ready to prove one of the main theorems of the essay (Theorem \ref{teorema}). We restate it here before the proof.
\\\\
\textbf{Theorem} (Strong ill-posedness in $H^\beta$):
For any $T>0$, $0<c_0<1$, $M>1$, $\gamma \in (-1,1)$, $\beta \in [1,2+\gamma)\cap (\frac{3}{2}+\gamma,2+\gamma)$ and $t_\ast>0$ as small as we want, we can find a $H^{\beta+\frac{1}{2}}$ function $\theta_0$ with $  \|  \theta_0  \|  _{H^{\beta}}\leq c_0$ such that the unique solution $\theta (x,t)$ in $H^{\beta+\frac{1}{2}}$ for $t\in[0,T]$ to the $\gamma$-gSQG equation (\ref{gSQG}) with initial conditions $\theta_0$ is such that $  \|  \theta(x,t_\ast)  \|  _{H^\beta}\geq Mc_0$. Moreover, along its time of existence the solution is $C^\infty_c$ and $supp(\theta)\subset supp(\theta_0)+B_{2c_0T}(0)$.
\begin{proof} 
Throughout this proof we will specify the bounds that are not uniform on $0<c<1$, $K>1$ with subscripts in the constants $C_{c,K}>0$ of the bounds. Hence, whenever we write a constant without subscripts $C>0$ in a bound, we are saying that that bound is uniform on $0<c<1$, $K>1$. As a last remark notice that all constants, with or without subscripts are uniform on $N>1$ big enough.\\\\
As the reader may guess, we choose $\theta_0=\overline{\theta}_{N,c,K}(\cdot,0)$ as initial condition. Recall that $  \|  f_{2,c,K}  \|  _{L^2}=c$, so
\begin{align*}
      \|  \overline{\theta}_{N,c,K}(\cdot,0)-f_{1,c,K}  \|  _{L^2}=  \|  f_{2,c,K}\frac{r^\beta_{c,K} \sin(N\alpha)}{N^\beta}  \|  _{L^2}  \\
    \leq   \|  f_{2,c,K}  \|  _{L^2}  \|  \frac{r^\beta_{c,K} \sin(N\alpha)}{N^\beta}  \|  _{L^\infty} \leq \frac{c r^\beta_{c,K} }{N^\beta}.
\end{align*}
Now let us bound the homogeneous norm $\Dot{H}^3$ as follows
\begin{align*}
    \|  \overline{\theta}_{N,c,K}(\cdot,0)-f_{1,c,K}  \|  _{\Dot{H}^{3}}=
 \|  f_{2,c,K}\frac{r^\beta_{c,K} \sin(N\alpha)}{N^\beta}  \|  _{\Dot{H}^{3}}
 \\ \leq 
\sum_{|\bold{j}+\bold{k}|=3} \|\partial^{\bold{k}} f_{2,c,K} \|_{L^2}
\|
\partial^{\bold{j}}\frac{r^{\beta}_{c,K} \sin(N\alpha)}{N^\beta} 
\|_{L^\infty}\\
\leq 
\sum_{|\bold{j}+\bold{k}|=3} \|\partial^{\bold{k}} f_{2,c,K} \|_{L^2} r_{c,K}^{\beta-|\bold{j}|} N^{|\bold{j}|-\beta}\\
\leq C c r_{c,K}^{\beta-3} N^{3-\beta}+ C\|f_{2,c,K} \|_{C^3}r_{c,K}^{\beta-2} N^{2-\beta}.
\end{align*}
Summing it to the $L^2$-norm bound, using interpolation and that $(x+y)^s\leq x^s+y^s$ for $x,y\geq 0$ and $s\in (0,1)$, we obtain 
\begin{align*}
      \|  \overline{\theta}_{N,c,K}(\cdot,0)-f_{1,c,K}  \|  _{H^\beta}\\
    \leq 
       \|  \overline{\theta}_{N,c,K}(\cdot,0)-f_{1,c,K}  \|  _{L^2}^{1-\frac{\beta}{3}}
        \|  \overline{\theta}_{N,c,K}(\cdot,0)-f_{1,c,K}  \|  _{H^{3}}^{\frac{\beta}{3}}\\
      \leq \frac{Cc}{N^{\beta(1-\frac{\beta}{3})+(\beta-3)(\frac{\beta}{3})}}+ \frac{C c^{1-\frac{\beta}{3}} \|f_{2,c,K}\|_{C^3}^{\frac{\beta}{3}}}{(N/r_{c,K})^{\beta(1-\frac{\beta}{3})+(\beta-2)(\frac{\beta}{3})}}
       \leq Cc +C_{c,K}r_{c,K}^{\frac{\beta}{3}}N^{-\frac{\beta}{3}}.
\end{align*}
So, as $  \|  f_{1,c,K}  \|  _{H^\beta}\leq c$, we obtain
\begin{align}
\label{cismall}
      \|  \theta_0  \|  _{H^\beta}\leq Cc +C_{c,K}N^{-\frac{\beta}{3}}\leq \frac{c_0}{2}+C_{c,K}N^{-\frac{\beta}{3}},
\end{align}
choosing $c>0$ small enough.\\ \\
Now, let us make the $H^\beta$ norm of $\overline{\theta}_{N,c,K}(\cdot,t_\ast)$ as big as we want. 
\begin{enumerate}
    \item On the one hand,
    \begin{align*}
           \|  \overline{\theta}_{N,c,K}(\cdot,t)-f_{1,c,K}  \|  _{L^2}=  \|  f_{2,c,K}\frac{r^\beta_{c,K} \sin(N\alpha-Nt\frac{v^\gamma_\alpha(f_{1,c,K}(r))}{r})}{N^\beta}  \|  _{L^2} \leq \frac{cr^\beta_{c,K}}{N^\beta}.
    \end{align*}
     \item On the other hand, we can obtain a lower bound of the $H^1$ norm. Let $t_\ast>0$, then
    \begin{align*}
        \frac{\partial (\overline{\theta}_{N,c,K}-f_{1,c,K})}{\partial x_1}\\
        =\cos(\alpha) \frac{\partial (\overline{\theta}_{N,c,K}-f_{1,c,K})}{\partial r}-\frac{\sin(\alpha)}{ r}\frac{\partial (\overline{\theta}_{N,c,K}-f_{1,c,K})}{\partial \alpha}\\
        = \cos(\alpha)\frac{\partial f_{2,c,K}}{\partial r}\frac{r^\beta_{c,K} \sin(N\alpha-Nt_\ast\frac{v^\gamma_\alpha(f_{1,c,K}(r))}{r})}{N^\beta}\\
        -f_{2,c,K}\frac{r^{\beta}_{c,K}}{N^{\beta-1}}\cos(N\alpha-Nt_\ast\frac{v^\gamma_\alpha(f_{1,c,K}(r))}{r})
        \left(
        t_\ast \cos(\alpha) \frac{\partial \frac{v_\alpha^\gamma(f_1)}{ r}}{\partial r}+ \frac{\sin(\alpha)}{r}
        \right).
    \end{align*}
    Then, using that (\ref{crecimiento}) holds on the support of $f_{2,c,K}$ and that integrating in polar coordinates one can see that the $L^2$ norm of $f_{2,c,K}\cos(N\alpha-Nt_\ast\frac{v^\gamma_\alpha(f_{1,c,K}(r))}{r})  \cos(\alpha)$ is of the form $Cc$ (with $C>0$ not depending  on $N$),
    \begin{align*}
         \| 
        \frac{\partial (\overline{\theta}_{N,c,K}-f_{1,c,K})}{\partial x_1}
         \| _{L^2} \\
        \geq
       \|f_{2,c,K}\frac{r^{\beta}_{c,K}}{N^{\beta-1}}\cos(N\alpha-Nt_\ast\frac{v^\gamma_\alpha(f_{1,c,K}(r))}{r})        t_\ast \cos(\alpha) \frac{\partial \frac{v_\alpha^\gamma(f_1)}{ r}}{\partial r}
       \|_{L^2}
        \\
        -\|f_{2,c,K}\frac{r^{\beta}_{c,K}}{N^{\beta-1}}\cos(N\alpha-Nt_\ast\frac{v^\gamma_\alpha(f_{1,c,K}(r))}{r})\frac{\sin(\alpha)}{r} 
        \|_{L^2}\\
        -\|\cos(\alpha)\frac{\partial f_{2,c,K}}{\partial r}\frac{r^\beta_{c,K} \sin(N\alpha-Nt_\ast\frac{v^\gamma_\alpha(f_{1,c,K}(r))}{r})}{N^\beta}\|_{L^2}
        \\
        \geq
        Cct_{\ast} r^{\beta-1}_{c,K} N^{-\beta+1} K-Cc r^{\beta-1}_{c,K} N^{-\beta+1}-C\|f_{2,c,K}\|_{C^1} r^{\beta}_{c,K} N^{-\beta}
        \\ \geq Ccr^{\beta-1}_{c,K} N^{-\beta+1}(Kt_\ast-1-\|f_{2,c,K}\|_{C^1}N^{-1}),
    \end{align*}
 and since $supp((\overline{\theta}_{N,c,K}-f_{1,c,K}))\cap supp(f_{1,c,K})=\emptyset$, then    
 \[
       \|  
          \overline{\theta}_{N,c,K}
           \|  _{H^1}\geq
     \|  
          (\overline{\theta}_{N,c,K}-f_{1,c,K})
          \|  _{H^1}  \geq
         \frac{Ccr_{c,K}^{\beta-1}(Kt_\ast-1-r_{c,K}\|f_{2,c,K}\|_{C^1}c^{-1}N^{-1})}{N^{\beta-1}}.
    \] 
    \item Now we are ready to use interpolation inequality, which gives us
    \[
      \|  \overline{\theta}_{N,c,K}  \|  _{H^1}\leq 
       \|  \overline{\theta}_{N,c,K}  \|  ^{\frac{\beta-1}{\beta}}_{L^2}
        \|  \overline{\theta}_{N,c,K}  \|  ^{\frac{1}{\beta}}_{H^\beta},
    \]
    and using the bounds for $  \|  \overline{\theta}_{N,c,K}  \|  _{L^2}$ and $  \|  \overline{\theta}_{N,c,K}  \|  _{H^1}$ we obtain
    \begin{align}
    \label{pseudosolbig}
      \|  \overline{\theta}_{N,c,K}  \|  _{H^\beta} \geq Cc (Kt_\ast-1-r_{c,K}\|f_{2,c,K}\|_{C^{1}}c^{-1}N^{-1})^\beta,
    \end{align}
    so as $c>0$ and $t_\ast$ are already chosen, we just have to first fix a $K>t_*^{-1}\left(2+\left(\frac{2Mc_0}{Cc}\right)^\frac{1}{\beta}\right)$ with the $C>0$ in (\ref{pseudosolbig}) and then take $N>0$ big enough such that, for the fixed $K>0$ we obtain $r_{c,K}\|f_{2,c,K}\|_{C^{1}}c^{-1}N^{-1}<1$. Hence,
     \begin{align}
    \label{pseudosolbig2}
       \|  \overline{\theta}_{N,c,K}(\cdot,t_\ast)  \|  _{H^\beta} \geq 2Mc_0.
    \end{align}
    Finally, now that $c,K>0$ are fixed, we again make $N>0$ big to turn (\ref{cismall}) into 
    \[
    \|\theta_0\|_{H^\beta}\leq c_0,
    \]
    just as we wanted.
\end{enumerate} 
Let $\theta_{N,c,K,\gamma}$ be the local-in-time solution  of (\ref{gSQG}) with initial conditions $\theta_0=\overline{\theta}_{N,c,K}(\cdot,0)$. By Lemma \ref{cotaTheta}, we have that, choosing $N>0$ big enough, the corresponding solution will exist until the desired fixed time $T>0$. Furthermore,  
\begin{align}
\label{difsmall}   
   \|  \theta_{N,c,K,\gamma}(\cdot,t_\ast)-\overline{\theta}_{N,c,K}(\cdot,t_\ast)  \|  _{H^\beta} \leq
   \|  \theta_{N,c,K,\gamma}(\cdot,t_\ast)-\overline{\theta}_{N,c,K}(\cdot,t_\ast)  \|  _{H^{\beta+\frac{1}{2}}}\leq \frac{C_{c,K}t_\ast}{N^{\beta-\frac{3}{2}-\gamma}} \mathcal{X}(\gamma),
\end{align}
and since $c,K>0$ are fixed, taking again $N$ large enough
\begin{align}
\label{solbig}
   \|  \theta_{N,c,K,\gamma}(\cdot,t_\ast)  \|  _{H^\beta} \geq
  \|   \overline{\theta}_{N,c,K}(\cdot,t_\ast)  \|  _{H^\beta}-
   \|  \theta_{N,c,K,\gamma}(\cdot,t_\ast)-\overline{\theta}_{N,c,K}(\cdot,t_\ast)  \|  _{H^\beta}
 \geq Mc_0.
\end{align}
Also by construction, since the initial conditions are $C^\infty_c$ and we fixed a time interval $[0,T]$ where the $H^{\beta+\frac{1}{2}}$-norm does not explode, then the solution will be $C^\infty_c$ in the hole interval of time.\\ \\
To prove the confinement of the support, notice that  $\theta_{N,c,K,\gamma}$ are solutions of a transport equation with uniformly bounded velocity on $0<c<1<N,K$. Indeed, 
     \begin{align*}
         \|v^\gamma(\theta_{N,c,K,\gamma})(\cdot,t)\|_{L^\infty}
         \leq \|v^\gamma(\theta_{N,c,K,\gamma}-\overline{\theta}_{N,c,K})(\cdot,t)\|_{L^\infty}+\|v^\gamma(\overline{\theta}_{N,c,K})(\cdot,t)\|_{L^\infty}\\
         \leq \|v^\gamma(\theta_{N,c,K,\gamma}-\overline{\theta}_{N,c,K})(\cdot,t)\|_{C^{0,\varepsilon}}+\|v^\gamma(\overline{\theta}_{N,c,K})(\cdot,t)\|_{C^{0,\varepsilon}}\\
         \leq \|v^\gamma(\theta_{N,c,K,\gamma}-\overline{\theta}_{N,c,K})(\cdot,t)\|_{H^{1+\varepsilon}}+\|v^\gamma(\overline{\theta}_{N,c,K})(\cdot,t)\|_{H^{1+\varepsilon}}\\
         \leq \|(\theta_{N,c,K,\gamma}-\overline{\theta}_{N,c,K})(\cdot,t)\|_{H^{1+\varepsilon+\gamma}}+\|\overline{\theta}_{N,c,K}(\cdot,t)\|_{H^{1+\varepsilon+\gamma}}\\
          \leq \|(\theta_{N,c,K,\gamma}-\overline{\theta}_{N,c,K})(\cdot,t)\|_{H^{\beta}}+\|\overline{\theta}_{N,c,K}(\cdot,t)\|_{H^{1+\varepsilon+\gamma}}\\
         \leq Ct\mathcal{X}(\gamma)N_j^{-\beta-\frac{3}{2}-\gamma}+ \|f_{1,c,K}\|_{H^{1+\varepsilon+\gamma}}+\|\overline{\theta}_{N,c,K}-f_{1,c,K}\|_{H^{1+\varepsilon+\gamma}}\\
         \leq Ct\mathcal{X}(\gamma)N^{-\beta-\frac{3}{2}-\gamma}+ c+Ct\mathcal{X}(\gamma)N^{1+\varepsilon+\gamma-\beta}
          \leq  2c,
     \end{align*} 
     for $N$ big enough and $t\in[0,T]$ and hence the confinement of the solution follows from the bound of the velocity and the regularity of the solutions.\\\\
\end{proof}
\begin{rmk}
    Notice how important it has been, in the previous proof, the order in which we have fixed the parameters $c,K,N>0$. 
    \begin{enumerate}
        \item First of all, we obtain the  condition (\ref{cismall}) and we fix just $0<c<1$ small enough to get $Cc \leq \frac{c_0}{2}$ uniformly on $K,N>1$.
        \item Then we obtain (\ref{pseudosolbig}). With this, we can fix $K>1$ big enough and then make $N>1$ also big (bigger than a certain $N_0>1$ but not fixed) such that (\ref{pseudosolbig2}) holds.
        \item Now that $c,K>0$ are fixed, then we can make $N>1$ even bigger (this is, we can establish a bigger lower bound $N_0>1$) such that, (\ref{cismall}) implies $\|\theta_0\|_{H^\beta}\leq c_0$.
        \item Finally, using again that $c,K>0$ are fixed, we make again $N>1$ big enough such that (\ref{difsmall}) and (\ref{pseudosolbig2}) imply (\ref{solbig}).
    \end{enumerate}
\end{rmk}
\section{Non-existence of solutions for gSQG}
\label{seccion3}
In this section we prove the non-existence result for gSQG in Sobolev spaces announced in Theorem \ref{nonexistence}. Let us remember the precise result.
\begin{thm*}[Non-existence in $H^\beta$ in the supercritical case]
 For any $t_0>0$, $ 0<c_0<1$, $\gamma \in (-1,1)$ and $\beta \in [1,2+\gamma)\cap (\frac{3}{2}+\gamma,2+\gamma)$, we can find initial conditions $\theta_0$ with $\|\theta_{0}\|_{H^\beta}\leq c_0$ such that there exist a solution $\theta$ to $\gamma$-gSQG (\ref{gSQG}) with $\theta(x,0)=\theta_0(x)$ satisfying $\|\theta(x,t)\|_{H^\beta}=\infty$ for all $t\in (0,t_0]$.  Furthermore, it is the only solution with initial conditions $\theta_0$ that satisfies $\theta \in L^\infty([0,t_0],H^{\frac{3}{2}+\gamma})\cap C([0,t_0],C^2_x(K))$ for any compact set $K\subset\mathbb{R}^2$.
 \end{thm*}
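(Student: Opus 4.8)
The proof is a gluing argument: I superpose infinitely many mutually–distant rescaled copies of the ill–posed building blocks produced by Theorem~\ref{teorema}. Fix $T\ge t_0$. For each $j\in\mathbb N$ I choose, \emph{in this order}, parameters $c_j$ (small), $K_j$ (large), $N_j$ (very large) and finally a centre $R_j\to\infty$ (growing fast), so that the pseudo–solution $\overline\theta_{N_j,c_j,K_j}$ of Section~\ref{seccionpseudosol} satisfies, writing $\tau_j:=2/K_j$:
(i) $\|\overline\theta_{N_j,c_j,K_j}(\cdot,0)\|_{H^\beta}\le c_0 2^{-j-1}$ (by \eqref{cismall}, fixing $c_j$ and then $N_j$);
(ii) $\|\overline\theta_{N_j,c_j,K_j}(\cdot,t)\|_{H^\beta}\ge t\,c_0 2^{j}$ for all $t\in[\tau_j,T]$, with $\tau_j\le 2^{-j}$ (by \eqref{pseudosolbig}, which gives $\gtrsim c_j(K_j t-1)^\beta$; compatible with (i) taking $c_jK_j\gtrsim c_02^{j}$ and $K_j\ge 2^{j+1}$);
(iii) $\sup_{[0,T]}\|\overline\theta_{N_j,c_j,K_j}(\cdot,t)\|_{H^{3/2+\gamma}}\le 2^{-j}$, possible because the oscillatory part of the pseudo–solution has $\dot H^{3/2+\gamma}$ norm $\lesssim c_j(tK_j)^{3/2+\gamma}N_j^{3/2+\gamma-\beta}\to 0$ as $N_j\to\infty$ (here $\beta>3/2+\gamma$ is essential) and its smooth part is $\le c_j$ in $H^{3/2+\gamma}$;
(iv) $\sup_{[0,T]}\|\theta_{N_j,c_j,K_j}-\overline\theta_{N_j,c_j,K_j}\|_{H^{\beta+\frac12}}\le 2^{-j}$ by Lemma~\ref{cotaTheta} (enlarging $N_j$), so each true solution $\theta_{N_j,c_j,K_j}$ exists on $[0,T]$.
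Finally $R_j$ is taken large enough that the supports of $\overline\theta_{N_j,c_j,K_j}(\cdot-R_j,\cdot)$, enlarged by $2c_0T$–balls, are pairwise disjoint and the cross velocities are summably small (see below). Set $\theta_0:=\sum_{j\ge1}\overline\theta_{N_j,c_j,K_j}(\cdot-R_j,0)$, so $\|\theta_0\|_{H^\beta}\le\sum_j c_02^{-j-1}\le c_0$.

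\textbf{Existence of the glued solution.} For $n\in\mathbb N$ let $\theta^{(n)}$ be the local classical $H^{\beta+\frac12}$–solution with truncated data $\theta_0^{(n)}=\sum_{j\le n}\overline\theta_{N_j,c_j,K_j}(\cdot-R_j,0)\in C^\infty_c$, and let $\overline\Theta^{(n)}:=\sum_{j\le n}\overline\theta_{N_j,c_j,K_j}(\cdot-R_j,\cdot)$; the latter is a pseudo–solution of \eqref{gSQG} with source $\sum_{j\le n}F_{N_j,c_j,K_j}(\cdot-R_j,\cdot)+\sum_{\substack{1\le j,k\le n\\ j\neq k}}v^\gamma(\overline\theta_{N_k,c_k,K_k}(\cdot-R_k,\cdot))\cdot\nabla\overline\theta_{N_j,c_j,K_j}(\cdot-R_j,\cdot)$. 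By Corollary~\ref{FnormH} and the decay $|\partial^{\mathbf k}v^\gamma(g)(x)|\lesssim\|g\|_{L^1}\operatorname{dist}(x,\operatorname{supp}g)^{-2-\gamma-|\mathbf k|}$ for $x\notin\operatorname{supp}g$, this aggregated source is controlled in $H^{\beta+\frac12}$ by $\sum_j 2^{-j}$, uniformly in $n$, provided $R_j$ grows fast enough. I then run the bootstrap of Lemma~\ref{cotaTheta} for $\Xi^{(n)}:=\theta^{(n)}-\overline\Theta^{(n)}$, which solves exactly the equation of that lemma with $F$ replaced by the aggregated source: this gives $\sup_{[0,T]}\|\Xi^{(n)}(\cdot,t)\|_{H^{\beta+\frac12}}\le C$ with $C$ independent of $n$ and small, so $\theta^{(n)}$ does not leave $H^{\beta+\frac12}$ on $[0,T]$. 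Since $\overline\Theta^{(n)}\to\sum_{j}\overline\theta_{N_j,c_j,K_j}(\cdot-R_j,\cdot)$ in $C([0,t_0];H^{3/2+\gamma})$ (by (iii) and the triangle inequality) while $\{\Xi^{(n)}\}$ is bounded in $L^\infty([0,T];H^{\beta+\frac12})$ with $\{\partial_t\Xi^{(n)}\}$ bounded in a lower norm, a compactness argument yields $\theta=\sum_j\overline\theta_{N_j,c_j,K_j}(\cdot-R_j,\cdot)+\Xi$ solving \eqref{gSQG} with data $\theta_0$, with $\|\Xi\|_{L^\infty([0,t_0];H^{\beta+\frac12})}\le C$; hence $\theta\in L^\infty([0,t_0];H^{3/2+\gamma})$, and $\theta$ is $C^\infty$ near each bump and locally a finite sum, so $\theta\in C([0,t_0];C^2_x(K))$.

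\textbf{Instantaneous blow–up in $H^\beta$.} Fix $t\in(0,t_0]$ and pick $j_0$ with $2^{-j_0}<t$. For every $j\ge j_0$ one has $t\in[\tau_j,T]$, so by (ii) $\|\overline\theta_{N_j,c_j,K_j}(\cdot,t)\|_{H^\beta}\ge t\,c_02^{j}$. Let $\Omega_j$ be a bounded neighbourhood of $R_j$ meeting only the $j$–th bump's support. Restricting the $\dot H^\beta$ integral to $\Omega_j$, using that a function supported in $\Omega_j$ obeys $\|f\|_{\dot H^\beta(\Omega_j)}\ge\|f\|_{\dot H^\beta(\mathbb R^2)}-C_{\Omega_j}\|f\|_{L^2}$, and that on $\Omega_j$ the solution $\theta(\cdot,t)$ coincides with $\overline\theta_{N_j,c_j,K_j}(\cdot-R_j,t)$ up to the $H^{\beta+\frac12}$–small correction $\Xi$ plus the summably small tails of the other bumps, one obtains
\[
\|\theta(\cdot,t)\|_{H^\beta}\ \ge\ \|\theta(\cdot,t)\|_{H^\beta(\Omega_j)}\ \ge\ t\,c_02^{j}-C\qquad\text{for all }j\ge j_0,
\]
and letting $j\to\infty$ gives $\|\theta(\cdot,t)\|_{H^\beta}=\infty$.

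\textbf{Uniqueness, and the main obstacle.} If $\tilde\theta$ has the stated regularity and the same data, then $w=\theta-\tilde\theta$ solves $\partial_t w+v^\gamma(\theta)\cdot\nabla w+v^\gamma(w)\cdot\nabla\tilde\theta=0$; an $L^2$ energy estimate — using $\operatorname{div}v^\gamma=0$ for the first transport term, the oddness of $v^\gamma$ together with $\tilde\theta\in C^2_x(K)\cap H^{3/2+\gamma}$ for the second, and Lemma~\ref{gammanegativa} (when $\gamma<0$) resp. Lemma~\ref{leibnizv} (when $\gamma>0$) to bound the resulting $L^2$ norm of $v^\gamma(w)$ — yields $\tfrac{d}{dt}\|w\|_{L^2}^2\le C\|w\|_{L^2}^2$, hence $w\equiv0$ by Gr\"onwall; this is the scheme of Lemma~\ref{cotadiferencia}, and $\beta+\frac12>2+\gamma$ keeps the constructed solution in the well–posedness class. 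I expect the genuine difficulty to be the existence step: running the bootstrap of Lemma~\ref{cotaTheta} \emph{uniformly in $n$} for the superposition forces one to estimate the interaction terms $v^\gamma(\overline\theta_{N_k,c_k,K_k})\cdot\nabla\overline\theta_{N_j,c_j,K_j}$ in $H^{\beta+\frac12}$ and to fix the centres $R_j$ — \emph{after} $c_j,K_j,N_j$ are frozen — growing fast enough that their cumulative contribution is summably small, while the $\gamma<0$ sub-cases (where Lemmas~\ref{leibniz}–\ref{leibnizv} are unavailable) again require the substitutes \eqref{gammanegativo1}–\eqref{gammanegativo3}.
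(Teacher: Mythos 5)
Your gluing strategy — superposing far-apart translates $T_{R_j}\overline\theta_{N_j,c_j,K_j}$ with parameters chosen in the order $c_j,K_j,N_j,R_j$, then deducing $\|\theta(\cdot,t)\|_{H^\beta}=\infty$ from local lower bounds — matches the paper's. But the existence step has a genuine gap, and it is precisely where the paper's construction differs. You propose to run the bootstrap of Lemma~\ref{cotaTheta} for $\Xi^{(n)}=\theta^{(n)}-\overline\Theta^{(n)}$ with the aggregated source, claiming a bound on $\|\Xi^{(n)}\|_{H^{\beta+\frac12}}$ that is uniform in $n$. The Gr\"onwall constants in that lemma, however, are \emph{not} uniform in $c,K$: they come from estimates like $\|v^\gamma_i(\overline\theta_{N,c,K})\|_{C^1}\le C_{c,K}$ and $\|D^s v^\gamma_i(\overline\theta_{N,c,K})\|_{L^\infty}\le C_{c,K}$, which ultimately rest on $C^k$ norms of $f_{1,c,K}$ with $k>\beta$, and these grow without bound as $K\to\infty$ (see the remark preceding Lemma~\ref{FnormL2}). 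For $\overline\Theta^{(n)}$ these quantities are governed by the worst bump, i.e.\ $\max_{j\le n}C_{c_j,K_j}\to\infty$, so the exponential rate in the bootstrap diverges with $n$. Choosing $R_j$ large only makes the \emph{cross-interaction source} small; it does nothing to the coefficients multiplying $\|\Xi^{(n)}\|_{H^{\beta+\frac12}}$ inside the energy inequality. You flag this as the ``genuine difficulty'' but do not resolve it.

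The paper avoids this by an \emph{inductive one-bump-at-a-time} gluing in the weaker norm $H^4$ (Lemma~\ref{lemanonexistence}): the true solution with data $\theta_1(\cdot,0)+T_R\theta_2(\cdot,0)$ is within $O(R^{-2-\gamma})$ in $H^4$ of the sum of the two separate \emph{true} solutions, provided $R$ is chosen \emph{after} the $C^6$ norms of $\theta_1,\theta_2$ are known. Applying this with $\theta_1=\theta_{tr,J,\gamma}$ and $\theta_2=\theta_{J+1,\gamma}$, and choosing $D_J$ (hence $R_{J+1}$) last at each step, gives $\|\theta_{tr,J+1,\gamma}-\theta_{tr,J,\gamma}-\theta_{J+1,\gamma}\|_{H^4}\le 2^{-J}$; at each stage the constants involve only finitely many, already-frozen, quantities, so the cascade is controlled. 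The sequence is then Cauchy in $C([0,t_0],H^4(K))$ for each compact $K$, and the limit is identified as a classical solution. The blow-up and uniqueness parts then need more care than you give them: the $H^\beta$ lower bound is obtained through Lemma~\ref{fractionalsobolev} (separated supports) and (\ref{normascrecientes}) rather than the rougher $\|f\|_{\dot H^\beta(\Omega_j)}\ge\|f\|_{\dot H^\beta}-C_{\Omega_j}\|f\|_{L^2}$, and uniqueness must be localized bump-by-bump because the constructed $\theta$ is only $C^2$ on compacts, not globally; the distance $D_j$ provides the needed geometric decay in the cross terms so that Gr\"onwall closes.
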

 First we prove some lemmas. Notice that the confinement of the support in the previous strong ill-posedness result is crucial in the following proofs. The key point here is that for making the gluing argument we use that the interaction between separated patches of compact support will be small in a finite interval of time, and hence we can consider that the behavior of the different patches will be almost independent. 
 \begin{lma}
 \label{lemanonexistence}
     Let $\theta_1^\gamma$, $\theta_2^\gamma\in C_tC_{x}^\infty$ compactly supported solutions to $\gamma$-gSQG (\ref{gSQG}) with $supp(\theta_i^\gamma)\subset B_D(0)$ for $t\in[0,T]$ and $i\in\{1,2\}$. Then, for any $\varepsilon>0$, there exist $R_D>0$ such that for any $R>R_D$, 
     \begin{align}
         \|\theta^\gamma(\cdot,t)-\theta_1^\gamma(\cdot,t)-T_R\theta_2^\gamma(\cdot,t)\|_{H^4}<\varepsilon, \hspace{3mm} \forall t\in [0,T],
     \end{align}
     where $T_Rf(x_1,x_2)=f(x_1+R,x_2)$ and $\theta^\gamma$ is the solution to $\gamma$-gSQG (\ref{gSQG}) with initial conditions $\theta_1^\gamma(x,0)+T_R\theta_2^\gamma(x,0)$.
     \begin{proof}
        Let us define the function $\overline{\theta}^\gamma_R:=\theta_1^\gamma+T_R \theta_2^\gamma$. It solves the PDE
        \begin{align}
            \partial_t \overline{\theta}^\gamma_R +v^\gamma(\overline{\theta}^\gamma_R)\cdot \nabla \overline{\theta}^\gamma_R=v^\gamma(\theta_1^\gamma)\cdot \nabla T_R\theta_2^\gamma+v^\gamma(T_R\theta_2^\gamma)\cdot \nabla \theta_1^\gamma=:F_R.
        \end{align}
        The source term $F_R$ can be bounded in $H^4$ in terms of $R$ in the following way: let us consider $\sigma=(\sigma_1,\sigma_2)\in \mathbb{N}^2$. Then, taking $R>4D$ we obtain $dist(supp(\theta_1^\gamma), supp(T_R \theta_2^\gamma))\geq R-2D\geq R/2$ for all time, so
        \begin{align*}
             \sum_{|\sigma|=0}^4\int_{\mathbb{R}^2}(\partial^\sigma F_R)^2 dx=
            \sum_{|\sigma|=0}^4\int_{\mathbb{R}^2} \left[ \partial^\sigma (v^\gamma(\theta_1^\gamma)\cdot \nabla T_R\theta_2^\gamma+v^\gamma(T_R\theta_2^\gamma)\cdot \nabla \theta_1^\gamma)\right]^2dx\\
            \leq \|\theta_2^\gamma\|_{C^5}^2 \sum_{|\sigma|=0}^4 \int_{supp(T_R\theta_2^\gamma)} |\partial^\sigma v^\gamma(\theta_1^\gamma)|^2 dx+\|\theta_1^\gamma\|_{C^5}^2 \sum_{|\sigma|=0}^4 \int_{supp(\theta_1^\gamma)} |\partial^\sigma v^\gamma(T_R \theta_2^\gamma)|^2 dx\\
            \leq C \sum_{|\sigma|=0}^4
            \left(R^{-2-\gamma-|\sigma|}\right)^2
            \left(|supp(T_R\theta_2^\gamma)| \|\theta_2^\gamma\|_{C^5}^2 \|\theta_1^\gamma\|_{L^1}^2+|supp(\theta_1^\gamma)| \|\theta_1^\gamma\|_{C^5}^2 \|\theta_2^\gamma\|_{L^1}^2
            \right)
            \\ \leq C 
            \left(R^{-2-\gamma} D \|\theta_1^\gamma\|_{C^5} \|\theta_2^\gamma\|_{C^5}\right)^2,
        \end{align*}
        so 
        \begin{align}
        \label{cotaFH4}
            \|F_R\|_{H^4}\leq C D \|\theta_1^\gamma\|_{C^5} \|\theta_2^\gamma\|_{C^5} R^{-2-\gamma}.
        \end{align}
        Defining $\Theta^\gamma_R:=\theta^\gamma-\theta_1^\gamma-T_R\theta_2^\gamma=\theta^\gamma-\overline{\theta}_R^\gamma$, it solves the PDE 
        \begin{align}
        \label{diferencia}
            \partial_t\Theta^\gamma_R+ v^\gamma(\Theta^\gamma_R) \cdot \nabla \Theta^\gamma_R+v^\gamma(\Theta^\gamma_R)\cdot \nabla \overline{\theta}^\gamma_R +v^\gamma(\overline{\theta}^\gamma_R)\cdot \nabla \Theta^\gamma_R-F_R=0.
        \end{align}
        By (\ref{diferencia}), using the incompresibility condition and the Lemma \ref{leibniz}, following the strategy of the proof of the Lemma \ref{cotadiferencia},
        \begin{align*}
            \partial_t \|\Theta^\gamma_R\|_{L^2}\leq C\|\Theta^\gamma_R\|_{L^2}\|\nabla\overline{\theta}^\gamma_R\|_{L^\infty}+\|F_R\|_{L^2},  \text{ if } \gamma\leq 0,\\
            \partial_t \|\Theta^\gamma_R\|_{L^2}\leq C\|\Theta^\gamma_R\|_{L^2}\|v^\gamma(\nabla\overline{\theta}^\gamma_R)\|_{L^\infty}+\|F_R\|_{L^2},  \text{ if } \gamma>0,
        \end{align*}
        and hence, for $\gamma \in (-1,1)$,
        \begin{align}
        \label{edoL2}
           \partial_t \|\Theta^\gamma_R\|_{L^2}\leq C\|\Theta^\gamma_R\|_{L^2}\|\overline{\theta}^\gamma_R\|_{C^3}+\|F_R\|_{L^2} .
        \end{align}
        Now we look for a bound of the $\Dot{H}^4$ norm. To do this we follow the same energy-estimate technique of considering a generic 4-th order derivative $\partial^\sigma$ ($\sigma=(\sigma_1,\sigma_2)$, $\sigma_1+\sigma_2=4$) we apply it at both sides of the equation (\ref{diferencia}) and we multiply at both sides times $\partial^\sigma \Theta^\gamma_R$ and integrate. We bound every term apart from the time derivative: for the first part of the sum we obtain
        \begin{align*}
            \left|-\int_{\mathbb{R}^2}\partial^\sigma \Theta^\gamma_R \partial^\sigma (v^\gamma(\Theta^\gamma_R) \cdot \nabla \Theta^\gamma_R)dx \right| \leq C\|\Theta^\gamma_R\|_{H^4}^3,
        \end{align*}
        and it can be seen repeating the proof of the well-posedness of $\gamma$-gSQG done in \cite{existence} (proof of the Theorem 1.1).\\\\
        For the third term we obtain the bound 
        \begin{align*}
            \left|-\int_{\mathbb{R}^2}\partial^\sigma \Theta^\gamma_R \partial^\sigma (v^\gamma(\overline{\theta}^\gamma_R) \cdot \nabla \Theta^\gamma_R)dx \right|\leq C \|\overline{\theta}^\gamma_R\|_{H^{6}}\|\Theta^\gamma_R\|_{H^4}^2,
        \end{align*}
        just applying the Leibniz rule and the Hölder's inequality and noticing that, when the four derivatives go to $\nabla \Theta^\gamma_R$, the integral vanishes due to the incompresibility condition.\\ \\
        Finally we bound the second term, this is, 
        \begin{align*}
           \left| -\int_{\mathbb{R}^2} \partial^\sigma \Theta^\gamma_R \partial^\sigma (v^\gamma(\Theta^\gamma_R)\cdot \nabla \overline{\theta}^\gamma_R)dx\right|\leq C \|\overline{\theta}^\gamma_R\|_{C^5}\|\Theta^\gamma_R\|_{H^4}^2.
        \end{align*}
        The idea is the same as before, this is, applying Leibniz rule and the Hölder's inequality with the exception of the high order terms in $\Theta_R$ when $\gamma \in (0,1)$, in which we must apply the fact that $v_i^\gamma$ are odd operators and the Lemma \ref{leibniz}, obtaining
        \begin{align*}
           \left| -\int_{\mathbb{R}^2} \partial^\sigma \Theta^\gamma_R  v^\gamma_i(\partial^\sigma \Theta^\gamma_R) \partial_{x_i} \overline{\theta}^\gamma_R dx\right| =
            \left| \frac{1}{2}\int_{\mathbb{R}^2} \partial^\sigma \Theta^\gamma_R \left(v^\gamma_i(\partial^\sigma \Theta^\gamma_R \partial_{x_i} \overline{\theta}^\gamma_R)  -v^\gamma_i(\partial^\sigma \Theta^\gamma_R) \partial_{x_i} \overline{\theta}^\gamma_R\right) dx\right|\\
            \leq 
            \frac{1}{2}\|\partial^\sigma \Theta^\gamma_R\|_{L^2} \|v^\gamma_i(\partial^\sigma \Theta^\gamma_R \partial_{x_i} \overline{\theta}^\gamma_R)  -v^\gamma_i(\partial^\sigma \Theta^\gamma_R) \partial_{x_i} \overline{\theta}^\gamma_R\|_{L^2}
            \leq C \|D^\gamma (\partial_{x_i}\overline{\theta}^\gamma_R)\|_{L^\infty}\|\Theta^\gamma_R\|^2_{H^4}\leq C \|\overline{\theta}^\gamma_R\|_{C^3}\|\Theta^\gamma_R\|_{H^4}^2.
        \end{align*}
          For lower order terms, this is, when three or less derivatives are affecting $\Theta_R$, we just use $\|\partial^\tau v^\gamma(\Theta_R)\|_{L^2}\leq C\|\Theta_R\|_{H^{|\tau|+\gamma}}$ for $\tau \in \mathbb{N}^2$.\\\\
        Putting these three bounds together we obtain 
        \begin{align*}
            \partial_t \|\Theta^\gamma_R\|_{\Dot{H}^4} \leq C( \|\Theta^\gamma_R\|_{H^4}^2+ \|\overline{\theta}^\gamma_R\|_{C^{6}}\|\Theta^\gamma_R\|_{H^4})+\|F_R\|_{\Dot{H}^4},
        \end{align*}
        and together with (\ref{edoL2}) we get
        \begin{align}
            \partial_t (\|\Theta^\gamma_R\|_{L^2}+\|\Theta^\gamma_R\|_{\Dot{H}^4})\leq C( \|\Theta^\gamma_R\|_{H^4}^2+ \|\overline{\theta}^\gamma_R\|_{C^{6}}\|\Theta^\gamma_R\|_{H^4})+\|F\|_{H^4}.
        \end{align}
        Since $\Theta^\gamma_R \in C_tC_x^\infty$ and $\Theta^\gamma_R(x,0)=0$, we continue via a bootstrap argument, defining $0<T_R\leq T$ as the supremum time such that  $\|\Theta^\gamma_R\|_{H^4}\leq R^{-2-\gamma}\ln(R)$. Then, for $R>0$ big enough, using the bootstrap assumption to absorb the quadratic term in the linear one and (\ref{cotaFH4}),
        \begin{align*}
            \partial_t (\|\Theta^\gamma_R\|_{L^2}+\|\Theta^\gamma_R\|_{\Dot{H}^4}) \leq C\|\overline{\theta}^\gamma_R\|_{C^{6}}\|\Theta^\gamma_R\|_{H^4}+\|F_R\|_{H^4}\\ \leq C(\|\theta_1^\gamma\|_{C^6}+\|\theta_2^\gamma\|_{C^6})\|\Theta_R^\gamma\|_{H^4}+C D \|\theta_1^\gamma\|_{C^5} \|\theta_2^\gamma\|_{C^5} R^{-2-\gamma},
        \end{align*}
        for $t\in [0,T_R]$, so 
        \begin{align}
        \label{cotaR}
            \|\Theta^\gamma_R\|_{H^4}\leq CR^{-2-\gamma} t, \hspace{3mm} \forall t \in[0,T_R],
        \end{align}
        where $C>0$ is uniform on $R>4D$. Finally, for $R>max\{R_D,e^{CT}\}>0$ for the $C>0$ in (\ref{cotaR}) and by definition of supremum, $T_R=T$ and the result holds.
     \end{proof}
 \end{lma}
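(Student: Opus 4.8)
The plan is to view $\overline{\theta}^\gamma_R:=\theta_1^\gamma+T_R\theta_2^\gamma$ as a pseudo-solution of $\gamma$-gSQG, estimate the error it commits, and then propagate the difference with the genuine solution $\theta^\gamma$ by an energy argument. Since translation by $(R,0)$ is a symmetry of (\ref{gSQG}) and $v^\gamma$ is linear with a translation-invariant kernel, inserting $\overline{\theta}^\gamma_R$ into the equation leaves the source term $F_R:=v^\gamma(\theta_1^\gamma)\cdot\nabla(T_R\theta_2^\gamma)+v^\gamma(T_R\theta_2^\gamma)\cdot\nabla\theta_1^\gamma$, i.e.\ each velocity field paired with the gradient of the other bump. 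The point is that $\nabla(T_R\theta_2^\gamma)$ is supported at distance $\gtrsim R$ from $supp(\theta_1^\gamma)\subset B_D(0)$ and symmetrically, so on those sets one can use the far-field decay $|\partial^\sigma v^\gamma(f)(x)|\le C\|f\|_{L^1}\,d(x,supp(f))^{-(2+\gamma+|\sigma|)}$ (the same kind of estimate already exploited in Lemma \ref{vradial}). Differentiating $F_R$ up to four times and integrating then gives $\|F_R\|_{H^4}\le C D\,\|\theta_1^\gamma\|_{C^5}\|\theta_2^\gamma\|_{C^5}\,R^{-(2+\gamma)}$, which tends to $0$ as $R\to\infty$ since $2+\gamma>0$.

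Next I put $\Theta^\gamma_R:=\theta^\gamma-\overline{\theta}^\gamma_R$; it has zero initial data and satisfies $\partial_t\Theta^\gamma_R+v^\gamma(\Theta^\gamma_R)\cdot\nabla\Theta^\gamma_R+v^\gamma(\Theta^\gamma_R)\cdot\nabla\overline{\theta}^\gamma_R+v^\gamma(\overline{\theta}^\gamma_R)\cdot\nabla\Theta^\gamma_R-F_R=0$. I estimate $\|\Theta^\gamma_R\|_{L^2}$ and $\|\Theta^\gamma_R\|_{\Dot{H}^4}$ by the standard energy method, following the scheme of Lemma \ref{cotadiferencia}. In $L^2$, incompressibility of $v^\gamma$ kills the terms $v^\gamma(\Theta^\gamma_R)\cdot\nabla\Theta^\gamma_R$ and $v^\gamma(\overline{\theta}^\gamma_R)\cdot\nabla\Theta^\gamma_R$ after integration by parts, while $v^\gamma(\Theta^\gamma_R)\cdot\nabla\overline{\theta}^\gamma_R$ is controlled by $C\|\Theta^\gamma_R\|_{L^2}^2\|\overline{\theta}^\gamma_R\|_{C^3}$ — directly for $\gamma\le 0$ (invoking Lemma \ref{gammanegativa} in the very negative range) and, for $\gamma>0$, by using that each $v^\gamma_i$ is an odd operator together with Lemma \ref{leibnizv} to shift the order-$\gamma$ loss onto the smooth factor $\overline{\theta}^\gamma_R$. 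For the $\Dot{H}^4$ estimate I apply a generic fourth-order derivative $\partial^\sigma$, pair with $\partial^\sigma\Theta^\gamma_R$, and Leibniz-expand: the self-interaction $\partial^\sigma(v^\gamma(\Theta^\gamma_R)\cdot\nabla\Theta^\gamma_R)$ contributes at most $C\|\Theta^\gamma_R\|_{H^4}^3$, reproducing the commutator bounds of the $\gamma$-gSQG well-posedness proof in \cite{existence}; $\partial^\sigma(v^\gamma(\overline{\theta}^\gamma_R)\cdot\nabla\Theta^\gamma_R)$ gives $C\|\overline{\theta}^\gamma_R\|_{H^6}\|\Theta^\gamma_R\|_{H^4}^2$ (the top-order term vanishing by incompressibility); and $\partial^\sigma(v^\gamma(\Theta^\gamma_R)\cdot\nabla\overline{\theta}^\gamma_R)$ gives $C\|\overline{\theta}^\gamma_R\|_{C^6}\|\Theta^\gamma_R\|_{H^4}^2$, where for the term with all four derivatives on $\Theta^\gamma_R$ and $\gamma>0$ I again use oddness of $v^\gamma_i$ and Lemma \ref{leibniz} so that the $\gamma$-derivative lands on $\nabla\overline{\theta}^\gamma_R$. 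Summing these, $\partial_t(\|\Theta^\gamma_R\|_{L^2}+\|\Theta^\gamma_R\|_{\Dot{H}^4})\le C(\|\Theta^\gamma_R\|_{H^4}^2+\|\overline{\theta}^\gamma_R\|_{C^6}\|\Theta^\gamma_R\|_{H^4})+\|F_R\|_{H^4}$.

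To close the estimate I bootstrap in time. Since $\Theta^\gamma_R\in C_tC_x^\infty$ with $\Theta^\gamma_R(\cdot,0)=0$, let $T_R\le T$ be the largest time on which $\|\Theta^\gamma_R\|_{H^4}\le R^{-(2+\gamma)}\ln R$. On $[0,T_R]$ this smallness (for $R$ large) absorbs the quadratic term into the linear one, $\|\overline{\theta}^\gamma_R\|_{C^6}\le\|\theta_1^\gamma\|_{C^6}+\|\theta_2^\gamma\|_{C^6}$ is uniform in $R$, and together with the source bound of the first step Gronwall yields $\|\Theta^\gamma_R\|_{H^4}\le C R^{-(2+\gamma)}\,t$ on $[0,T_R]$ with $C$ independent of $R>4D$. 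Choosing $R_D$ so large that $CT<\ln R$ for $R>R_D$ makes the bootstrap bound strict, hence $T_R=T$; enlarging $R_D$ further so that $C R_D^{-(2+\gamma)}T<\varepsilon$ concludes. I expect the main obstacle to be the top-order energy bookkeeping in the $\Dot{H}^4$ step — the cubic self-interaction $\int\partial^\sigma\Theta^\gamma_R\,\partial^\sigma(v^\gamma(\Theta^\gamma_R)\cdot\nabla\Theta^\gamma_R)$, which is precisely where the $\gamma$-gSQG commutator estimates and the choice of a regularity ($H^4$) comfortably above the critical $H^{2+\gamma}$ are needed — and, for $\gamma\in(0,1)$, arranging the oddness/commutator manipulations so that no more than one derivative is lost onto the rough factor $\Theta^\gamma_R$.
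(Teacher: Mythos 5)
Your proposal follows essentially the same route as the paper: the same pseudo-solution $\overline{\theta}^\gamma_R=\theta_1^\gamma+T_R\theta_2^\gamma$, the same far-field bound $\|F_R\|_{H^4}\leq CD\|\theta_1^\gamma\|_{C^5}\|\theta_2^\gamma\|_{C^5}R^{-2-\gamma}$, the same $L^2$ plus $\dot{H}^4$ energy estimates (using incompressibility, oddness of $v^\gamma_i$ and the commutator lemmas for $\gamma>0$), and the same bootstrap with threshold $R^{-2-\gamma}\ln R$ closed by Gronwall and a choice of $R$ large. The argument is correct as proposed.
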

      \begin{lma}
          \label{fractionalsobolev}
          Let $f,g\in H^\beta\cap C^{\lfloor\beta\rfloor}$ with $\beta > 0$ such that
          \begin{itemize}
              \item $f$ and $g$ have disjoint supports with $d=\text{dist}(\text{supp}(f),\text{supp}(g))>2\pi\mathcal{K}_\beta^2+3$, where $\mathcal{K}_\beta=\frac{4^{\frac{\beta}{2}}\Gamma(\frac{2+\beta}{2})}{\pi|\Gamma(-\frac{\beta}{2})|}$,
              \item $\text{diam}(\text{supp}(g))\leq 1$ and
              \item $\|f\|_{H^{\lfloor\beta\rfloor}},\|g\|_{H^{\lfloor\beta\rfloor}}\leq 1$.
          \end{itemize} 
          Then
          \begin{align*}
             C\|f+g\|_{H^{\beta}}\geq \|f\|_{H^{\beta}} +\|g\|_{H^{\beta}}-C(\beta-\lfloor \beta \rfloor )d^{-\beta+\lfloor\beta\rfloor},
          \end{align*}
        where we use $\lfloor\cdot\rfloor$ for the floor function and $C>0$ just depend on $\beta$.
          \begin{proof}
          If $\beta\in \mathbb{N}$ the result is trivial.\\\\
          Assume now that $\beta\in(0,1)$. Let $A=\{x\in \mathbb{R}^2: dist(x,\text{supp}(f))\leq d/3\}$ and $B=\{x\in \mathbb{R}^2: dist(x,\text{supp}(g))\leq d/3\}$. Notice that $A\cap B=\emptyset$. Then, using Hölder's inequality and $(a+b)^2\leq 2(a^2+b^2)$,
          \begin{align*}
              \|\Lambda^\beta f\|_{L^2}^2= \mathcal{K}_\beta^2\int_{\mathbb{R}^2} \left(P.V.\int_{\mathbb{R}^2}\frac{f(x+h)-f(x)}{|h|^{2+\beta}}dh\right) ^2dx\\
              =\mathcal{K}_\beta^2\int_{A} \left(P.V.\int_{\mathbb{R}^2}\frac{f(x+h)-f(x)}{|h|^{2+\beta}}dh\right) ^2dx
              +\mathcal{K}_\beta^2\int_{B} \left(P.V.\int_{\mathbb{R}^2}\frac{f(x+h)}{|h|^{2+\beta}}dh\right) ^2dx\\
                +\mathcal{K}_\beta^2\int_{(A\cup B)^c} \left(P.V.\int_{\mathbb{R}^2}\frac{f(x+h)}{|h|^{2+\beta}}dh\right) ^2dx
                \end{align*}
                \begin{align*}
                \leq 
                2\mathcal{K}_\beta^2\int_{A} \left(P.V.\int_{\mathbb{R}^2}\frac{(f+g)(x+h)-f(x)}{|h|^{2+\beta}}dh\right) ^2dx
                +2\mathcal{K}_\beta^2\int_{A} \left(\int_{\mathbb{R}^2} 
                \frac{g(x+h)}{|h|^{2+\beta}}dh\right)^2dx\\
                +\mathcal{K}_\beta^2\int_{B} \left(\int_{2d/3\leq |h|\leq \infty}\frac{f(x+h)}{|h|^{2+\beta}}dh\right) ^2dx\\
                +2\mathcal{K}_\beta^2\int_{(A\cup B)^c} \left(P.V.\int_{\mathbb{R}^2}\frac{(f+g)(x+h)}{|h|^{2+\beta}}dh\right) ^2dx
                +2\mathcal{K}_\beta^2\int_{(A\cup B)^c} \left(\int_{\mathbb{R}^2}\frac{g(x+h)}{|h|^{2+\beta}}dh\right) ^2dx 
                 \end{align*}
                \begin{align*}
                \leq 
                 2\mathcal{K}_\beta^2\int_{A} \left(P.V.\int_{\mathbb{R}^2}\frac{(f+g)(x+h)-f(x)}{|h|^{2+\beta}}dh\right) ^2dx
                 +2\mathcal{K}_\beta^2|\text{supp}(g)|\int_{A} \int_{2d/3\leq |h|\leq \infty}\frac{g(x+h)^2}{|h|^{4+2\beta}}dh dx
                 \\
                 +\mathcal{K}_\beta^2 \int_B  \left(\int_{2d/3\leq|h|\leq \infty} \frac{1}{|h|^{4+2\beta}}dh
                 \right)
                 \left( \int_{\mathbb{R}^2} f(x+h)^2dh\right)dx\\
                 +2\mathcal{K}_\beta^2\int_{(A\cup B)^c} \left(P.V.\int_{\mathbb{R}^2}\frac{(f+g)(x+h)}{|h|^{2+\beta}}dh\right) ^2dx
                 +2\mathcal{K}_\beta^2|\text{supp}(g)|\int_{(A\cup B)^c} \int_{d/3\leq |h|\leq \infty} \frac{g(x+h)^2}{|h|^{4+2\beta}}dh dx
                  \end{align*}
                \begin{align*}
                 \leq 
                  2\mathcal{K}_\beta^2\int_{A} \left(P.V.\int_{\mathbb{R}^2}\frac{(f+g)(x+h)-f(x)}{|h|^{2+\beta}}dh\right) ^2dx
                  +2\mathcal{K}_\beta^2\int_{(A\cup B)^c} \left(P.V.\int_{\mathbb{R}^2}\frac{(f+g)(x+h)}{|h|^{2+\beta}}dh\right) ^2dx\\
                 +2\mathcal{K}_\beta^2 \frac{(2d/3)^{-2-2\beta}}{2+2\beta} \|g\|_{L^2}^2        
                 +\mathcal{K}_\beta^2 \pi (1+d/3)^2 (2d/3)^{-2-2\beta} \|f\|_{L^2}^2
                 +2\mathcal{K}_\beta^2 \frac{(d/3)^{-2-2\beta}}{2+2\beta} \|g\|_{L^2}^2
          \end{align*}
          \begin{align*}
              \leq 2\mathcal{K}_\beta^2 \int_{A} \left(P.V.\int_{\mathbb{R}^2}\frac{(f+g)(x+h)-f(x)}{|h|^{2+\beta}}dh\right) ^2dx
              +2\mathcal{K}_\beta^2 \int_{(A\cup B)^c} \left(P.V.\int_{\mathbb{R}^2}\frac{(f+g)(x+h)}{|h|^{2+\beta}}dh\right) ^2dx
              \\+C d^{-2\beta}.
               \end{align*}
               Then, since $ \|\Lambda^{\beta} (f+g)\|_{L^2}^2$ can be decomposed as
               \begin{align*}
                   \|\Lambda^{\beta} (f+g)\|_{L^2}^2=\mathcal{K}_\beta^2 \int_{A} \left(P.V.\int_{\mathbb{R}^2}\frac{(f+g)(x+h)-f(x)}{|h|^{2+\beta}}dh\right) ^2dx\\
               +\mathcal{K}_\beta^2\int_{B} \left(P.V.\int_{\mathbb{R}^2}\frac{(f+g)(x+h)-g(x)}{|h|^{2+\beta}}dh\right) ^2dx 
              +\mathcal{K}_\beta^2 \int_{(A\cup B)^c} \left(P.V.\int_{\mathbb{R}^2}\frac{(f+g)(x+h)}{|h|^{2+\beta}}dh\right) ^2dx,
              \end{align*}
               we have
          \begin{align*}
              \|\Lambda^{\beta} f\|_{L^2}^2\leq 2\|\Lambda^{\beta}(f+g)\|_{L^2}^2+Cd^{-2\beta}.
          \end{align*}
          Notice that along this last computation we have already bounded the quantities 
          \begin{align*}
              \int_B\left(\int_{\mathbb{R}^2} \frac{f(x+h)}{|h|^{2+\beta}}dh\right)^2dx, \hspace{3mm}
              \int_A\left(\int_{\mathbb{R}^2} \frac{g(x+h)}{|h|^{2+\beta}}dh\right)^2dx
              , \hspace{3mm}
              \int_{(A\cup B)^c}\left(\int_{\mathbb{R}^2} \frac{g(x+h)}{|h|^{2+\beta}}dh\right)^2dx,
          \end{align*}
         by $Cd^{-2\beta}$. Using this, 
         \begin{align*}
              \|\Lambda^\beta g\|_{L^2}^2= \mathcal{K}_\beta^2\int_{\mathbb{R}^2} \left(P.V.\int_{\mathbb{R}^2}\frac{g(x+h)-g(x)}{|h|^{2+\beta}}dh\right) ^2dx\\
              =\mathcal{K}_\beta^2\int_{B} \left(P.V.\int_{\mathbb{R}^2}\frac{g(x+h)-g(x)}{|h|^{2+\beta}}dh\right) ^2dx
              +\mathcal{K}_\beta^2\int_{A} \left(P.V.\int_{\mathbb{R}^2}\frac{g(x+h)}{|h|^{2+\beta}}dh\right) ^2dx\\
                +\mathcal{K}_\beta^2\int_{(A\cup B)^c} \left(P.V.\int_{\mathbb{R}^2}\frac{g(x+h)}{|h|^{2+\beta}}dh\right) ^2dx
                \end{align*}
                \begin{align*}
                \leq
                2\mathcal{K}_\beta^2\int_{B} \left(P.V.\int_{\mathbb{R}^2}\frac{(f+g)(x+h)-g(x)}{|h|^{2+\beta}}dh\right) ^2dx 
                +2\mathcal{K}_\beta^2\int_{B} \left(\int_{\mathbb{R}^2}\frac{f(x+h)}{|h|^{2+\beta}}dh\right) ^2dx
                +Cd^{-2\beta}
                 \\ \leq 2\|\Lambda^{\beta}(f+g)\|_{L^2}^2+Cd^{-2\beta},
                \end{align*}
                so we conclude 
                \begin{align*}
                    \|\Lambda^{\beta}f\|_{L^2}^2+
                    \|\Lambda^{\beta}g\|_{L^2}^2\leq 4\|\Lambda^{\beta}(f+g)\|_{L^2}^2+Cd^{-2\beta},
                \end{align*}
                and hence since the sum of the $L^2$ norms is equal to the $L^2$ norm of the sum of functions with disjoint support, there exist $C>0$ depending only on $\beta \in (0,1)$ such that
                \begin{align*}
                    C\|f+g\|_{H^{\beta}}\geq \|f\|_{H^{\beta}} +\|g\|_{H^{\beta}}-Cd^{-\beta}.
                \end{align*}
                To prove it for general $\beta \geq 1$, noticing that the classical derivative does not create any problem since it is a local  operator. It is enough to repeat the proof replacing $f$ and $g$ for $\partial^{\mathbf{k}}f$ and $\partial^{\mathbf{k}}g$ with every $|\mathbf{k}|=\lfloor\beta\rfloor$ and  $\beta$ for $\beta-\lfloor\beta\rfloor$. 
          \end{proof}
        \end{lma}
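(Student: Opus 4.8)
The plan is to reduce the statement to the case $\beta\in(0,1)$ and then run a localization argument directly on the hypersingular integral defining $\Lambda^\beta$. The integer case $\beta\in\mathbb{N}$ is immediate: the error term $C(\beta-\lfloor\beta\rfloor)d^{-\beta+\lfloor\beta\rfloor}$ vanishes, and since every derivative up to order $\beta$ is a local operator, $\partial^{\mathbf{k}}(f+g)=\partial^{\mathbf{k}}f+\partial^{\mathbf{k}}g$ is a sum of functions with disjoint supports, so $\|\partial^{\mathbf{k}}(f+g)\|_{L^2}^2=\|\partial^{\mathbf{k}}f\|_{L^2}^2+\|\partial^{\mathbf{k}}g\|_{L^2}^2$; summing over $\mathbf{k}$ and using $\sqrt{a^2+b^2}\ge\frac{1}{\sqrt2}(a+b)$ gives the claim with $C=\sqrt2$.

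For the core estimate with $\beta\in(0,1)$, I would use the pointwise representation $\Lambda^\beta u(x)=\mathcal{K}_\beta\,\mathrm{P.V.}\!\int_{\mathbb{R}^2}\frac{u(x+h)-u(x)}{|h|^{2+\beta}}\,dh$, legitimate for $u$ in our regularity class (by density if necessary). Put $A=\{x:\mathrm{dist}(x,\mathrm{supp}(f))\le d/3\}$ and $B=\{x:\mathrm{dist}(x,\mathrm{supp}(g))\le d/3\}$; since $d=\mathrm{dist}(\mathrm{supp}(f),\mathrm{supp}(g))$, the sets $A$ and $B$ are disjoint, $g$ vanishes off $B$ and $f$ vanishes off $A$. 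Decompose $\|\Lambda^\beta f\|_{L^2}^2$ into the integrals over $A$, $B$ and $(A\cup B)^c$. On $A$ write $f(x+h)-f(x)=[(f+g)(x+h)-f(x)]-g(x+h)$ and apply $(a-b)^2\le 2a^2+2b^2$: because $f(x)=(f+g)(x)$ on $A$, the first resulting piece is exactly the $A$-part of the same three-region decomposition of $\|\Lambda^\beta(f+g)\|_{L^2}^2$, while the second, $\int_A\big(\int\frac{g(x+h)}{|h|^{2+\beta}}\,dh\big)^2dx$, is controlled by observing that $g(x+h)\ne 0$ forces $|h|\ge 2d/3$, so Cauchy--Schwarz and Fubini bound it by $|\mathrm{supp}(g)|\,\|g\|_{L^2}^2\int_{|h|\ge 2d/3}|h|^{-4-2\beta}\,dh\le Cd^{-2-2\beta}$. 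The integrals over $B$ and $(A\cup B)^c$ are treated identically --- on $B$ the surviving integrand has no singularity since $f(x+h)\ne 0$ already forces $|h|\ge 2d/3$, and the volume factor $|B|\le Cd^2$ is absorbed by the decay $|h|^{-4-2\beta}$ --- so altogether $\|\Lambda^\beta f\|_{L^2}^2\le 2\|\Lambda^\beta(f+g)\|_{L^2}^2+Cd^{-2\beta}$, and symmetrically the same bound for $g$; the assumption $d>2\pi\mathcal{K}_\beta^2+3$ keeps the explicit constants in these estimates under control and guarantees $d^{-2-2\beta}\le d^{-2\beta}$.

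Adding the two inequalities and using $\|f\|_{L^2}^2+\|g\|_{L^2}^2=\|f+g\|_{L^2}^2$ (disjoint supports), then taking square roots with $\sqrt{a+b}\le\sqrt a+\sqrt b$ and $a+b\le\sqrt2\sqrt{a^2+b^2}$, yields $\|f\|_{H^\beta}+\|g\|_{H^\beta}\le C\|f+g\|_{H^\beta}+Cd^{-\beta}$, which is the claim for $\beta\in(0,1)$. For non-integer $\beta\ge 1$, write $\beta=\lfloor\beta\rfloor+\beta'$ with $\beta'\in(0,1)$ and invoke the standard Fourier-multiplier equivalence $\|u\|_{\dot{H}^{\beta}}\sim_\beta\sum_{|\mathbf{k}|=\lfloor\beta\rfloor}\|\Lambda^{\beta'}\partial^{\mathbf{k}}u\|_{L^2}$; applying the $\beta'$-case to each pair $\partial^{\mathbf{k}}f,\partial^{\mathbf{k}}g$ --- whose supports stay disjoint at distance $d$, still satisfy $\mathrm{diam}(\mathrm{supp}(\partial^{\mathbf{k}}g))\le1$, and still obey $\|\partial^{\mathbf{k}}f\|_{L^2},\|\partial^{\mathbf{k}}g\|_{L^2}\le\|f\|_{H^{\lfloor\beta\rfloor}},\|g\|_{H^{\lfloor\beta\rfloor}}\le1$ --- and summing over $\mathbf{k}$ gives the stated inequality with error $Cd^{-\beta'}=Cd^{-\beta+\lfloor\beta\rfloor}$; the prefactor $\beta-\lfloor\beta\rfloor$ comes out because the fractional constants (e.g. $\int_{|h|\ge cd}|h|^{-4-2\beta'}\,dh\sim(\beta')^{-1}$ after normalizing $d$) degenerate as $\beta'\to 0$, consistently with the integer case. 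The hard part will be the cross-term bookkeeping in the $\beta\in(0,1)$ step: verifying that after each $(a-b)^2\le 2a^2+2b^2$ splitting the $(f+g)$-integral that appears is literally a sub-piece of the three-region decomposition of $\|\Lambda^\beta(f+g)\|_{L^2}^2$ (so nothing is double-counted), and that every constant is controlled by $\beta$ alone.
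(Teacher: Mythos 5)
Your proposal is correct and follows essentially the same route as the paper: the identical three-region decomposition into $A$, $B$ and $(A\cup B)^c$, the same splitting $f(x+h)-f(x)=[(f+g)(x+h)-f(x)]-g(x+h)$ with cross terms killed by the forced lower bound $|h|\gtrsim d$ (via Cauchy--Schwarz, Fubini, and the finite measure of $\mathrm{supp}(g)$ resp.\ of $B$), leading to $\|\Lambda^\beta f\|_{L^2}^2\leq 2\|\Lambda^\beta(f+g)\|_{L^2}^2+Cd^{-2\beta}$ and its analogue for $g$, and the same reduction of non-integer $\beta\geq 1$ to the fractional part applied to $\partial^{\mathbf{k}}f,\partial^{\mathbf{k}}g$ with $|\mathbf{k}|=\lfloor\beta\rfloor$. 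The only additions beyond the paper's argument are cosmetic (the explicit remark on where the prefactor $\beta-\lfloor\beta\rfloor$ originates), so there is nothing to correct.
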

        \begin{lma}
            Let $f\in L^2(\mathbb{R}^2)\cap C^2(K)$ for any compact set $K\subset \mathbb{R}^2$. Then, for $\beta \in (0,2)$,
            \begin{align}
                \label{normascrecientes}
                \begin{split}
                \|f\|_{H^{\beta}(K)}\leq C(\|f\|_{L^{2}(\mathbb{
                R}^2)}+ \|f\|_{H^{2}(K+B_{1}(0))}),\\          \|f\|_{H^{\beta+1}(K)}\leq C(\|f\|_{H^{1}(\mathbb{
                R}^2)}+ \|f\|_{H^{3}(K+B_{1}(0))}).\\      
                \end{split}
            \end{align}
            Here $C>0$ depends just on $\beta$ and $K+B_{1}(0):=\{x+y\in \mathbb{R}^2:x\in K, y\in B_1(0)\}$.
            \begin{proof}
                Let $\beta \in (0,2)$. For $\bold{k}\in \mathbb{N}_0^2$, let $R_\bold{k} f$ be the $\bold{k}$-th reminder of the Taylor expansion of $f$. Then, 
                \begin{align*}
                    \|\Lambda^\beta f\|_{L^2(K)}^2=\mathcal{K}_\beta^2 \int_K \left( \int_{\mathbb{R}^2} \frac{f(x+h)-f(x)}{|h|^{2+\beta}}dh \right)^2dx
                    \\ \leq
                    2\mathcal{K}_\beta^2 \int_K \left( \int_{\{0\leq |h|\leq 1\}} \frac{f(x+h)-f(x)}{|h|^{2+\beta}}dh \right)^2dx
                    +2\mathcal{K}_\beta^2\int_K \left( \int_{\{1\leq |h|\leq \infty\}} \frac{f(x+h)-f(x)}{|h|^{2+\beta}}dh \right)^2dx
                    \\=
                     2\mathcal{K}_\beta^2\int_K \left( \int_{\{0\leq |h|\leq 1\}} \frac{f(x+h)-f(x)-\partial_{x_1}f(x)h_1-\partial_{x_2}f(x)h_2}{|h|^{2+\beta}}dh \right)^2dx
                    \\+2\mathcal{K}_\beta^2\int_K \left( \int_{\{1\leq |h|\leq \infty\}} \frac{f(x+h)-f(x)}{|h|^{2+\beta}}dh \right)^2dx
                    \\ \leq 
                    2\mathcal{K}_\beta^2\int_K \left( \int_{\{0\leq |h|\leq 1\}} \frac{|\sum_{|\mathbf{k}|=2 }R_\mathbf{k} f(x,h)| |h|^2}{|h|^{2+\beta}}dh \right)^2dx\\
                    +4\mathcal{K}_\beta^2\int_K \left( \int_{\{1\leq |h|\leq \infty\}} \frac{|f(x+h)|}{|h|^{2+\beta}}dh \right)^2dx  
                    +4\mathcal{K}_\beta^2\int_K \left( \int_{\{1\leq |h|\leq \infty\}} \frac{|f(x)|}{|h|^{2+\beta}}dh \right)^2dx  
                      \\ \leq 
                   2\mathcal{K}_\beta^2 \int_K \left( \int_{\{0\leq |h|\leq 1\}} \frac{|\sum_{|\mathbf{k}|=2 }R_\mathbf{k} f(x,h)| }{|h|^{\beta}}dh \right)^2dx\\
                    +4\mathcal{K}_\beta^2\left( \int_{\{1\leq |h|\}}|h|^{-2-\beta}dh\right)\int_K  \int_{\{1\leq |h|\}} \frac{|f(x+h)|^2}{|h|^{2+\beta}}dhdx  
                    +4\mathcal{K}_\beta^2
                    \left(\int_K |f(x)|^2 dx \right)
                    \left( \int_{\{1\leq |h|\}} \frac{1}{|h|^{2+\beta}}dh \right)^2 \\
                    \leq
                    2\mathcal{K}_\beta^2\left(\int_{\{0\leq |h|\leq 1\}}|h|^{-\beta} dh\right)\int_K \left( \int_{\{0\leq |h|\leq 1\}} |h|^{-\beta}\left(\sum_{|\mathbf{k}|=2 }|R_\mathbf{k} f(x,h)| \right)^2 dh \right)dx
                    +
                    C\mathcal{K}_\beta^2\|f\|^2_{L^2(\mathbb{R}^2)}
                    \end{align*}
                    \begin{align*}
                    \leq
                    C\mathcal{K}_\beta^2   \int_{\{0\leq |h|\leq 1\}}|h|^{-\beta} \sum_{|\mathbf{k}|=2 }\left(\int_K|R_\mathbf{k} f(x,h)|^2dx\right)  dh
                    +
                    C\mathcal{K}_\beta^2\|f\|^2_{L^2(\mathbb{R}^2)}\\
                    \leq
                    C\mathcal{K}_\beta^2  \int_{\{0\leq |h|\leq 1\}}|h|^{-\beta} \sum_{|\mathbf{k}|=2 }
                    \int_K \left(\left|
                    \int_{0}^1 (1-t) \partial^{\mathbf{k}} f (x+(1-t)h) dt
                    \right|^2dx \right) dh  
                    +
                    C\mathcal{K}_\beta^2\|f\|^2_{L^2(\mathbb{R}^2)}
                   \\
                    \leq 
                    C\mathcal{K}_\beta^2\|f\|_{\Dot{H}^2(K+B_1(0))}^2 \left(\int_{\{0\leq |h|\leq 1\}} 
                    |h|^{-\beta}
                     dh  \right)
                    +
                    C\mathcal{K}_\beta^2\|f\|^2_{L^2(\mathbb{R}^2)}\leq C(\|f\|_{L^{2}(\mathbb{
                R}^2)}+ \|f\|_{H^{2}(K+B_{1}(0))}) ,
                \end{align*}
                using Young's and Hölder's inequalities, the fact that the first order terms of the Taylor's expansion vanish when integrating in $h$ in symmetric balls and the integral expression of the reminder of the Taylor's expansion 
                \begin{align*}
                    R_{\bold{k}}f=\frac{|\bold{k}|}{\bold{k}!} \int_{0}^1 (1-t)^{|\bold{k}|-1} \partial^{\mathbf{k}} f (x+(1-t)h) dt.
                \end{align*}
                The second inequality follows from repeating the proof with $\partial_{x_i}f$ instead of $f$.
            \end{proof}
        \end{lma}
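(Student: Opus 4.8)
The plan is to reduce the two inequalities to a single pointwise bound on $\Lambda^\beta f$ obtained from its singular-integral representation, split into a near field and a far field. Since $\|f\|_{L^2(\mathbb{R}^2)}$ (resp.\ $\|f\|_{H^1(\mathbb{R}^2)}$) already sits on the right-hand side, it is enough to control the homogeneous part $\|f\|_{\dot H^\beta(K)}=\|\Lambda^\beta f\|_{L^2(K)}$. First I would write, for $x$ in a neighbourhood of $K$ where $f$ is $C^2$,
\[
\Lambda^\beta f(x)=\mathcal{K}_\beta\,\mathrm{P.V.}\int_{\mathbb{R}^2}\frac{f(x)-f(x+h)}{|h|^{2+\beta}}\,dh,
\]
with $\mathcal{K}_\beta$ as in Lemma~\ref{fractionalsobolev}; the principal value is genuinely needed once $\beta\ge 1$, because then the linear Taylor term of $f$ at $x$ fails to be absolutely integrable against $|h|^{-2-\beta}$ near the origin and must be removed using the symmetry of the domain. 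Then I would split $\int_{\mathbb{R}^2}=\int_{\{|h|\le1\}}+\int_{\{|h|>1\}}$, estimate each regime, and square and integrate in $x\in K$ only at the end.

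In the near field I would insert the first-order Taylor polynomial of $f$ at $x$ (harmless under the P.V.) and use the integral form of the remainder,
\[
f(x+h)-f(x)-\nabla f(x)\cdot h=\sum_{|\mathbf{k}|=2}\frac{2}{\mathbf{k}!}\,h^{\mathbf{k}}\int_0^1(1-t)\,\partial^{\mathbf{k}}f(x+th)\,dt,
\]
so that the near-field contribution at $x$ is bounded by $C\int_{\{|h|\le1\}}|h|^{-\beta}\int_0^1\sum_{|\mathbf{k}|=2}|\partial^{\mathbf{k}}f(x+th)|\,dt\,dh$. Squaring, integrating over $x\in K$, applying Cauchy–Schwarz twice (in the $t$-average, and against the weight $|h|^{-\beta}\mathbbm{1}_{\{|h|\le1\}}$, whose total mass $\int_{\{|h|\le1\}}|h|^{-\beta}\,dh=\frac{2\pi}{2-\beta}$ is finite exactly because $\beta<2$), then Fubini and the change of variables $y=x+th$, gives a bound by $C\sum_{|\mathbf{k}|=2}\|\partial^{\mathbf{k}}f\|_{L^2(K+B_1(0))}^2\le C\|f\|_{H^2(K+B_1(0))}^2$, where one uses $K+\{th:t\in[0,1],|h|\le1\}\subset K+B_1(0)$. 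This is the core of the proof. For the far field I would simply bound $|f(x)-f(x+h)|\le|f(x)|+|f(x+h)|$: the $|f(x)|$ piece gives $|f(x)|\int_{\{|h|>1\}}|h|^{-2-\beta}\,dh=C|f(x)|$, whose $L^2(K)$ norm is $\le C\|f\|_{L^2(\mathbb{R}^2)}$, and the $|f(x+h)|$ piece, after Cauchy–Schwarz against $|h|^{-2-\beta}$ and Fubini, is again $\le C\|f\|_{L^2(\mathbb{R}^2)}$ (the $h$-integral converges at infinity because $\beta>0$). Adding the two regimes and the trivial $L^2(\mathbb{R}^2)$ term proves the first inequality.

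For the second inequality I would apply the first one to $g=\partial_{x_i}f$, $i=1,2$: using $\Lambda^\beta\partial_{x_i}=\partial_{x_i}\Lambda^\beta$ and the fact that $\|\cdot\|_{\dot H^{\beta+1}}$ is comparable, up to Riesz transforms (bounded on $L^2(\mathbb{R}^2)$), to $\big(\sum_i\|\partial_{x_i}\cdot\|_{\dot H^\beta}^2\big)^{1/2}$, the estimate just proved turns $\|g\|_{L^2(\mathbb{R}^2)}$ into $\|f\|_{H^1(\mathbb{R}^2)}$ and $\|g\|_{H^2(K+B_1(0))}$ into $\|f\|_{H^3(K+B_1(0))}$ (if the latter is infinite the inequality is vacuous, so $f$ may be taken as regular as needed). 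I expect the only real difficulty to be the bookkeeping rather than any single estimate: one must check that the representation formula is valid under merely local $C^2$ regularity, that the case $\beta\ge1$ is handled by oddness instead of absolute convergence, and — the reason the right-hand side carries the enlarged set $K+B_1(0)$ — that after Fubini every evaluation of $D^2f$ (or $D^3f$) indeed lands inside $K+B_1(0)$.
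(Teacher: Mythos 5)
Your proof is correct and follows essentially the same route as the paper's: the split of the singular integral at $|h|=1$, the insertion of the first-order Taylor polynomial (which cancels by oddness) followed by the integral form of the second-order remainder and Cauchy--Schwarz/Fubini with the weight $|h|^{-\beta}\mathbbm{1}_{\{|h|\le1\}}$ on the near field, the triangle inequality plus Cauchy--Schwarz/Fubini on the far field, and the second inequality obtained by applying the first to $\partial_{x_i}f$. Your explicit remark that for $\beta\ge1$ the cancellation of the linear Taylor term needs the principal-value interpretation makes precise a point the paper covers only with the phrase that the first-order terms ``vanish when integrating in $h$ in symmetric balls.''
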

 \begin{proof}[Proof of Theorem \ref{nonexistence}]
      Now we are ready to proof non-existence of solutions. First of all notice that, by the strong ill-posedness Theorem \ref{teorema} and the  invariance of the gSQG equation under traslations, we know that for every $j\in \mathbb{N}$ there exist $\theta_{j,\gamma}$ a solution of $\gamma$-gSQG with initial condition $\theta_{j,\gamma}(\cdot,0)=T_{R_j}\overline{\theta}_{N_j,c_j,K_j}(\cdot,0)$ such that for $N_j,K_j>0$ big enough and $c_j>0$ small enough,
     \begin{enumerate}
         \item \label{punto1} $\|\theta_{j,\gamma}(\cdot,0)\|_{H^{\beta}}\leq 2^{-j} $,
         \item \label{punto2} $ supp (\theta_{j,\gamma}(\cdot,t))\subset B_{2^{-j}}(-R_j,0)$ for $t\in [0,t_0]$, where $R_j>0$ will be fixed later and
         \item \label{punto3} $\|\theta_{j,\gamma}(\cdot,t)\|_{H^{\beta}}\geq 2^{j}$ for $t\in [ 2^{-j},t_0]$.
         \item \label{punto4} Let $\beta_0=\frac{\max\{\lfloor \beta \rfloor, \frac{3}{2}+\gamma \}+\beta}{2}$ if $\beta\neq 1,2$ and $\beta_0=\frac{\frac{3}{2}+\gamma+\beta}{2}$ if $\beta=1,2$. For any $s \in [0,\beta_0]$, we can obtain $\|\theta_{j,\gamma}(\cdot,t)\|_{H^s}\leq 2^{-j}$ for $t\in[0,t_0]$. 
                  \item \label{punto5} $\theta_{j,\gamma}(\cdot,t)\in C^\infty_c$ for all $t\in [0,t_0]$.
     \end{enumerate}
         Points \ref{punto1}, \ref{punto3} and \ref{punto5} are true because of Theorem \ref{teorema}.\\\\
         The point \ref{punto4} holds by the construction of the solutions done in the proof of Theorem \ref{teorema}. Indeed,
         \begin{align*}
             \|\theta_{j,\gamma}(\cdot,t)\|_{H^s}\leq \|T_{R_j}\overline{\theta}_{N_j,c_j,K_j}(\cdot,t)\|_{H^s}+\|\theta_{j,\gamma}(\cdot,t)-T_{R_j}\overline{\theta}_{N_j,c_j,K_j}(\cdot,t)\|_{H^s} \\\leq 
             \|T_{R_j}\overline{\theta}_{N_j,c_j,K_j}(\cdot,t)\|_{H^s}+\|\theta_{j,\gamma}(\cdot,t)-T_{R_j}\overline{\theta}_{N_j,c_j,K_j}(\cdot,t)\|_{H^{\beta+\frac{1}{2}}}\\
\leq \|f_{1,c_j,K_j}\|_{H^{\beta}}+\|\overline{\theta}_{N_j,c_j,K_j}-f_{1,c_j,K_j}\|_{H^{s}}
+Ct\mathcal{X}(\gamma)N_j^{-\beta+\frac{3}{2}+\gamma}\\ \leq c_j+Ct\mathcal{X}(\gamma)N_j^{s-\beta}+Ct\mathcal{X}(\gamma)N_j^{-\beta+\frac{3}{2}+\gamma}
\\ \leq c_j+Ct\mathcal{X}(\gamma)N_j^{\beta_0-\beta}+Ct\mathcal{X}(\gamma)N_j^{-\beta+\frac{3}{2}+\gamma}\leq 2^{-j},
         \end{align*}
        using the norms of the pseudosolutions and (\ref{cotaTheta2}) for $N_j,K_j>0$ big enough and $c_j>0$ small enough.\\\\
   The point \ref{punto2} is also true by construction. Notice that Lemma \ref{lemaf1} joint with the particular choice of the supports of the radial functions involved in the definitions of the initial conditions ($supp(f_{1,c,K})(r)\subset [a_0,\frac{r_{c,K}}{2}]$ and $supp(f_{2,c,K})(r)\subset [\frac{2r_{c,K}}{3}, \frac{3r_{c,K}}{2}]$) imply that we can take the support of the initial values in a ball of radio as small as we want. Then by Theorem \ref{teorema} and choosing $(c_j)$ small enough, the point \ref{punto2} follows.\\\\
     We will consider the truncated initial conditions 
     \begin{align}
     \label{truncatedic}
         \sum_{j=0}^J \theta_{j,\gamma}(\cdot,0) =\sum_{j=0}^J T_{R_j}\overline{\theta}_{N_j,c_j,K_j}(\cdot,0),
     \end{align}
      and we will call $\theta_{tr,J,\gamma}(x,t)$ the corresponding local in time solution to $\gamma$-gSQG (\ref{gSQG}) with the initial conditions given by (\ref{truncatedic}).\\\\ Our aim is to prove that there exist a $\theta_{\infty,\gamma}$ (obtained by taking the limit $J\mapsto \infty$ in $\theta_{tr,J,\gamma}$) that solves $\gamma$-gSQG (\ref{gSQG}) with initial conditions $ \sum_{j=0}^\infty \theta_{j,\gamma}(\cdot,0)\in H^\beta $ and that leaves $H^\beta$ instantaneously. To do this, we define $R_0=0$, $R_{j+1}=D_j+R_j$ and $(D_j)_{j=0}^J\subset (1,\infty)$ a growing sequence such that $D_j>2R_j$ to be fixed. We also define the $J+1$-th approximation by 
      \begin{align}
      \label{solaprox}
\overline{\theta}_{tr,J+1,\gamma}:=\theta_{tr,J,\gamma}+\theta_{J+1,\gamma},
      \end{align}
      this is, (\ref{solaprox}) is the sum of the smooth compact solution to $\gamma$-gSQG (\ref{gSQG}) with initial conditions $\sum_{j=0}^J \theta_{j,\gamma}(\cdot,0) $ and the smooth compact solution to $\gamma$-gSQG (\ref{gSQG}) with initial conditions $\theta_{J+1,\gamma}(\cdot,0)=T_{R_{J+1}}\overline{\theta}_{N_{J+1},c_{J+1},K_{J+1}}(\cdot,0)$, this is, the translated solution of the one with initial conditions $\overline{\theta}_{N_{J+1},c_{J+1},K_{J+1}}(\cdot,0)$. Hence we can apply Lemma \ref{lemanonexistence}, obtaining
      \begin{align}
      \label{cauchy}
          \|\theta_{tr,J+1,\gamma}-\overline{\theta}_{tr,J+1,\gamma}\|_{H^4}\leq 2^{-J}, \text{ for } t\in[0,t_0],
      \end{align}
      by an inductive process, doing $D_j>1$ as big as needed and taking into account that the common period of existence \footnote{$\theta_{J,\gamma}\in H^4$ for $t\in [0,t_0]$ clearly because of point \ref{punto5}. Then, $\theta_{tr,J, \gamma}\in H^{4}$ also for $t\in [0,t_0]$ because of an inductive process. Indeed, $\theta_{tr,J,\gamma}(\cdot,0)=\overline{\theta}_{tr,J,\gamma}(\cdot,0)$ and using (\ref{cauchy}) and Picard–Lindelöf theorem we can extend the existence of the truncated solutions in the hole time interval.} of all the functions is $t\in [0,t_0]$. \\\\Let us fix a compact set $K\times [0,t_0]\subset \mathbb{R}^2\times [0,t_0]$. Then, for $J\geq J_0 \in \mathbb{N}$ big enough we have that $\theta_{J+1,\gamma}(x,t)=0$ for every $(x,t)\in K\times [0,t_0]$ since $D_j>1$, so (\ref{cauchy}) implies 
      \begin{align}
      \label{aproxK}
            \|\theta_{tr,J+1,\gamma}-\theta_{tr,J,\gamma}\|_{H^4(K)}\leq 2^{-J}, \text{ for } t\in[0,t_0], \hspace{2mm} J\geq J_0,
      \end{align}
      which implies that $\{\theta_{tr,J,\gamma}\}_{J=0}^\infty$ is a Cauchy sequence in $C([0,t_0],H^4(K))$, that is a Banach space, so there exist $\theta_{\infty,\gamma}$ a limit. Since the compact set $K$ was arbitrary and the limit must be unique for every $K$, the function $\theta_\infty(x,t)$ can be defined for every $(x,t) \in \mathbb{R}^2\times [0,t_0]$ and it is $C_tH_x^4$ in every compact set. Now, we would like to prove some convergence result also in the velocities. More specifically, we will proof convergence for the velocities in the space $C_tH^2(K)$ using (\ref{normascrecientes}). Thus,
      \begin{align*}
         \|v^\gamma(\theta_{\infty,\gamma})-v^\gamma(\theta_{tr,J,\gamma})\|_{C_tH^2(K)}\leq 
          \|\theta_{\infty,\gamma}-\theta_{tr,J,\gamma}\|_{C_tH^{2+\gamma}(K)}
       \\
       \leq 
       C\sum_{j\geq J} \left\|
       \theta_{tr,j+1,\gamma}-\theta_{tr,j,\gamma}
        \right\|_{C_tH^{4}(K+B_1(0))}+
        C\sum_{j\geq J} \left\|
       \theta_{tr,j+1,\gamma}-\theta_{tr,j,\gamma}
        \right\|_{C_tH^{1}}\\
        \leq C\sum_{j\geq J} 2^{-j}\rightarrow 0, \hspace{3mm} \text{when } J\longrightarrow \infty,
      \end{align*}
      using also the point \ref{punto4}. So we conclude that for any $\gamma\in (-1,1)$,
      \begin{align*}
      v^\gamma(\theta_{tr,J,\gamma})  \longrightarrow v^\gamma(\theta_{\infty,\gamma})\hspace{2mm} \text{when } J\rightarrow \infty, \text{ in } C_tH^2(K).
      \end{align*}
      Since $\theta_{tr,J,\gamma} \in C([0,t_0],H^4(K))$ is a solution of (\ref{gSQG}), we can integrate in time (\ref{gSQG}), obtaining for every $0\leq t_1< t_2\leq t_0$,
       \begin{align*}
           \theta_{tr,J,\gamma}(x,t_2)-\theta_{tr,J,\gamma}(x,t_1)=-\int_{t_1}^{t_2}v^\gamma(\theta_{tr,J,\gamma})(x,t)\cdot \nabla \theta_{tr,J,\gamma}(x,t) dt,
       \end{align*}
       and passing to the limit in $J\mapsto \infty$,
        \begin{align}
        \label{eclimite}
           \theta_{\infty,\gamma}(x,t_2)-\theta_{\infty,\gamma}(x,t_1)=-\int_{t_1}^{t_2}v^\gamma(\theta_{\infty,\gamma})(x,t)\cdot \nabla \theta_{\infty,\gamma}(x,t) dt.
       \end{align}
         Notice that, since the convergence of $(\theta_{tr,J,\gamma})_{J>0}$ occurs in $C([0,t_0],H^4(K))$ and the one of $(v^\gamma(\theta_{tr,J,\gamma}))_{J>0}$ in $C([0,t_0],H^{2}(K))$, then $(v^\gamma(\theta_{tr,J,\gamma})\cdot \nabla \theta_{tr,J,\gamma})_{J>0}$ converges, in particular, in $C([0,t_0],C^{1/2}(K))$
         by Sobolev embedding and hence the limit (\ref{eclimite}) make sense classically for any $(x,t_i)\in K$, $i=1,2$. Moreover, since $K$ is a compact set and the integrand of the RHS is continuous, we can bound it by some positive number $M_K$ and hence 
         \begin{align*}
             |\theta_{\infty,\gamma}(x,t_2)-\theta_{\infty,\gamma}(x,t_1)|\leq M_K |t_2-t_1|, 
         \end{align*}
         obtaining that the limit $\theta_{\infty,\gamma}$ is Lipschitz in time in $K$. Hence, now we can divide in (\ref{eclimite}) by $t_2-t_1$ at both sides and consider the limit $t_2\mapsto t_1$. In the LHS, the time derivative in $t_1$  appears and in the RHD, using the Lebesgue differentiation theorem, we obtain the integrand evaluated in $t_1$. Since the compact set $K$ was arbitrary, we conclude that $\theta_{\infty,\gamma}$ solves (\ref{gSQG}) pointwise in the classical sense.  
        \begin{rmk}
        \label{outsupport}
        Let $f\in H^\beta$ with  $supp(f)\subset B_R(0)=B_R$ and let $K=\overline{B_{2 R}(0)}$. Then:
        \begin{itemize}
        \item If $\beta \in \mathbb{N}$,  
        \begin{align*}            \|f\|_{H^\beta(\mathbb{R}^2\setminus K)}=0.
        \end{align*}
            \item Let $\beta=1+s$ with $s\in (0,1)$. Notice that, fixed $x\in \mathbb{R}^2\setminus K$, the $y\in \overline{B_R}$ that minimizes the distance between $x$ and $y$ is $y=R\frac{x}{|x|}$ and hence $|x-y|\geq |x-R\frac{x}{|x|}|=(1-\frac{R}{|x|})|x|\geq \frac{|x|}{2}$, so
        \begin{align*}    \|f\|^2_{\Dot{H}^\beta(\mathbb{R}^2\setminus K)}=\|\Lambda^\beta f\|^2_{L^2(\mathbb{R}^2\setminus K)}=C\int_{\mathbb{R}^2\setminus K} 
        \left(
        \int_{B_R} \frac{f(x)-f(y)}{|x-y|^{3+s}} dy
        \right)^2 dx \\
        =C\int_{\mathbb{R}^2\setminus K} 
        \left(
        \int_{B_R} \frac{-f(y)}{|x-y|^{3+s}} dy
        \right)^2 dx
        \leq 
        C\int_{\mathbb{R}^2\setminus K} 
        \left(
        \int_{B_R} \frac{|f(y)|}{|x|^{3+s}} dy
        \right)^2 dx\\
         \leq C \|f\|_{L^1}^2\int_{\mathbb{R}^\setminus K}|x|^{-6-2s}dx \leq C\|f\|_{L^2}^2 R^{2}R^{-4-2s} \leq C\|f\|_{L^2}^2 R^{-2-2s}.
        \end{align*}
        \item Let $\beta =2+s$ with $s\in (0,1)$. Again, repeating the previous steps,
        \begin{align*}   \|f\|^2_{\Dot{H}^\beta(\mathbb{R}^2\setminus K)}=\|\Delta \Lambda^s f\|_{L^2(\mathbb{R}^2\setminus K)}^2
        =C\int_{\mathbb{R}^2\setminus K} 
        \left(
        \int_{B_R} \frac{\Delta f(x)-\Delta f(y)}{|x-y|^{2+s}} dy
        \right)^2 dx
        \\=
        C\int_{\mathbb{R}^2\setminus K} 
        \left(
        \int_{B_R} \frac{-\Delta f(y)}{|x-y|^{2+s}} dy
        \right)^2 dx
        \leq 
         C\int_{\mathbb{R}^2\setminus K} 
        \left(
        \int_{B_R} \frac{|\Delta f(y)|}{|x|^{2+s}} dy
        \right)^2 dx\leq C\|\Delta f\|_{L^2}^2R^{-2s},
        \end{align*}
        \end{itemize}
        so taking into account that the $L^2$ norm is zero out of the support, we conclude $\|f\|_{H^\beta(\mathbb{R}^2\setminus K)}\leq C\|f\|_{H^{\lfloor \beta \rfloor}}R^{-(\beta-\lfloor \beta \rfloor)}$, where $\lfloor \cdot \rfloor$ denotes the floor function and $\beta \in [1,2+\gamma)\cap (\frac{3}{2}+\gamma,2+\gamma)$. As a consequence, 
        \begin{align*}
            \|f\|_{H^\beta(K)}\geq \|f\|_{H^\beta(\mathbb{R}^2)}- C\|f\|_{H^{\lfloor \beta \rfloor}}R^{-(\beta-\lfloor \beta \rfloor)}.
        \end{align*}
        \end{rmk}
      Now let us check that $\theta_{\infty,\gamma}$ loses regularity instantaneously. Let $t\in (0,t_0]$ and let $J_0\in \mathbb{N}$ be such that $0<2^{-J_0}<t\leq t_0$. Notice that $supp(\theta_{j})\subset B_{R_j+1}(0)$ for $t\in[0,t_0]$ and define $K_{j}=\overline{B_{ 2R_j+2}}(0)$. If $\beta=2$, then it is easy to see, using the locality of the $H^2$ norm and the point \ref{punto3},
      \begin{align*}
          \|\theta_{J_1+1,\gamma}(\cdot,t)\|_{H^\beta(K_{J_1+1})}=\|\theta_{J_1+1,\gamma}(\cdot,t)\|_{H^\beta(\mathbb{R}^2)}\rightarrow \infty, \hspace{3mm} \text{when }  J_1\rightarrow \infty.
      \end{align*}   
      When $\beta \neq 2$, using the Remark \ref{outsupport} and the points \ref{punto3} and \ref{punto4} written at the beginning of the proof, for any $J_1>J_0,$
      \begin{align*}
      \|\theta_{J_1+1,\gamma}(\cdot,t)\|_{H^\beta(K_{J_1+1})}\geq 
          \|\theta_{J_1+1,\gamma}\|_{H^\beta} -C\|\theta_{J_1+1,\gamma}\|_{H^{\lfloor \beta \rfloor}}(2R_j+2)^{-\beta+\lfloor \beta \rfloor}\\ \geq 
          2^{J_1+1}-C2^{-J_1-1}(2R_j+2)^{-\beta+\lfloor \beta \rfloor} \rightarrow \infty , \hspace{6mm} J_1 \rightarrow \infty,
          \end{align*}
      where $\lfloor\cdot\rfloor$ denote the floor function and where we have used in the last limit that $R_j$ goes to infinity.\\\\
      Now, using (\ref{cauchy}) in the second inequality, Lemma \ref{fractionalsobolev} in the third and the previous computation when taking limits,
      \begin{align*}
    \|\theta_{tr,J_1+1,\gamma}(\cdot,t)\|_{H^\beta(K_{J_1+1})}\geq \|\overline{\theta}_{tr,J_1+1,\gamma}\|_{H^\beta(K_{J_1+1})}-  \|\theta_{tr,J_1+1,\gamma}-\overline{\theta}_{tr,J_1+1,\gamma}\|_{H^\beta}
         \\ \geq          \|\overline{\theta}_{tr,J_1+1,\gamma}\|_{H^\beta(K_{J_1+1})}-  2^{-J_1} \\\geq 
\|\theta_{J_1+1,\gamma}\|_{H^\beta(K_{J_1+1})}+\|\theta_{tr,J_1,\gamma}\|_{H^\beta(K_{J_1+1})}-C(D_{J_1}-2)^{-(\beta-\lfloor \beta \rfloor)}-2^{-J_1}
\\ \geq 
\|\theta_{J_1+1,\gamma}\|_{H^\beta(K_{J_1+1})}-C(D_{J_1}-2)^{-(\beta-\lfloor \beta \rfloor)}-2^{-J_1}  \longrightarrow \infty,\hspace{6mm} J_1 \mapsto \infty,
      \end{align*}
      making $(D_j)$ tend to infinity.\\\\
      Now, let us proof that $\|\theta_{\infty,\gamma}(\cdot,t)\|_{H^\beta}=\infty$ for any $t>0$. Since for any compact set $K$, $\theta_{tr,j,\gamma}\rightarrow \theta_{\infty,\gamma}$ in $C([0,t_0], H^4(K))$, using (\ref{normascrecientes}) and the point \ref{punto3}, for any $J_0\in \mathbb{N}$ there exist a big enough $J_1>J_0$ such that $$\|\theta_{\infty,\gamma}(\cdot,t)-\theta_{tr,J_1,\gamma}(\cdot,t)\|_{H^\beta(K_{J_0})}\leq \|\theta_{\infty,\gamma}(\cdot,t)\|_{H^1}+\|\theta_{tr,J_1,\gamma}(\cdot,t)\|_{H^1}+\|\theta_{\infty,\gamma}(\cdot,t)-\theta_{tr,J_1,\gamma}(\cdot,t)\|_{H^4(K_{J_0}+B_ 1)}<3.$$
      Using this together with the inequalities  (\ref{normascrecientes}), (\ref{aproxK}) and the point \ref{punto3}, for any $J_0\in \mathbb{N}$ there exist a big enough $J_1>J_0+2$ such that
      \begin{align*}
          \|\theta_{\infty,\gamma}(\cdot,t)\|_{H^\beta}\geq \|\theta_{\infty,\gamma}(\cdot,t)\|_{H^\beta(K_{J_0})}\\
          \geq \|\theta_{tr,J_0,\gamma}(\cdot,t)\|_{H^\beta(K_{J_0})}  -\|\theta_{\infty,\gamma}(\cdot,t)-\theta_{tr,J_1,\gamma}(\cdot,t)\|_{H^\beta(K_{J_0})}
          -\sum_{j=J_0}^{J_1-1}\|\theta_{tr,j+1,\gamma}(\cdot,t)-\theta_{tr,j,\gamma}(\cdot,t)\|_{H^\beta(K_{J_0})} \\
          \geq \|\theta_{tr,J_0,\gamma}(\cdot,t)\|_{H^\beta(K_{J_0})} -3-\sum_{j=J_0}^{J_1-1}\|\theta_{tr,j+1,\gamma}(\cdot,t)-\theta_{tr,j,\gamma}(\cdot,t)\|_{H^4(K_{J_0}+B_1(0))}
          -2\sum_{j=J_0}^{J_1}\|\theta_{tr,j,\gamma}(\cdot,t)\|_{H^1}\\
          \geq \|\theta_{tr,J_0,\gamma}(\cdot,t)\|_{H^\beta(K_{J_0})} -C,
      \end{align*}
       and doing $J_0\rightarrow \infty$, by the previous computation we obtain $ \|\theta_{\infty,\gamma}(\cdot,t)\|_{H^\beta}=\infty$.
      \begin{rmk}
          Notice that (\ref{aproxK}) is written for some compact $K$ and for $J$ big enough, but indeed the only relation we need between $K$ and $J$ is that $\theta_{J+1}=0$ in $K$, so we can use (\ref{aproxK}) in the third inequality of the previous computation doing $D_j>3$.
      \end{rmk}
      Now, let us proof uniqueness. We assume that there exist two solutions $\theta_1,\theta_2\in L^\infty_tH^{\frac{3}{2}+\gamma}\cap C([0,t_0],C^2( K))$ for any compact set $K\subset \mathbb{R}^2$ and $\theta_1$ is the one that we constructed before. Notice that $\theta_1$ is in the claimed spaces. It is in $L^\infty_tH^{\frac{3}{2}+\gamma}$ basically because of the point \ref{punto3} and in $C([0,t_0],C^2( K))$ because we  obtain the convergence of the truncated solutions in $C([0,t_0],H^ 4( K))$, so by Sobolev embeddings we obtain that the limit has in particular spatial regularity $C^2$ in compact sets.\\ \\
Let us study uniqueness in the interval $[0,t_0]$. Let $\|v^\gamma_j(\theta_i)\|_{L^\infty}\leq v_{max}$ for $i,j=1,2$. We have that $supp(\theta_i)\subset \cup_{j\in \mathbb{N}}B_{t_0v_{max}+2^{-j}}(-R_j,0)$ for $i=1,2$, since $\theta_1$ and $\theta_2$, the considered solutions to the transport equation (\ref{gSQG}), are of class $C^1$ in spatial coordinates. We define 
\begin{align*}
    \theta^{j}_1(x,t)=\mathbbm{1}_{B_{t_0v_{max}+2^{-j}}(-R_j,0)}\theta_1(x,t),\\ 
     \theta^{j}_2(x,t)=\mathbbm{1}_{B_{t_0v_{max}+2^{-j}}(-R_j,0)}\theta_2(x,t),
\end{align*}
and we define $\Theta^j:=\theta^j_2-\theta^j_1$ and $\Theta:=\theta_2-\theta_1$.\\ \\
Notice that 
\begin{align*}
    \frac{\partial \Theta^j}{\partial t}= \frac{\partial \theta_2^j}{\partial t}-\frac{\partial \theta_1^j}{\partial t}\\ \\
    =v^\gamma_1(\theta_1)\frac{\partial \theta_1^j}{\partial x_1}+
    v^\gamma_2(\theta_1)\frac{\partial \theta_1^j}{\partial x_2}
    -v^\gamma_1(\theta_2)\frac{\partial \theta_2^j}{\partial x_1}
    -v^\gamma_2(\theta_2)\frac{\partial \theta_2^j}{\partial x_2}\\ \\
    = -v^\gamma_1(\Theta) \frac{\partial \Theta^j}{\partial x_1}
    -v^\gamma_1(\Theta) \frac{\partial \theta_1^j}{\partial x_1}
     -v^\gamma_1(\theta_1) \frac{\partial \Theta_1^j}{\partial x_1}\\
     -v^\gamma_2(\Theta) \frac{\partial \Theta^j}{\partial x_2}
    -v^\gamma_2(\Theta) \frac{\partial \theta_1^j}{\partial x_2}
     -v^\gamma_2(\theta_1) \frac{\partial \Theta_1^j}{\partial x_2}
          \end{align*}
     \begin{align*}
     =-v^\gamma_1(\Theta^j)\frac{\partial \theta_1^j}{\partial x_1}
      -v^\gamma_1(\Theta^j)\frac{\partial \Theta^j}{\partial x_1}
      -v^\gamma_2(\Theta^j)\frac{\partial \theta_1^j}{\partial x_2}
      -v^\gamma_2(\Theta^j)\frac{\partial \Theta^j}{\partial x_2}\\
       -v^\gamma_1(\theta_1^j)\frac{\partial \Theta^j}{\partial x_1}
       -v^\gamma_2(\theta_1^j)\frac{\partial \Theta^j}{\partial x_2}
       -v^\gamma_1(\Theta-\Theta^j)\frac{\partial \theta_1^j}{\partial x_1} 
       -v^\gamma_1(\Theta-\Theta^j)\frac{\partial \Theta^j}{\partial x_1}\\
       -v^\gamma_2(\Theta-\Theta^j)\frac{\partial \theta_1^j}{\partial x_2} 
       -v^\gamma_2(\Theta-\Theta^j)\frac{\partial \Theta^j}{\partial x_2}\\
       -v^\gamma_1(\theta_1-\theta_1^j)\frac{\partial \Theta^j}{\partial x_1}
       -v^\gamma_2(\theta_1-\theta_1^j)\frac{\partial \Theta^j}{\partial x_2}.
\end{align*}
By an standard energy method,
\begin{align*}
    \frac{1}{2}\frac{\partial \|\Theta^j\|^2_{L^2}}{\partial t}=\int \Theta^j \left(
    -v^\gamma_1(\Theta^j)\frac{\partial \theta_1^j}{\partial x_1}
      -v^\gamma_2(\Theta^j)\frac{\partial \theta_1^j}{\partial x_2}  -v^\gamma_1(\Theta-\Theta^j)\frac{\partial \theta_1^j}{\partial x_1} -v^\gamma_2(\Theta-\Theta^j)\frac{\partial \theta_1^j}{\partial x_2} 
       \right)dx,
\end{align*}
so 
\begin{align*}
   \frac{\partial \|\Theta^j\|_{L^2}}{\partial t} 
      \leq C\|\theta_1\|_{C^2(\overline{B}_{t_0 v_{max}+2^{-j}})}\|\Theta^j\|_{L^2}
     +C\|\theta_1\|_{C^1(\overline{B}_{t_0 v_{max}+2^{-j}})} \frac{\|\Theta\|_{L^2}}{(D_j-C)^{1+\gamma}}.
\end{align*}
For positive $\gamma$, we used that $v^\gamma$ is odd and the Lemma \ref{leibnizv} as we did other times to bound the first part of the sum. For the second sum, we take into account firstly that $v^\gamma_i(\Theta-\Theta^j)$ is multiplied by a function with support in $B_{t_0v_{max}+2^{-j}}(-R_j,0)$, so we just have to bound 
\begin{align*}
    \left(
    \int_{supp(\Theta_j)}
    \left|
    [(\Theta-\Theta^j) \ast K^\gamma](x)
    \right|^2
    dx
    \right)^{\frac{1}{2}}
    \\ \leq
    \|\Theta-\Theta^j\|_{L^2} 
    \left(
       \int_{supp(\Theta_j)}
   (\int
   \frac{\mathbbm{1}_{supp(\Theta-\Theta^j)}(y)}{|x-y|^{4+2\gamma}}dy)
    dx
    \right)^{\frac{1}{2}}
    \\ \leq 
    C \frac{\pi(1+2t_0v_{max}) \|\Theta\|_{L^2}}{(D_j-2t_0v_{max}-2)^{1+\gamma}} \leq  C \frac{\|\Theta\|_{L^2}}{ (D_j-C)^{1+\gamma}},
\end{align*}
using that $(D_j)$ is an increasing sequence big enough.\\\\
Hence, for all time $t\in [0,t_0]$ and taking a sequence $(D_j)$ big enough we get 
\begin{align*}
    \|\Theta^j\|_{L^2}\leq \frac{\epsilon \|\Theta\|_{L^2}}{(D_j-C)^{\frac{1+\gamma}{2}}},
\end{align*}
compensating the possible growth of $\|\theta_1\|_{C^2(\overline{B}_{t_0 v_{max}+2^{-j}})}$ with the factor $(D_j-C)^{-\frac{1+\gamma}{2}}$. Adding over all $j$ and taking $\epsilon$ small we have, taking again $(D_j)$ big, 
\begin{align*}
    \|\Theta\|_{L^2}\leq \frac{  \|\Theta\|_{L^2}}{2},
\end{align*}
and thus $  \|\Theta\|_{L^2}=0$ in $[0,t_0]$. 
 \end{proof} 
 \section*{Acknowledgments}
 This work is supported in part by the Spanish Ministry of Science and Innovation, through the “SeveroOchoa Programme for Centres of Excellence in R$\&$D (CEX2019-000904-S $\&$ CEX2023-001347-S)” and PID2023-152878NB-I00. We were also partially supported by the ERC Advanced Grant 788250, and by the SNF grant FLUTURA: Fluids, Turbulence, Advection No. 212573.


\end{document}